\renewcommand{\leq}{\leqslant}
\renewcommand{\geq}{\geqslant}
\numberwithin{equation}{section}
\newcommand{\uple}[1]{\text{\boldmath${#1}$}}
\def\stacksum#1#2{{\stackrel{{\scriptstyle #1}}
{{\scriptstyle #2}}}}
\newcommand{\bfalpha}{\uple{\alpha}}
\newcommand{\bfbeta}{\uple{\beta}}
\newcommand{\bfb}{\uple{b}}
\newcommand{\bfO}{\mathbf{0}}
\newcommand{\bfR}{\mathbf{R}}
\newcommand{\bfK}{\mathbf{K}}
\newcommand{\Cc}{\mathbf{C}}
\newcommand{\Aa}{\mathbf{A}}
\newcommand{\Zz}{\mathbf{Z}}
\newcommand{\Pp}{\mathbf{P}}
\newcommand{\Rr}{\mathbf{R}}
\newcommand{\Gg}{\mathbf{G}}
\newcommand{\Gm}{\mathbf{G}_{m}}
\newcommand{\Qq}{\mathbf{Q}}
\newcommand{\Fp}{{\mathbf{F}_p}}
\newcommand{\Fq}{{\mathbf{F}_q}}
\newcommand{\Fqt}{{\mathbf{F}^\times_q}}
\newcommand{\Fqd}{{\mathbf{F}_{q^d}}}
\newcommand{\Ff}{\mathbf{F}}
\newcommand{\bFq}{\overline{\Ff}_q}
\newcommand{\bQl}{\overline{\Qq}_{\ell}}
\newcommand{\mmu}{\boldsymbol{\mu}}
\newcommand{\mcV}{\mathcal{V}}
\newcommand{\mcO}{\mathcal{O}}
\newcommand{\mcN}{\mathcal{N}}
\newcommand{\mcH}{\mathcal{H}}
\newcommand{\HYPK}{\mathcal{K}\ell}
\newcommand{\KL}{\mathcal{K}\ell}
\newcommand{\mods}[1]{\,(\mathrm{mod}\,{#1})}
\newcommand{\what}{\widehat}
\DeclareMathOperator{\frob}{Fr}
\DeclareMathOperator{\hypk}{Kl}
\newcommand{\ra}{\rightarrow}
\newcommand{\lra}{\longrightarrow}
\newcommand{\injecte}{\hookrightarrow}
\newcommand{\fleche}[1]{\stackrel{#1}{\lra}}
\DeclareMathOperator{\Spec}{Spec}
\DeclareMathOperator{\rank}{rank}
\DeclareMathOperator{\Frob}{\mathrm{Fr}}
\DeclareMathOperator{\Kl}{\mathrm{Kl}}
\DeclareMathOperator{\tr}{\mathrm{Tr}}
\DeclareMathOperator{\nr}{\mathrm{Nr}}
\DeclareMathOperator{\Gal}{Gal}
\DeclareMathOperator{\Ind}{Ind}
\DeclareMathOperator{\Res}{Res}
\DeclareMathOperator{\supp}{supp}
\DeclareMathOperator{\Tr}{Tr}
\DeclareMathOperator{\swan}{Swan}
\DeclareMathOperator{\Sing}{Sing}
\DeclareMathOperator{\Drop}{drop}
\DeclareMathOperator{\sw}{Swan}
\DeclareMathOperator{\ft}{FT}
\DeclareMathOperator{\cond}{\mathbf{c}}
\newcommand{\eps}{\varepsilon}
\renewcommand{\rho}{\varrho}
\DeclareMathOperator{\SL}{SL}
\DeclareMathOperator{\GL}{GL}
\DeclareMathOperator{\PGL}{PGL}
\DeclareMathOperator{\rmB}{B}
\DeclareMathOperator{\Sp}{Sp}
\newcommand{\sheaf}[1]{\mathcal{{#1}}}
\DeclareMathSymbol{\gena}{\mathord}{letters}{"3C}
\DeclareMathSymbol{\genb}{\mathord}{letters}{"3E}
\def\multsum{\mathop{\sum\cdots \sum}\limits}
\theoremstyle{plain}
\newtheorem{theorem}{Theorem}[section]
\newtheorem*{theorem*}{Theorem}
\newtheorem{lemma}[theorem]{Lemma}
\newtheorem{cor}[theorem]{Corollary}
\newtheorem{proposition}[theorem]{Proposition}
\theoremstyle{remark}
\theoremstyle{definition}
\newtheorem{definition}[theorem]{Definition}
\newtheorem{remark}[theorem]{Remark}
\newcommand{\mcL}{\mathcal{L}}
\newcommand{\mcC}{\mathcal{C}}
\newcommand{\mcF}{\mathcal{F}}
\newcommand{\mcK}{\mathcal{K}}
\newcommand{\mcR}{\mathcal{R}}
\newcommand{\mcG}{\mathcal{G}}
\newcommand{\mcB}{\mathcal{B}}
\newcommand{\lf}{\lambda_f}
\newcommand{\vphi}{\varphi}
\renewcommand{\geq}{\geqslant}
\renewcommand{\leq}{\leqslant}
\newcommand{\refs}{\eqref}
\newcommand{\ov}[1]{\overline{#1}}
\newcommand\sumsum{\mathop{\sum\sum}\limits}
\newcommand\sumsumsum{\mathop{\sum\sum\sum}\limits}
\newcommand\rpfree{1/144}
\begin{document}

\title{Bilinear forms with Kloosterman sums and applications}

\author{Emmanuel Kowalski}
\address{ETH Z\"urich -- D-MATH\\
  R\"amistrasse 101\\
  CH-8092 Z\"urich\\
  Switzerland} \email{kowalski@math.ethz.ch}

\author{Philippe Michel} \address{EPFL/SB/TAN, Station 8, CH-1015
  Lausanne, Switzerland } \email{philippe.michel@epfl.ch}

\author{Will Sawin}
\address{ETH Institute for Theoretical Studies, ETH Zurich, 8092 Zürich}
\email{william.sawin@math.ethz.ch}

\begin{abstract}
  We prove non-trivial bounds for general bilinear forms in
  hyper-Kloosterman sums when the sizes of both variables may be below
  the range controlled by Fourier-analytic methods (P\'olya-Vinogradov
  range). We then derive applications to the second moment of cusp
  forms twisted by characters modulo primes, and to the distribution
  in arithmetic progressions to large moduli of certain
  Eisenstein-Hecke coefficients on $\GL_3$.  Our main tools are new
  bounds for certain complete sums in three variables over finite
  fields, proved using methods from algebraic geometry, especially
  $\ell$-adic cohomology and the Riemann Hypothesis.
\end{abstract}

\subjclass[2010]{11T23, 11L05, 11N37, 11N75, 11F66, 14F20, 14D05}

\keywords{Kloosterman sums, Kloosterman sheaves, monodromy, Riemann
  Hypothesis over finite fields, short exponential sums, moments of
  $L$-functions, arithmetic functions in arithmetic progressions}

\thanks{Ph.\ M.\ and E.\ K.\ were partially supported by a DFG-SNF
  lead agency program grant (grant 200021L\_153647). W. S. was
  supported by the National Science Foundation Graduate Research
  Fellowship under Grant No. DGE-1148900. A large portion of this
  paper was written while Ph.M. and W.S. where enjoying the
  hospitality of the Forschungsinstitut f\"ur Mathematik at ETH
  Zurich and it was continued during a visit of Ph. M.  at
  Caltech. We would like to thank both institutions for providing
  excellent working conditions. W.S. partially supported by Dr. Max
  R\"ossler, the Walter Haefner Foundation and the ETH Zurich
  Foundation. \today\ \currenttime}

\maketitle 

\begin{flushright}
  \textit{Dedicated to Henryk Iwaniec}
\end{flushright}

\setcounter{tocdepth}{1}
\tableofcontents

\section{Introduction}

\subsection{Statements of results} 

A number of important problems in analytic number theory can be
reduced to non-trivial estimates for bilinear forms
\begin{equation}\label{eq-bil}
  B(K,\bfalpha,\bfbeta)=\sum_{m}\sum_n \alpha_m \beta_n K(mn),
\end{equation}
for some arithmetic function $K$ and complex coefficients
$(\alpha_m)_{m\geq 1}$, $(\beta_n)_{n\geq 1}$.  A particularly
important case is when $K:\Zz\lra \Zz/q\Zz\ra\Cc$ runs over a sequence
of $q$-periodic functions, which are bounded independently of $q$, and
estimates are required in terms of $q$.
\par
In dealing with these sums, the challenges lie (1) in handling
coefficients $(\alpha_m)$, $(\beta_n)$ which are as general as
possible; and (2) in dealing with coefficients supported in intervals
$1\leq m\leq M$ and $1\leq n\leq N$ with $M$, $N$ as small as possible
compared with $q$.  In this respect, a major threshold is the
\emph{Fourier-theoretic range} (also called sometimes the
P\'olya-Vinogradov range), where $M$ and $N$ are both close to
$q^{1/2}$, and especially when they are slightly smaller in
logarithmic scale, so that applying the completion method and even
best-possible bounds for the Fourier transform gives trivial results.

In particular, when dealing with problems related to the analytic
theory of automorphic forms, one is often faced with the case where
$K(n)$ is a hyper-Kloosterman sum $\hypk_k(n;q)$. We recall that these
sums are defined, for $k\geq 2$ and $a\in (\Zz/q\Zz)^{\times}$, by
$$
\hypk_k(a;q)=\frac{1}{q^{(k-1)/2}} \sum_{\substack{x_1,\ldots,
    x_k\in\Zz/q\Zz\\x_1\cdots
    x_k=a}}e\Bigl(\frac{x_1+\cdots+x_k}{q}\Bigr).
$$
A deep result of Deligne shows that $|\hypk_k(a;q)|\leq k^{\omega(q)}$
for all $a\in (\Zz/q\Zz)^{\times}$. For any integer $c$ coprime to
$q$, we also denote by $[\times c]^*\hypk_k$ the function
$a\mapsto \hypk_k(ca;q)$.
\par
There are several intrinsic reasons why hyper-Kloosterman sums are
ubiquitous in the theory of automorphic forms:
\begin{itemize}
\item[-] they are closely related, via the Bruhat decomposition, to
  Fourier coefficients and Whittaker models of automorphic forms and
  representations, and therefore occur in the Kuznetsov-Petersson
  formula (see for instance the works of Deshouillers and
  Iwaniec~\cite{DesIw}, Bump--Friedberg--Goldfeld~\cite{BFG} or
  Blomer~\cite{Bl});
\item[-] the hyper-Kloosterman sums are the inverse Mellin transforms
  of certain monomials in Gauss sums, and therefore occur in
  computations involving root numbers in families of $L$-functions (as
  in the paper of Luo, Rudnick and Sarnak~\cite{LRS});
\item[-] the hyper-Kloosterman sums are constructed by iterated
  multiplicative convolution (see Katz's book~\cite{GKM} for the
  algebro-geometric version of this construction), which explains why
  they occur after applying the Voronoi summation formula on $\GL_k$.
\end{itemize}
\par
Our main results provide new bounds for general bilinear forms in
hyper-Kloosterman sums that go beyond the Fourier-theoretic range (see
Theorems~\ref{thmtypeII} and \ref{thmtypeI} below).  To illustrate the
potential of the results, we derive two applications of these bounds
in this paper. Both are related to the third source of
hyper-Kloosterman sums described above, but we believe that further
significant applications will arise from the other perspectives (as
well as from other directions).

\subsection{Bilinear forms with Kloosterman sums}

We will always assume that the sequences $\uple{\alpha}$ and
$\uple{\beta}$ have finite support. We denote
$$
\| \uple{\alpha} \|_1 = \sum_m |\alpha_m|,\quad\quad \| \uple{\alpha}
\|_2 = \Bigl(\sum_m |\alpha_m|^2\Bigr)^{1/2},
$$
the $\ell^1$ and $\ell^2$ norms.
\par
Our main result for general bilinear forms is the following:

\begin{theorem}[General bilinear forms]\label{thmtypeII}
  Let $q$ be a prime. Let $c$ be an integer coprime to $q$. Let $M$
  and $N$ be real numbers such that
$$
1\leq M\leq Nq^{1/4},\quad q^{1/4}<MN< q^{5/4}.
$$
\par
Let $\mcN\subset[1,q-1]$ be an interval of length $\lfloor N\rfloor$
and let $\uple{\alpha}=(\alpha_m)_{m\leq M}$ and
$\uple{\beta}=(\beta_n)_{n\in \mcN}$ be sequences of complex numbers.
\par
For any $\eps>0$, we have
\begin{equation}\label{typeIIgen}
  B([\times c]^*\hypk_k,\uple{\alpha},\uple{\beta})
%%  \sumsum_{m\leq M,\,n\in \mcN}\alpha_m\beta_n\Kl_2(amn;q)
  \ll q^\eps\|\uple{\alpha}\|_2\|\uple{\beta}\|_2(MN)^{\frac12}
  \Bigl(M^{-\frac12}+(MN)^{-\frac{3}{16}}q^{\frac{11}{64}}  \Bigr)
\end{equation}
where the implied constant depend only on $k$ and $\eps$.
\end{theorem}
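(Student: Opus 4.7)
The pointwise Deligne bound $|\hypk_k(a;q)|\le k^{\omega(q)}$ combined with Cauchy--Schwarz gives the trivial estimate $|B|\ll q^\eps\|\uple\alpha\|_2\|\uple\beta\|_2(MN)^{1/2}$, and the theorem asks for two alternative savings: a factor $M^{-1/2}$ and a factor $(MN)^{-3/16}q^{11/64}$. I would prove the two corresponding inequalities separately, by distinct chains of Cauchy--Schwarz, completion and complete-sum bounds, and combine them at the end.

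\textbf{The $M^{-1/2}$ saving (diagonal argument).} I would apply Cauchy--Schwarz in the $n$-variable to extract $\uple\beta$, obtaining
\[
|B|^2\le\|\uple\beta\|_2^2\sum_{m_1,m_2}\alpha_{m_1}\bar\alpha_{m_2}\sum_{n\in\mcN}\hypk_k(cm_1 n;q)\overline{\hypk_k(cm_2 n;q)}.
\]
The diagonal $m_1=m_2$ is bounded by $\|\uple\alpha\|_2^2\|\uple\beta\|_2^2 N q^\eps$ using pointwise control of $\hypk_k$. For the off-diagonal, complete the inner $n$-sum by Fourier expansion over $\Fq$ and apply Katz--Deligne-type square-root cancellation for the correlation sums $\sum_{n\in\Fq}\hypk_k(m_1 n)\overline{\hypk_k(m_2 n)}e(hn/q)$ with $(m_1:m_2)\ne(1:1)$. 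In the range where the completion error is dominated by the diagonal contribution, this gives the factor $(MN)^{1/2}M^{-1/2}=N^{1/2}$.

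\textbf{The $(MN)^{-3/16}q^{11/64}$ saving (shift-and-amplify).} This is the novel half. I would first Cauchy--Schwarz in $m$, giving $|B|^2\le\|\uple\alpha\|_2^2\sum_m|\sum_n\beta_n\hypk_k(cmn;q)|^2$. Next, I would introduce a shift parameter $r$ in an interval of length $R$ (to be optimized): writing the inner sum as $\sum_n\beta_{n-r}\hypk_k(cm(n-r);q)$ via substitution and averaging over $r$, the problem gets lifted to a triple $(m,n,r)$. A second Cauchy--Schwarz in the combined variable $(m,n)$, followed by Fourier completion of the $m$-sum over $\Fq$, reduces matters to estimating, for tuples of shifts $\uple r=(r_1,r_2,r_3,r_4)$ in a ``generic'' set and auxiliary Fourier variable $h$, a complete three-variable sum of the shape
\[
T(\uple r,h)=\sum_{m,n\in\Fq}\hypk_k(cm(n+r_1))\overline{\hypk_k(cm(n+r_2))}\hypk_k(cm(n+r_3))\overline{\hypk_k(cm(n+r_4))}\,e(hm/q).
\]
The required bound is square-root cancellation $|T|\ll q^{3/2+\eps}$. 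Substituting back, treating the exceptional diagonal set of $\uple r$ trivially, and balancing the two contributions in $R$, one arrives at the saving $(MN)^{-3/16}q^{11/64}$ after optimization.

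\textbf{Main obstacle.} The whole argument is conditional on the estimate for $T(\uple r,h)$, which is the paper's central algebro-geometric input. To prove it, one must show that the constructible $\bQl$-sheaf on $\mathbf A^3_{\Fq}$ obtained from the tensor product of four shifted and $\pm$-dualized pullbacks of the Kloosterman sheaf, further twisted by the Artin--Schreier sheaf attached to $e(hm/q)$, has no large subspace of geometric monodromy invariants away from the expected diagonal locus $\{r_i=r_j\}$. The verification relies on Katz's classification of the monodromy of $\mathrm{Kl}_k$, a careful analysis of singular fibres and of local monodromy and Swan conductors (in particular at infinity on the three-dimensional base) to rule out spurious invariants, and finally Deligne's Riemann Hypothesis. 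This sheaf-theoretic step is the key technical novelty enabling the analytic machinery above to reach below the P\'olya--Vinogradov range.
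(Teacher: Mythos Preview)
Your broad outline—combine a cheap diagonal saving with a deeper shift-and-complete argument whose crux is a square-root bound for a complete sum of products of Kloosterman sums—is correct in spirit, but the analytic reduction you sketch for the main saving is not coherent, and the complete sum you isolate is the wrong one.

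The single additive shift $n\mapsto n-r$ you propose is a tautology for general $\uple\beta$: the identity $\sum_n \beta_n K(cmn)=\sum_n\beta_{n-r}K(cm(n-r))$ holds for every $r$, so averaging over $r$ returns exactly the same expression and creates no new variable structure; there is then nothing for a ``second Cauchy--Schwarz in $(m,n)$'' to act on. What the paper actually uses (following Fouvry--Michel and~\cite[\S 5.5]{BFKMM}) is the \emph{two-parameter} shift $n\mapsto n+ab$ together with the multiplicative rewriting $K(cm(n+ab))=K((cam)(\bar a n+b))$; this genuinely decouples into $s=am$, $r\equiv\bar a n\pmod q$, and a free short shift $b$, producing the shape $K_c(s(r+b))$. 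After a Cauchy--Schwarz and a H\"older inequality with exponent $4$, one arrives at
\[
|B(K_c,\uple\alpha,\uple\beta)|^2\ll\|\uple\alpha\|_2^2\|\uple\beta\|_2^2\Bigl(N+\frac{q^\eps}{AB}(AN)^{3/4}M^{1/2}\Bigl(\sum_{\bfb}|\Sigma^{\neq}(K_c,\bfb;AM)|\Bigr)^{1/4}\Bigr),
\]
where $\Sigma^{\neq}$ is a sum over \emph{three} variables $(r,s_1,s_2)$ of a product of \emph{eight} hyper-Kloosterman sums. The term $N$ on the right already yields the $M^{-1/2}$ saving, so no separate argument is needed for that part.

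Correspondingly, the decisive algebro-geometric input is not a bound for your two-variable sum $T(\uple r,h)$—that sum is in fact the Type~I complete sum $\hat\Sigma(K,\bfb,\lambda)$, for which the paper proves the stronger estimate $\ll q$—but the \emph{correlation} bound
\[
\sum_{r\in\Fq}\bfR(r,\lambda_1,\bfb)\,\overline{\bfR(r,\lambda_2,\bfb)}=\delta(\lambda_1,\lambda_2)\,q^2+O(q^{3/2}),
\]
with $\bfR(r,\lambda,\bfb)=\sum_s\psi(\lambda s)\prod_{i}K(s(r+b_i))^{\pm1}$. Establishing this (Theorem~\ref{thm2Kl2}) requires showing not only that each specialized sum-product sheaf $\mcR^*_{\lambda,\bfb}$ is geometrically irreducible, but also that the sheaves for distinct $\lambda$ are not geometrically isomorphic—an extra layer absent from the Type~I problem and from your obstacle paragraph.
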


\begin{remark}\label{remnontrivialII} 
  The bilinear form is easily bounded by
  $\|\uple{\alpha}\|_2\|\uple{\beta}\|_2(MN)^{\frac12}$, which we view
  as the trivial bound; a more elaborate treatment yields the bound of
  P\'olya-Vinogradov type (cf. \cite[Thm. 1.17]{FKM2})
  \begin{equation}\label{PVII}
    B([\times c]^*\hypk_k,\uple{\alpha},\uple{\beta})
%%  \sumsum_{m\leq M,\,n\in \mcN}\alpha_m\beta_n\Kl_2(amn;q)
    \ll_k \|\uple{\alpha}\|_2\|\uple{\beta}\|_2(MN)^{\frac12}
    \Bigl(q^{-\frac14}+M^{-\frac12}+N^{-\frac{1}{2}}q^{\frac{1}{4}}\log q  \Bigr),
\end{equation}
which improves the trivial bound as long as $M\gg 1$ and
$N\gg q^{1/2}\log^2q$. We then see that for $M=N$, the bound
\eqref{typeIIgen} is non-trivial as long as $M=N\geq q^{11/24}$, which
goes beyond the Fourier-theoretic range. In the special case
$M=N=q^{1/2}$, the saving factor is $q^{-1/64+\eps}$.
\end{remark}

When $\uple{\beta}$ is the characteristic function of an interval (or
more generally, by summation by parts, a ``smooth'' function; in
classical terminology, this means that the bilinear form is a ``type
I'' sum), we obtain a stronger result:

\begin{theorem}[Special bilinear forms]\label{thmtypeI} Let $q$ be a
  prime number.  Let $c$ be an integer coprime to $q$. Let $M$,
  $N\geq 1$ be such that
$$
1\leq M\leq N^2,\quad N<q,\quad MN<q^{3/2}.
$$ 
\par
Let $\uple{\alpha}=(\alpha_m)_{m\leq M}$ be a sequence of complex
numbers bounded by $1$, and let $\mcN\subset[1,q-1]$ be an interval of
length $\lfloor N\rfloor$.
\par
For any $\eps>0$, we have
\begin{equation}\label{typeIgen}
  B([\times c]^*\Kl_k,\uple{\alpha},1_\mcN)
  \ll q^{\eps} \|\uple{\alpha}\|_1^{1/2}
  \|\uple{\alpha}\|_2^{1/2}M^{\frac1{4}}
  N\times
  \Bigl(\frac{M^2N^5}{q^3}\Bigr)^{-1/12},
%% q^{\frac1{4}}M^{-\frac{1}{6}}N^{-\frac{5}{12}},
\end{equation}
where the implied constant depend only on $k$ and $\eps$.
\end{theorem}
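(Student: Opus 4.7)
The plan is to reduce the bilinear estimate to a fourth-moment bound on the linear sums $T_m = \sum_{n \in \mcN}\Kl_k(cmn;q)$, and then to bound this moment using the paper's new estimates for complete exponential sums in three variables over $\Ff_q$ (the main technical input advertised in the abstract). Two successive applications of Cauchy--Schwarz give
\[
|B|^{2} \;\leq\; \|\uple\alpha\|_1 \sum_m |\alpha_m||T_m|^{2} \;\leq\; \|\uple\alpha\|_1\|\uple\alpha\|_2 \Bigl(\sum_{m \leq M}|T_m|^{4}\Bigr)^{1/2},
\]
where the first step comes from pairing $|\alpha_m|^{1/2}$ with $\alpha_m|\alpha_m|^{-1/2}T_m$. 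Comparing the resulting bound $|B| \leq \|\uple\alpha\|_1^{1/2}\|\uple\alpha\|_2^{1/2}(\sum_m|T_m|^4)^{1/4}$ with \eqref{typeIgen}, the claim becomes equivalent to the fourth-moment estimate $\sum_{m\leq M}|T_m|^{4} \ll q^{1+\eps}M^{1/3}N^{7/3}$.

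Expanding $|T_m|^{4}$ as a sum over quadruples $(n_1,n_2,n_3,n_4)\in\mcN^{4}$ of products $\prod_i\Kl_k(cmn_i)^{\pm 1}$ and interchanging the order of summation, I would apply the P\'olya--Vinogradov completion formula to the inner sum over $m\in[1,M]$, replacing it by $(M/q)\widehat K(0;\uple n) + O((\log q)\max_{h\neq 0}|\widehat K(h;\uple n)|)$ where $\widehat K(h;\uple n)=\sum_{m\pmod q}\prod_i\Kl_k(cmn_i)^{\pm 1}e(mh/q)$. The ``diagonal'' tuples $\{n_1,n_2\}=\{n_3,n_4\}$ (as multisets) contribute an acceptable $O(q^{\eps}MN^{2})$ via trivial bounds on the Kloosterman factors. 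For the off-diagonal tuples, the naive square-root bound $|\widehat K(h;\uple n)|\ll q^{1/2}$ from Deligne's Riemann hypothesis (applied to the one-dimensional Kloosterman-correlation sum in $m$), summed over $\mcN^{4}$, yields only $O(N^{4}q^{1/2})$---too weak as soon as $N$ exceeds roughly $q^{1/2}$.

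To secure the missing saving, the final step is to apply P\'olya--Vinogradov completion a second time, now in two of the $n_i$ variables (say $n_1$ and $n_2$), keeping the remaining $(n_3,n_4)\in\mcN^{2}$ to be summed over trivially. This reduces matters to bounding a complete exponential sum in three variables $(m,n_1,n_2)\in\Ff_q^{3}$, schematically of the form
\[
W \;=\; \sum_{m,n_1,n_2\in\Ff_q}\Kl_k(cmn_1)\Kl_k(cmn_2)\overline{\Kl_k(cmn_3)\Kl_k(cmn_4)}\; e\Bigl(\frac{mh+n_1k_1+n_2k_2}{q}\Bigr),
\]
with $(h,k_1,k_2;n_3,n_4)$ playing the role of parameters. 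The central new input of the paper is a square-root bound $|W|\ll q^{3/2+\eps}$ for generic parameter values; I expect this to be the principal technical obstacle. To prove it one realises $W$ as a sum of traces of Frobenius on a suitable constructible $\ell$-adic sheaf on $\mathbf{A}^{3}_{\Ff_q}$, built from tensor products and multiplicative pullbacks of the Kloosterman sheaf, and verifies that its geometric monodromy is sufficiently large---equivalently, that no accidental cohomological degeneracy between the four multiplicatively-shifted Kloosterman sheaves occurs outside an explicit lower-dimensional ``resonant'' locus of parameters---before invoking Deligne's Riemann hypothesis. Granted $|W|\ll q^{3/2+\eps}$, reassembly of the pieces---summing the Fourier frequencies $(k_1,k_2)$ against $|\widehat{\mathbf 1_\mcN}(k)|$ via the standard bound $\sum_k|\widehat{\mathbf 1_\mcN}(k)|\ll q\log q$, and the remaining variables trivially---produces the required fourth-moment estimate, and hence \eqref{typeIgen} after balancing the completion errors.
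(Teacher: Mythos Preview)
Your opening double Cauchy--Schwarz is fine and gives the correct norm dependence, reducing the theorem to the fourth-moment bound $\sum_{m\leq M}|T_m|^4\ll q^{1+\eps}M^{1/3}N^{7/3}$. But your plan for proving that bound has a genuine gap. Even granting $|W|\ll q^{3/2+\eps}$ for your three-variable complete sum (which, incidentally, is not the sum the paper analyses and would require its own sheaf-theoretic treatment), the reassembly you describe --- summing over $n_3,n_4\in\mcN$ trivially and over $(h,k_1,k_2)$ via $\sum_h|\widehat{1_{[1,M]}}(h)|\ll q\log q$, $\sum_{k_i}|\widehat{1_{\mcN}}(k_i)|\ll q\log q$ --- gives only
\[
\sum_{m\leq M}|T_m|^4 \;\ll\; q^{-3}\cdot N^2\cdot(q\log q)^3\cdot q^{3/2} \;=\; N^2q^{3/2}(\log q)^3,
\]
which at $M=N=q^{1/2}$ is $q^{5/2+\eps}$, i.e.\ the trivial bound, whereas the target is $q^{7/3+\eps}$. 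The underlying obstruction is that P\'olya--Vinogradov completion in a variable of length $L$ gains nothing once $L\leq q^{1/2}$; in the critical range $M,N\approx q^{1/2}$ your three completions recover precisely what they cost, and no choice of which $n_i$ to complete rescues the numerology.

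The missing ingredient is the Fouvry--Michel ``shift by $ab$'' device (after Vinogradov--Karatsuba). One inserts an average over $A<a\leq 2A$, $B<b\leq 2B$ with $AB\leq N$ and $AM<q$, writes $m(n+ab)=am\cdot(\bar a n+b)$, and changes variables to $s=am$, $r\equiv\bar a n\pmod q$. H\"older's inequality with exponent $4$ then expands over the \emph{short} box $\bfb\in(B,2B]^4$, and the sum to control becomes
$\Sigma(K_c,\bfb;AM)=\sum_{r\bmod q}\sum_{1\leq s\leq 2AM}\prod_iK_c(s(r+b_i))^{\pm 1}$,
in which $r$ is \emph{already complete}. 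Only the single short variable $s$ is now completed, yielding a \emph{two}-variable complete sum $\hat\Sigma(K,\bfb,\lambda)=\sum_{r,s}\psi(\lambda s)\prod_i K(s(r+b_i))^{\pm 1}$. The paper's deep input (Theorem~\ref{thm1Klk}, resting on the sum-product sheaf analysis of Section~\ref{sec-irreducible}) is the bound $\hat\Sigma\ll q$ for $\bfb$ outside a codimension-one subvariety $\mcV^{bad}\subset\Aa^4$; the specific additive structure $s(r+b_i)$, with a single $r$ translating all four arguments simultaneously, is exactly what makes the geometric irreducibility argument go through. The bad-locus tuples number only $O(B^3)$ and are handled by the one-variable bound of Lemma~\ref{lemonevariableI}; the choice $A=M^{-1/3}N^{2/3}$, $B=(MN)^{1/3}$ then yields \eqref{typeIgen}.
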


\begin{remark}
  (1) A trivial bound in that case is
  $\|\uple{\alpha}\|_1^{1/2}\|\uple{\alpha}\|_2^{1/2}M^{1/4}N$, which
  explains why we stated the result in this manner. When $M=N$, we see
  that our bound \eqref{typeIgen} is non-trivial essentially when
  $M=N\geq q^{3/7}$, which goes even more significantly below the
  Fourier-theoretic range. In the special case $M=N=q^{1/2}$, the
  saving is $q^{-1/24+\eps}$.
\par
(2) For $k=2$, a slightly stronger result is proved by Blomer, Fouvry,
Kowalski, Michel and Mili{\'c}evi{\'c}~\cite[Prop. 3.1]{BFKMM}. This
builds on a method of Fouvry and Michel~\cite[\S VII]{FoMi}, which is
also the basic starting point of the analysis in this paper.
\par
(3) If $\uple{\alpha}$ and $\uple{\beta}$ are both characteristic
functions of intervals, a stronger result is proved by Fouvry,
Kowalski and Michel in~\cite[Th. 1.16]{FKM2} for a much more general
class of summands $K$, namely the trace functions of arbitrary
geometrically isotypic Fourier sheaves, with an implied constant
depending then on the conductor of these sheaves (for $M=N$, it is
enough there that $MN\geq q^{3/8}$, and for $M=N=q^{1/2}$, the saving
is $q^{-1/16+\eps}$).
\end{remark}

%At the moment, we can only handle the general bilinear form below the
%P\'olya-Vinogradov range for one-variable Kloosterman sums.

\subsection{Application 1: moments of twisted $L$-functions} 

Let $f$ and $g$ be fixed Hecke-eigenforms (of level $1$ say). 
A long-standing problem is the evaluation with power-saving error term
of the average
$$
\frac{1}{\vphi(q)}\sum_{\chi\mods{q}}L(f\otimes\chi,1/2)L(g\otimes\ov\chi,1/2),
$$
where $\chi$ runs over Dirichlet characters of prime conductor
$q$. When $f$ and $g$ are non-holomorphic Eisenstein series, the
problem becomes that of evaluating the fourth moment of Dirichlet
$L$-series at $1/2$. This was studied, for instance, by
Heath-Brown~\cite{HBfourth} and by Soundararajan~\cite{Sound}, and it
was solved by Young~\cite{Young}. For $f$ and $g$ cuspidal, this
question was studied by Gao, Khan and Ricotta \cite{GKR} and, with
different methods, by Hoffstein and Lee \cite{HL}.  Recently, the
problem was revisited in full generality by Blomer and
Mili{\'c}evi{\'c}~\cite{BlMi} and by Blomer, Fouvry, Kowalski, Michel
and Mili{\'c}evi{\'c}~\cite{BFKMM}. This last work solved the problem
when one of the two forms is non-cuspidal.  The general bilinear bound
of Theorem \ref{thmtypeII} (for $k=2$) is the final ingredient to the
resolution of this problem in the case where $f$ and $g$ are cuspidal.

\begin{theorem}[Moments of twisted cuspidal
  $L$-functions]\label{th-moments} Let $q$ be a prime
  number.  Let $f,g$ be cuspidal Hecke eigenforms (holomorphic forms
  or Maass forms) of level $1$ with respective root numbers $\eps(f)$
  and $\eps(g)$ (equal to $\pm 1$). If $f$ and $g$ are either both
  holomorphic forms or both Maass forms, assume also that
  $\eps(f)\eps(g)=1$.
\par
Let $\delta< \rpfree$. If $f\not=g$, we have
\begin{equation}\label{eqmixedmoment}
  \frac{1}{\vphi(q)}\sum_{\chi\mods q }L(f\otimes\chi,1/2)
  \overline{L(g\otimes\chi,1/2)}=\frac{2L(f\otimes
    g,1)}{\zeta(2)}+O(q^{-\delta}),	
\end{equation}
where $L(f\otimes g,1)\not=0$ is the value at $1$ of the
Rankin-Selberg convolution of $f$ and $g$, and the implied constant
depends only on $f$, $g$ and $\delta$.
\par
If $f=g$, then there exists a constant $\beta_f\in\Cc$ such that we
have
\begin{equation}\label{eqsecondmoment}
  \frac{1}{\vphi(q)}\sum_{\chi\mods q}|L(f\otimes\chi,1/2)|^2= \frac{2 L(
    \mathrm{sym}^2f,1)}{\zeta(2)}(\log
  q)+\beta_f+O(q^{-\delta}),
\end{equation}
where $L(\mathrm{sym}^2f,s)$ denotes the symmetric square $L$-function
of $f$, and the the implied constant depends only on $f$ and $\delta$.
\end{theorem}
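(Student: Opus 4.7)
The plan is to follow the strategy of Blomer, Fouvry, Kowalski, Michel and Mili\'cevi\'c~\cite{BFKMM}, in which the case of one non-cuspidal form was handled by exploiting the quasi-factorization of divisor-type Fourier coefficients to reduce to a specialized bilinear form. In the fully cuspidal case no such factorization is available, and the general bilinear bound of Theorem~\ref{thmtypeII} with $k=2$ furnishes exactly the missing input.

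First, I apply an approximate functional equation to each of $L(f\otimes\chi,1/2)$ and $L(g\otimes\chi,1/2)$, each of conductor $q^{2}$, so that the effective length in each half is $\sim q$. The ``dual'' half carries a root number $\eps(f\otimes\chi)=\eps(f)\tau(\chi)^{2}/q$, and analogously for $g$, since $f,g$ have level $1$ and trivial nebentypus. Expanding the product and averaging over $\chi\mods{q}$ by Dirichlet orthogonality splits the moment into four contributions $\Sigma_{++},\Sigma_{+-},\Sigma_{-+},\Sigma_{--}$, according to which halves are dualized.

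The aligned contributions $\Sigma_{++}$ and $\Sigma_{--}$ (the latter after using $|\tau(\chi)|^{2}=q$) reduce to sums supported on pairs $(m,n)$ with $m\equiv n\mods{q}$. The diagonal $m=n$ produces, via a standard Mellin--contour analysis, the Rankin--Selberg main term $2L(f\otimes g,1)/\zeta(2)$ when $f\neq g$, and the main term $\tfrac{2L(\mathrm{sym}^{2}f,1)}{\zeta(2)}\log q+\beta_{f}$ when $f=g$. The off-diagonal part ($m\equiv n\mods{q}$, $m\neq n$) is a shifted convolution sum, treated by the spectral/$\delta$-method machinery of~\cite{BFKMM}.

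The cross terms $\Sigma_{+-}$ and $\Sigma_{-+}$ each carry a single Gauss-sum factor $\tau(\chi)^{2}$; opening it and summing over $\chi$ produces the classical Kloosterman sum, and after smooth dyadic decomposition each piece takes the shape
\[
\frac{q^{1/2}}{\vphi(q)}\sum_{m}\sum_{n}\lf(m)\,\ov{\lamg(n)}\,\Kl_{2}(c\,mn;q)\,W\!\left(\frac{m}{M},\frac{n}{N}\right),
\]
for some $c$ coprime to $q$ depending on $\eps(f)\eps(g)$, with $M,N\leq q^{1/2+\eps}$ and $MN\leq q^{1+\eps}$. To each dyadic block I apply Theorem~\ref{thmtypeII} (or Theorem~\ref{thmtypeI} in the most unbalanced ranges, after partial summation in the long variable), combined with the Rankin--Selberg $\ell^{2}$ bounds $\|\lf\|_{2}^{2}\ll_{f}Mq^{\eps}$ and $\|\lamg\|_{2}^{2}\ll_{g}Nq^{\eps}$. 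At the balanced point $M=N\sim q^{1/2}$ this gains a factor $q^{-1/64+o(1)}$ over the trivial bound, and more in unbalanced regimes.

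The main obstacle is to propagate a \emph{uniform} power saving across all admissible dyadic blocks $(M,N)$: one must interpolate between Theorems~\ref{thmtypeII} and~\ref{thmtypeI}, falling back on the P\'olya--Vinogradov bound~\eqref{PVII} at the edges of the range, and then optimize against the losses coming from the length of the approximate functional equation, from the normalizing factor $q^{1/2}/\vphi(q)$ in the cross term, and from the error in the shifted convolution problem attached to the aligned contributions. Balancing all of these savings simultaneously is what yields the final admissible exponent $\delta<\rpfree$.
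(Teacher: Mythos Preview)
Your proposal is correct and follows essentially the same route as the paper: the paper's proof is a one-line citation to~\cite{BFKMM}, where the entire reduction you sketch (approximate functional equation, the four contributions, diagonal main term plus shifted convolution for the aligned pieces, and Kloosterman bilinear forms for the cross terms) was already carried out conditionally, with the only missing input being exactly the case $k=2$, $c=1$ of Theorem~\ref{thmtypeII}. Your write-up simply unpacks what~\cite{BFKMM} does; note in particular that only Theorem~\ref{thmtypeII} is actually required (Theorem~\ref{thmtypeI} is not needed here), and the final optimization yielding $\delta<\rpfree$ is done in~\cite[\S 7.2]{BFKMM}.
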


\begin{proof}
  In~\cite[\S 7.2]{BFKMM}, Theorem~\ref{th-moments} (which is Theorem
  1.3 in loc. cit.) was shown to follow from a certain bound on a
  bilinear sum of Kloosterman sums (cf. the statement
  of~\cite[Prop. 3.1]{BFKMM}.)  That bound is exactly the case $k=2$
  and $c=1$ of Theorem~\ref{thmtypeII}.
\end{proof}

\begin{remark}
  (1) The assumption on the root number in Theorem~\ref{th-moments} is
  necessary, since otherwise the special values vanish and the sums
  are identically $0$.
\par
(2) 
It is well-established that an asymptotic formula with a power saving
error term for some moment in a family of $L$-functions typically
implies the possibility of evaluating asymptotically some additional
``twisted'' moments, in this case those of the shape
$$
\frac{1}{\vphi(q)}\sum_{\chi\mods q}L(f\otimes\chi,1/2)
\overline{L(g\otimes\chi,1/2)}\chi(\ell/\ell'),
$$
where $1\leq \ell,\ell'\leq L$ are coprime integers which are also
coprime with $q$ and $L=q^\eta$ for some fixed absolute constant
$\eta>0$.

Using such a formula for $f=g$, we may apply the \emph{mollification
  method} and the \emph{resonance method}, and obtain further results
on the special values for this family of $L$-functions (estimates for
the distribution of the order of vanishing at $s=1/2$, existence of
large values, for instance).  This will be taken up in the forthcoming
paper \cite{2for6} jointly with Blomer, Fouvry and Mili{\'c}evi{\'c}.
\end{remark}

\subsection{Application 2: arithmetic functions in arithmetic
  progressions}

In our second application, we use the bound for special bilinear forms
when $K=\Kl_3$ to study the distribution in arithmetic progressions to
large moduli of certain arithmetic functions which are closely related
to the triple divisor function.

\begin{theorem}\label{th-lfone}
  Let $f$ be a holomorphic primitive cusp form of level $1$ with Hecke
  eigenvalues $\lf(n)$, normalized so that $|\lf(n)|\leq d_2(n)$.
\par
For $n\geq 1$, let 
$$
(\lf\star 1)(n)=\sum_{d\mid n}\lf(d).
$$
\par
For $x\geq 2$, for any $\eta<1/102$, for any prime
$q\leq x^{1/2+\eta}$, for any integer $a$ coprime to $q$ and for any
$A\geq 1$, we have
$$
\sum_{\stacksum{n\leq x}{n\equiv a\mods{q}}} (\lf\star
1)(n)-\frac{1}{\varphi(q)} \sum_\stacksum{n\leq x}{(n,q)=1}(\lf\star
1)(n)\ll \frac{x}q(\log x)^{-A}
$$
where the implied constant depends only on $(f,\eta,A)$.
\end{theorem}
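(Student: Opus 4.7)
The plan is to derive Theorem~\ref{th-lfone} from Theorem~\ref{thmtypeI} with $k=3$, via a Fourier-theoretic reduction based on $\GL_3$ Voronoi summation. After a smooth dyadic partition in $n$, it suffices to control, for each dyadic $N\leq x$, the quantity
$$
\Delta(N)=\sum_{n\equiv a\,(q)}V(n/N)(\lf\star 1)(n)-\frac{1}{\vphi(q)}\sum_{(n,q)=1}V(n/N)(\lf\star 1)(n),
$$
where $V$ is a fixed smooth bump function supported in $[1,2]$. Detecting the congruence $n\equiv a\pmod q$ by additive characters and isolating the $h=0$ contribution as the main term yields
$$
\Delta(N)=\frac{1}{q}\sum_{1\leq h<q}e(-ah/q)\,S_h(N),\qquad S_h(N)=\sum_n V(n/N)(\lf\star 1)(n)\,e(hn/q).
$$

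Since $L(s,f)\zeta(s)=\sum_n(\lf\star 1)(n)n^{-s}$ is the standard $L$-function of an Eisenstein-Hecke form on $\GL_3$, the associated $\GL_3$ Voronoi summation formula is available. For $(h,q)=1$ and $q$ prime, it converts $S_h(N)$, up to a negligible tail, into a dual sum of the shape
$$
S_h(N)\approx\frac{N}{q}\sum_m(\lf\star 1)(m)\,\Kl_3(\overline{h}\,m;q)\,\widetilde V(mN/q^3),
$$
where $\widetilde V$ is a Mellin-type integral transform of $V$ with rapid decay past $mN\gg q^{3+\eps}$, so the effective dual length is $M^{*}\asymp q^3/N$. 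Substituting into $\Delta(N)$ and performing the $h$-sum collapses (up to a bounded constant) to a single copy of $\sum_m(\lf\star 1)(m)\,[\times c]^{*}\Kl_3(m;q)\widetilde V(mN/q^3)$ for some $c=c(a)\in(\Zz/q\Zz)^{\times}$. To exhibit a Type~I structure, unfold $(\lf\star 1)(m)=\sum_{d\mid m}\lf(d)$, write $m=d\ell$, and dyadically decompose $d\sim D$, $\ell\sim L$ with $DL\asymp M^{*}$. After removing the smooth $\ell$-weight by partial summation, each block takes the form
$$
\sum_{d\sim D}\lf(d)\sum_{\ell\in\mcN}[\times c]^{*}\Kl_3(d\ell;q),\qquad|\mcN|\asymp L,
$$
to which Theorem~\ref{thmtypeI} applies with $\uple{\alpha}=(\lf(d))_{d\sim D}$. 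Invoking $|\lf(d)|\leq d_2(d)$, we obtain a bound
$$
\ll q^\eps D^{1/4}L\Bigl(\frac{D^2L^5}{q^3}\Bigr)^{-1/12}
$$
whenever $D\leq L^2$, $L<q$, and $DL<q^{3/2}$; dyadic blocks outside this range are controlled by the P\'olya--Vinogradov-type estimate~\eqref{PVII} or by a trivial bound.

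The hard part is the final balancing of exponents. Summing the error over all dyadic $D,L,N$ with $N\leq x$, one needs the total to be $\ll x/(q(\log x)^A)$ whenever $q\leq x^{1/2+\eta}$. The decisive range is $N\asymp x$, where $M^{*}\asymp q^3/x\asymp x^{1+3\eta}$ and the savings factor $(D^2L^5/q^3)^{1/12}$ supplied by Theorem~\ref{thmtypeI} must just beat the $q$-loss from the additive-character unfolding together with the outer $1/q$ factor. Optimizing the dyadic split between $D$ and $L$, and treating the boundary blocks with~\eqref{PVII}, pins down the admissible exponent $\eta<1/102$. The remaining routine matters---rapid decay of the Voronoi tail, the $h=0$ main-term contribution, and coprimality conditions---are standard and are absorbed into the $(\log x)^{-A}$ saving.
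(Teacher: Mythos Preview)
Your overall strategy---reduce via Voronoi to bilinear forms in $\Kl_3$ and then invoke Theorem~\ref{thmtypeI}---is correct, and is close in spirit to the paper's argument. But two concrete steps are wrong or incomplete.

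First, the $\GL_3$ Voronoi formula applied to $\sum_n A(1,n)e(hn/q)V(n/N)$ does \emph{not} produce $\Kl_3$ on the dual side: for prime $q$ it yields classical Kloosterman sums $S(\bar h,m;q)$, i.e.\ $\Kl_2$. The hyper-Kloosterman sum only appears after you carry out the $h$-average, via the identity $\sum_{h}e(-ah/q)S(\bar h,m;q)=q\,\Kl_3(-am;q)$. Your two-step claim (``Voronoi gives $\Kl_3$'', then ``the $h$-sum collapses'') is therefore misstated; if one followed your displayed formula literally, the $h$-sum of $\Kl_3(\bar hm;q)$ would produce $q^{1/2}\Kl_4(-am;q)$, not $\Kl_3$. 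The paper avoids this detour entirely: it unfolds $(\lf\star 1)(n)=\sum_{mk=n}\lf(m)$ \emph{first}, then applies $\GL_2$ Voronoi to the cusp-form variable and Poisson to the remaining smooth variable (Proposition~\ref{fcteqn}). This directly yields the bilinear structure $\sum_{m,n}\lf(m)\Kl_3(amn;q)V^*(m)W^*(n)$ with dual lengths $M^*=q^2/M$, $N^*=q/N$, and no $h$-sum is ever introduced.

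Second, the endgame cannot be closed using only Theorem~\ref{thmtypeI} and the P\'olya--Vinogradov bound~\eqref{PVII}. The paper's case analysis (Proposition~\ref{CombinedTheorem}) employs four distinct estimates, and in particular the bound~\eqref{FKM1} of~\cite{FKM1} for $\sum_m\lf(m)V(m/M)\Kl_3(am;q)$ is indispensable in the range where the smooth variable $N^*$ is very short (roughly $N^*\leq q^{2\delta}$): here Theorem~\ref{thmtypeI} fails its hypothesis $M^*\leq (N^*)^2$ and~\eqref{PVII} gives nothing since $N^*\ll q^{1/2}$. Without~\eqref{FKM1} the exponent $\eta<1/102$ is not attainable.
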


When $f$ is replaced by a specific non-holomorphic Eisenstein series,
we obtain as coefficients $(\lf\star 1)(n)=(d_2\star 1)(n)=d_3(n)$,
the triple (or ternary) divisor function. In that case, a result with
exponent of distribution $>1/2$ as above was first obtained (for
general moduli) by Friedlander and Iwaniec~\cite{FrIw}. This was
subsequently improved by Heath-Brown~\cite{HB} and more recently (for
prime moduli) by Fouvry, Kowalski and Michel~\cite{FKM3}.
\par
The approach of~\cite{FKM3} relied ultimately on bounds for the
bilinear sums $B(\Kl_3,\uple{\alpha},\uple{\beta})$ when \emph{both}
sequences $\uple{\alpha}$ and $\uple{\beta}$ are smooth. Indeed, as
already recalled, a very general estimate for
$B(K,\uple{\alpha},\uple{\beta})$ was proved in that case
in~\cite{FKM2}. Here, in the cuspidal case, the splitting
$d_2(n)=(1\star 1)(n)$ is not available and we need instead a bound
where only one sequence is smooth, which is given by
Theorem~\ref{thmtypeI} (we could of course also use
Theorem~\ref{thmtypeII}, with a slightly weaker result).
\par

The functions $n\mapsto d_3(n)=(1\star 1\star 1)(n)$ and
$n\mapsto (\lf\star 1)(n)$ are the Hecke eigenvalues of certain
non-cuspidal automorphic representation of $\GL_{3,\Qq}$, namely the
isobaric representations $1\boxplus 1\boxplus 1$ and
$\pi_f\boxplus 1$. The methods of \cite{FrIw,HB,FKM3} and of the
present paper can be generalized straightforwardly to show that the
$n$-th Hecke eigenvalue function of any fixed non-cuspidal automorphic
representation of $\GL_{3,\Qq}$ has exponent of distribution $>1/2$,
for individual prime moduli. Extending this further to cuspidal
$\GL_{3,\Qq}$-representation is a natural and interesting challenge.
\par
Theorem~\ref{th-lfone} is proved in section~\ref{secternary}.

\subsection{Further developments}\label{sec-extensions}

We describe here some possible extensions of our results, which will
be the subject of future papers.

\subsubsection{Extension to other trace functions}

A natural problem is to try to extend Theorems~\ref{thmtypeII}
and~\ref{thmtypeI} to more general trace functions $K$. In
\cite{FoMi}, Fouvry and Michel derived non-trivial bounds as in
Theorem~\ref{thmtypeI} (type I sums) when $\Kl_k$ is replaced by a
\emph{rational phase function} of the type
$$
K_f(n)=
\begin{cases}e_q(f(n))&\text{ if $n$ is not a pole of $f$}\\
0&\text{ otherwise},
\end{cases}
$$
where $q$ is prime, $e_q(x)=\exp(2\pi i\frac{x}q)$ and $f\in\Fq(X)$ is
some rational function which is not a polynomial of degree $\leq 2$.
They proved bounds similar to Theorem~\ref{thmtypeII} (type II sums)
for $K$ given by a \emph{quasi-monomial} phase, defined as above with
$$
f=aX^d+bX
$$
for some $a,b\in\Fq$, $a\not=0$ and $d\in\Zz-\{0,1,2\}$. While both
cases relied on arguments from algebraic geometry, they were
different, and far simpler, than those involved in the present work.

It is plausible that the methods developed in the present paper would
allow for an extension of Theorems \ref{thmtypeI} and \ref{thmtypeII}
to many of the families of exponential sums studied in great details
in the books of Katz (in particular in~\cite{GKM,ESDE}). % Examples of
% interest are
% \begin{itemize}
% \item[-] Fourier transforms of rational phases
% $$
% \what{K_f}(n)=\frac{1}{q^{1/2}}\sum_{x\in\Fq}e_q(f(x)+nx),
% $$
% \item[-] Fourier transforms of ``counting functions'' of non-constant
%   rational maps, i.e. functions of the form
% $$
% K(n)=\frac{1}{q^{1/2}}\sum_\stacksum{x\in\Fq}{f(x)\not=\infty}e_q(nf(x)),
% \quad f\in\Fq(X ).
% $$
% \item[-] Pull-back of the above by non-constant rational maps
% $$
% g^*K(n):=K(g(n)),\quad\text{for some}\quad g\in\Fq(X).
% $$	
% \end{itemize}
% \par
Other potentially interesting variants that could be treated by the
methods presented here are bilinear sums of the shape
$$
\sum_{m,n}\alpha_m\beta_n K((m^dn)^{\pm 1}),\ d{\geq 1}\text{ fixed}.
$$ 
Again the case where $K$ is a hyper-Kloosterman sum (possibly
including multiplicative characters) seem particularly interesting for
number theoretic applications (see the recent work of
Nunes~\cite{Nunes}, for instance).

\subsubsection{Extension to composite moduli}

In this paper, we have focused our attention on bilinear forms
associated to functions $K$ which are periodic modulo a prime
$q$. This is in some sense the hardest case, but nevertheless it would
be very useful for many applications to have bounds similar to those
of Theorems~\ref{thmtypeI} and~\ref{thmtypeII} when the modulus $q$ is
arbitrary, or at least squarefree.

For instance, Blomer and Mili{\'c}evi{\'c}~\cite[Thm 1]{BlMi} proved the
analogue of the asymptotic formula in Theorem~\ref{th-moments} with
power saving error term when the modulus $q$ admits a factorization
$q=q_1q_2$ where $q_1$ and $q_2$ are neither close to $1$ (excluding
therefore the case when $q$ is prime, which is now solved by Theorem
\ref{th-moments}) nor to $q^{1/2}$. This excludes the case when $q$ is
a product of two distinct primes which are close to each other; it
would be possible to treat this using if a version of
Theorem~\ref{thmtypeII} for composite moduli was available.

Another direct application would be a version of
Theorem~\ref{th-lfone} for general moduli $q$. This would immediately
imply the following shifted convolution bound: there exists a constant
$\delta>0$, independent of $f$ and $h$, such that for all $N\geq 1$
and $h\geq 0$, we have
$$
\sum_{n\leq N}(\lf\star 1)(n)d_2(n+h)\ll_{f} N^{1-\delta}
$$
where the implied constant is independent of $h$. We refer to the
works of Munshi~\cite{Mu1,Mu2} and Topacogullari~\cite{Topa} for
related results.

Other potential applications are to problems involving the
Petersson-Kuznetsov trace formula (the first of the three items listed
in the beginning of this introduction) as well as to the study of
arithmetic functions (like the primes) in arithmetic progressions to
large moduli, as suggested by Theorem~\ref{th-lfone}.

\subsection{Structure of the proofs}

We now discuss the essential features of the proofs of our bounds for
bilinear sums, in the more difficult case of general coefficients
$\uple{\alpha}$ and $\uple{\beta}$.  Several aspects of the proof are
not specific to the case of hyper-Kloosterman sums. In view of
possible extensions to new cases, we describe the various steps in a
general setting and indicate those which are currently restricted to
the case of hyper-Kloosterman sums.

Let $q$ be a prime, and let $K$ be the $q$-periodic trace function of
some $\ell$-adic sheaf $\mcF$ on $\Aa^1_\Fq$, which we assume to be a
middle-extension pure of weight $0$, geometrically irreducible and of
conductor $\cond(\mcF)$. We think of $q$ varying, while the conductor
$\cond(\mcF)$ is bounded independently of $q$ (for the case of
hyper-Kloosterman sums, the sheaf $\mcF=\Kl_k$ is the Kloosterman
sheaf, defined by Deligne and studied by Katz~\cite{GKM}). We denote
by $\psi$ a fixed non-trivial additive character of $\Fq$.
\par
The problem of bounding the general bilinear sums
$B(K,\uple{\alpha},\uple{\beta})$, with non-trivial bounds slightly
below the Fourier-theoretic range, can be handled by the following
steps.
\par
\smallskip
\par
(1) We consider auxiliary functions $\bfK$ and $\bfR$, of the ``sum of
product'' type, defined by
$$ 
\bfK(r,s,\lambda,\bfb)=\psi(\lambda s)\prod_{i=1}^2 K( s (r+b_i ))
\overline{K(s (r+b_{i+2}))}
$$ 
and
$$%%\begin{equation}\label{Rsum}
  \bfR(r,\lambda,\bfb):=\sum_{s\in\Fq}\bfK(r,s,\lambda,\bfb),
$$%%\end{equation}
where $r$, $s$ and $\lambda$ are in $\Fq$, and
$\bfb=(b_1,b_2,b_3,b_4)\in\Ff_q^4$. 
\par
Building on methods developed in~\cite{FoMi} (also inspired by the
work of Friedlander-Iwaniec~\cite{FrIw} and the ``shift by $ab$''
trick of Vinogradov and Karatsuba), we reduce the problem in
Section~\ref{sec-incomplete} to that of obtaining square-root
cancellation bounds for two complete exponential sums involving $\bfK$
and $\bfR$. Precisely, we need to obtain bounds of the type
\begin{equation}\label{Ssum}
\sum_{r\mods q}\bfK(r,s,0,\bfb)\ll q^{1/2},
\end{equation}
for $s\in\Fqt$, as well as \emph{generic} bounds
\begin{align}
\label{Rsumbound}
\sum_{r\mods q}\bfR(r,\lambda,\bfb)&\ll q,\\
\label{correlationsumbound}
%% \mcC(\lambda,\lambda',\bfb)=
\sum_{r\mods q}\bfR(r,\lambda,\bfb)\ov{\bfR(r,\lambda',\bfb)}
&=\delta(\lambda,\lambda')q^2+O(q^{3/2}).
\end{align}
\par
Here, ``generic'' means that the bounds should hold for every
$\lambda\in\Fq$ provided $\bfb$ does not belong to some proper
subvariety of $\Aa^4$. Of course, the implied constants in all these
estimates must be controlled by the conductor of $\mcF$, but this can
be achieved relatively easily in all cases using general arguments to
bound suitable Betti numbers independently of $q$.
\par
We will obtain the bounds~(\ref{Ssum}),~(\ref{Rsumbound})
and~(\ref{correlationsumbound}) from Deligne's general form of the
Riemann Hypothesis over finite fields~\cite{WeilII}. A crucial feature
is that we can interpret the functions $\bfK$ and $\bfR$ themselves as
trace functions of suitable $\ell$-adic sheaves denoted $\mcK$ (on
$\Aa^7$) and $\mcR$ (on $\Aa^6$) respectively. We call the latter
% (which play the most important role)
the \emph{sum-product transform sheaf} associated to the input sheaf
$\mcF$, to emphasize the structure of its trace functions and the
``$+ab$'' trick.
\par
Using the Grothendieck--Lefschetz trace formula and Deligne's form of
the Riemann Hypothesis, we see that the bounds will result if we can
show the following properties of these sheaves:
\begin{itemize}
\item The sheaf representing $r\mapsto \bfK(r,s,0,\bfb)$ is
  geometrically irreducible and geometrically non-trivial;
%%  Jordan-Hölder component;
\item The sheaf $\mcR_{\lambda,\bfb}$ with trace function
  $r\mapsto \bfR(r,\lambda,\bfb)$ is geometrically irreducible, and
  $\mcR_{\lambda,\bfb}$ is not geometrically isomorphic to
  $\mcR_{\lambda',\bfb}$ if $\lambda'\not=\lambda$.
\end{itemize}
\par
This is a natural and well-established approach, but the
implementation of this strategy will require very delicate geometric
analysis of the $\ell$-adic sheaves involved.
\par
\smallskip
\par
(2) The first bound~\eqref{Ssum} is proved in great generality in
Section~\ref{sec-complete} using the ideas of Katz around the
Goursat-Kolchin-Ribet criterion (see~\cite[Prop. 1.8.2]{ESDE})
following the general discussion of sums of products by Fouvry,
Kowalski and Michel in~\cite{FKMSP}. Indeed, it is sufficient that the
original sheaf $\mcF$ with trace function $K$ be a ``bountiful'' sheaf
in the sense of \cite[Def. 1.2]{FKMSP}, a class that contains many
interesting sheaves in analytic number theory (in particular,
Kloosterman sheaves).
\par
\smallskip
\par
(3) To prove that the sheaf representing $r\mapsto
\bfR(r,\lambda,\bfb)$ is geometrically irreducible is much more
involved. As a first step, we prove (also in
Section~\ref{sec-complete}) a weaker generic irreducibility property,
where both $\bfb$ and $\lambda$ are variables. Indeed, using Katz's
diophantine criterion for irreducibility~\cite[\S 7]{rigid}), it
suffices to evaluate asymptotically the second moment of the relevant
trace function over all finite extensions $\Fqd$ of $\Fq$, and to
prove that
\begin{gather*}
  \frac{1}{(q^d)^{5}}\sum_{(r,\bfb)\in\Ff_{q^d}^5}
  |\bfR(r,0,\bfb;\Fqd)|^2=q^d(1+o(1)),\\
  \frac{1}{(q^d)^2}\sum_{(r,\lambda)\in\Ff_{q^d}^2}
  |\bfR(r,\lambda,\bfb;\Fqd)|^2=q^d(1+o(1))),
\end{gather*}
as $d\ra +\infty$.  Again, the methods are those of~\cite{FKMSP} and
require only that $\mcF$ be a bountiful sheaf.
\par
\smallskip
\par
(5) The next and final step is the crucial one, and is the deepest
part of this work. In the very long Section~\ref{sec-irreducible}, we
show that one can ``upgrade'' the generic irreducibility of $\mcR$
from the previous step to \emph{pointwise} irreducibility of the sheaf
deduced from $\mcR$ by fixing the values of $\lambda$ and $\bfb$,
where only $\bfb$ is required to be outside some exceptional set.
This step uses such tools as Deligne's semicontinuity theorem and
vanishing cycles. It requires quite precise information on the
ramification properties of $\mcK$ and $\mcR$. At this stage, we need
to build on the precise knowledge of the local monodromy of
Kloosterman sheaves $\KL_k$, which is again due to Katz~\cite{GKM}.
We will give some indications of the ideas involved in
Section~\ref{sec-irreducible}.

\subsection*{Notation} 

We write $\delta(x,y)$ for the Kronecker delta symbol.

For any prime number $\ell$, we assume fixed an isomorphism
$\iota\,:\, \bQl\to \Cc$.  Let $q$ be a prime number. Given an
algebraic variety $X_{\Fq}$, a prime $\ell\not=q$ and a constructible
$\bQl$-sheaf $\mcF$ on $X$, we denote by $t_{\mcF}\,:\, X(\Fq)\lra
\Cc$ its trace function, defined by
$$
t_{\mcF}(x)=\iota(\Tr(\frob_{x,\Fq}\mid \mcF_{x})),
$$
where $\mcF_x$ denotes the stalk of $\mcF$ at $x$. More generally, for
any finite extension $\Fqd/\Fq$, we denote by $t_{\mcF}(\cdot;\Fqd)$
the trace function of $\mcF$ over $\Fqd$, namely
$$
t_\mcF(x;\Fqd)=\iota(\tr(\Frob_{x,\Fqd}\mid \mcF_x)).
$$
\par
We will usually omit writing $\iota$; in any expression where some
element $z$ of $\bQl$ has to be interpreted as a complex number, we
mean to consider $\iota(z)$.
\par
We denote by $\cond(\mcF)$ the conductor of a constructible
$\ell$-adic sheaf $\sheaf{F}$ on $\Aa^1_{\Fq}$ as defined
in~\cite{FKM1} (with adaptation to deal with sheaves which may not be
middle-extensions). Recall that this is the non-negative integer given
by
$$
\cond(\mcF)=\rank(\mcF)+|\Sing(\mcF)|+\sum_{x\in
  \Sing(\mcF)}\swan_x(\mcF)+ \dim H^0_c(\Aa^1_{\bFq},\sheaf{F}),
$$
where $\Sing(\mcF)\subset \Pp^1(\bFq)$ is the set of ramification
points of $\mcF$ and $\swan_x(\mcF)$ is the Swan conductor at $x$.

% We usually denote by $\sheaf{F}\simeq \sheaf{G}$ a \emph{geometric}
% isomorphism of sheaves over an algebraic variety over a finite
% field. When an arithmetic isomorphism is meant, this will be stated
% explicitly.

For convenience, we recall the general version of the Riemann
Hypothesis over finite fields that will be the source of our
estimates.

\begin{proposition}\label{pr-recall-rh}
  Let $\Fq$ be a finite field with $q$ elements. Let $\mcF$ and $\mcG$
  be constructible $\ell$-adic sheaves on $\Aa^1_{\Fq}$ which are
  geometrically irreducible, mixed of weights $\leq 0$ and pointwise
  pure of weight $0$ on a dense open subset. We have
$$
\sum_{x\in\Fq} t_{\mcF}(x;\Fq)\overline{t_{\mcG}(x;\Fq)}\ll 
\sqrt{q}
$$
unless $\mcF$ is geometrically isomorphic to $\mcG$, and
$$
\sum_{x\in\Fq} |t_{\mcF}(x;\Fq)|^2=q+O(\sqrt{q}).
$$
\par
The implied constants depend only on the conductors of $\mcF$ and
$\mcG$.
\end{proposition}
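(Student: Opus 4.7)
The plan is to derive both statements from the Grothendieck--Lefschetz trace formula applied to the tensor product $\mcH := \mcF\otimes D(\mcG)$, combined with Deligne's Riemann Hypothesis from Weil~II and a Schur-type analysis of the top cohomology. I would proceed as follows.

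First I would reduce the sum $\sum_{x\in\Fq}t_\mcF(x)\overline{t_\mcG(x)}$ to a sum of a single trace function. Let $U\subset\Aa^1_{\Fq}$ be a dense open on which both $\mcF$ and $\mcG$ are lisse and pointwise pure of weight $0$. On such points, Frobenius acts with eigenvalues of absolute value $1$, so $\overline{t_\mcG(x)}=t_{D\mcG}(x)$, and hence $t_\mcF(x)\overline{t_\mcG(x)}=t_\mcH(x)$. The finitely many points of $\Aa^1_{\Fq}\setminus U$ contribute at most $|\Aa^1\setminus U|\cdot\rk(\mcF)\cdot\rk(\mcG)$, which is bounded purely in terms of the conductors and is absorbed in the error term.

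Next I would apply Grothendieck--Lefschetz to $\mcH$ on $\Aa^1_{\bFq}$:
\[
\sum_{x\in\Fq}t_\mcH(x;\Fq)=\sum_{i=0}^{2}(-1)^i\tr(\Fr_q\mid H^i_c(\Aa^1_{\bFq},\mcH)).
\]
Since $\mcF$ and $\mcG$ are mixed of weights $\leq 0$ and generically pure of weight $0$, $\mcH$ is mixed of weights $\leq 0$. Deligne's main theorem in Weil~II then gives that $H^i_c(\Aa^1_{\bFq},\mcH)$ is mixed of weights $\leq i$, so the $H^0_c$ term is $O(1)$, the $H^1_c$ term is bounded by $(\dim H^1_c)\sqrt{q}$, and the $H^2_c$ term is bounded by $(\dim H^2_c)q$. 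The Betti numbers $\dim H^i_c$ are controlled via the Euler--Poincar\'e formula of Grothendieck--Ogg--Shafarevich by data (rank, singularities, Swan conductors) that enter the definition of $\cond(\mcF)$ and $\cond(\mcG)$; this yields the required uniformity of the implied constant.

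The decisive step is the analysis of $H^2_c(\Aa^1_{\bFq},\mcH)$. For a lisse sheaf on a dense open $V\subset\Aa^1$, this cohomology equals the coinvariants $\mcH_{\pi_1^{\mathrm{geom}}(V)}(-1)$. Since $\mcF$ and $\mcG$ are geometrically irreducible, Schur's lemma identifies the dimension of these coinvariants (equivalently, of the invariants of $\Hom(\mcG,\mcF)$ under the geometric fundamental group) as $1$ if $\mcF\simeq\mcG$ geometrically, and $0$ otherwise. In the non-isomorphic case, $H^2_c$ vanishes and the bound collapses to $O(\sqrt{q})$; in the case $\mcF=\mcG$, the $H^2_c$ term contributes a pure weight-$2$ eigenvalue, which a direct computation shows equals $q$, giving the main term in the second statement.

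The main obstacle lies in the ramification bookkeeping: carefully passing between $\mcH$ on $\Aa^1$ and its lisse restriction on $U$ (for instance via a middle extension or via the short exact sequences comparing $j_!j^*\mcH$ with $\mcH$), verifying that the coinvariants computation is not perturbed by the punctual sheaves at bad points, and tracking all constants back to $\cond(\mcF)$ and $\cond(\mcG)$. Once this is done, the two asserted estimates follow immediately from the weight bounds on $H^0_c$, $H^1_c$ and the Schur-lemma description of $H^2_c$.
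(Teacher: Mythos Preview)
The paper does not actually prove this proposition; it is stated in the notation section as a recall of the general form of the Riemann Hypothesis over finite fields, with no proof environment following it. Your proposed argument---Grothendieck--Lefschetz on $\mcF\otimes\mcG^{\vee}$, Deligne's weight bounds on $H^i_c$, the coinvariant description of $H^2_c$, Schur's lemma for the geometric monodromy, and the Euler--Poincar\'e formula to control Betti numbers in terms of conductors---is the standard route and is correct; it is precisely how such statements are established in the literature (see for instance the references in \cite{FKM1,FKM2}).
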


We denote by $\sheaf{F}^{\vee}$ the dual of a constructible sheaf
$\mcF$; if $\mcF$ is a middle-extension sheaf, we will use the same
notation for the middle-extension dual.
\par
Let $\psi$ (resp. $\chi$) be a non-trivial additive
(resp. multiplicative) character of $\Fq$. We denote by $\mcL_\psi$
(resp. $\mcL_\chi$) the associated Artin-Schreier (resp. Kummer) sheaf
on $\Aa^1_\Fq$ (resp. on $(\Gg_m)_{\Fq}$), as well (by abuse of
notation) as their middle extension to $\Pp^1_\Fq$. The trace
functions of the latter are given by
\begin{gather*}
  t_\psi(x;\Fqd)=\psi(\tr_{\Fqd/\Fq}(x))\quad\text{ if } x\in\Fqd,\quad
  t_{\psi}(\infty;\Fqd)=0,\\
  t_\chi(x;\Fqd)=\chi(\nr_{\Fqd/\Fq}(x))\quad\text{ if }
  x\in\Ff_{q^d}^{\times},\quad
  t_{\chi}(0;\Fqd)=t_{\chi}(\infty;\Fqd)=0
\end{gather*}
(which we denote also by $\psi_{q^d}(x)$ and by $\chi_{q^d}(x)$,
respectively). For the trivial additive or multiplicative character,
the trace function of the middle-extension is the constant function
$1$.
\par
Given $\lambda\in\Fqd$, we denote by $\mcL_{\psi_\lambda}$ the
Artin-Schreier sheaf of the character of $\Fqd$ defined by
$x\mapsto \psi(\tr_{\Fqd/\Fq}(\lambda x))$.
\par
If $q\geq 3$, we denote by $\chi_2$ the Legendre symbol on $\Fq$.
\par
If $X_{\Fq}$ is an algebraic variety, $\psi$ (resp. $\chi$) is an
$\ell$-adic additive character of $\Fq$ (resp. $\ell$-adic
multiplicative character) and $f\,:\, X\lra \Aa^1$ (resp.
$g\,:\, X\lra \Gg_m$) is a morphism, we denote by either
$\sheaf{L}_{\psi(f)}$ or $\sheaf{L}_{\psi}(f)$ (resp. by
$\sheaf{L}_{\chi(g)}$ or $\sheaf{L}_{\chi}(g)$) the pullback
$f^*\sheaf{L}_{\psi}$ of the Artin-Schreier sheaf associated to $\psi$
(resp. the pullback $g^*\sheaf{L}_{\chi}$ of the Kummer sheaf). These
are lisse sheaves on $X$ with trace functions $x\mapsto \psi(f(x))$
and $x\mapsto \chi(g(x))$, respectively. The meaning of the notation
$\sheaf{L}_{\psi}(f)$, which we use when putting $f$ as a subscript
would be typographically unwieldy, will always be unambiguous, and no
confusion with Tate twists will arise.
\par
Given a variety $X/\Fq$, an integer $k\geq 1$ and a function $c$ on
$X$, we denote
% \footnote{\ As already indicated in the introduction There will be
% no danger of confusion between this notation and Tate twists.}
by $\mcL_{\psi}(c s^{1/k})$ the sheaf on $X\times \Aa^1$ (with
coordinates $(x,s)$) given by
$$
\alpha_* \mcL_{\psi (c(x)t)}
$$ 
where $\alpha$ is the covering map $(x,s,t)\mapsto (x,s)$ on the
$k$-fold cover
$$
\{(x,s,t)\in X\times\Aa^1\times\Aa^1\,\mid\, t^k=s\}.
$$
% \par
% For $c\in{\bFq}^\times$ and $k\in\Zz$, we have
% $$
% \mcL_{\psi}((cx)^{1/k})=[x\mapsto x^k/c]_*\mcL_{\psi}.
% $$ 
% the pushforward of $\mcL_\psi$ by the map $x\mapsto x^k/c$, viewed as
% a middle-extension sheaf on $\Aa^1$. This is a sheaf of rank $k$.
\par
Given a field extension $L/\Fp$, and elements $\alpha\in L^{\times}$
and $\beta\in L$, we denote by $[\times\alpha]$ the scaling map
$x\mapsto \alpha x$ on $\Aa^1_L$, and by $[+\beta]$ the additive
translation $x\mapsto x+\beta$. For a sheaf $\mcF$, we denote by
$[\times\alpha]^*\mcF$ (resp. $[+\alpha]^*\mcF$) the respective
pull-back operation. More generally given an element
$\gamma=\begin{pmatrix}a&b\\c&d
\end{pmatrix}\in\PGL_2$, we denote by $\gamma^*\mcF$ the
pullback under the fractional linear transformation on $\Pp^1$ given
by
$$
\gamma\cdot x=\frac{ax+b}{cx+d}.
$$ 
We usually omit to mention any necessary base change to $L$ if the
matrix involved is in $\PGL_2(L)$ for some extension $L/\Fq$.
\par
We will usually not indicate base points in \'etale fundamental
groups; whenever this occurs, it will be clear that the properties
under consideration are independent of the choice of a base point.
\par
% We will also use the following convention: given some finite extension
% $\Fqd$ and some function 
% $$
% \bfK\,:\, \Aa^n(\Fqd)\lra \Cc,
% $$
% and given an integer $l\leq n$ and $y_{0}\in\Fqd$, we denote by
% $\bfK_{y_{0},l}$ the function $\Ff_{q^d}^{n-1}\lra \Cc$ such that
% $$
% \bfK_{y_0,l}(x_1,\cdots,x_{l-1},x_{l+1},\cdots,x_n)=
% \bfK(x_{1},\cdots,x_{l-1},y_0,x_{l+1},\cdots,x_n),
% $$ 
% i.e., the restriction of $\bfK$ to the set of tuples whose $l$-th
% entry is equal to $y_{0}$. Iterating this process, we define
% $\bfK_{(y_0,l),(y_1,l')}$, etc.

% Similarly, given an $\ell$-adic sheaf $\mcK$ on $\Aa^n_\Fq$, an
% integer $l\leq n$ and an element $y_0\in\Aa^1$, we denote by
% $\mcK_{x_{0}, l}$ the restriction of that sheaf to the subvariety
% defined by the equation $X_l=y_0$.

As mentioned above, a large portion of our argument is valid for a
more general class of functions $K$ than hyper-Kloosterman sums. We
now state the definition of the relevant class of sheaves, which is a
slight extension of~\cite[Def. 1.2]{FKMSP}. Let $\mcG$ be a
middle-extension sheaf on $\Aa^1$ of rank $k\geq 2$, which is pure of
weight $0$. Let $U=\Aa^1-S_\mcG$ denote the maximal open subset where
$\mcG$ is lisse, and let $\cond(\mcG)$ be the conductor of $\mcG$.
Let $\mcF$ be \emph{either} $\mcG$ or the extension by zero to $\Aa^1$
of $\mcG|U$.  
\par

\begin{definition}\label{def-type}
We say that $\mcF$ is \emph{bountiful} (resp. \emph{bountiful with
  respect to the upper-triangular Borel subgroup $\rmB\subset\PGL_2$})
if
\begin{itemize}
\item[--] The geometric and arithmetic monodromy groups of the lisse
  sheaf $\mcF|U$, or equivalently of $\mcG|U$, coincide and are equal
  either $\SL_k$ if $k\geq 3$ or to $\Sp_k$. Accordingly, we will
  say that $\mcF$ (or $\mcG$) is of $\Sp$ or $\SL$ type.
\item[--] For any non-trivial element $\gamma\in \PGL_2(\bFq)$
  (resp. in $\rmB(\bFq)$), the sheaf $\gamma^*\mcG$ is not
  geometrically isomorphic to $\mcG\otimes\mcL$ for any rank $1$ sheaf
  $\mcL$.
\item[--] If $\mcF$ is of $\SL$-type, there is at most one $\xi\in
  \PGL_2(\bFq)$ (resp. $\xi\in \rmB(\bFq)$) such that we have a
  geometric isomorphism
$$
\xi^*\mcG\simeq \mcG^\vee\otimes\mcL
$$
for some rank $1$ sheaf $\mcL$. If the element $\xi$ exists, it is
called the special involution of $\mcF$.  It is {\em exactly} of order $2$ and in the
Borel case, is of the shape
$$
\xi_\mcF=\begin{pmatrix} -1&b_\mcF\\&1
\end{pmatrix}.
$$
\end{itemize}
\end{definition}
\begin{remark} We take this occasion to address a minor slip in
  \cite{FKMSP} pointed by one of the referees: the original definition
  of a bountiful sheaf should have required the rank of the sheaf to
  be $\geq 3$ in the $\SL$ case, since $\SL_2$ should be viewed as a
  symplectic group in this context (because its standard
  representation is self-dual). Correspondingly \cite[Thm 1.5]{FKMSP}
  should include this condition as well. This has no impact on
  applications since the resulting corollaries all included that
  condition in their statement.
\end{remark}

\begin{remark} Another difference with \cite{FKMSP} is that we allow
  the possibility that $\mcF$ be the extension by zero of $\mcG$, and
  do not require that $\mcF$ be necessarily a middle-extension. It is
  immediate that the results of~\cite{FKMSP} that we use extend to
  this slightly more general class of sheaves: the arguments there are
  either performed on a dense open subset where all sheaves involved
  are lisse, or only depend on the bound $|t_{\mcG}(x)|\leq \rank(x)$
  for a middle-extension sheaf $\mcG$ (see, e.g.,~\cite[p. 21, proof
  of Prop. 1.1]{FKMSP}). We refer to Remark~\ref{rm-kl-def} for a
  justification of this change in the definition of~\cite{FKMSP}.
\end{remark}

The Kloosterman sheaves $\KL_k$ (defined here as extension by zero of
the Kloosterman sheaves on $\Gg_m$) are examples of bountiful sheaves.
They are of $\Sp$-type if $k$ is even and of $\SL$-type if $k$ is odd
(cf.~\cite{GKM,FKMSP}), and in that case, there is a special
involution given by $\xi=\begin{pmatrix}-1&\\&1
\end{pmatrix}$, and indeed $\xi^*\KL_k\simeq \KL_k^{\vee}$. All this
will be recalled with references in Section~\ref{sec-kl-sheaves}.

\subsection*{Acknowledgments}

We acknowledge the deep influence of \'E. Fouvry on this work.  The
ideas of our collaborators concerning the problem of averages of
twisted $L$-functions in~\cite{BFKMM} (\'E. Fouvry, V. Blomer and
D. Mili{\'c}evi{\'c}) were also of great importance in motivating our
work on this paper. We also thank P. Nelson and I. Petrow for many
discussions.

We are extremely thankful to the referee who read
Sections~\ref{sec-complete} and~\ref{sec-irreducible} and pointed out
many minor slips and a few more significant issues in the first and second version of this paper.  We thank him or her in particular for giving very
useful references to certain papers of L. Fu that corrected and
simplified some of our local monodromy computations.

%%%%%%%%%%%%%%%%%

\section{Reduction to complete exponential sums}\label{sec-incomplete}

In this section, we perform the first step of the proof of Theorems
\ref{thmtypeI} and \ref{thmtypeII}: the reduction to estimates for
complete sums over finite fields. The two subsections below are
essentially independent; the first one concerns special bilinear forms
(``type I'', as in Theorem~\ref{thmtypeI}) and the second discusses
the case of general bilinear forms (``type II'') as in
Theorem~\ref{thmtypeII}. 

\subsection{Special bilinear forms}

We follow the method of \cite{FoMi}, as generalized in \cite[\S
6.2]{BFKMM}. Let $q$ be a prime number and let $\mcF$ be a bountiful
sheaf on $\Aa^1_{\Fq}$ (with respect to the Borel subgroup). Let
$k\geq 2$ be the rank of $\mcF$ and $\cond(\mcF)$ its conductor.
% be Given $k\geq 2$, let $K:\Fq\mapsto \Cc$ be the trace
% function of a sheaf $\mcF$ of rank $k\geq 2$ on $\Aa^1_\Fq$ which is
% pure of weight $0$ and whose conductor is noted $c(\mcF)$.
\par
We fix some $c\in\Fqt$, and denote $K_c=[\times c]^*K$.  We consider
the special bilinear form
$$
B(K_c,\uple{\alpha},\mcN)= \sumsum_{m\leq M,\,n\in\mcN}\alpha_m K(cmn)
$$
where $\mcN$ is an interval in $[1,q-1]$ of length $\lfloor N\rfloor $
and $\uple{\alpha}=(\alpha_m)_{m\leq M}$ with
\begin{equation}\label{MNcond}
  1\leq M\leq N^2,\ N<q,\ MN<q^{3/2}.
\end{equation}

\begin{remark}
  The condition $MN<q^{3/2}$ is somewhat restrictive. It arises from
  the estimate of the possible ``bad'' parameter $\uple{b}$ (see the
  proof of Theorem~\ref{thm1Klk} below). However, for
  $MN\geq q^{3/2}$, other methods lead to non-trivial estimates for
  these bilinear forms (e.g., the bound~(\ref{PVII})).
\end{remark}

Given auxiliary integral parameters $A,B\geq 1$ such that
\begin{equation}\label{ABbounds}
  2B<q,\quad AB\leq N,\quad AM<q, 
\end{equation}
we have
\begin{align*}
  B(K_c,\uple{\alpha},\mcN)
  &=\frac{1}{AB}
    \sumsum_{\substack{A < a \leq 2A\\ B < b \leq 2B}}
  \sum_{m\leq M}\alpha_m\sum_{n+ab\in\mcN} K_c(m(n+ab))\\
  &=\frac{1}{AB}\sumsum_{\substack{A < a \leq 2A\\ B < b \leq 2B}}
  \sum_{m\leq M}\alpha_m\sum_{n+ab\in\mcN} K_c(am(\ov an+b)).
\end{align*}
We get
$$
B(K_c,\uple{\alpha},\mcN)\ll_{\eps}
\frac{q^\eps}{AB}\sumsum_{\substack{r\mods q\\s\leq
    2AM}}\nu(r,s)\Bigl|\sum_{B < b \leq 2B }\eta_bK_c(s(r+b))\Bigr|
$$ 
where
$$
\nu(r,s)=\sumsumsum_\stacksum{A < a \leq 2A,\ m\leq M,\
  n\in\mcN}{am=s,\ \ov an\equiv r\mods q}|\alpha_m|
$$
and $(\eta_b)_{B < b \leq 2B}$ are some complex numbers such that
$|\eta_b|\leq 1$.  We have clearly
$$
\sum_{r,s}\nu(r,s)\ll AN\sum_{m\leq M}|\alpha_m|.
$$
\par
We also have
$$
\sum_{r,s}\nu(r,s)^2=\multsum_{\substack{a,m,n,a',m',n'\\
    am=a'm'\\a'n=an'\mods q}} |\alpha_m||\alpha_{m'}|.
$$
\par
Observe that, once $a$ and $m$ are given, the equation $am=a'm'$
determines $a'$ and $m'$ up to $O(q^\eps)$ possibilities; furthermore,
for each such pair $(a,m)$ and each $n\in\mcN$, the congruence
$a'n=an'\mods q$ determines $n'$ uniquely, as $n'$ varies over an
interval of length $\leq q$. Therefore we get
$$
\sum_{r,s}\nu(r,s)^2\ll \sum_{a,m}|\alpha_m|^2
\multsum_{\substack{n,a',m',n'\\am=a'm'\\a'n=an'\mods q}}1 \ll_\eps
q^\eps AN\sum_{m}|\alpha_m|^2,
$$
where we have used the inequality
$|\alpha_m||\alpha_{m'}|\leq |\alpha_m|^2+|\alpha_{m'}|^2$.

We next apply H\"older's inequality in the form
\begin{align*}
  \sumsum_{\substack{r\mods q\\1\leq s\leq 2AM}} \nu(r,s)
  \Bigl|\sum_{B < b \leq 2B}\eta_bK_c(s(r+b))\Bigr|
&
  \leq
  \Bigl(\sum_{r,s}\nu(r,s)\Bigr)^{\frac{1}{2}}
  \Bigl(\sum_{r,s}\nu(r,s)^2\Bigr)^{\frac1{4}}
  \\
&\quad\quad\quad\quad\times \Bigl(\sum_{r,s}\Bigl|\sum_{B < b \leq 2B
  }\eta_bK_c(s(r+b))\Bigr|^{4}\Bigr)^{\frac{1}{4}}
  \\
& \ll_{\eps} q^\eps
  (AN)^{\frac{3}{4}}\|\uple{\alpha}\|_1^{\frac{1}{2}}\|\uple{\alpha}\|_2^{\frac{1}{2}}
  \Bigl(\sum_{r,s}\Bigl|\sum_{B < b \leq 2B }\eta_b
  K_c(s(r+b))\Bigr|^{4}\Bigr)^{\frac{1}{4}}.
\end{align*}

Expanding the fourth power, we have
\begin{equation}\label{bSigmasum} 
  \sum_{r,s}\Bigl|\sum_{B < b \leq 2B }\eta_b
  K_c(s(r+b))\Bigr|^{4}
  \leq \sum_{\bfb\in\mcB}\bigl|\Sigma(K_c, \bfb;AM)\bigr|
\end{equation}
where $\mcB$ denotes the set of tuples $\bfb=(b_1,b_2,b_3,b_4)$
of integers satisfying $B < b_i \leq 2B$ ($i=1,\cdots,4$), and
\begin{equation}\label{eq-def-sigma}
  \Sigma(K_c,\bfb;AM)=%\sum_{\bfb\in\mcB}\eta(\bfb)
  \sumsum_\stacksum{r\mods
    q}{1\leq s\leq 2AM}\prod_{i=1}^2 K_c(s(r+b_i))\ov{K_c(s(r+b_{i+2}))}.
\end{equation}
This is a sum over $r$ and $s$ of a product of four values of the
trace function $K$, which we will later specialize to
hyper-Kloosterman sums. At this stage, we have proved the bound
\begin{equation}\label{eq-intermediate}
  B(K_c,\uple{\alpha},\mathcal{N})
  \ll q^{\eps}
  \frac{N^{3/4}}{A^{1/4}B}\|\uple{\alpha}\|_1^{1/2}\|\bfb\|_2^{1/2}
  \Bigl(\sum_{\bfb\in\mcB}\bigl|\Sigma(K_c, \bfb;AM)\bigr|\Bigr)^{1/4}
\end{equation}
for any $\eps>0$, where the implied constant depends on $\eps$ and on
the conductor of $\sheaf{F}$.
\par
To continue, we first define the ``diagonal'' in the space of the
parameters $\bfb\in\mcB$. We recall that sheaves of $\Sp$-type or of
$\SL$-type were introduced in Definition~\ref{def-type}.

\begin{definition}\label{multidiagonaldef} 
  Let $\mcV^\Delta$ be the affine variety of $4$-uples
$$
\bfb=(b_1,b_2,b_3,b_4)\in\Aa_\Fq^{4}
$$
defined by the following conditions:
\begin{itemize}
\item[--] if $\mcF$ is of $\Sp$-type, then for any $i\in\{1,\cdots,
  4\}$, the cardinality
$$
|\{j=1,\ldots, 4\,\mid\, b_j=b_i\}|
$$
is even.
\item[--] if $\mcF$ is of $\SL$-type, then for any $i\in\{1,2\}$, we
  have
$$
|\{j=1,2\,\mid\, b_j=b_i\}|-|\{j=3,4\,\mid\, b_j=b_i\}|=0.
$$
\end{itemize}
\end{definition}

We now denote by $\mcB^{\Delta}$ the subset of tuples of integers
$\bfb\in\mcB$ such that
$$
\bfb\mods q\in\mcV^\Delta(\Fq).
$$

Since $k\geq 2$ and $2B<q$ (by~(\ref{ABbounds})), we have
$|\mcB^{\Delta}|=O(B^2)$.  For $\bfb\in\mcB^{\Delta}$, we estimate
$\Sigma(K_c, \bfb; AM)$ trivially using the bound
$|K(cx)|\leq \cond(\mcF)$. The contribution to \refs{bSigmasum} of all
$\bfb\in\mcB^\Delta$ satisfies
\begin{equation}\label{bDelta}
  \sum_{\bfb\in\mcB^{\Delta}}|\Sigma(K_c, \bfb;AM)|\ll AB^2M q,
\end{equation}
where the implied constant depends only on the conductor of $\mcF$.

In Section~\ref{sec-complete}, we will establish two estimates
concerning the contribution of $\bfb\not\in \mcB^{\Delta}$.  For the
first argument, we fix the value of $s$ with $1\leq s\leq 2AM$ and we
average  over $r$.

\begin{lemma}\label{lemonevariableI} 
  For $\bfb\in \mcB\backslash\mcB^\Delta$ and any  $s\in\Fqt$, we have
$$
%%\bfS(s,\bfb):=
\sum_{r\mods q}\prod_{i=1}^2
K_c(s(r+b_i))\ov{K_c(s(r+b_{i+2}))}\ll_{k} q^{1/2}
$$
where the implied constant depends only on $\cond(\mcF)$.
\par
In particular % (replacing $s$ by $cs$ in the bound above),
for any subset $\mcB'\subset \mcB\backslash\mcB^\Delta$, we have
\begin{equation}\label{onevariableI}
  \sum_{\bfb\in\mcB'}|\Sigma(K_c, \bfb; AM)|\ll_k AM|\mcB'|q^{1/2}
\end{equation}
where the implied constant depends only on $\cond(\mcF)$.
\end{lemma}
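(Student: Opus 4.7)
The plan is to reduce the sum to a standard ``sum of products'' that is already handled by the theory of bountiful sheaves developed in \cite{FKMSP}. First, since $s\in\Fqt$ and $c\in\Fqt$, I will change variables by setting $u=csr$, which is a bijection of $\Fq$. Writing $\beta_i=csb_i$ for $i=1,\dots,4$, one has $K_c(s(r+b_i))=K(u+\beta_i)$, so the inner sum becomes
$$
T(s,\bfb)=\sum_{u\in\Fq}\prod_{i=1}^{2} t_\mcF(u+\beta_i)\,\overline{t_\mcF(u+\beta_{i+2})}.
$$
Crucially, the diagonal variety $\mcV^\Delta$ of Definition~\ref{multidiagonaldef} depends only on the pattern of coincidences among the entries of $\bfb$; since $u\mapsto csu$ is a bijection of $\Fq$, the condition $\bfb\mods q\notin\mcV^\Delta(\Fq)$ is equivalent to $\bfbeta=(cs\bfb)\mods q\notin \mcV^\Delta(\Fq)$.

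Next, I interpret $T(s,\bfb)$ cohomologically. Let
$$
\mcH_{\bfbeta}=\bigotimes_{i=1}^{2}[+\beta_i]^*\mcF\otimes \bigotimes_{i=3}^{4}[+\beta_i]^*\mcF^\vee,
$$
which is a mixed sheaf of weights $\leq 0$ and pure of weight $0$ on the dense open subset where all four translates of $\mcF$ are lisse. The Grothendieck--Lefschetz trace formula and Deligne's Riemann Hypothesis (Proposition~\ref{pr-recall-rh}, applied after splitting into pure constituents) give
$$
T(s,\bfb)=\dim H^2_c(\Aa^1_{\bFq},\mcH_{\bfbeta})\cdot q+O(q^{1/2}),
$$
where the implied constant is controlled by the sum of Betti numbers of $\mcH_{\bfbeta}$, and the latter can be bounded in terms of $\cond(\mcF)$ alone by standard estimates (e.g.\ Katz's bounds for Betti numbers of tensor products).

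The heart of the matter is therefore to show that $H^2_c(\Aa^1_{\bFq},\mcH_{\bfbeta})=0$ whenever $\bfbeta\notin\mcV^\Delta(\Fq)$, i.e.\ that the tensor product has no geometrically invariant line. This is precisely the conclusion of the Goursat--Kolchin--Ribet type criterion \cite[Prop.~1.8.2]{ESDE} as applied in \cite{FKMSP}: for a bountiful sheaf $\mcF$ (of either $\Sp$- or $\SL$-type, with respect to the Borel subgroup, which in particular controls additive translates), the geometric monodromy group of $\bigoplus_i [+\beta_i]^*\mcF$ is as large as permitted by the pairwise geometric isomorphisms among the summands, and the only such isomorphisms among $[+\beta_i]^*\mcF$ and $[+\beta_j]^*\mcF^{(\vee)}$ come from equalities $\beta_i=\beta_j$; the combinatorial bookkeeping of these equalities in the $\Sp$- and $\SL$-cases is exactly encoded by the two bullet points in Definition~\ref{multidiagonaldef}. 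Thus off $\mcV^\Delta$, the tensor product has no trivial constituent, so $H^2_c=0$ and $T(s,\bfb)\ll q^{1/2}$.

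The main obstacle, conceptually, is the invariance-theoretic input in the last step, but this is already packaged for us by \cite{FKMSP} given that $\mcF$ is assumed bountiful (with respect to $\rmB$, which ensures that additive translates and duals behave properly). Once the bound on $T(s,\bfb)$ is established, inequality~\refs{onevariableI} is immediate: summing trivially over $s$ with $1\leq s\leq 2AM$ gives $|\Sigma(K_c,\bfb;AM)|\ll AM\,q^{1/2}$ for each $\bfb\in\mcB\setminus\mcB^\Delta$, and a further trivial summation over $\bfb\in\mcB'$ yields the stated bound $AM|\mcB'|q^{1/2}$, with implied constant depending only on $\cond(\mcF)$.
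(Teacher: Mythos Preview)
Your proposal is correct and follows essentially the same approach as the paper: both reduce the bound to the vanishing of the geometric coinvariants of the relevant tensor-product sheaf and invoke the ``sums of products'' machinery of \cite{FKMSP} for bountiful sheaves. The only cosmetic difference is that you first make the change of variable $u=csr$ to reduce to pure additive translates $[+\beta_i]^*\mcF$, whereas the paper keeps the original variables and applies \cite[Cor.~1.6]{FKMSP} directly with the Borel elements $\gamma_{s,i}=\bigl(\begin{smallmatrix}s&sb_i\\0&1\end{smallmatrix}\bigr)$, checking explicitly that the tuple $(\uple{\gamma},\uple{\sigma})$ is normal (or $r$-normal, with the special involution handled because $\gamma_{s,i}\gamma_{s,j}^{-1}$ is never an involution for $b_i\neq b_j$ and $q\neq 2$).
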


This result gives a saving of a factor $q^{1/2}$ over the trivial
bound.  We refer to Section~\ref{sec-one-variable} for the proof.

The second argument is much deeper, and we can only bring it to
completion for hyper-Kloosterman sums. We apply the discrete
Plancherel formula to complete the sum with respect to the variable
$s$ (see for instance~\cite[Lemma 12.1 and following]{IwKo}). Recall
that $\psi$ is a fixed non-trivial additive character of $\Fq$.  For
any function $L\colon \Fq\to \Cc$, define
$$
\hat{\Sigma}(L,\bfb,\lambda)=\sum_{r\in\Fq}\bfR(L,r,\lambda,\bfb)
$$
with
\begin{equation}\label{defSrblambdasum}
  \bfR(L,r,\lambda,\bfb)=\sum_{s\in\Fqt}
  \psi(\lambda s)\prod_{i=1}^2 L(s(r+b_i))\ov{L(s(r+b_{i+2}))}.
\end{equation}
Then, observing that for any $c\in\Fqt$, we have
\begin{equation}\label{eq-rem-c}
\bfR(K_c,r,\lambda,\bfb)=\bfR(K,r,\lambda/c,\bfb),\quad\quad
\hat{\Sigma}(K_c,\bfb,\lambda)=\hat{\Sigma}(K,\bfb,\lambda/c),
\end{equation}
the completion yields the bound
$$
\Sigma(K_c,\bfb;AM)\ll (\log q)
\max_{\lambda\in\Fq}|\hat{\Sigma}(K,\bfb,\lambda)|
$$
where the implied constant is absolute.
%, where
% $$
% \hat{\Sigma}(K,\bfb,\lambda)=\sum_{r\in\Fq}\bfR(K,r,\lambda,\bfb)
% $$
% with
% \begin{equation}\label{defSrblambdasum}
%   \bfR(K,r,\lambda,\bfb)=\sum_{s\in\Fqt}
%   \psi(\lambda s)\prod_{i=1}^2 K(s(r+b_i))\ov{K(s(r+b_{i+2}))}	
% \end{equation}
 
% \begin{remark}\label{remc}
% 	Observe that for any $c\in\Fqt$ one has
% $$\bfR(K_c,r,\lambda,\bfb)=\bfR(K,r,\lambda/c,\bfb)$$
% and
% $$\hat{\Sigma}(K_c,\bfb,\lambda)=\hat{\Sigma}(K,\bfb,\lambda/c).$$
% \end{remark}

Taking $\mcF$ to be the Kloosterman sheaf with trace function
$K=\Kl_k$, we will obtain, an additional saving of $q^{1/2}$ in
comparison with Lemma~\ref{lemonevariableI}, from the cancellation in
the completed variable $s$, leading to a net saving $AMq^{1/2}$.
% rove the following result which saves an extra
% factor $q^{1/2}$ over the bound derived from Lemma
% \ref{lemonevariableI}:

\begin{theorem}\label{thm1Klk} 
  Let $k\geq 2$ and let $K=\Kl_k$. There exists a codimension one
  subvariety $\mcV^{bad}\subset\Aa^4_\Fq$ containing $\mcV^{\Delta}$,
  with degree bounded independently of $q$, such that for any
  $\lambda\in\Fq$ and any $\bfb\not\in\mcV^{bad}(\Fq)$, we have
% $$
% \hat{\Sigma}(\Kl_k,\bfb,\lambda)\ll q.
% $$ 
% and therefore (by Remark \ref{remc}) we have for any $c\in\Fqt$
$$
\Sigma([\times c]^*\Kl_{k},\bfb;AM)\ll q\log q
$$
for any $c\in\Fqt$.  The implied constant depends only on $k$.
\end{theorem}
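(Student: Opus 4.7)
My plan is to reduce the estimate on $\Sigma([\times c]^*\Kl_k,\bfb;AM)$ to a single geometric input that will be proven in Sections~\ref{sec-complete} and~\ref{sec-irreducible}, namely the geometric irreducibility and non-triviality of a ``sum-product transform sheaf'' attached to $\Kl_k$, and then to apply Deligne's Riemann Hypothesis (Proposition~\ref{pr-recall-rh}). The Plancherel-based completion carried out just above the statement, combined with~\eqref{eq-rem-c} and the invertibility of $c\in\Fqt$, gives
$$
\Sigma([\times c]^*\Kl_k,\bfb;AM)\ll(\log q)\max_{\lambda\in\Fq}|\hat{\Sigma}(\Kl_k,\bfb,\lambda)|,
$$
so it suffices to establish
$$
\Bigl|\sum_{r\in\Fq}\bfR(\Kl_k,r,\lambda,\bfb)\Bigr|\ll q
$$
uniformly in $\lambda\in\Fq$, for every $\bfb$ outside an exceptional codimension one locus $\mcV^{bad}\supset\mcV^{\Delta}$ of bounded degree.

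To obtain this, I will construct — along the lines announced in the introduction — an $\ell$-adic sheaf $\mcR$ on $\Aa^6_{\Fq}$ (with coordinates $(r,\lambda,\bfb)$) as the appropriate higher direct image along the $s$-projection of the tensor sheaf
$$
\mcK=\mcL_{\psi(\lambda s)}\otimes\bigotimes_{i=1}^{2}[\times s(r+b_i)]^*\shKl_k\otimes[\times s(r+b_{i+2})]^*\shKl_k^{\vee}
$$
on $\Aa^7_{\Fq}$. Suitably Tate-twisted by $1/2$, $\mcR$ will be mixed of weights $\leq 0$, pointwise pure of weight $0$ on a dense open set, with conductor bounded in terms of $k$ alone, and its specialization $\mcR_{\lambda,\bfb}$ in the variable $r$ will have trace function $q^{-1/2}\bfR(\Kl_k,\cdot,\lambda,\bfb)$. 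Proposition~\ref{pr-recall-rh}, applied to $\mcR_{\lambda,\bfb}$ against the trivial sheaf, then yields the required bound $|\hat{\Sigma}(\Kl_k,\bfb,\lambda)|\ll q$, \emph{provided} that $\mcR_{\lambda,\bfb}$ is geometrically irreducible and not geometrically isomorphic to the constant sheaf.

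The substance of the proof is therefore the geometric claim: there exists a codimension one subvariety $\mcV^{bad}\subset\Aa^4$ containing $\mcV^{\Delta}$, of degree bounded independently of $q$, such that for every $\bfb\notin\mcV^{bad}(\Fq)$ and every $\lambda\in\Fq$, the sheaf $\mcR_{\lambda,\bfb}$ is geometrically irreducible and geometrically non-trivial. Following the outline in the introduction, I will attack this in two steps. First, in Section~\ref{sec-complete}, a generic version (in which both $\bfb$ and $\lambda$ are allowed to vary) will be derived from the bountifulness of $\Kl_k$ together with Katz's diophantine criterion for irreducibility, the key computation being a second-moment asymptotic
$$
\frac{1}{(q^d)^2}\sum_{(r,\lambda)\in\Fqd^{2}}|\bfR(\Kl_k,r,\lambda,\bfb;\Fqd)|^{2}=q^d(1+o(1))
$$
as $d\to\infty$, valid for $\bfb$ outside some proper subvariety; these ingredients are available via the techniques of~\cite{FKMSP}.

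The main obstacle — and the reason Section~\ref{sec-irreducible} is by far the longest of the paper — is upgrading this generic statement to a \emph{pointwise} one that holds uniformly in $\lambda\in\Fq$, while keeping the exceptional $\bfb$-set of codimension one and of controlled degree. The plan is to invoke Deligne's semicontinuity theorem, supplemented by a vanishing cycles analysis at the critical specializations of $\lambda$, to show that no such specialization can create a new irreducible summand in $\mcR_{\lambda,\bfb}$ nor force a trivial quotient to appear. Carrying this out will require extremely precise information on the local ramification of $\mcK$ and $\mcR$ at $0$, $\infty$ and the shifts $-b_i$, resting on Katz's description of the local monodromy of Kloosterman sheaves in~\cite{GKM} (to be recalled in Section~\ref{sec-kl-sheaves}); the degree bound on $\mcV^{bad}$, which is what makes the exceptional contribution to~\eqref{bSigmasum} manageable through Lemma~\ref{lemonevariableI}, will follow from effective control of Swan conductors and Betti numbers along the construction of $\mcR$.
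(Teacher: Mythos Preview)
Your proposal follows the paper's approach exactly: the paper's proof of this statement is simply a forward reference to Theorem~\ref{thmRbounds} (the bound $\sum_r\bfR(r,\lambda,\bfb)\ll q$), which is itself derived from the irreducibility statement Theorem~\ref{thmirreducibility} via Deligne's Riemann Hypothesis, just as you sketch.

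One technical point to correct: the sheaf $\mcR=R^1\pi^{(2)}_!\mcK$ is \emph{not} pointwise pure on any dense open set --- the unipotent local monodromy of $\HYPK_k$ at $s=0$ contributes a nonzero lower-weight subsheaf (see Lemma~\ref{lm-weight-computation} and the proof of Corollary~\ref{cor-adjusted-local-monodromy}) --- so $\mcR_{\lambda,\bfb}$ is genuinely reducible and Proposition~\ref{pr-recall-rh} cannot be applied to it directly. The paper instead passes to the maximal pure-of-weight-$1$ quotient $\mcR^*$ (Definition~\ref{def-sp}), proves irreducibility and non-triviality for $\mcR^*_{\lambda,\bfb}$, and absorbs the discrepancy $t_{\mcR}-t_{\mcR^*}=O(1)$ in the final cohomological estimate.
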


This follows from Theorem \ref{thmRbounds} in
Section~\ref{sec-irreducible}.
\par
\smallskip
\par
Now, assuming Lemma~\ref{lemonevariableI} and Theorem~\ref{thm1Klk},
we can conclude the proof of Theorem~\ref{thmtypeI}.  Indeed, set
$$
\mcB^{bad}=\mcB\cap\{\bfb\in\mcB\,\mid\,
\bfb\mods{q}\in\mcV^{bad}(\Fq)\},\quad\quad
\mcB^{gen}=\mcB\backslash\mcB^{bad}.
$$
Since $\mcV^{bad}$ has degree bounded in terms of $k$ only,
independently of $q$, we have $|\mcB^{bad}|=O_{k}(B^{3})$ (in fact,
$|\mcB^{bad}|\leq (\deg\mcV^{bad})|B|^{3}$ by the so-called
Schwarz-Zippel Lemma).
\par
Hence, applying Theorem \ref{thm1Klk} for $\bfb\in\mcB^{gen}$, the
bound \refs{onevariableI} from Lemma~\ref{lemonevariableI} for
$\bfb\in\mcB^{bad}-\mcB^{\Delta}$, and finally~\eqref{bDelta} for
$\bfb\in\mcB^{\Delta}$, we obtain
$$
\sum_{\bfb\in\mcB}\bigl|\Sigma([\times c]^*\Kl_k, \bfb;AM)\bigr|\ll_k (B^{4}q+
AB^{3}Mq^{1/2}+AB^2M q)(\log q).
$$
\par
Upon choosing
$$
A=M^{-\frac1{3}}N^{\frac{2}{3}},\quad B=(MN)^{\frac1{3}},
$$
(which satisfy \eqref{ABbounds} by \eqref{MNcond}), we see that the
first and third terms in parenthesis coincide and are equal to
$(MN)^{4/3}q$, while the second term is equal to
$$
(MN)^{4/3}q\times(MNq^{-3/2})^{1/3}\leq (MN)^{4/3}q
$$ 
by \eqref{MNcond}.  Therefore we deduce from~(\ref{eq-intermediate})
that
\begin{align*}
  B([\times c]^*\Kl_k,\uple{\alpha},\mcN)
  &\ll_{k,\eps} 
    \frac{q^\eps}{AB}(AN)^{\frac{3}{4}}
    \|\uple{\alpha}\|_1^{\frac{1}{2}}\|\uple{\alpha}\|_2^{\frac1{2}}Bq^{1/4}\\
  &\ll_{k,\eps}q^\eps\|\uple{\alpha}\|_1^{\frac{1}{2}}
    \|\uple{\alpha}\|_2^{\frac1{2}}M^{1/4}N\Bigl(\frac{M^2N^5}{q^3}\Bigr)^{-1/12}.
\end{align*}
\par
This proves Theorem \ref{thmtypeI}, subject to the proof of
Lemma~\ref{lemonevariableI} and of Theorem~\ref{thm1Klk}.

\subsection{General bilinear forms} 

We now consider the situation of Theorem~\ref{thmtypeII}. Again we
begin with a prime $q$ and a bountiful sheaf $\mcF$ on $\Aa^1_{\Fq}$
with respect to the Borel subgroup.  Let $k\geq 2$ be the rank of
$\mcF$ and $\cond(\mcF)$ its conductor.
\par
Given $M,N\geq 1$ satisfying
\begin{equation}\label{MNcondII}
1\leq M\leq Nq^{1/4},\ q^{1/4}<MN< q^{5/4},
\end{equation}
an interval $\mcN\subset[1,q-1]$ of length $\lfloor N\rfloor$ and
sequences $\uple{\alpha}=(\alpha_m)_{m\leq M}$ and
$\uple{\beta}=(\beta_n)_{n\in \mcN}$, we consider the general bilinear
form
$$
B(K_c,\uple{\alpha},\uple{\beta})=\sumsum_{m\leq M,\,n\in
  \mcN}\alpha_m\beta_n K(cmn).
$$
\par
We begin once more as in \cite{FoMi,BFKMM}. We choose auxiliary
parameters $A,B\geq 1$ satisfying \eqref{ABbounds}. The argument of
\cite[\S 5.5]{BFKMM} leads
% \footnote{\ The counting function $\nu(r,s_1,s_2)$ in loc. cit. is
% replaced by
% $\nu_c(r,s_1,s_2)=\nu(r,s_1/c,s_2/c)$, so the average and
% mean-square estimates for $\nu_c$ are the same as those for $\nu$.}
to the estimate
\begin{equation}\label{eqBKholder}
  |B(K_c,\uple{\alpha},\uple{\beta})|^2\ll 
  \|\uple{\alpha}\|_2^2\|\uple{\beta}\|_2^2
  \Bigl(N+\frac{q^{\eps}}{AB}(AN)^{3/4}M^{1/2}
  \Bigl(\sum_\bfb|\Sigma^{\not=}(K_c,\bfb;AM)|\Bigr)^{1/4}\Bigr)	
\end{equation}
for any $\eps>0$, where the implied constant depends only on
$\cond(\mcF)$ and $\eps$, and where
\begin{multline*}
\Sigma^{\not=}(K_c,\bfb;AM)=\sum_{r\mods q}\sumsum_\stacksum{1\leq
  s_1,s_2\leq AM}{s_1\not = s_2\mods
  q}\prod_{i=1}^2K_c(s_1(r+b_i))\ov
K_c(s_2(r+b_i))
\\
\ov{K_c(s_1(r+b_{i+2}))\ov K_c(s_2(r+b_{i+2}))}
\end{multline*}
for $\bfb$ running over the set $\mcB$ of quadruples of integers
$(b_1,b_2,b_3,b_4)$ satisfying $B < b_i \leq 2 B$. Note that, in the
case $K=\hypk_k$, we have now a sum, over the three variables
$(r,s_1,s_2)$, of a product of eight hyper-Kloosterman sums.

We will estimate the inner triple sum over $r, s_1, s_2$ in different
ways depending on the value taken by $\bfb$.

First, for $\bfb\in\mcB^\Delta$ (as defined in Definition
\ref{multidiagonaldef}) we use the trivial bound from
$|K(cx)|\leq \cond(\sheaf{F})$ and obtain
\begin{equation}\label{diagonalbound}
  \sum_{\bfb\in \mcB^{\Delta}}|\Sigma^{\not=}(K_c,\bfb;AM)|\ll qA^2B^2M^2,
\end{equation}
where the implied constant depends only on $\cond(\mcF)$.

We next have an analogue of Lemma \ref{lemonevariableI}, where we sum
over the variable $r$ for fixed $(s_1,s_2)$:

\begin{lemma}\label{lemonevariableII} 
  For $\bfb\in \mcB\backslash\mcB^\Delta$ and any $s_1$, $s_2\in\Fqt$ with
  $s_1\not=s_2$, we have
  \begin{equation}\label{onevariableII}
    \sum_{r\mods q}\prod_{i=1}^2K_c(s_1(r+b_i))\ov K_c(s_2(r+b_i))
    \ov{K_c(s_1(r+b_{i+2}))\ov K_c(s_2(r+b_{i+2}))}\ll q^{1/2},
  \end{equation}
  where the implied constant depends only on $\cond(\mcF)$.
\par
In
particular % (replacing $(s_1,s_2)$ by $(cs_1,cs_2)$ in the above bound),
for any subset $\mcB'\subset\mcB\backslash\mcB^\Delta$, we have
$$
\sum_{\bfb\in\mcB'}|\Sigma^{\not=}(K_c,\bfb;AM)| \ll
(AM)^2|\mcB'|q^{1/2},
$$
where the implied constant depends only on $\cond(\mcF)$.
\end{lemma}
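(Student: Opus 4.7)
The plan is to mirror the proof of Lemma \ref{lemonevariableI}, interpreting the inner sum over $r$ as a one-variable complete exponential sum of the trace function of a tensor product sheaf and applying the Riemann Hypothesis (Proposition \ref{pr-recall-rh}). Setting $\gamma_{i,j}(x) = c s_j (x + b_i)$, which lies in $\rmB(\Fq) \subset \PGL_2(\Fq)$, for $i \in \{1,2,3,4\}$ and $j \in \{1,2\}$, the summand equals, away from finitely many ramified points (which contribute $O(1)$ via $|K_c(x)| \leq \cond(\mcF)$), the trace function at $r$ of the sheaf
\[
\mcH_{\bfb, s_1, s_2} = \bigotimes_{i=1}^{2}\bigl( \gamma_{i,1}^*\mcF \otimes \gamma_{i,2}^*\mcF^{\vee}\bigr) \otimes \bigotimes_{i=3}^{4}\bigl( \gamma_{i,1}^*\mcF^{\vee} \otimes \gamma_{i,2}^*\mcF\bigr).
\]
A $q^{1/2}$ bound then follows from Proposition \ref{pr-recall-rh} provided this sheaf contains no trivial geometric subrepresentation.

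By the Goursat-Kolchin-Ribet framework adapted to bountiful sheaves as in \cite{FKMSP}, such a trivial subrepresentation in this eight-fold tensor product forces a matching of the eight factors into four pairs, each a geometric isomorphism $\gamma_\alpha^*\mcF \simeq \gamma_\beta^*\mcF$ or $\gamma_\alpha^*\mcF \simeq \gamma_\beta^*\mcF^{\vee}$ up to twist by a rank one sheaf. Since $\mcF$ is bountiful with respect to $\rmB$, each such isomorphism forces either $\gamma_\alpha^{-1}\gamma_\beta$ to be the identity in $\PGL_2$, or (in the $\SL$-case only, and only for $\mcF$-$\mcF^\vee$ pairings) to equal the special involution $\xi_\mcF$. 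A direct computation $\gamma_{i,j}^{-1}\gamma_{i',j'}(x) = (s_{j'}/s_j)(x + b_{i'}) - b_i$ shows the first condition forces $s_j = s_{j'}$ and $b_i = b_{i'}$, while the second forces $s_{j'} = -s_j$ together with $b_i + b_{i'} = -b_\mcF$.

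The central combinatorial task is then to verify that, given the $\mcF/\mcF^\vee$ sign pattern of $\mcH_{\bfb, s_1, s_2}$, any admissible four-pair matching of the eight factors forces either $s_1 = s_2$ (excluded by hypothesis) or $\bfb \in \mcV^\Delta$ as in Definition \ref{multidiagonaldef}. In the $\Sp$-case this is straightforward: all matchings use trivial isomorphisms, and with $s_1 \neq s_2$ pairs lie entirely within a single $s$-column, forcing the even-multiplicity condition on the $b_i$'s. The main obstacle is the $\SL$-case, where the special involution opens additional pairing patterns linking the two $s$-columns; there one must check that the simultaneous constraints $s_2 = -s_1$ imposed by any involution pairings are either incompatible with the remaining trivial pairings required on the other factors, or collapse the constraints on $\bfb$ back into $\mcV^\Delta$.

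Finally, the second bound follows by summing the pointwise estimate trivially over $\bfb \in \mcB'$ and over the $O((AM)^2)$ pairs $(s_1, s_2) \in [1, AM]^2$ with $s_1 \neq s_2 \pmod q$.
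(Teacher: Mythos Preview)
Your approach is the paper's: interpret the sum as the trace of an eight-fold tensor product of Borel pullbacks of $\mcF$ and $\mcF^\vee$, then use Goursat--Kolchin--Ribet to exclude a trivial geometric summand. The paper, however, does not unpack the Goursat argument but quotes \cite[Cor.~1.6]{FKMSP} as a black box for the $8$-tuple $\uple{\gamma}=(\gamma_{s_1,1},\dots,\gamma_{s_1,4},\gamma_{s_2,1},\dots,\gamma_{s_2,4})$ together with (in the $\SL$ case) the sign tuple $\uple{\sigma}=(\mathrm{Id},\mathrm{Id},c,c,c,c,\mathrm{Id},\mathrm{Id})$; the only work is to verify the $r$-normality hypothesis of \cite[Def.~1.3]{FKMSP}. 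Since $s_1\neq s_2$ forces $\gamma_{s_1,i}\neq\gamma_{s_2,j}$ for all $i,j$, the relevant multiplicities split into two independent $4$-tuple blocks; within each block $\gamma_{s,i}\gamma_{s,j}^{-1}$ is unipotent and hence never equal to the non-trivial involution $\xi_\mcF$, so conditions~(2) and~(3) of \cite[Def.~1.3]{FKMSP} coincide there, and one is reduced verbatim to the $4$-tuple check already carried out for Lemma~\ref{lemonevariableI}. Packaging the argument through $r$-normality thus bypasses the cross-column case analysis you flagged as ``the main obstacle'' and left open. One further correction: your assertion that a trivial subrepresentation ``forces a matching of the eight factors into four pairs'' is the $\Sp$-type criterion; for $\SL_r$ with $r\geq 3$ the actual obstruction is the signed-multiplicity-modulo-$r$ condition that $r$-normality encodes, and not every balanced configuration decomposes into pairs.
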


This is proved in Section~\ref{sec-one-variable}.
\par
\medskip
\par
Finally, we use discrete Fourier analysis. We detect the condition
$s_1\not\equiv s_2\mods q$ using additive characters:
$$
1-\frac1q\sum_{\lambda\mods q}
e_q(\lambda(s_1-s_2))=
\begin{cases}
1&\text{ if } s_1\not=s_2\text{ in }\Fq,\\
0&\text{ otherwise.}
\end{cases}
$$
\par
We further complete the sums over $s_1$ and $s_2$ using additive
characters. 
For any $L\colon \Fq\to\Cc$, we define
$$
\mcC(L,\lambda_1,\lambda_2,\bfb)=\sum_{r\mods
  q}\bfR(L,r,\lambda_1,\bfb)\ov{\bfR(L,r,\lambda_2,\bfb)}
$$
where $\bfR(L,r,\lambda,\bfb)$ is the sum defined in
\eqref{defSrblambdasum}. Then let
$$
\hat{\Sigma}(L,\bfb,\lambda_1,\lambda_2,)=
\mcC(L,\lambda_1,\lambda_2,\bfb)-\frac{1}{q} \sum_{\lambda\mods q}
\mcC(L,\lambda_1+\lambda,\lambda_2+\lambda,\bfb).
$$
Observing, as in~(\ref{eq-rem-c}), that for $c\in\Fqt$ we have
$$
\hat{\Sigma}(K_c,\bfb,\lambda_1,\lambda_2)=\hat{\Sigma}(K,\lambda_1/c,\lambda_2/c,\bfb),
$$
the completion leads to the bound
$$
\Sigma^{\not=}(K_c,\bfb;AM)\ll (\log q)^2
\max_{\lambda_1,\lambda_2\in\Fq}|\hat{\Sigma}(K,\bfb,\lambda_1,\lambda_2)|
$$
for any $c\in\Fqt$, where the implied constant is absolute.

% where
% $$
% \hat{\Sigma}(K,\lambda_1,\lambda_2,\bfb)=
% \mcC(K,\lambda_1,\lambda_2,\bfb)-\frac{1}q\sum_{\lambda\mods q}
% \mcC(K,\lambda_1+\lambda,\lambda_2+\lambda,\bfb),
% $$
% in terms of the ``correlation sums'' given by
% $$
% \mcC(K,\lambda_1,\lambda_2,\bfb)=\sum_{r\mods
%   q}\bfR(K,r,\lambda_1,\bfb)\ov{\bfR(K,r,\lambda_2,\bfb)}
% $$
% where $\bfR(r,\lambda,\bfb)$ is the same sum already defined in
% \eqref{defSrblambdasum}.
% \begin{remark}\label{remc2} Observe, as  in Remark \ref{remc},  that for $c\in\Fqt$
% $$\hat{\Sigma}(K_c,\lambda_1,\lambda_2,\bfb)=\hat{\Sigma}(K,\lambda_1/c,\lambda_2/c,\bfb).$$
% \end{remark}

We must now assume  as before that $\mcF=\KL_k$ is the Kloosterman
sheaf of rank $k$ with trace function $K=\Kl_k$. We will prove below
our final bound:

\begin{theorem}\label{thm2Kl2} 
  Let $k\geq 2$ and let $K=\Kl_k$. There exists a codimension one
  subvariety $\mcV^{bad}\subset\Aa^4_\Fq$ containing $\mcV^{\Delta}$,
  with degree bounded independently of $q$, such that for any
  $\bfb\not\in\mcV^{bad}(\Fq)$ and every distinct $\lambda_1$,
  $\lambda_2\in\Fq$, we have
\begin{equation}\label{SigmaIIbound}
  |\hat{\Sigma}(\Kl_k,\bfb,\lambda_1,\lambda_2)|\ll q^{3/2}	
\end{equation}
where the constant depends only on $k$.
\end{theorem}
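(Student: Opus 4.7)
The plan is to interpret $r \mapsto \bfR(\Kl_k, r, \lambda, \bfb)$ cohomologically. One constructs a constructible $\ell$-adic sheaf $\mcR_{\lambda,\bfb}$ on $\Aa^1_\Fq$ as the higher direct image along the $s$-projection of an explicit sheaf on $\Aa^2_\Fq$ whose trace function is the summand $\bfK(r,s,\lambda,\bfb)$; this sheaf is mixed of weights $\leq 1$, pure of weight $1$ on a dense open subset, with conductor bounded in terms of $k$ only, and its trace function at $r$ equals $\bfR(\Kl_k, r, \lambda, \bfb)$. The decisive geometric input is Theorem \ref{thmRbounds} from Section \ref{sec-irreducible}: there exists a codimension-one subvariety $\mcV^{bad} \subset \Aa^4$ containing $\mcV^\Delta$, of degree bounded independently of $q$, such that for every $\bfb \notin \mcV^{bad}(\Fq)$ and every $\lambda \in \Fq$, the sheaf $\mcR_{\lambda,\bfb}$ is geometrically irreducible, and moreover $\mcR_{\lambda,\bfb}$ is not geometrically isomorphic to $\mcR_{\lambda',\bfb}$ for distinct $\lambda, \lambda' \in \Fq$.

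Granted these geometric properties, Deligne's Riemann Hypothesis in the form of Proposition \ref{pr-recall-rh} (applied to a half Tate twist of $\mcR_{\lambda,\bfb}$, to normalize it to weight $0$) yields the correlation bound
\[
\mcC(\Kl_k, \lambda, \lambda', \bfb) = \sum_{r \in \Fq} \bfR(\Kl_k, r, \lambda, \bfb)\ov{\bfR(\Kl_k, r, \lambda', \bfb)} = \delta(\lambda, \lambda')\, q^2 + O(q^{3/2}),
\]
valid for every $\bfb \notin \mcV^{bad}(\Fq)$ and every $\lambda, \lambda' \in \Fq$, with implied constant depending only on $k$. The off-diagonal estimate exploits the geometric non-isomorphism via the first assertion of Proposition \ref{pr-recall-rh}, while the diagonal main term $q^2$ comes from the second (Plancherel-type) assertion, rescaled to account for the weight-$1$ normalization of $\mcR_{\lambda,\bfb}$.

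The theorem then follows at once. For distinct $\lambda_1, \lambda_2 \in \Fq$, we have $\lambda_1 + \lambda \ne \lambda_2 + \lambda$ for every $\lambda \in \Fq$, so every correlation occurring in
\[
\hat{\Sigma}(\Kl_k, \bfb, \lambda_1, \lambda_2) = \mcC(\Kl_k, \lambda_1, \lambda_2, \bfb) - \frac{1}{q}\sum_{\lambda \in \Fq}\mcC(\Kl_k, \lambda_1 + \lambda, \lambda_2 + \lambda, \bfb)
\]
is off-diagonal and hence bounded by $O(q^{3/2})$. Combining the two terms by the triangle inequality yields the desired bound \eqref{SigmaIIbound}.

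The main obstacle is entirely geometric: upgrading the generic (in $\lambda$ and/or $\bfb$) irreducibility and non-isomorphism of the family $\{\mcR_{\lambda,\bfb}\}$, which would follow from straightforward second-moment calculations over growing extensions of $\Fq$ as in \cite{FKMSP}, to the pointwise statements above, valid for every $\lambda \in \Fq$ and every $\bfb$ outside the single codimension-one locus $\mcV^{bad}$. As indicated in the introduction, this upgrade is the heart of Section \ref{sec-irreducible} and rests on Deligne's semicontinuity theorem, the formalism of vanishing cycles, and a detailed analysis of the local monodromy (in particular the ramification at $0$ and $\infty$) of the Kloosterman sheaves, following Katz.
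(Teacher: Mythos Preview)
Your approach is correct and is essentially the paper's own argument: the paper deduces Theorem~\ref{thm2Kl2} from Theorem~\ref{thmRbounds}, whose statement is precisely the correlation estimate $\mcC(\Kl_k,\lambda,\lambda',\bfb)=\delta(\lambda,\lambda')q^2+O(q^{3/2})$ that you quote, and the final step unwinding $\hat{\Sigma}$ is identical.

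Two small corrections worth noting. First, what you describe as the content of Theorem~\ref{thmRbounds} (irreducibility and pairwise non-isomorphism) is in fact Theorem~\ref{thmirreducibility}; Theorem~\ref{thmRbounds} already packages this into the correlation bound, and its proof is exactly the Riemann-Hypothesis deduction you sketch. Second, the sheaf $\mcR_{\lambda,\bfb}$ is only mixed of weights $\leq 1$ and is \emph{not} geometrically irreducible or pure on any open set; the object to which Theorem~\ref{thmirreducibility} and Proposition~\ref{pr-recall-rh} apply is the pure-weight-$1$ quotient $\mcR^*_{\lambda,\bfb}$ of Definition~\ref{def-sp}. Since $t_{\mcR^*}=t_{\mcR}+O(1)=-\bfR+O(1)$, passing between the two costs only $O(q^{3/2})$ in the correlation sum, so your argument goes through once this distinction is made explicit (this is precisely the care taken in the proof of Theorem~\ref{thmRbounds}).
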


This follows from Theorem \ref{thmRbounds} in
Section~\ref{sec-irreducible}. In fact, the subvariety $\mcV^{bad}$ is
the same as in Theorem~\ref{thm1Klk}.

Assuming these results, we conclude the proof of
Theorem~\ref{thmtypeII} in the same manner as in the previous
section. For
$$
\mcB^{bad}=\mcB\cap\{\bfb\in\mcB\,\mid\,
\bfb\mods{q}\in\mcV^{bad}(\Fq)\},\quad\quad
\mcB^{gen}=\mcB\backslash\mcB^{bad},
$$
we have the estimate $|\mcB^{bad}|=O_{k}(B^{3})$ since $\mcV^{bad}$
has degree bounded independently of $q$.
\par
We apply Theorem \ref{thm2Kl2} for $\bfb\in\mcB^{gen}$, the bound
\refs{onevariableII} of Lemma~\ref{lemonevariableII} for
$\bfb\in\mcB^{bad}-\mcB^{\Delta}$ and finally \eqref{diagonalbound}
for  $\bfb\in\mcB^{\Delta}$. This gives
$$
\sum_\bfb|\Sigma^{\not=}([\times c]^*\Kl_k,\bfb;AM)|\ll (\log
q)^2(B^4q^{3/2}+A^2B^3M^2q^{1/2}+A^2B^2M^2q),
$$
where the implied constant depends only on $k$.
\par
We select
$$A=q^{\frac18}M^{-\frac12}N^{\frac12},\
B=q^{-\frac18}M^{\frac12}N^{\frac12},\ $$ which satisfy
\eqref{ABbounds} by \eqref{MNcondII}. Then $AB=N$ and the first and
third terms on the right-hand side are equal to $(MN)^2q$. The second
term is $(MN)^{\frac{5}2}q^{\frac38}\leq (MN)^2q$ by \eqref{MNcondII}.
Therefore we have
$$
\sum_\bfb|\Sigma^{\not=}([\times c]^*\Kl_k,\bfb;AM)|\ll (MN)^2q(\log q)^2
$$
and consequently we obtain from~(\ref{eqBKholder}) the bound
\begin{align*}
  |B([\times c]^*\Kl_k,\uple{\alpha},\uple{\beta})|^2
  &\ll
    \|\uple{\alpha}\|_2^2\|\uple{\beta}\|_2^2
    \Bigl(N+\frac{q^{\eps}}{N}(AN)^{3/4}M^{1/2}q^{1/4}(MN)^{1/2}\Bigr)\\
  &\ll
    q^\eps\|\uple{\alpha}\|_2^2\|\uple{\beta}\|_2^2\Bigl(N+(MN)^{\frac{5}{8}}q^{\frac{11}{32}}
    \Bigr)\\
  &\ll
    q^\eps\|\uple{\alpha}\|_2^2\|\uple{\beta}\|_2^2MN\Bigl(M^{-1}+(MN)^{-\frac{3}{8}}q^{\frac{11}{32}}
    \Bigr),
\end{align*}
for any $\eps>0$, where the implied constant depends only on $k$ and
$\eps$.
\par
This concludes the proof of Theorem \ref{thmtypeII} modulo the proof
of Lemma~\ref{lemonevariableII} and of Theorem \ref{thm2Kl2}.

\begin{remark} \label{rmkimprove}
  As in \cite{FoMi} it is possible to apply the H\"older inequality
  that leads to \eqref{eqBKholder} with higher exponent than
  $2l=4$. Doing this leads to sums involving products of the shape
$$
(r,s_1,s_2)\mapsto\prod_{i=1}^{l}K(s_1(r+b_i))\ov
K(s_2(r+b_i))\ov{K(s_1(r+b_{i+l}))\ov K(s_2(r+b_{i+l}))}
$$ 
for
$$
\bfb_l=(b_1,\cdots,b_l,b_{l+1},\cdots,b_{2l})\in ]B,2B]^{2l}.
$$
\par
Except for heavier notational complexity, some of the arguments of
this section (and of the next) do carry over and (assuming that
\eqref{MNcondII} holds), one obtains for $l\geq 3$ and
$MN\geq q^{7/8}$ the bound
%%\marginpar{Ph checked}
$$
|B(\Kl_k,\uple{\alpha},\uple{\beta})|^2\ll_{\eps,k}
q^\eps\|\uple{\alpha}\|_2^2\|\uple{\beta}\|_2^2MN\Bigl(M^{-1}+
(q^{l+4}(MN)^{-8})^{\frac{1}{4l(l+2)}}\Bigr).
$$
\par
This bound is only interesting  when $l=3$ and yields a
non-trivial estimate in the range
$$
MN\geq q^{\frac{7}8+\delta},\quad\quad \delta>0
$$ 
compared  with $MN\geq q^{\frac{11}{12}+\delta}$ in Remark
\ref{remnontrivialII}. 
\par
In order for the H\"older inequality with higher exponents to give
better estimates, one needs to improve the lower bound on the
codimension of the variety $\mcV_l^{bad}\subset \Aa_\Fq^{2l}$ in the
corresponding generalization of Theorem~\ref{thm2Kl2}. At the moment,
we only know that this codimension is at least $1$, but if one could
prove that this variety has codimension $2$, one could take $l=5$ and
obtain a non-trivial bound in the range $MN\geq q^{\frac56+\delta}$.
\par
The best possible result which might be achieved using this method
would be if $\mcV_l^{bad}$ had codimension $l$. This would lead to
non-trivial bounds as long as
$$
MN\geq q^{\frac{3l+5}{4(l+1)}+\delta},\quad\quad \delta>0.
$$
Although we expect that the codimension of $\mcV_l^{bad}$ is indeed
$l$, this seems a difficult geometric problem when $l$ is large
(indeed, the method used in Section~\ref{sec-irreducible} do not seem
to be sufficient, as they amount to ``estimating'' $\mcV_{l}$ by
showing that it is a subvariety of another variety whose codimension
we estimate, and one expects that the codimension of this auxiliary
variety is exactly $1$).
\par
By taking $l$ very large, we thus see that the limit of the method is
the range $MN\geq q^{\frac34+\delta}$. Interestingly, this is the same
range achieved in~\cite[Th. 1.6]{FKM2} for the case where
$\uple{\alpha}$ and $\uple{\beta}$ are both smooth.

Some of the claims made in this remark have now been verified by D. Bejleri, A. Christensen, B. Kadets, C.-Y. Hsu and Z. Yao while pursuing a research project during the 2016 Arizona Winter School.
\end{remark}

\section{Bounds for complete exponential sums}
\label{sec-complete}

In this section we use methods from $\ell$-adic cohomology to prove
Lemmas \ref{lemonevariableI} and \ref{lemonevariableII}, and we make
the first steps towards Theorems~\ref{thm1Klk} and \ref{thm2Kl2}.  The
proof of these last two theorems will be finished in
Section~\ref{sec-irreducible}.
\par
All results in this section apply for bountiful sheaves (with respect
to the Borel subgroup). Thus we fix a prime $q$ and such a sheaf
$\mcF$ on $\Aa^1_{\Fq}$. We denote by $K$ the trace function of
$\mcF$, and by $\Sing({\mcF})\subset \Pp^1(\bar{\Ff}_q)$ the set of
ramification points of $\mcF$.
% \par
% We define the variety $\mcV^{\Delta}\subset\Aa^4$ to be the union of the
% hyperplanes
% $$
% \{\bfb=(b_1,b_2,b_3,b_4),\ b_i=b_j\},\quad\text{ for }i,j=1,2,3,4,\
% i\not=j.
% $$
\par
For any finite extension $\Fqd/\Fq$, for $\bfb\in (\Fqd)^4$,
$\lambda\in\Fqd$, and $(r,s)\in \Fqd\times\Fqd$, we denote
$$
\bfK(r,s,\lambda,\bfb;\Fqd)=\psi_{\Fqd}(\lambda s) \prod_{i=1}^2 K( s
(r+b_i );\Fqd) \overline{K(s (r+b_{i+2});\Fqd)}.
$$
\par
For $d=1$, we write simply
$\bfK(r,s,\lambda,\bfb)=\bfK(r,s,\lambda,\bfb;\Fq)$. 

\subsection{One variable bounds}\label{sec-one-variable}

The next proposition is a restatement of Lemma~\ref{lemonevariableI}
and \ref{lemonevariableII}, where the variable $c\in\Fqt$ is taken to
be equal to $1$; since we may replace $s$ by $cs$ (resp. $(s_1,s_2)$
by $(cs_1,cs_2)$), this implies the case where $c\in\Fqt$ is
arbitrary.

\begin{proposition}\label{weil} 
  Assume $q\not=2$.  Let $\mcV^\Delta\subset \Aa_\Fq^{4}$ be the
  affine variety given in Definition \ref{multidiagonaldef}.
\par
For all $\bfb=(b_1,b_2,b_3,b_4)\in \Fq^4-\mcV^\Delta(\Fq)$ and for all
$s$, $s_1$, $s_2\in \Fqt$, with $s_1\not=s_2$, we have
\begin{gather}\label{weil1}
\sum_{r\in\Fq}\bfK(r,s,0,\bfb)\ll q^{1/2},\\
\label{weil2}
\sum_{r\in\Fq}\bfK(r,s_1,0,\bfb)\ov{\bfK(r,s_2,0,\bfb)}\ll q^{1/2}
\end{gather}
where the constant implied depend only on the conductor of $\mcF$.
\end{proposition}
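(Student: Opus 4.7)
The plan is to interpret each sum as the trace-function sum of an appropriate tensor product sheaf on $\Aa^1_{\Fq}$ built from Borel pullbacks of $\mcF$ and $\mcF^\vee$, apply Deligne's Riemann Hypothesis to reduce the bound to the vanishing of geometric invariants, and verify that vanishing by means of the Goursat-Kolchin-Ribet criterion \cite[Prop.~1.8.2]{ESDE} together with the bountifulness of $\mcF$, following the template developed in \cite{FKMSP}.

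Writing $\gamma_{s,b}\in\rmB(\bFq)$ for the affine transformation $r\mapsto s(r+b)$, and using that $\ov{K(x)}=t_{\mcF^\vee}(x)$ on the dense open on which $\mcF$ is lisse and pure of weight $0$, the sum in \eqref{weil1} coincides, up to a controlled contribution from the $O_{\cond(\mcF)}(1)$ ramification values, with $\sum_{r}t_{\mcG_{s,\bfb}}(r)$, where
\[
\mcG_{s,\bfb}:=\gamma_{s,b_1}^*\mcF\otimes\gamma_{s,b_2}^*\mcF\otimes\gamma_{s,b_3}^*\mcF^\vee\otimes\gamma_{s,b_4}^*\mcF^\vee,
\]
and \eqref{weil2} becomes $\sum_r t_{\mcH_{s_1,s_2,\bfb}}(r)$ for an eight-fold tensor product indexed by $(j,i)\in\{1,2\}\times\{1,\ldots,4\}$, with the $\mcF/\mcF^\vee$ pattern dictated by complex conjugation. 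Since the conductor of such a tensor product is bounded solely in terms of $\cond(\mcF)$, the Grothendieck-Lefschetz trace formula combined with Deligne's Riemann Hypothesis yields
\[
\Bigl|\sum_{r}t_{\mcG}(r)\Bigr|\ll q^{1/2}+q\cdot\dim H^2_c(\Aa^1_{\bFq},\mcG),
\]
so the problem reduces to showing that the geometric invariants of $\mcG_{s,\bfb}$ (resp.\ $\mcH_{s_1,s_2,\bfb}$) under their monodromy group vanish whenever $\bfb\notin\mcV^\Delta(\Fq)$.

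To analyse these invariants I would group the factors into geometric equivalence classes, where two pullbacks are equivalent when they differ by a twist by a rank-one sheaf, possibly after applying the special involution $\xi_\mcF$ in the $\SL$-type case. The first condition in Definition \ref{def-type} --- that $\gamma^*\mcF\not\cong\mcF\otimes\mcL$ for any non-trivial $\gamma\in\rmB(\bFq)$ and rank-one $\mcL$ --- together with the uniqueness of $\xi_\mcF$, shows that two factors of the form $\gamma_{s,b}^*\mcF$ or $\gamma_{s,b}^*\mcF^\vee$ belong to the same class only when rigid equalities hold among the parameters $(s,b)$. By the Goursat-Kolchin-Ribet criterion, the monodromy of the tensor product is then a direct product, over distinct classes, of copies of $\SL_k$ or $\Sp_k$ embedded diagonally within each class, so invariants arise exclusively from classical invariant theory applied class-by-class: in $\Sp$-type the number of factors in each class must be even, while in $\SL$-type the number of $\mcF$-factors must equal the number of $\mcF^\vee$-factors (modulo $\xi_\mcF$). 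Translating these balance conditions back to the parameters reproduces exactly the multiset definition of $\mcV^\Delta$ from Definition \ref{multidiagonaldef}.

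The main technical obstacle is the eight-fold sum \eqref{weil2}, where one must preclude cross-pairings between factors carrying $s_1$ and those carrying $s_2$. Such a cross-pairing would place a Borel element of the form $\gamma_{s_1,b_i}^{-1}\gamma_{s_2,b_{i'}}$, possibly composed with $\xi_\mcF$, into the stabiliser of $\mcF$ up to rank-one twist; by the bountifulness hypotheses this forces the element to be either trivial (in the ``direct'' case) or the unique special involution (in the ``dual'' case). Combined with the condition $s_1\ne s_2$, each alternative yields a rigid relation among the $b_i$ that would force $\bfb\in\mcV^\Delta$, contrary to hypothesis. Carrying out this case analysis symmetrically for $\Sp$- and $\SL$-type and verifying that every admissible cross-pairing lands in $\mcV^\Delta$ is the technical heart of the argument.
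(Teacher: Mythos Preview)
Your approach is essentially the paper's: both reduce the bounds to the vanishing of geometric invariants of a tensor-product sheaf and invoke the bountifulness of $\mcF$ through the Goursat--Kolchin--Ribet machinery of \cite{FKMSP}. The paper simply cites \cite[Cor.~1.6]{FKMSP} as a black box and verifies that the relevant $4$- and $8$-tuples $(\uple{\gamma},\uple{\sigma})$ are \emph{normal} (resp.\ \emph{$r$-normal}) in the sense of \cite[Def.~1.3]{FKMSP}, whereas you outline the underlying mechanism.

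For \eqref{weil2} the paper's verification is cleaner than the case analysis you propose. The key observation is that $s_1\neq s_2$ forces $\gamma_{s_1,i}\neq\gamma_{s_2,j}$ for all $i,j$, so the multiplicities entering the normality check for the $8$-tuple split as the disjoint union of the $s_1$-block and the $s_2$-block, each of which is exactly a copy of the \eqref{weil1} situation; one is reduced to that case without any cross-term analysis. Your assertion that a single dual cross-pairing $\gamma_{s_1,b_i}^{-1}\gamma_{s_2,b_{i'}}=\xi_\mcF$ already ``forces $\bfb\in\mcV^\Delta$'' is not correct as stated: that equation only imposes $s_2=-s_1$ together with one affine relation on $b_i,b_{i'}$, which does not by itself place $\bfb$ in $\mcV^\Delta$. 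What must be ruled out is not an individual cross-pairing but a \emph{global} balancing of the full $8$-tuple; the paper sidesteps this by noting (already in the \eqref{weil1} step, using $q\neq 2$) that $\gamma_{s,i}\gamma_{s,j}^{-1}$ is a pure translation and hence never a nontrivial involution, so conditions (2) and (3) of \cite[Def.~1.3]{FKMSP} coincide and only direct equalities of the $\gamma$'s need to be tracked.
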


\begin{proof}
  This follows from the techniques surveyed in~\cite{FKMSP}.
  Precisely, for fixed $s\in\Fqt$ and $\bfb\notin\mcV^{\Delta}(\Fq)$,
  the sum in~(\ref{weil1}) is of the type discussed
  in~\cite[Cor. 1.6]{FKMSP} with $k=4$, $h=0$, the $4$-tuple
$$ 
\uple{\gamma}=(\gamma_{s,1},\cdots,\gamma_{s,4})\in\PGL_2(\Fq)^{4}
$$
such that
$$
\gamma_{s,i}=\begin{pmatrix} s &sb_i\\ &1
\end{pmatrix},\ i=1,\ldots,4.
$$
and (if $\mcF$ is of $\SL$-type) the $4$-tuple
$$
\uple{\sigma}=(\sigma_i)_{i=1,\cdots, 4}\in \mathrm{Aut}(\Cc/\Rr)^{4}
$$
where
$$
\sigma_1=\sigma_2=\mathrm{Id_{\Cc}},\quad \sigma_{3}=\sigma_4=c,\quad
c=\text{complex conjugation}.
$$
\par
If $\mcF$ is of $\Sp$ type, the fact that $\bfb$ is not contained in
$\mcV^\Delta(\Fq)$ implies that the tuple $\uple{\gamma}$ is
\emph{normal} in the sense of \cite[Definition 1.3]{FKMSP}. 
\par
Similarly, if $\mcF$ is of $\SL$-type with $\rank(\mcF)=r\geq 3$, and
$\bfb\not\in\mcV^\Delta(\Fq)$, the pair of tuples $(\gamma,\sigma)$ is
$r$-\emph{normal}, including with respect to the special involution
$\xi_\mcF$ of $\mcF$, if the latter exists. Indeed, because $q\not=2$,
$\gamma_{s,i}\gamma^{-1}_{s,j}$ is not an involution unless $b_i=b_j$, so
can only be equal to $\xi_\mcF$ if $b_i=b_j$. This means that conditions
(2) and (3) of \cite[Def. 1.3]{FKMSP} are equivalent in our situation.
Thus the bound \eqref{weil1} follows from \cite[Cor. 1.6]{FKMSP}.
%%Should we speak of the fact that FKMSP uses congruences mod r but
%%here it's the same because the numbers involved are |.|\leq 2, and
%%r\geq 3?
\par
We now consider the bound~(\ref{weil2}). We are again in the situation
of~\cite[Cor. 1.6]{FKMSP} with $h=0$, $k=8$, 
% Given $s_1,s_2$ two distinct elements of $\Fqt$; the function $r\mapsto \bfK_{0}(r,s_1,\bfb)\ov{\bfK_{0}(r,s_2,\bfb)}$ is the trace function of the rank $8k$-sheaf
% $$\mcK_{s_1,0,\bfb}\otimes\mcK_{s_2,0,\bfb}^\vee=\bigotimes_{i=1}^{2}  \gamma_{s_1,i}^* \mcF \otimes \gamma_{s_1,2+i}^* \mcF^\vee\otimes
% \gamma_{s_2,i}^* \mcF^\vee \otimes \gamma_{s_2,2+i}^* \mcF$$
% (at least outside finitely many points  where it not lisse,  whose size is bounded in terms of $c(\mcF)$ only).
% Let
the $8$-tuple
$$
\uple{\gamma}=(\gamma_{s_1,1},\ldots,\gamma_{s_1,4},
\gamma_{s_2,1},\ldots,\gamma_{s_2,4})
$$ 
and (in the $\SL$-type case) the $8$-tuple
$$
\uple{\sigma}= (\mathrm{Id}_{\Cc},
\mathrm{Id}_{\Cc},c,c,c,c,\mathrm{Id}_{\Cc},\mathrm{Id}_{\Cc}).
$$
\par
For $s_1\not=s_2$ and $\bfb\not\in\mcV^\Delta(\Fq)$, the $8$-tuple
$\uple{\gamma}$ is normal for $\mcF$ of $\Sp_k$-type while for $\mcF$
of $\SL_r$-type with $r\geq 3$, the tuples
$(\uple{\gamma},\uple{\sigma})$ are again $r$-normal (also possibly
with respect to the special involution $\xi_\mcF$, if it
exists). Indeed, the fact that $s_1\not=s_2$ implies that the
multiplicities involved in checking~\cite[Def. 1.3]{FKMSP} are either
multiplicities from the $4$-tuple associated to $s_1$, or from that
associated to $s_2$, and we are reduced to the situation
in~(\ref{weil1}).  Hence we obtain \eqref{weil2} by
\cite[Cor. 1.6]{FKMSP}.
\end{proof}

By definition, the bound~(\ref{weil1}) gives
Lemma~\ref{lemonevariableI}, and~(\ref{weil2}) gives
Lemma~\ref{lemonevariableII}. 

\begin{remark}
  In the case of hyper-Kloosterman sums ($K=\Kl_k$), the statements we
  use are special cases of the bounds stated in~\cite[Cor. 3.2,
  Cor. 3.3]{FKMSP}.
\end{remark}

\subsection{Second moment computations}

We now consider second moment averages. These estimates will be used
in the next section to prove irreducibility of various sheaves.
\par
For any finite extension $\Fqd/\Fq$, any $\bfb\in (\Fqd)^4$ and
$r\in\Fqd$, we define
\begin{equation}\label{Rdef}
  \bfR(r,\lambda,\bfb;\Fqd)=\sum_{s\in\Fqd}\bfK(r,s,\lambda,\bfb;\Fqd).	
\end{equation}
Note that, as a function of $\lambda$, this is the discrete Fourier
transform of $s\mapsto \bfK(r,s,0,\bfb;\Fqd)$.

\begin{lemma}\label{lm-Rcorrelation}
  Suppose that the $b_i$, $1\leq i\leq 4$, are pairwise distinct in
  $\Fq$. For any $d\geq 1$, we have
\begin{equation}\label{Rcorrelation}
  \frac{1}{(q^d)^2}\sumsum_{r,\lambda\in\Fqd}|\bfR(r,\lambda,\bfb;\Fqd)|^2=
  q^d+O(q^{d/2}),
\end{equation}
where the implied constant depends only on the conductor of $\mcF$.
\par
If $\mcF$ is of $\SL$-type and admits the special involution
\begin{equation}
\label{eq-special-invol}
\xi=
\begin{pmatrix} -1&0\\
  0 &1
\end{pmatrix}
\end{equation}
then we have
\begin{equation}\label{Rnoncorrelation}
  \frac{1}{(q^d)^2}\sum_{r,\lambda\in\Fqd}
  \bfR(r,\lambda,\bfb;\Fqd)\ov{\bfR(r,-\lambda,\bfb;\Fqd)}=O(q^{d/2}),
\end{equation}
where the implied constant depends only on the conductor of $\mcF$.
%%provided $q>\rank(\mcF)$.
\end{lemma}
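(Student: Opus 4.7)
The plan is to start by applying the discrete Plancherel identity in the $\lambda$ variable. Since $\lambda \mapsto \bfR(r,\lambda,\bfb;\Fqd)$ is the Fourier transform of $s \mapsto \bfK(r,s,0,\bfb;\Fqd)$, we obtain
\begin{equation*}
\sum_{\lambda\in\Fqd}|\bfR(r,\lambda,\bfb;\Fqd)|^2 = q^d\sum_{s\in\Fqd}|\bfK(r,s,0,\bfb;\Fqd)|^2,
\end{equation*}
and, after the change of variable $s \mapsto -s$,
\begin{equation*}
\sum_{\lambda}\bfR(r,\lambda,\bfb;\Fqd)\overline{\bfR(r,-\lambda,\bfb;\Fqd)} = q^d\sum_{s}\bfK(r,s,0,\bfb;\Fqd)\overline{\bfK(r,-s,0,\bfb;\Fqd)}.
\end{equation*}
This reduces both identities to the evaluation of a double sum over $(r,s)$. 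For each fixed $s \in \Fqd^\times$ I would then substitute $u = sr$, turning the inner $r$-sum into a sum over $u$ involving the values of $K$ at the four distinct points $u + sb_i$.

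For \eqref{Rcorrelation}, the integrand becomes $\prod_{i=1}^{4}|K(u+sb_i)|^2$. Using the decomposition $\mcF\otimes\mcF^\vee \simeq \mathbf{1}\oplus\mcF^{\mathrm{ad}}$ on the dense open where $\mcF$ is lisse --- which is valid because $\mcF$ is bountiful, so that its geometric monodromy group $\Sp_k$ or $\SL_k$ acts irreducibly on the standard representation --- we would expand the integrand as a sum of $2^4$ terms. The term corresponding to the trivial factor everywhere contributes $q^d + O(1)$; each of the remaining $15$ terms is of the form $\sum_u\prod_{i\in S}t_{\mcF^{\mathrm{ad}}}(u+sb_i)$ with $S\neq\emptyset$ and pairwise distinct shifts, and hence is $O(q^{d/2})$ by \cite[Cor.~1.6]{FKMSP} applied to the geometrically irreducible, non-trivial sheaf $\mcF^{\mathrm{ad}}$. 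Thus the inner sum equals $q^d + O(q^{d/2})$ for each $s\neq 0$, and after summing over $s\in\Fqd$ (with an $O(q^d)$ contribution from $s=0$) one obtains $\sum_{r,s}|\bfK|^2 = q^{2d}+O(q^{3d/2})$. Multiplying by $q^d$ and dividing by $q^{2d}$ yields \eqref{Rcorrelation}.

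For \eqref{Rnoncorrelation}, I would use the special involution \eqref{eq-special-invol}, which provides a geometric isomorphism $\xi^*\mcF \simeq \mcF^\vee\otimes\mcL$ for some rank-1 sheaf $\mcL$, and thus a pointwise identity $K(-x) = c\cdot\overline{K(x)}\cdot t_\mcL(x)$ on a dense open, for some constant $c$ of modulus one. Substituting this into the second integrand and performing the same change of variable $u = sr$, the inner sum becomes the trace-function sum over $u$ of a sheaf obtained by tensoring, at the four distinct points $u+sb_i$, pullbacks of $\mcF^{\otimes 2}$ and $(\mcF^\vee)^{\otimes 2}$ together with four Kummer-type factors $t_\mcL^{\pm 1}$. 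I would argue via monodromy that this sheaf has no geometric invariants, so that Deligne's Riemann Hypothesis gives $O(q^{d/2})$ for each $s \neq 0$, and summing over $s$ produces the desired bound $O(q^{3d/2})$. The absence of invariants follows from the same representation-theoretic mechanism as in Proposition~\ref{weil}: by the bountiful hypothesis, any geometric isomorphism among pullbacks of $\mcF$ by Borel elements (up to a rank-1 twist) must come from $\xi$ itself, so no accidental coincidence arises when $s\neq 0$ and the $b_i$ are distinct.

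The hardest step will be this monodromy verification in part \eqref{Rnoncorrelation}: one must carefully keep track of the Kummer twist $\mcL$ produced by $\xi$ and confirm that the resulting $8$-fold tensor product at the distinct points $u+sb_i$ carries no $\SL_k$-invariants. This reduces to a normality check in the sense of \cite[Def.~1.3]{FKMSP} for the augmented $8$-tuple of matrices $(\gamma_{s,i}, \xi\gamma_{s,i})_{i=1,\ldots,4}$ of the type used in Proposition~\ref{weil}, and should follow, in the same spirit as the proof of \eqref{weil2}, from the pairwise distinctness of the $b_i$ and the uniqueness of $\xi$ as special involution within the Borel subgroup.
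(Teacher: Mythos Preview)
Your overall strategy---Plancherel in $\lambda$, then a sum-of-products bound via \cite{FKMSP}---is exactly the paper's, and is correct. There are two points worth noting.

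First, a difference in execution: after Plancherel, the paper fixes $r$ (with $r+b_i\neq 0$) and evaluates the \emph{$s$-sum} using the diagonal matrices $\gamma_{r+b_i}=\begin{pmatrix}r+b_i&0\\0&1\end{pmatrix}$, citing \cite[Cor.~1.7]{FKMSP} directly to obtain $\sum_s\prod_i|K((r+b_i)s)|^2=q^d+O(q^{d/2})$. You instead fix $s\neq 0$, substitute $u=sr$, and evaluate the \emph{$u$-sum} with additive shifts $u\mapsto u+sb_i$. Both routes are valid because the bountiful hypothesis is with respect to the full Borel, so multiplicative and additive shifts are covered equally.

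Second, a minor gap in your justification for \eqref{Rcorrelation}: your detour through the decomposition $\mcF\otimes\mcF^\vee\simeq\mathbf{1}\oplus\mcF^{\mathrm{ad}}$ is fine, but the appeal to \cite[Cor.~1.6]{FKMSP} for the cross-terms $\sum_u\prod_{i\in S}t_{\mcF^{\mathrm{ad}}}(u+sb_i)$ is not quite legitimate as stated, since $\mcF^{\mathrm{ad}}$ is not itself bountiful (its monodromy is the adjoint representation of $\Sp_k$ or $\SL_k$, not a standard representation). The bound you want does hold---the Goursat--Kolchin--Ribet argument of \cite{FKMSP} gives joint monodromy $G^{|S|}$ for $\bigoplus_{i\in S}[+sb_i]^*\mcF$, and the external tensor of adjoints has no $G^{|S|}$-invariants---but the cleanest fix is to skip the adjoint decomposition entirely and cite \cite[Cor.~1.7]{FKMSP} for $\sum_u\prod_i|K(u+sb_i)|^2=q^d+O(q^{d/2})$, exactly parallel to what the paper does over $s$.

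For \eqref{Rnoncorrelation}, your plan (use the special involution to rewrite $K(-x)$ as $\overline{K(x)}$ times a rank-one factor, then check $r$-normality of the resulting $8$-tuple) matches the paper's. Note that after the substitution the relevant $8$-tuple consists of four \emph{translations} $[+sb_i]$ each with multiplicity $2$ and fixed sign; the special-involution condition~(3) of \cite[Def.~1.3]{FKMSP} is vacuous here because $\xi\cdot[+sb_i]$ is not a translation, and condition~(2) holds since $\pm 2\not\equiv 0\pmod{k}$ for $k\geq 3$. Your closing sentence about the tuple $(\gamma_{s,i},\xi\gamma_{s,i})$ is a slightly different (pre-substitution) formulation; either works, but be consistent.
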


%%EK: I don't really see the b\not=0 case...

\begin{proof} 
  We abbreviate simply $\psi=\psi_{\Fqd}$ and $K(x)=K(x;\Fqd)$ in the
  computations.  Opening the squares in the lefthand sides of
  \eqref{Rcorrelation} and \eqref{Rnoncorrelation} and averaging over
  $\lambda$, we obtain
\begin{align*}
  q^{-d}\sum_{r,s\in\Fqd}|\bfK(r,s,0,\bfb;\Fqd)|^2&=
  q^{-d}\sum_{r,s\in\Fqd}\prod_{i=1}^4|K(s(r+b_i))|^2\\
&  =q^{-d}\sum_\stacksum{r\in\Fqd}{r+b_i\not=0,\
    i=1,\ldots, 4}\sum_{s\in\Fqd}\prod_{i=1}^4|K(s(r+b_i))|^2+O(1)
\end{align*}
and
\begin{multline*}
  q^{-d}\sum_{r,s\in\Fqd}\bfK(r,s,0,\bfb;\Fqd)\ov{\bfK(r,-s,0,\bfb;\Fqd)}
  \\=q^{-d}\sum_{r,s\in\Fqd}
  \prod_{i=1}^2K(s(r+b_i))\overline{K(s(r+b_{i+2}))} \ov{K(-s(r+b_i))}
  K(-s(r+b_{i+2}))\\
  =q^{-d}\sum_\stacksum{r\in\Fqd}{r+b_i\not=0,\ i=1,\ldots, 4}\sum_{s\in\Fqd}
  \prod_{i=1}^2K(s(r+b_i))\overline{K(s(r+b_{i+2}))} \ov{K(-s(r+b_i))}
  K(-s(r+b_{i+2}))+O(1)
\end{multline*}
respectively, where the implied constant depends only on the conductor
of $\mcF$. 
\par
Since $\xi^*\mcF$ is geometrically isomorphic to the tensor product of
the dual of $\mcF$ with a rank $1$ sheaf $\sheaf{L}$, by assumption,
it follows that $K(-x)=\chi(x)\overline{K(x)}$ for some function
$\chi$ with $|\chi(x)|=1$ for all $x$ such that $\mcF$ is lisse at
$x$. Hence the last sum is equal to
$$
q^{-d}\sum_\stacksum{r\in\Fqd}{r+b_i\not=0,\ i=1,\ldots, 4}\sum_{s\in\Fqd}
L(s)
\prod_{i=1}^2K(s(r+b_i))^2\ov{K(s(r+b_{i+2}))}^2+O(1).
$$
where 
$$
L(s)=\prod_{i=1}^2\overline{\chi(s(r+b_i))} \chi(s(r+b_{i+2}))
$$
is the trace function of a rank $1$ sheaf. Using the relation
$\xi^*\mcF\simeq \mcF^{\vee}\otimes\sheaf{L}$, we see that the
conductor of $\sheaf{L}$ is bounded in terms of the conductor of
$\mcF$ only.
\par
We proceed to evaluate the sum over $s$ using again~\cite{FKMSP} (more
precisely, the final estimates follow from the extension to all finite
fields of these results, which is immediate).
\par
For each $i$, let 
$$
\gamma_{r+b_i}=
\begin{pmatrix}r+b_i&0\\0&1
\end{pmatrix}.
$$
\par
In the $\Sp$-type case, since the $r+b_i$ are pairwise distinct for
$1\leq i\leq 4$, the 
$8$-tuple
$$
\uple{\gamma}=(\gamma_{r+b_1},\ldots,\gamma_{r+b_4},\gamma_{r+b_1},\ldots,
\gamma_{r+b_4})
$$
consists of $4$ distinct pairs $(\gamma,\gamma)$; by~\cite[Cor. 1.7
(1)]{FKMSP}, it follows that for each $r$ distinct from the $-b_i$ for
$1\leq i\leq 4$, we have
$$
\sum_{s\in\Fqd}\prod_{i=1}^4|K((r+b_i)s)|^2= q^d+O(q^{d/2}),
$$
and summing over $r$ gives~(\ref{Rcorrelation}).
\par
In the $\SL$-type case with $r=\rank(\mcF)\geq 3$, the components of
the pair of $8$-tuples
\begin{gather*}
\uple{\gamma}=(\gamma_{r+b_1},\ldots,\gamma_{r+b_4},\gamma_{r+b_1},\ldots,
\gamma_{r+b_4})\\
\uple{\sigma}=(\mathrm{Id}_{\Cc},
\mathrm{Id}_{\Cc},
\mathrm{Id}_{\Cc},
\mathrm{Id}_{\Cc},c,c,c,c)
\end{gather*}
satisfy the final assumption of~\cite[Cor. 1.7 (2)]{FKMSP}, and hence
$$
\sum_{s\in\Fqd}\prod_{i=1}^4|K((r+b_i)s)|^2= q^d+O(q^{d/2}).
$$
also follows if $r+b_i$ is non-zero for each $i$. We therefore
derive~(\ref{Rcorrelation}) again.
\par
Finally we prove \eqref{Rnoncorrelation}: recall we are in the
$\SL$-type with the special involution $\xi$ as
in~(\ref{eq-special-invol}) and with the pair of $8$-tuples
\begin{gather*}
  \uple{\gamma}=(\gamma_{r+b_1},\ldots,\gamma_{r+b_4},\gamma_{r+b_1},\ldots,
  \gamma_{r+b_4})\\
  \uple{\sigma}=(\mathrm{Id}_{\Cc},\mathrm{Id}_{\Cc},c,c,\mathrm{Id}_{\Cc},\mathrm{Id}_{\Cc},c,c).
\end{gather*}
This pair is $r$-normal with respect to $\xi$ (because the multiplicity of any
element in the tuple is either $0$ or $2$). Arguing as in the proof
of~\cite[Th. 1.5]{FKMSP} (p. 20--21, loc. cit.), we deduce that for
each $r$ distinct from the $-b_i$ for $1\leq i\leq 4$, we have
$$
\sum_{s\in\Fqd}L(s) \prod_{i=1}^2K(s(r+b_i))^2\ov{K(s(r+b_{i+2}))}^2\ll q^{d/2},
$$
where the implied constant depends only on the conductor of $\mcF$.
\end{proof}

Finally, we consider one more averaging over the $r$ and $\bfb$
variables in the case  when $\lambda=0$.

\begin{lemma}\label{lm-more-average}
For any $d\geq 1$, we have
$$
\frac{1}{(q^d)^5}
\sumsum_{(r,\bfb)\in\Ff^5_{q^d}}|\bfR(r,0,\bfb;\Fqd)|^2=q^d+O(q^{d/2})
$$
where the implied constant depends only on the conductor of $\mcF$.
\end{lemma}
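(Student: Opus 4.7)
The plan is to open the square, integrate out $\bfb$ by a change of variable that factorises the sum, and then apply the Riemann Hypothesis (Proposition~\ref{pr-recall-rh}) together with the bountifulness of $\mcF$.

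Expanding $\bfR(r,0,\bfb;\Fqd)=\sum_s \bfK(r,s,0,\bfb;\Fqd)$ and opening the square gives
\begin{equation*}
|\bfR(r,0,\bfb;\Fqd)|^2
= \sum_{s_1,s_2\in\Fqd} \prod_{i=1}^2 K(s_1(r+b_i)) \overline{K(s_2(r+b_i))}
\prod_{i=3}^4 \overline{K(s_1(r+b_i))} K(s_2(r+b_i)).
\end{equation*}
Summing over $\bfb\in\Ff^4_{q^d}$, the change of variable $u_i=r+b_i$ decouples the four factors, and the free sum over $r$ contributes a factor $q^d$. Hence
\begin{equation*}
\sum_{(r,\bfb)\in\Ff^5_{q^d}} |\bfR(r,0,\bfb;\Fqd)|^2 = q^d \sum_{s_1,s_2\in\Fqd} |A(s_1,s_2)|^4,
\qquad A(s_1,s_2):=\sum_{u\in\Fqd} K(s_1u)\overline{K(s_2u)}.
\end{equation*}

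It remains to estimate the inner sum. For the diagonal $s_1=s_2=s\neq 0$, the substitution $v=su$ gives $A(s,s)=\sum_v|K(v)|^2 = q^d+O(q^{d/2})$ by the second assertion of Proposition~\ref{pr-recall-rh}, since $\mcF$ is geometrically irreducible and pure of weight $0$ on a dense open set. For $s_1\neq s_2$ both nonzero, writing $t=s_2/s_1\neq 1$ and substituting $v=s_1u$ gives $A(s_1,s_2)=\sum_v K(v)\overline{K(tv)}$; this is a correlation between $\mcF$ and $[\times t]^*\mcF$, and since $\mcF$ is bountiful with respect to the Borel subgroup, taking $\mcL$ trivial in Definition~\ref{def-type} forces $[\times t]^*\mcF\not\simeq\mcF$ geometrically for $t\neq 1$, so the first assertion of Proposition~\ref{pr-recall-rh} yields $A(s_1,s_2)\ll q^{d/2}$. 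The remaining terms with $s_1=0$ or $s_2=0$ reduce to $K(0)\sum_v K(v)$ or its conjugate, which is $O(q^{d/2})$ because $\mcF$ is geometrically irreducible and non-trivial (so $H^0_c$ and $H^2_c$ of $\mcF$ on $\Aa^1_{\bFq}$ vanish), and $A(0,0)=q^d|K(0)|^2=O(q^d)$ trivially.

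Assembling these bounds, the diagonal with $s\neq 0$ contributes $(q^d-1)(q^d+O(q^{d/2}))^4=q^{5d}+O(q^{9d/2})$, while the remaining $O(q^{2d})$ pairs contribute $O(q^{4d})$ in total. Multiplying by $q^d$ and dividing by $(q^d)^5$ yields the claimed value $q^d+O(q^{d/2})$. No serious obstacle arises: the main observation is that averaging over $\bfb$ collapses the problem into a single-variable correlation to which the Borel-bountifulness hypothesis directly applies, and unlike in Lemma~\ref{lm-Rcorrelation} no special-involution issue enters because the correlation is between $\mcF$ and $[\times t]^*\mcF$ rather than between $\mcF$ and the twisted dual.
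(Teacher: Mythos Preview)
Your proof is correct and follows essentially the same approach as the paper's: both perform the change of variable $u_i=r+b_i$ to factorise the $\bfb$-sum, reduce to the fourth moment of the correlation $A(s_1,s_2)=\sum_u K(s_1u)\overline{K(s_2u)}$ (the paper uses the normalised version $\mcC(K,s_1,s_2)=q^{-d}A(s_1,s_2)$), and apply Proposition~\ref{pr-recall-rh} together with bountifulness to handle the diagonal and off-diagonal terms. Your treatment of the $s_1=0$ or $s_2=0$ boundary terms is slightly more explicit than the paper's, which simply restricts to $s\in\Ff_{q^d}^\times$ from the outset.
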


\begin{proof}
  By a change of variables, we see that the sum is given by
$$
q^{-4d}\sumsum_{b_1,b_2,b_3,b_4}\Bigl| \sum_{s\in\Ff^\times_{q^d}}
\prod_{i=1}^2K(sb_i;\Fqd)\ov{K(sb_{i+2};\Fqd)} \Bigr|^2=
\sum_{s,s'\in\Ff^\times_{q^d}}|\mcC(K,s,s';\Fqd)|^2|\mcC(K,s',s;\Fqd)|^2
$$
where $\mcC(K,s,s';\Fqd)$ denote the correlation sum
$$
\mcC(K,s,s';\Fqd)=q^{-d}\sum_{b\in\Ff_{q^d}}K(sb;\Fqd) \ov
K(s'b;\Fqd)=q^{-d}\sum_{b\in\Ff_{q^d}}K((s/s')b;\Fqd)\ov K(b;\Fqd).
$$
\par
By assumption, the sheaf $\mcF$ is geometrically irreducible and is
such that $[\times (s/s')]^*\mcF$ is geometrically isomorphic to
$\mcF$ if and only if $s=s'$. Therefore by the usual application of
the Riemann Hypothesis (see Proposition~\ref{pr-recall-rh}), we have
$$
\mcC(K,s,s';\Fqd)=\delta(s,s')+O(q^{-d/2}),
$$
where the implied constant depends only on the conductor of
$\mcF$. It follows that
$$
\sum_{s,s'\in\Ff^\times_{q^d}}
|\mcC(K,s,s';\Fqd)|^2|\mcC(K,s',s;\Fqd)|^2=
{q^d}+O({q}^{d-d/2})+O({q}^{2d-4d/2})
=q^d+O({q}^{d/2}),
$$
were the implied constant depends only on the conductor of $\mcF$.
\end{proof}

%%%%%%%%%%%%%%%%%%%%%%%%%%%%
%%%%%%% irred begin %%%%%%%%

\section{Irreducibility of sum-product transform
  sheaves}\label{sec-irreducible}

The goal of this long section, which is the most difficult of the
paper, is to prove Theorems~\ref{thm1Klk} and~\ref{thm2Kl2}.  In the
whole section, we fix a prime $q$ and a non-trivial %%$\bQl$-valued
additive character $\psi$ of $\Fq$. We fix also an integer $k\geq
2$. We will also assume that $q$ is sufficiently large depending on
$k$. In particular, unless stated otherwise, we always assume that
$$q>k\geq 2.$$
\par
We first begin by outlining the argument. The $7$-variable function
$\bfK$ and its sum $\bfR$ associated to the trace function of a sheaf
$\mcF$ are first interpreted as trace functions of suitable sheaves in
Section~\ref{sec-aux}. The goal is then to prove that various
specializations of these sheaves, which we call \emph{sum-product}
sheaves, are geometrically irreducible. This we can do when $\mcF$ is
a Kloosterman sheaf. To do so requires quite delicate properties of
these sheaves, which are recalled in Section~\ref{sec-kl-sheaves}. It
also requires some relatively general tools which are stated for
convenience in Section~\ref{sec-prelim-geo}. The argument splits in
two parts, depending on whether we specialize with $\lambda=0$ or with
$\lambda\not=0$, and these are handled separately in
Sections~\ref{sec-zero} and~\ref{sec-not-zero}.

\subsection{Sum-product sheaves}\label{sec-aux}

Let $\mcF$ be an $\bQl$-sheaf on $\Aa^1_{\Fq}$, lisse of rank $k$ and
pure of weight $0$ on a dense open subset, and mixed of weight
$\leq 0$ on $\Aa^1$. (Examples of this include the extension by zero
of a lisse and pure sheaf from an open subset or the middle extension
of a lisse and pure sheaf \cite[Corollary 1.8.9]{WeilII}.)
\par
On the affine space $\Aa^7=\Aa^2\times\Aa^1\times\Aa^4$, with
coordinates denoted $(r,s,\lambda,\bfb)$, we define the projection
$p_{2,3}\,:\, \Aa^7\lra \Aa^1$ by
$$
p_{2,3}(r,s,\lambda,b_1,\ldots,b_4)=\lambda s
$$ 
and morphisms $f_i\,:\, \Aa^7\lra \Aa^1$ for $1\leq i\leq 4$ by
\begin{equation}\label{eq-fi}
  f_i(r,s,\lambda,b_1,\ldots,b_4)=s(r+b_i).
\end{equation}

Let $\mcK$ be the $\bQl$-sheaf on $\Aa^7$ defined by
\begin{equation}\label{eq-sheaf-k}
 \mcK=p_{2,3}^*\mcL_{\psi}\otimes\bigotimes_{i=1}^{2}  (f_i^* \mcF
 \otimes f_{i+2}^* \mcF^\vee ).
\end{equation}
\par
The sheaf $\mcK$ is a constructible $\bQl$-sheaf of rank $k^4$ on
$\Aa^7$, pointwise mixed of weights $\leq 0$. It is lisse and
pointwise pure of weight $0$ on the dense open set $U_{\mcK}$ which is
the complement of the union of the divisors given by the equations
$$
\quad \{s(r+b_i)=\mu\},\quad\text{ for }\mu\in
S_\mcF\text{ and } i=1,\ldots,4,
$$
where $S_{\mcF}$ is the set of ramification points of $\mcF$ in
$\Aa^1$. The trace function of $\mcK$ is
$$
t_{\mcK}(r,s,\lambda,\bfb)=\bfK(r,s,\lambda,\bfb)
$$
for $(r,s,\lambda,\bfb)\in U_{\mcK}(\Fq)$.
\par
Now we consider the projection $\pi^{(2)}\,:\, \Aa^7\lra \Aa^6$ given
by
$$
\pi^{(2)}(r,s,\lambda,\bfb)=(r,\lambda,\bfb),
$$
and the compactly-supported higher-direct image sheaves
$R^i\pi^{(2)}_{!}\mcK$.  Since the fibers of $\pi^{(2)}$ are curves,
these sheaves are zero unless $0\leq i\leq 2$.

\begin{lemma}\label{lm-support}% \marginpar {rating: 9/10} 
Assume that the sheaf $\mcF$ is bountiful with respect to the Borel
subgroup.
\par
\emph{(1)} For $0\leq i\leq 2$, the sheaf $R^i\pi^{(2)}_!\mcK$ on
$\Aa^6_{\Fq}$ is mixed of weights $\leq i$.
\par
\emph{(2)} Let $\mcV^{\Delta}$ be the subvariety of $\Aa^4$ given in
Definition~\ref{multidiagonaldef}. The sheaves $R^0\pi^{(2)}_!\mcK$
and $R^2\pi^{(2)}_!\mcK$ are supported on
$\Aa^1\times\Aa^1\times\mcV^{\Delta}$.
\par
\emph{(3)} For $(r,\lambda,\bfb)$ such that
$\bfb\notin \mcV^{\Delta}$, the geometric monodromy representation of
the sheaf $\mcK_{r,\lambda,\bfb}$ does not contain a trivial
subrepresentation on a dense open subset of $\Aa^1$ where it is lisse.
\end{lemma}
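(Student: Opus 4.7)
Part (1) is a direct consequence of Deligne's theorem on weight bounds (Weil II, Thm.~3.3.1). Indeed, $\mcL_\psi$ is pure of weight $0$, $\mcF$ is mixed of weights $\leq 0$ by hypothesis and pure of weight $0$ on a dense open subset, so $\mcF^\vee$ inherits the same weight bounds on that open set; pullbacks by the $f_i$ and $p_{2,3}$ preserve these bounds, and tensor products add weights. Hence $\mcK$ is mixed of weights $\leq 0$, and since $\pi^{(2)}$ has relative dimension one, $R^i\pi^{(2)}_!\mcK$ is mixed of weights $\leq i$.

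For (2), we reduce to a fiberwise analysis via proper base change: the stalk of $R^i \pi^{(2)}_! \mcK$ at a point $(r,\lambda,\bfb)$ equals $H^i_c(\Aa^1_{\bar\Fq}, \mcM_{r,\lambda,\bfb})$, where
\[
\mcM_{r,\lambda,\bfb} := \mcL_\psi(\lambda s) \otimes \bigotimes_{i=1}^2 [\times(r+b_i)]^*\mcF \otimes [\times(r+b_{i+2})]^*\mcF^\vee
\]
is the restriction of $\mcK$ to the $s$-line. Each factor of $\mcK$ is a pullback of a middle extension or of an extension by zero of a lisse sheaf, so $\mcM_{r,\lambda,\bfb}$ has no punctual components; therefore $H^0_c(\Aa^1, \mcM_{r,\lambda,\bfb}) = 0$, proving $R^0\pi^{(2)}_!\mcK = 0$. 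For $R^2$, we use the standard identification $H^2_c(\Aa^1, \mcM) \simeq (\mcM_{\bar\eta})_{\pi_1^{\mathrm{geom}}}(-1)$; since the monodromy of $\mcF$ factors through the reductive group $\Sp_k$ or $\SL_k$, the monodromy of $\mcM_{r,\lambda,\bfb}$ is semisimple, so coinvariants are non-zero precisely when invariants are. Part (3) then forces such non-vanishing to occur only for $\bfb \in \mcV^\Delta$.

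Part (3) is the heart of the argument. The plan is to distinguish $\lambda = 0$ from $\lambda \neq 0$. When $\lambda = 0$, the fiber sheaf is the pure tensor of pullbacks of $\mcF$ and $\mcF^\vee$ by the scalings $[\times(r+b_i)]$. By the bountifulness of $\mcF$ with respect to the Borel subgroup, $[\times\alpha]^*\mcF$ and $[\times\alpha']^*\mcF$ can be geometrically isomorphic (even after a rank 1 twist) only when $\alpha = \alpha'$, and dually the only ``self-dualizing'' scaling is the special involution $\xi$, if it exists. A Goursat--Kolchin--Ribet argument, in the spirit of~\cite{FKMSP}, then identifies the image of $\pi_1^{\mathrm{geom}}$ inside the product of the monodromy groups of the four factors, modulo the diagonals imposed by the isomorphisms among the $[\times(r+b_i)]^*\mcF^{(\pm)}$. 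Invariant theory for $\Sp_k$ or $\SL_k$ (the first fundamental theorem) reduces the existence of a non-zero invariant vector to the existence of a perfect matching of the factors: in the $\Sp$-case, matching pairs of $\mcF$-factors via the symplectic form; in the $\SL$-case, matching the two $\mcF$-factors with the two $\mcF^\vee$-factors via the natural contraction. Each match forces an equality $r + b_i = r + b_j$, i.e.\ $b_i = b_j$, reproducing exactly the conditions that define $\mcV^\Delta$. For $\lambda \neq 0$, a trivial subsheaf of $\mcM_{r,\lambda,\bfb}$ would produce an embedding $\mcL_{-\psi}(\lambda s) \hookrightarrow \bigotimes [\times(r+b_i)]^*\mcF^{(\pm)}$; this is ruled out by comparing local monodromies at $\infty$, using that the right-hand side has ramification inherited from $\mcF$ incompatible with the simple wild character $\mcL_{-\psi}(\lambda s)$ of Swan conductor exactly one.

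The hard part will be the treatment of the $\SL$-case with special involution $\xi$, for which the isomorphism $\xi^*\mcF \simeq \mcF^\vee\otimes\mcL$ gives rise to additional potential matchings of the form $r + b_i = -(r+b_j) + b_\mcF$; these are linear relations coupling $r$ and $\bfb$ that are not encoded by $\mcV^\Delta$ alone. One must verify that whenever these extra matchings produce an actual invariant in the tensor product, the accompanying constraints on the rank 1 twists $\mcL$ and on the remaining pairings either force $\bfb \in \mcV^\Delta$ or are ruled out; I expect this to require explicit identification of $\mcL$ together with a careful tracking of the diagonals introduced by Goursat--Kolchin--Ribet, possibly invoking the second-moment computations of Lemmas~\ref{lm-Rcorrelation} and~\ref{lm-more-average} to control the generic behavior in the family.
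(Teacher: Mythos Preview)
Your treatment of Part~(1) and the $H^0_c$ vanishing in Part~(2) is correct and matches the paper. The main structural difference is in the logical order between (2) and (3): you try to prove (3) directly by a Goursat--Kolchin--Ribet argument and then deduce the $H^2_c$ vanishing in (2), whereas the paper does the opposite. It invokes \cite[Theorem~1.5]{FKMSP} as a black box to obtain directly that $H^2_c(\Aa^1_{\bFq},\mcK_{r,\lambda,\bfb})=0$ whenever $\bfb\notin\mcV^\Delta$; Part~(3) then follows in one line from the coinvariant formula and the semisimplicity of a pure sheaf, with Pink's specialization theorem passing from closed points to arbitrary points. This is much shorter, and you have essentially been re-deriving the content of~\cite{FKMSP} inline.

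Your direct attack on (3) has two genuine gaps. First, the $\lambda\neq 0$ argument via ``incompatibility of local monodromies at $\infty$'' does not work for a general bountiful sheaf $\mcF$: the definition of bountiful imposes no constraint whatsoever on the ramification of $\mcF$ at $\infty$ (it could be tame, or have break equal to $1$, etc.), so there is no general comparison with the Swan conductor of $\mcL_{\psi(-\lambda s)}$. The correct argument is that once Goursat--Kolchin--Ribet gives that the geometric monodromy of $\bigotimes_i[\times(r+b_i)]^*\mcF^{(\pm)}$ is a product of copies of $\SL_k$ or $\Sp_k$, every rank-one subquotient is geometrically trivial, so $\mcL_{\psi(-\lambda s)}$ cannot occur for $\lambda\neq 0$. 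Second, you explicitly leave the special-involution case open, and your worry that the matchings $r+b_i=-(r+b_j)+b_{\mcF}$ depend on $r$ is exactly the kind of thing that~\cite[Theorem~1.5]{FKMSP} is designed to absorb: the normality conditions there are formulated so that $\bfb\notin\mcV^\Delta$ suffices, and the putative extra pairings in the $\SL$-case contribute copies of $V\otimes V$ (rather than $V\otimes V^\vee$) to the invariant-theory computation, which have no trivial summand for $k\geq 3$. There is no need to invoke Lemmas~\ref{lm-Rcorrelation} or~\ref{lm-more-average} here; those are used elsewhere in the paper for entirely different purposes.

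In short: reverse the order, cite~\cite[Theorem~1.5]{FKMSP} for the $H^2_c$ vanishing, and (3) drops out immediately.
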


\begin{proof}
  The first part is an application of Deligne's main
  theorem~\cite[Theorem 1]{WeilII}. For the second part, by the proper base
  change theorem, the stalk of $R^i\pi^{(2)}_!\mcK$ at
  $x=(r,\lambda,\bfb)\in \Aa^6$ is
$$
H^i_c(\Aa^1_{\bFq}, \sheaf{L}_{\psi(s\lambda)}\otimes
\bigotimes_{i=1}^{2}[\times (r+b_i)]^*\mcK\otimes [\times
(r+b_{i+2})]^*\mcK^{\vee})
$$
where $s$ is the coordinate on $\Aa^1$.
\par
This cohomology group vanishes for $i=0$ and any $x$. For $i=2$ and
$x\notin \mcV^{\Delta}$, its vanishing is given by~\cite[Theorem
1.5]{FKMSP} using (only) the assumption that $\mcF$ is bountiful in
the sense of our definition.
\par
For the last part, we first consider a closed point
$x=(r,\lambda,\bfb)$. Then the vanishing of the stalk
$$
H^2_c(\Aa^1_{\bFq}, \sheaf{L}_{\psi(s\lambda)}\otimes
\bigotimes_{i=1}^{2}[\times (r+b_i)]^*\mcK\otimes [\times
(r+b_{i+2})]^*\mcK^{\vee})
$$
of $R^2\pi^{(2)}_!\mcK$ implies that the geometric monodromy
representation of $\mcK_{r,\lambda,\bfb}$ has no trivial
subrepresentation where it is lisse (by the co-invariant formula and the
semisimplicity that holds because the sheaf is pure of weight
$0$). The statement then extends to all points by Pink's
Specialization Theorem~\cite[Th. 8.18.2]{ESDE}.
\end{proof}

The sheaf $R^1\pi^{(2)}_!\mcK$, which is mixed of weights at most $1$,
is almost the sheaf we want to understand. However, some cleaning-up
is required to facilitate the later arguments. Precisely, recall
(see~\cite[Th. 3.4.1 (ii)]{WeilII}) that a lisse sheaf which is mixed
of weight $\leq w$ is an extension of a lisse sheaf which is pure of
weight $w$ by a mixed sheaf of weight $\leq w-1$. Thus the following
definition makes sense:

\begin{definition}[Sum-product sheaf]\label{def-sp}
  Let $\mcF$ be a bountiful sheaf on $\Aa^1_{\Fq}$, and let $\mcK$ be
  the sheaf~(\ref{eq-sheaf-k}) and $\mcR=R^1\pi^{(2)}_!\mcK$.
  Consider the stratification $(X_i)_{1\leq i\leq m}$ of $\Aa^6_{\Fq}$
  such that
\begin{itemize}
\item $X_1$ is the maximal open subset of $\Aa^6$ on which $\mcR$ is
  lisse;
% \item $X_2$ is the maximal open subset of $\Aa^6\setminus X_1$ on
%   which $\mcR$ is lisse;
\item for $i\geq 2$, $X_i$ is the maximal open subset of
  $\Aa^6\setminus (X_1\cup\cdots\cup X_{i-1})$ on which $\mcR$ is
  lisse.
\end{itemize}
 We define the
  \emph{sum-product transform sheaf} $\mcR^*$ associated to $\mcF$ as
  the constructible sheaf given as the sum over $X_i$ of the maximal
  quotient of $\mcR|X_i$ which is pure of weight $1$ extended by zero
  to all of $\Aa^6_{\Fq}$, so that $\mcR^*|X_i$ is the maximal
  quotient of $\mcR|X_i$ pure of weight $1$.
  % \footnote{ Any sheaf satisfying that condition would work for our
  % arguments.}
\par
For any $(\lambda,\bfb)\in \Aa^5$, we denote by
$\mcR^*_{\lambda,\bfb}$ the pullback of $\mcR^*$ to the affine line
given by the morphism $r\mapsto (r,\lambda,\bfb)$, and we call
$\mcR^*_{\lambda,\bfb}$ a \emph{specialized sum-product sheaf}.
% Let
%   $(X_i)$ be a stratification of $\Aa^6_{\Fq}$ by locally closed
%   subsets such that $\mcR$ is lisse on each strat. We define the
%   \emph{sum-product transform sheaf} $\mcR^*$ associated to $\mcF$ as
%   the constructible sheaf given as the sum over $X_i$ of the maximal
%   quotient of $\mcR|X_i$ which is pure of weight $1$ extended by zero
%   to all of $\Aa^6_{\Fq}$, so that $\mcR^*|X_i$ is the maximal
%   quotient of $\mcR|X_i$ pure of weight $1$.
%   % \footnote{ Any sheaf satisfying that condition would work for our
%   % arguments.}
% \par
% For any $(\lambda,\bfb)\in \Aa^5$, we denote by
% $\mcR^*_{\lambda,\bfb}$ the pullback of $\mcR^*$ to the affine line
% given by the morphism $r\mapsto (r,\lambda,\bfb)$, and we call
% $\mcR^*_{\lambda,\bfb}$ a \emph{specialized sum-product sheaf}.
\end{definition}

By construction, the sum-product sheaf is punctually pure of weight
$1$. A first property of this sheaf is as follows:

\begin{proposition}\label{corRrbcorrelation}% \marginpar{ rating 9/10}
For any $d\geq 1$, we have
$$
\frac{1}{(q^d)^5}
\sumsum_{(r,\bfb)\in\Ff^5_{q^d}}|t_{\mcR^*}(r,0,\bfb;\Fqd)|^2=q^d+O(q^{d/2}).
$$
\end{proposition}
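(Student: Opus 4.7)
The plan is to compare $t_{\mcR^*}(r,0,\bfb;\Fqd)$ to the trace-function sum $\bfR(r,0,\bfb;\Fqd)$ and then invoke Lemma~\ref{lm-more-average}. The key tools are the Grothendieck--Lefschetz trace formula applied fiberwise to $\pi^{(2)}\colon \Aa^7\to\Aa^6$ together with the support and weight information collected in Lemma~\ref{lm-support}.

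First I would note that, by proper base change and Grothendieck--Lefschetz applied on the $\Aa^1$-fibers of $\pi^{(2)}$, for every $(r,\lambda,\bfb)\in \Aa^6(\Fqd)$ we have
\[
\bfR(r,\lambda,\bfb;\Fqd)
=\sum_{i=0}^{2}(-1)^{i}\,t_{R^{i}\pi^{(2)}_{!}\mcK}(r,\lambda,\bfb;\Fqd),
\]
so in particular
\[
t_{\mcR}(r,0,\bfb;\Fqd)
= t_{R^{0}\pi^{(2)}_{!}\mcK}(r,0,\bfb;\Fqd)+t_{R^{2}\pi^{(2)}_{!}\mcK}(r,0,\bfb;\Fqd)-\bfR(r,0,\bfb;\Fqd).
\]
Next, using the definition of $\mcR^*$ as the maximal weight-$1$ quotient of $\mcR$ along the stratification, the difference $t_{\mcR}-t_{\mcR^*}$ is the trace function of a constructible sheaf mixed of weights $\leq 0$, whose rank is bounded in terms of $\cond(\mcF)$ only; hence
\[
|t_{\mcR^*}(r,0,\bfb;\Fqd)-t_{\mcR}(r,0,\bfb;\Fqd)|\ll 1
\]
uniformly, and since $\mcR^*$ is pure of weight $1$ and $\mcR$ mixed of weights $\leq 1$, we also have $|t_{\mcR^*}|,|t_{\mcR}|\ll q^{d/2}$ pointwise. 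Expanding the square then gives
\[
\bigl| |t_{\mcR^*}|^{2}-|t_{\mcR}|^{2}\bigr|\ll q^{d/2},
\]
which, summed over the $q^{5d}$ points $(r,\bfb)\in\Ff_{q^d}^{5}$, contributes at most $O(q^{11d/2})$ to the total.

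The heart of the argument is then to estimate $\sum_{r,\bfb}|t_{\mcR}(r,0,\bfb;\Fqd)|^{2}$. By Lemma~\ref{lm-support}(2), both $R^{0}\pi^{(2)}_{!}\mcK$ and $R^{2}\pi^{(2)}_{!}\mcK$ are supported on $\Aa^{1}\times\Aa^{1}\times\mcV^{\Delta}$. Since $\mcV^{\Delta}$ has codimension $\geq 2$ in $\Aa^{4}$, the set of $(r,\bfb)\in\Ff_{q^d}^{5}$ with $\bfb\in\mcV^{\Delta}(\Fqd)$ has cardinality $O(q^{3d})$. On this set, the trivial bounds $|\bfR|\ll q^{d}$ (sum over $s$ of bounded terms) and $|t_{R^{i}\pi^{(2)}_{!}\mcK}|\ll q^{d}$ (using that $R^{i}$ is mixed of weights $\leq i\leq 2$ with bounded rank) yield that the cross terms and the $|t_{R^{0}}+t_{R^{2}}|^{2}$ term each contribute $O(q^{5d})$ to $\sum_{r,\bfb}|t_{\mcR}|^{2}$. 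Putting this together with the displayed Grothendieck--Lefschetz identity, I obtain
\[
\sum_{(r,\bfb)\in\Ff_{q^d}^{5}}|t_{\mcR}(r,0,\bfb;\Fqd)|^{2}
=\sum_{(r,\bfb)\in\Ff_{q^d}^{5}}|\bfR(r,0,\bfb;\Fqd)|^{2}+O(q^{11d/2}).
\]

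Finally, combining this with the corresponding estimate for $t_{\mcR^*}$ in place of $t_{\mcR}$ and invoking Lemma~\ref{lm-more-average}, which supplies $\sum_{r,\bfb}|\bfR(r,0,\bfb;\Fqd)|^{2}=q^{6d}+O(q^{11d/2})$, division by $(q^{d})^{5}$ yields the claimed asymptotic. The only step that requires real care is bookkeeping the weight and rank bounds on $R^{0}$, $R^{2}$ and on the weight-$<1$ piece of $\mcR$, so that all error terms remain controlled by $\cond(\mcF)$ and fit within $O(q^{d/2})$ after normalization; none of these estimates is deep, all of them follow from Deligne's main theorem and the support statement already secured in Lemma~\ref{lm-support}.
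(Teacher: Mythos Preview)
Your proof is correct and follows essentially the same route as the paper's: reduce $t_{\mcR^*}$ to $t_{\mcR}$ via the $O(1)$ weight-$\leq 0$ discrepancy, then reduce $t_{\mcR}$ to $\bfR$ using Lemma~\ref{lm-support}(2) (which makes $R^0$ and $R^2$ vanish off $\mcV^{\Delta}$) together with the codimension-$2$ bound on $\mcV^{\Delta}$, and finally invoke Lemma~\ref{lm-more-average}. The paper's write-up is slightly more terse---it does not spell out the Grothendieck--Lefschetz identity for $t_{\mcR}$ as you do---but the substance is the same.
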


\begin{proof}
Since
$$
t_{\mcR^*}(r,0,\bfb;\Fqd)=t_{\mcR}(r,0,\bfb;\Fqd)+O(1),
$$
by construction, it is enough to prove that
$$
\frac{1}{(q^d)^5}
\sumsum_{(r,\bfb)\in\Ff^5_{q^d}}|t_{\mcR}(r,0,\bfb;\Fqd)|^2=q^d+O(q^{d/2}).
$$
Let $\mcV^{\Delta}$ be the subvariety of $\Aa^4$ of
Definition~\ref{multidiagonaldef}. 
We have
$$
\sumsum_{(r,\bfb)\in\Ff^5_{q^d}}|\bfR(r,0,\bfb;\Fqd)|^2=
\sumsum_\stacksum{(r,\bfb)\in\Ff^5_{q^d}}{\bfb\not\in\mcV^{\Delta}(\Fqd)}
|\bfR(r,0,\bfb;\Fqd)|^2+
\sumsum_\stacksum{(r,\bfb)\in\Ff^5_{q^d}}{\bfb\in\mcV^{\Delta}(\Fqd)}
|\bfR(r,0,\bfb;\Fqd)|^2.
$$
Since $\mcV^\Delta$ has codimension $2$ and
$\bfR(r,0,\bfb;\Fqd)\ll_k q^d$, the second sum is bounded by
$\ll_k q^{5d}$. On the other hand, the first sum equals
$$
\sumsum_\stacksum{(r,\bfb)\in\Ff^5_{q^d}}{\bfb\not\in\mcV^{\Delta}(\Fqd)}
|t_{\mcR}(r,0,\bfb;\Fqd)|^2.
$$
\par
By the same argument we get
$$
\sumsum_\stacksum{(r,\bfb)\in\Ff^5_{q^d}}{\bfb\not\in\mcV^{\Delta}(\Fqd)}
|t_{\mcR}(r,0,\bfb;\Fqd)|^2= \sumsum_{(r,\bfb)\in\Ff^5_{q^d}}
|t_{\mcR}(r,0,\bfb;\Fqd)|^2 +O(q^{5d}),
$$
and the result then follows from Lemma~\ref{lm-more-average}.
\end{proof}

% We denote $\mcK$ and $\mcR$ be the $\ell$-adic sheaves studied in the
% previous section by specializing $\mcF$ to be $\KL_k$. We will need to
% "clean-up" the sheaf $\mcR$ from some negligible part:

%%%%%%%%%%%%%%%%%%%%%%%%%

The following lemma is a fairly standard one.

\begin{lemma}
  Let $\sheaf{F}_1$ and $\sheaf{F}_2$ be two lisse $\ell$-adic sheaves
  on a smooth geometrically connected scheme $X/\Fq$. Assume that
  $\sheaf{F}_1$ and $\sheaf{F}_2$ are both pure of some weight $w$ and
  that for any $d\geq 1$ and any $x\in X(\Ff_{q^d})$, we have
$$
t_{\sheaf{F}_1}(x;\Fqd)=t_{\sheaf{F}_2}(x;\Fqd)+O(q^{d(w-1)/2}),
$$
where the implied constant is absolute. Then the semisimplifications
of $\sheaf{F}_1$ and $\sheaf{F}_2$ are isomorphic.
\end{lemma}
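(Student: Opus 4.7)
The plan is to reduce, via standard facts, to proving equality of characteristic polynomials of Frobenius at every closed point, and then to deduce that equality from the hypothesis by a short analytic-continuation argument on a generating series.

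First, since the trace function depends only on the semisimplification, I may replace each $\sheaf{F}_i$ by its semisimplification. By the Brauer--Nesbitt theorem combined with Deligne's density theorem (Frobenius conjugacy classes are dense in $\pi_1(X,\bar\eta)$, and traces are continuous class functions on the profinite group), it suffices to prove that for every closed point $\bar x\in|X|$, the characteristic polynomials
\[
\det(1-T\,\Frob_{\bar x}\mid \sheaf{F}_{1,\bar x})\quad\text{and}\quad \det(1-T\,\Frob_{\bar x}\mid \sheaf{F}_{2,\bar x})
\]
coincide, for then the traces of all integer powers of every Frobenius agree and continuity together with semisimplicity forces $\sheaf{F}_1\cong\sheaf{F}_2$.

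Next, I fix a closed point $\bar x$ of residue degree $e$ and let $\alpha_{i,1},\ldots,\alpha_{i,r_i}$ be the eigenvalues of $\Frob_{\bar x}$ on $\sheaf{F}_{i,\bar x}$. Purity gives $|\iota(\alpha_{i,j})|=q^{ew/2}$ for all $i,j$. Viewing $\bar x$ as an $\Ff_{q^{en}}$-point for each $n\geq 1$, the hypothesis applied at this point reads
\[
\sum_{j=1}^{r_1}\iota(\alpha_{1,j})^n-\sum_{j=1}^{r_2}\iota(\alpha_{2,j})^n
= t_{\sheaf{F}_1}(\bar x;\Ff_{q^{en}})-t_{\sheaf{F}_2}(\bar x;\Ff_{q^{en}})
= O\bigl(q^{en(w-1)/2}\bigr),
\]
with implied constant independent of $n$.

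The heart of the argument is then the upgrade of this approximate identity into an exact equality of eigenvalue multisets. Setting $\beta_{i,j}:=\iota(\alpha_{i,j})/q^{ew/2}$, which lie on the unit circle, the previous bound becomes $|\sum_j\beta_{1,j}^n-\sum_j\beta_{2,j}^n|\ll q^{-en/2}$. Form the generating series
\[
F(T)=\sum_{n\geq 1}\Bigl(\sum_j\beta_{1,j}^n-\sum_j\beta_{2,j}^n\Bigr)T^n
=\sum_j\frac{\beta_{1,j}T}{1-\beta_{1,j}T}-\sum_j\frac{\beta_{2,j}T}{1-\beta_{2,j}T}.
\]
As a rational function, $F$ can only have poles on the unit circle $|T|=1$; on the other hand, the coefficient decay shows that the power series converges absolutely on the strictly larger disc $|T|<q^{e/2}$. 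Hence every potential pole on $|T|=1$ must cancel, which forces the multisets $\{\beta_{1,j}\}$ and $\{\beta_{2,j}\}$, and therefore $\{\alpha_{1,j}\}$ and $\{\alpha_{2,j}\}$, to coincide. This gives the required equality of characteristic polynomials at $\bar x$ and completes the proof.

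The only substantive step is the analytic-continuation argument on $F(T)$, which is where purity and the uniformity of the error in $d$ are used together; the reduction to closed points via semisimplification and Brauer--Nesbitt is formal, so I do not anticipate any genuine difficulty beyond writing these routine reductions cleanly.
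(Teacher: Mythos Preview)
Your proposal is correct and follows essentially the same route as the paper: reduce via Chebotarev/density to equality of Frobenius eigenvalue multisets at each point, then use the generating series $\sum_{n\geq 1}(\sum_j\beta_{1,j}^n-\sum_j\beta_{2,j}^n)T^n$ together with purity to force cancellation of all poles on the unit circle. The paper carries out exactly this argument (with a preliminary Tate twist to $w=0$ in place of your normalization $\beta_{i,j}=\iota(\alpha_{i,j})/q^{ew/2}$), so there is nothing to add.
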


\begin{proof}
  By the Chebotarev density theorem, it suffices to prove that the
  trace functions of $\sheaf{F}_1$ and $\sheaf{F}_2$ actually coincide
  (see, e.g.,~\cite[Prop. 1.1.2.1]{laumon87}). After applying a
  suitable Tate twist, we may assume that $w=0$. Let $d\geq 1$ and let
  $x\in X(\Ff_{q^d})$. Denote by $(\alpha_i)$ (resp. $(\beta_j)$) the
  (complex) eigenvalues of the Frobenius at $x$ relative to $\Fqd$ on
  $\sheaf{F}_1$ (resp. $\sheaf{F}_2$). By assumption, for any integer
  $k\geq 1$, we have
$$
\sum_{i} \alpha_i^k=\sum_{j}\beta_j^k+O(q^{-k/2}).
$$
We multiply this by $z^k$ and sum over $k\geq 1$, getting
$$
\sum_i\frac{\alpha_i z}{1-\alpha_i z}=
\sum_{j}\frac{\beta_j z}{1-\beta_j z}+R(z)
$$
where $R(z)$ is holomorphic for $|z|<q^{1/2}$. Comparing poles, we
deduce that the $\alpha_i$'s are a permutation of the $\beta_j$'s,
hence the result.
\end{proof}

%%%%%%%%%%%%%%%%%%%%%%%%%

We deduce from this an important duality property.

\begin{lemma}\label{lm-dual-sheaf}
  For $\bfb=(b_1,b_2,b_3,b_4)\in \Aa^4$, let
  $\tilde{\bfb}=(b_3,b_4,b_1,b_2)$.  For any $\lambda$ and
  $\bfb\notin\mcV^{\Delta}$, the arithmetic semisimplifications of
  $\mcR^{*\vee}_{\lambda, \bfb}$ and
  $ \mcR^*_{-\lambda, \tilde{\bfb}}(1)$ are isomorphic on any dense
  open subset where $\mcR^*_{\lambda,\bfb}$ is lisse.
  % If $\mcR_{\lambda,\bfb}$ is geometrically irreducible, then we
  % have an arithmetic isomorphism
% $$
% \mcR^{*\vee}_{\lambda, \bfb} \simeq \mcR^*_{-\lambda,
% \tilde{\bfb}}(1).  $$
\end{lemma}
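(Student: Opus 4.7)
The plan is to invoke the Chebotarev-type comparison lemma just established, which reduces the claim about arithmetic semisimplifications to a pointwise comparison of trace functions. Both $\mcR^{*\vee}_{\lambda,\bfb}$ and $\mcR^*_{-\lambda,\tilde{\bfb}}(1)$ are pure of weight $-1$ on the open locus where they are lisse, so the hypothesis of that lemma translates, with $w=-1$, into requiring an agreement of trace functions up to error $O(q^{-d})$ on that locus over every finite extension $\Fqd/\Fq$.

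First I would unwind both trace functions in terms of $\bfR$. Since $\bfb\notin\mcV^{\Delta}$, Lemma~\ref{lm-support}(2) guarantees that the stalks of $R^0\pi^{(2)}_!\mcK$ and $R^2\pi^{(2)}_!\mcK$ at $(r,\lambda,\bfb)$ vanish, so the Grothendieck--Lefschetz trace formula applied to the fibration $\pi^{(2)}$, combined with the description of $t_{\mcK}$ on $U_{\mcK}$, gives
\begin{equation*}
t_{\mcR_{\lambda,\bfb}}(r;\Fqd) = -\bfR(r,\lambda,\bfb;\Fqd) + O(1),
\end{equation*}
the $O(1)$ accounting for the finitely many $s\in\Fqd$ at which $\mcK$ is not lisse. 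Passing to the pure weight-$1$ quotient $\mcR^*$ changes this by at most another $O(1)$, so
\begin{equation*}
t_{\mcR^*_{\lambda,\bfb}}(r;\Fqd) = -\bfR(r,\lambda,\bfb;\Fqd) + O(1).
\end{equation*}

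Next, applying the standard identities $t_{\mcG^\vee}(\cdot;\Fqd) = q^{-dw}\,\overline{t_{\mcG}(\cdot;\Fqd)}$ for a sheaf $\mcG$ pure of weight $w$, and $t_{\mcG(1)}(\cdot;\Fqd) = q^{-d}\,t_{\mcG}(\cdot;\Fqd)$, yields
\begin{align*}
t_{\mcR^{*\vee}_{\lambda,\bfb}}(r;\Fqd) &= -q^{-d}\,\overline{\bfR(r,\lambda,\bfb;\Fqd)} + O(q^{-d}),\\
t_{\mcR^*_{-\lambda,\tilde{\bfb}}(1)}(r;\Fqd) &= -q^{-d}\,\bfR(r,-\lambda,\tilde{\bfb};\Fqd) + O(q^{-d}).
\end{align*}
Hence it suffices to verify the pointwise identity
\begin{equation*}
\overline{\bfR(r,\lambda,\bfb;\Fqd)} = \bfR(r,-\lambda,\tilde{\bfb};\Fqd),
\end{equation*}
which follows by direct calculation from the definition \eqref{Rdef}: complex conjugation turns $\psi_{\Fqd}(\lambda s)$ into $\psi_{\Fqd}(-\lambda s)$ and swaps each factor $K(s(r+b_i);\Fqd)$ with $\overline{K(s(r+b_{i+2});\Fqd)}$; the relabeling $\tilde{\bfb}=(b_3,b_4,b_1,b_2)$ then restores exactly the defining product form of $\bfR$.

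The main subtlety here is not geometric but a matter of bookkeeping: one must carefully match purities, Tate twists, and the sign arising from $R^1\pi^{(2)}_!$ in the Leray spectral sequence, and verify that the weight-$\leq 0$ contributions to $\mcR$ and the non-lisse locus of $\mcK$ only contribute $O(1)$. The one substantive geometric input is Lemma~\ref{lm-support}, which ensures that outside $\mcV^{\Delta}$ the $R^0$ and $R^2$ terms drop out and $t_{\mcR^*}$ can be identified with $-\bfR$ modulo negligible error; the rest of the argument is formal.
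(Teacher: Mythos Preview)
Your proof is correct and follows essentially the same route as the paper's: reduce to the Chebotarev-type comparison lemma, identify $t_{\mcR^*_{\lambda,\bfb}}$ with $-\bfR(r,\lambda,\bfb)$ up to $O(1)$, use the elementary identity $\overline{\bfR(r,\lambda,\bfb;\Fqd)}=\bfR(r,-\lambda,\tilde{\bfb};\Fqd)$, and then convert via purity and the Tate twist. One cosmetic remark: since $\HYPK_k$ is defined as the extension by zero (Proposition~\ref{pr-kl}(8)), the trace identity $t_{\mcR_{\lambda,\bfb}}=-\bfR$ actually holds exactly for $\bfb\notin\mcV^{\Delta}$ (cf.\ \eqref{eq-trace-1}), so your first $O(1)$ is not needed, though of course it does no harm.
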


\begin{proof}
  Let $U$ be a dense open subset where $\mcR^*_{\lambda,\bfb}$ is
  lisse. We will check that the sheaves
  $\mcR^{*,\vee}_{\lambda,\bfb}$ and
  $\mcR^*_{-\lambda, \tilde{\bfb}}(1)$, which are both pure of weight
  $-1$, satisfy the conditions of the previous lemma.  Indeed, let
  $d\geq 1$ and $x\in U(\Fqd)$ be given. We observe that
\begin{multline*}
  t_{\mcR^*_{-\lambda,\tilde{\bfb}}}(x;\Fqd) =t_{\mcR_{-\lambda,
      \tilde{\bfb}}}(x;\Fqd)+O(1)
  =- \bfR(x, -\lambda, \tilde{\bfb}) +O(1)\\
  =- \overline{\bfR(x, \lambda,\bfb)} +O(1)
  =\overline{t_{\mcR_{\lambda,\bfb}}(x;\Fqd)} +O(1)
  = \overline{t_{\mcR^*_{\lambda,\bfb}}(x;\Fqd)}+O(1).
\end{multline*}
Since $\mcR^*_{\lambda,\bfb}$ is pure of weight $1$ on $U$, we have
further
$$
t_{\mcR^{*\vee}_{\lambda,\bfb}}(x;\Fqd)=\frac{1}{q^d}\overline{
  t_{\mcR^*_{\lambda,\bfb}}(x;\Fqd)}=
\frac{1}{q^d}t_{\mcR^*_{-\lambda,\tilde{\bfb}}}(x;\Fqd)+O(q^{-d}).
$$
The conclusion now follows.
\end{proof}

\subsection{Properties of Kloosterman sheaves}\label{sec-kl-sheaves}

We will study the sum-product transform of the Kloosterman sheaves.
We first summarize the basic properties of the Kloosterman sheaves,
which were originally defined by Deligne.

\begin{proposition}[Kloosterman sheaves]\label{pr-kl}
%  \marginpar{rating 10/10} 
  Let $q>2$ be a prime number, $\ell\not=q$ an auxiliary prime number
  and $\psi$ a non-trivial $\ell$-adic additive character of
  $\Fq$. Let $k\geq 2$ be an integer.
\par
There exists a constructible $\bQl$-sheaf $\HYPK_k=\HYPK_{\psi,k}$ on
$\Pp^1_{\Fq}$, with the following properties:
\begin{enumerate}
\item For any $d\geq 1$ and any $x\in \Gm(\Fqd)$, we have
$$
t_{\KL}(x;\Fqd)=\Kl_k(x;\Fqd)=
\frac{(-1)^{k-1}}{q^{d(k-1)/2}}\sum_{x_1\cdots
  x_k=x}\psi_{\Ff_{q^d}}(x_1+\cdots+x_k).
$$
\item The sheaf $\HYPK_{\psi,k}$ is lisse of rank $k$ on $\Gm$.
\item On $\Gm$, the sheaf $\HYPK_{\psi,k}$ is geometrically
  irreducible and pure of weight $0$.
\item The sheaf $\HYPK_{\psi,k}$ is tamely ramified at $0$ with
  unipotent local monodromy with a single Jordan block.
\item The sheaf $\HYPK_{\psi,k}$ is wildly ramified at $\infty$, with
  a single break equal to $1/k$, and with Swan conductor equal to $1$.
\item There is an arithmetic isomorphism 
  % The dual sheaf $\HYPK_{\psi,k}^{\vee}$ is given by
$$
\HYPK_{\psi,k}^\vee\simeq [x\mapsto (-1)^kx]^*\HYPK_{\psi,k},
$$
% where the isomorphism is arithmetic; 
and in particular $\HYPK_{\psi,k}$ is arithmetically self-dual if $k$
is even.
\item If $k\geq 2$, then the arithmetic and geometric monodromy groups
  of $\HYPK_{\psi,k}$ are equal; if $k$ is even, they are equal to
  $\Sp_k$ and if $k$ is odd, then they are equal to $\SL_k$.
\item The stalks of $\HYPK_{\psi,k}$ at $0$ and $\infty$ both vanish.
\item If $\gamma\in \PGL_2(\bFq)$ is non-trivial, there does not exist
  a rank $1$ sheaf $\sheaf{L}$ such that we have a geometric
  isomorphism over a dense open set
$$
\gamma^*\HYPK_{\psi,k}\simeq \HYPK_{\psi,k}\otimes \sheaf{L}.
$$
%%if and only if $\gamma=1$.
\end{enumerate}
\end{proposition}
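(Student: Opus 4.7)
The plan is essentially to assemble all these statements from the existing literature, since the Kloosterman sheaves $\KL_{\psi,k}$ are a well-studied object going back to Deligne and extensively developed by Katz in \cite{GKM}. I would begin by recalling the construction: $\KL_{\psi,k}$ is defined on $\Gm$ as the iterated $!$-multiplicative convolution
\[
\KL_{\psi,k} = \mcL_{\psi} \star \cdots \star \mcL_{\psi} \quad (k\text{ factors}),
\]
with a Tate twist normalizing it to be pure of weight $0$, and extended to $\Pp^1_{\Fq}$ as a middle extension (or equivalently, for our purposes, by zero at $0$ and $\infty$; the stalks there vanish either way, giving property (8)). The trace function formula (1) then follows from the Grothendieck--Lefschetz trace formula applied inductively, together with Deligne's computation of the trace of $\mcL_{\psi}$. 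Properties (2) and (3) are part of Deligne's foundational work; a convenient reference is \cite[\S 4]{GKM}.

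For the ramification properties (4) and (5), I would cite \cite[\S 7.4]{GKM}: at $0$, the tame unipotent local monodromy with a single Jordan block is \cite[7.4.1]{GKM}; at $\infty$, the single break $1/k$ and unit Swan conductor are \cite[1.11, 7.4.1]{GKM}. The duality statement (6) is \cite[4.1.11]{GKM}. The determination of the arithmetic and geometric monodromy groups in (7) is the main result of \cite[Ch. 11]{GKM}, where Katz proves the classification: $\Sp_k$ for $k$ even and $\SL_k$ for $k$ odd, and that arithmetic and geometric monodromy coincide.

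The only assertion that requires an argument beyond direct citation is (9): that $\KL_k$ admits no non-trivial $\PGL_2(\bFq)$-symmetry up to a rank one twist. I would prove it by contradiction, assuming $\gamma^*\KL_k \simeq \KL_k \otimes \sheaf{L}$ for some non-trivial $\gamma$ and rank one $\sheaf{L}$. The strategy is to use the local ramification data from (4) and (5) as invariants: the set of singular points of $\KL_k$ is exactly $\{0,\infty\}$, so $\gamma$ must permute $\{0,\infty\}$; and the local monodromy at $0$ (tame unipotent, single Jordan block) and at $\infty$ (totally wild with unique break $1/k$ and Swan conductor $1$) must be matched up to rank one twists. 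This rules out $\gamma$ exchanging $0$ and $\infty$ (the break structures are incompatible, since tensoring by a rank one sheaf cannot turn a tame unipotent local representation into a totally wild one of slope $1/k$). If $\gamma$ fixes $\{0,\infty\}$ setwise and pointwise, then $\gamma \in \rmB(\bFq)$ and one applies \cite[Prop. 4.1.6, Lemma 1.19]{GKM}, or alternatively one notes that the break decomposition at $\infty$ forces $\gamma$ to act trivially on the set of slope-$1/k$ breaks in a way compatible with the explicit description given in \cite[Ch. 10]{GKM}, leading to $\gamma = \mathrm{id}$.

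The main obstacle is the clean treatment of (9), since this is the statement not explicitly isolated in \cite{GKM} but needed to certify $\KL_k$ as bountiful in our sense; I would cite the corresponding argument already given in \cite[Lemma 3.1]{FKMSP}, where exactly this symmetry-rigidity was established for Kloosterman sheaves. All the other items are quotations from Katz's book, and the proposition is primarily a convenient summary for the geometric arguments to follow in Section~\ref{sec-irreducible}.
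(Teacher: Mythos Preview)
Your approach is essentially the same as the paper's: assemble citations from Katz's book \cite{GKM} for properties (1)--(7), make (8) hold by the choice of extension, and cite \cite{FKMSP} for (9). The paper's proof is precisely such a citation list (with (1)--(5) from \cite[11.0.2]{GKM}, (6) from \cite[Cor.~4.1.3, 4.1.4]{GKM}, (7) from \cite[Th.~11.1, Cor.~11.3]{GKM}, and (9) from \cite[\S 3, (b), (c)]{FKMSP}).

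One genuine slip: your parenthetical claim that middle extension and extension by zero agree here, so that ``the stalks there vanish either way,'' is false at $0$. By property (4) the local monodromy at $0$ is unipotent with a single Jordan block, hence has a one-dimensional space of invariants; the middle extension $j_*j^*\HYPK_k$ therefore has a one-dimensional stalk at $0$, not a vanishing one. Property (8) holds \emph{because} the paper extends by zero from $\Gm$ to $\Pp^1$, not by middle extension, and Remark~\ref{rm-kl-def}(3) explains why this convention is preferred (extension by zero behaves well under tensor product, middle extension does not). At $\infty$ the two extensions do coincide since the monodromy is totally wild and has no invariants, but at $0$ they differ.
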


\begin{proof}
  All this is essentially \emph{mise pour m\'emoire}
  from~\cite{GKM}. The sheaf $\HYPK_k$ is the sheaf denoted
  $\mathrm{Kl}_n(\psi)((k-1)/2)$ in~\cite[11.0.2]{GKM}; precisely,
  properties (1) to (5) are stated with references
  in~\cite[11.0.2]{GKM}, property (6) is found in~\cite[Cor. 4.1.3,
  Cor. 4.1.4]{GKM}, and the crucial property (7) is~\cite[Th. 11.1,
  Cor. 11.3]{GKM}. The sheaf constructed in~\cite{GKM} is on $\Gm$,
  and we extend by zero from $\Gm$ to $\Pp^1$, making property (8)
  true by definition. The last property is explained, e.g.,
  in~\cite[\S 3, (b), (c)]{FKMSP}.
\end{proof}

\begin{remark}\label{rm-kl-def}
  (1) As a matter of definition, one possibility is to define
  $\HYPK_{\psi,k}$ as $k$-fold (Tate-twisted) multiplicative
  convolution of the basic Artin-Schreier sheaf $\mcL_\psi$, namely
$$
\HYPK_k=(\mcL_{\psi}\star \cdots\star \mcL_{\psi})((k-1)/2),
$$
see~\cite[5.5]{GKM}. 
% However, this particular definition will not play any role in our
% arguments.
\par
(2) Katz has also shown (see~\cite[Cor. 4.1.2]{GKM}) that the property
(1) characterizes $\HYPK_{\psi,k}$ as a lisse sheaf on $\Gm$, up to
arithmetic isomorphism.
\par
(3) It might seem more natural to define $\HYPK_{\psi,k}$ as the
middle extension from $\Gm$ to $\Pp^1$ of the sheaf constructed by
Katz. However, the property of being a middle extension is not
preserved by tensor product, so we would not be able to use directly
any of the properties of middle extension sheaves when studying the
sum-product transform sheaves. On the other hand, having stalk zero is
preserved by tensor product, and it will turn out that this property
simplifies certain technical arguments.
\end{remark}

\begin{cor} 
%\marginpar{ rating 9/10}
  For $k\geq 2$, the sheaf $\HYPK_{\psi,k}$ is bountiful with respect
  to the full group $\PGL_2$; it is of $\Sp$-type if $k$ is even, and
  of $\SL$-type if $k$ is odd. In the second case, $\HYPK_{\psi,k}$
  has the special involution $\begin{pmatrix} -1&0\\0&1
  \end{pmatrix}$.  Moreover, the conductor of $\HYPK_{\psi,k}$ is
  bounded in terms of $k$ only.
\end{cor}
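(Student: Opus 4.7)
The plan is to verify each clause of Definition~\ref{def-type} by direct translation from Proposition~\ref{pr-kl}, together with a short uniqueness argument for the special involution in the odd-$k$ case. All the substantive content has been packaged into Proposition~\ref{pr-kl} (which itself rests on the monodromy and local ramification computations of Deligne and Katz), so this corollary is essentially bookkeeping; I do not expect any step to be an obstacle.

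First I would invoke property~(7) of Proposition~\ref{pr-kl}: the geometric and arithmetic monodromy groups of $\HYPK_{\psi,k}|_{\Gm}$ coincide and equal $\Sp_k$ when $k$ is even, respectively $\SL_k$ when $k\geq 3$ is odd. This establishes the first bullet of Definition~\ref{def-type} and assigns to $\HYPK_{\psi,k}$ its type ($\Sp$ for $k$ even, $\SL$ for $k$ odd). The ``no rank one twist'' clause---that $\gamma^*\HYPK_{\psi,k}$ is not geometrically isomorphic to $\HYPK_{\psi,k}\otimes \mcL$ for any nontrivial $\gamma\in \PGL_2(\bFq)$ and any rank one sheaf $\mcL$---is verbatim property~(9), applied to the full group $\PGL_2$ (and therefore \emph{a fortiori} to the Borel subgroup).

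For the special involution, only needed when $k$ is odd, I would use property~(6): for odd $k$, it gives an arithmetic isomorphism $\HYPK_{\psi,k}^\vee \simeq [x\mapsto -x]^*\HYPK_{\psi,k}$, so that $\xi = \begin{pmatrix}-1 & 0 \\ 0 & 1\end{pmatrix}\in \PGL_2$ satisfies $\xi^*\HYPK_{\psi,k}\simeq \HYPK_{\psi,k}^\vee$; this is the defining property of a special involution with trivial rank one twist, and $\xi$ manifestly has order $2$ in $\PGL_2$. For uniqueness, if $\xi'\in \PGL_2(\bFq)$ also satisfies $\xi'^*\HYPK_{\psi,k}\simeq \HYPK_{\psi,k}^\vee\otimes \mcL'$ geometrically, then combining with the identity for $\xi$ produces a geometric isomorphism $(\xi\xi'^{-1})^*\HYPK_{\psi,k} \simeq \HYPK_{\psi,k}\otimes \mcL''$ for some rank one $\mcL''$, which by property~(9) forces $\xi\xi'^{-1}=1$ and hence $\xi'=\xi$.

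Finally, the conductor bound is immediate from the definition of $\cond$ recalled in the Notation section together with properties~(2), (4), (5), (8) of Proposition~\ref{pr-kl}: the rank equals $k$, the singular set is contained in $\{0,\infty\}$, the Swan conductor vanishes at $0$ (tame ramification) and equals $1$ at $\infty$, and $\dim H^0_c(\Aa^1_{\bFq},\HYPK_{\psi,k})$ vanishes since $\HYPK_{\psi,k}$ is the extension by zero of a geometrically irreducible lisse sheaf on the affine curve $\Gm$, so this cohomology group equals $H^0_c(\Gm_{\bFq},\HYPK_{\psi,k}|_{\Gm})=0$. This yields $\cond(\HYPK_{\psi,k}) \leq k + 3$, which depends only on $k$.
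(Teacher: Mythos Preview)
Your proof is correct and follows the same approach as the paper, which simply states that the corollary ``is clear from Proposition~\ref{pr-kl} using the definition of bountiful sheaves and of the conductor of a sheaf.'' You have just made explicit the verification the paper leaves to the reader, including the uniqueness argument for the special involution (via property~(9)) and the explicit conductor computation $\cond(\HYPK_{\psi,k})=k+3$.
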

\begin{proof}
  This is clear from Proposition~\ref{pr-kl} using the definition of
  bountiful sheaves and of the conductor of a sheaf.
\end{proof}

For convenience, we will most often simply denote
$\HYPK_k=\HYPK_{\psi,k}$ since we assume that $\psi$ is fixed.
% We also
% sometimes write $\mcF=\HYPK_k$, when stating properties of a
% ``generic'' nature that we expect to hold for much more general
% classes of sheaves.
\par
\medskip
\par
The following lemma computes precisely the local monodromy of
$\HYPK_k$ at $\infty$. This is a special case of a formula of
L. Fu~\cite[Prop. 0.8]{Fu} (which also describes $\HYPK_k$ as a
representation of the decomposition group, not just the inertia
group).

\begin{lemma}\label{lm-kl-infty} 
%\marginpar{ rating 9/10}
  Assume $q>k\geq 2$. Denote by $\tilde{\psi}$ the additive character
  $x\mapsto \psi(kx)$ of $\Fq$. Then, as representations of the
  inertia group $I(\infty)$ at $\infty$, there exists an isomorphism
$$
\HYPK_k\simeq [x\mapsto
x^k]_*(\mcL_{\chi_2^{k+1}}\otimes\mcL_{\tilde{\psi}}),
$$
% \begin{cases}[x\mapsto x^k]_*(\mcL_\psi)=\mcL_\psi(x^{1/k})&\hbox{ if $k$ is odd}\\
% [x\mapsto x^k]_*(\chi_2\otimes\mcL_\psi)&\hbox{ if $k$ is even}.
% \end{cases}
% $$
where we recall that
$\chi_2$ is the unique non-trivial character of order $2$ of $\Fqt$.
\end{lemma}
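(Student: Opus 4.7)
The plan is to match both sides as $I(\infty)$-representations via the following rigidity principle: \emph{an irreducible totally wild $I(\infty)$-representation of rank $k$ with a single break equal to $1/k$ and Swan conductor $1$ is uniquely determined, among such representations, by its determinant.}

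First, I would verify that both sides lie in this class. From Proposition~\ref{pr-kl}(2),(5), the sheaf $\HYPK_k$ has rank $k$, a single break $1/k$, and Swan conductor $1$ at $\infty$. For the right-hand side, set $\mcM := \mcL_{\chi_2^{k+1}} \otimes \mcL_{\tilde\psi}$, a rank-$1$ sheaf on $\Gm$ whose wild ramification at $\infty$ comes entirely from the Artin-Schreier factor $\mcL_{\tilde\psi}$ and hence has Swan conductor $1$. Since $[k]\colon x\mapsto x^k$ is totally ramified of degree $k$ at $\infty$, standard formulas for induction of local representations yield that $[k]_*\mcM$ has rank $k$, a single break $1/k$, and Swan conductor $1$ at $\infty$, matching the invariants of $\HYPK_k$.

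Next, I would check irreducibility on both sides. For $\HYPK_k|_{I(\infty)}$, the equality (Swan conductor) $=$ (rank) $\times$ (break) forces irreducibility by Katz's structure theorem for wild $I(\infty)$-representations (\cite{GKM}, Ch.~1). For $[k]_*\mcM|_{I(\infty)}$, Mackey's criterion reduces irreducibility to the non-existence of isomorphisms $\mcM \simeq [\times\zeta]^*\mcM$ as representations of the index-$k$ subgroup $I(\infty')\subset I(\infty)$ (corresponding to the Kummer cover $[k]$), for $\zeta\in\mu_k(\bFq)$ with $\zeta\neq 1$. This is immediate because $[\times\zeta]^*\mcL_{\tilde\psi} \simeq \mcL_{\psi(k\zeta\,\cdot)}$ corresponds to a distinct additive character.

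The main obstacle is pinning down the multiplicative character $\chi_2^{k+1}$, which requires matching the determinants of the two sides. The cleanest way is to invoke the stationary phase analysis of L.~Fu cited in the statement, applied to the iterated multiplicative convolution $\HYPK_k = \mcL_\psi\star\cdots\star\mcL_\psi$ of Remark~\ref{rm-kl-def}(1): the critical value of $y_1+\cdots+y_k$ on the fiber $y_1\cdots y_k = x$ is $kx^{1/k}$, attained at the symmetric critical point $y_1=\cdots=y_k=x^{1/k}$, yielding the additive factor $\tilde\psi(y)=\psi(ky)$ on the Kummer cover; the Hessian at this critical point is a Vandermonde-type quadratic form whose discriminant contributes a Gauss sum with multiplicative character $\chi_2^{k-1}=\chi_2^{k+1}$. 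Alternatively, one can compute the determinants directly: that of $\HYPK_k$ is extracted from the self-duality of Proposition~\ref{pr-kl}(6), while the induction-determinant formula gives $\det([k]_*\mcM) = \chi_2^{k-1}\cdot N(\mcM)$, with the factor $\chi_2^{k-1}$ arising from the sign character of the regular representation of $\mu_k$; matching the two then forces the multiplicative character of $\mcM$ to be exactly $\chi_2^{k+1}$, as asserted.
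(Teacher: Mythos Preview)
Your overall strategy---rigidity plus a determinant match---is the same as the paper's, but the rigidity principle you invoke is stated too strongly, and this leaves a genuine gap. The correct statement (see \cite[\S1.18]{GKM} and \cite[Theorem~8.6.3]{ESDE}, which the paper cites) is that a totally wild $I(\infty)$-representation with Swan conductor~$1$ is determined by its rank and determinant \emph{only up to a multiplicative translate} $[\times c]^*$. The determinant cannot distinguish $V$ from $[\times c]^*V$: since $V$ has all breaks $1/k<1$, its determinant is rank~$1$ with break $<1$, hence (integral break) tame, hence of the form $\mcL_\chi$; and geometrically $[\times c]^*\mcL_\chi\simeq\mcL_\chi$. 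So after your determinant computation you only know $\HYPK_k\simeq[\times c]^*[k]_*(\mcL_{\chi_2^{k+1}}\otimes\mcL_{\tilde\psi})$ for \emph{some} $c\in\bFq^\times$, and nothing in your argument forces $c=1$.

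The paper supplies the missing ingredient by first quoting \cite[10.4.5]{GKM}, which gives $\HYPK_k\simeq[k]_*\mcL_{\tilde\psi}$ already as $P(\infty)$-representations; since $\mcL_{\chi_2^{k+1}}$ is tame, the right-hand side of the lemma has the same $P(\infty)$-restriction. Restricting the putative isomorphism to $P(\infty)$ and pulling back along $[k]$ compares $\bigoplus_{\zeta\in\mmu_k}\mcL_{\tilde\psi(\zeta t)}$ with $\bigoplus_{\zeta\in\mmu_k}\mcL_{\tilde\psi(\zeta c^{1/k}t)}$, forcing $c^{1/k}\in\mmu_k$ and hence $c=1$. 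Two smaller points: irreducibility of a totally wild Swan-$1$ representation is automatic by \cite[Prop.~1.14]{GKM}, so your Mackey argument, while correct, is unnecessary; and the triviality of $\det\HYPK_k$ comes from Proposition~\ref{pr-kl}(7) (monodromy in $\SL_k$ or $\Sp_k$), not from the duality statement~(6), which for odd $k$ is not even a self-duality. Alternatively, as the paper notes just before the lemma, one can simply cite Fu~\cite[Prop.~0.8]{Fu} for the full $I(\infty)$-computation---you mention Fu but only for the determinant, which by itself is not enough.
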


\begin{proof} According to the remark in~\cite[10.4.5]{GKM}, we have
  an isomorphism
$$
\HYPK_k\simeq [x\mapsto
x^k]_*(\mcL_{\tilde{\psi}}),
$$
as a representation of the wild inertia subgroup $P(\infty)\subset
I(\infty)$. On the other hand, by~\cite[\S 1.18]{GKM}
and~\cite[Theorem 8.6.3]{ESDE}, an
$I(\infty)$-representation which is totally wild with Swan conductor
$1$ is determined, up to scaling, by its rank and its determinant
(i.e., if two such representations $\pi_1$ and
$\pi_2$ have same rank and determinant, then there exists a non-zero
$c$ such that $\pi_2\simeq [\times c]^*\pi_1$).
\par
Since $\det
\HYPK_k$ is trivial (see Proposition~\ref{pr-kl} (7)), it is therefore
sufficient to check that the determinant of the
$I(\infty)$-representation $[x\mapsto
x^k]_*(\mcL_{\chi_2^{k+1}}\otimes\mcL_{\tilde{\psi}})$ is trivial.
But for any multiplicative character
$\chi$, we have a geometric isomorphism
$$
\det([x\mapsto x^k]_*(\mcL_{\chi}\otimes\mcL_{\tilde{\psi}})) \simeq
\chi\chi_2^{k+1}
$$
and this is geometrically trivial if $\chi=\chi_2^{k+1}$
(this follows, e.g., from the Hasse-Davenport relations as
in~\cite[Proposition 5.6.2]{GKM}, or from the block-permutation matrix
representation of an induced representation, similarly to the argument
that appears later in Lemma~\ref{lm-inductionformula}).
\end{proof}

Finally, we can state our main theorem concerning the sum-product
sheaves associated to Kloosterman sheaves.

\begin{theorem}[Irreducibility of sum-product
  sheaves]\label{thmirreducibility}
  Let $k\geq 2$ be an integer. % Let $q$ be a prime sufficiently large
  % with respect to $k$, 
  Let $\ell$ be a prime $\not=q$ and let $\mcR^*$ be the $\ell$-adic
  sum-product transform sheaf of $\KL_k$ over $\Fq$.
  \par
  If $q$ is sufficiently large with respect to $k$, there exists a
  closed subset $\mcV^{bad} \subset \Aa_\Fq^4$ containing
  $\mcV^{\Delta}$, of codimension $1$ and of degree bounded
  independently of $q$, stable under the automorphism
  $(b_1,b_2,b_3,b_4)\mapsto (b_3,b_4,b_1,b_2)$, such that for all
  $\bfb=(b_1,b_2,b_3,b_4)$ not in $\mcV^{bad}$, the following
  properties hold:
\begin{enumerate}
\item For all $\lambda$, the specialized sum-product sheaf
  $\mcR^*_{\lambda,\bfb}$ is lisse and geometrically irreducible on a
  dense open subset of $\Aa^1$;
\item For all $\lambda$, there does not exist a dense open subset $U$
  of $\Aa^1$ such that $\mcR^*_{\lambda,\bfb}|U$ is geometrically
  trivial;
% \footnote{Just added an additional claim. We need
%     % this for one of the two final bounds. This is pretty obvious from
%     % the local monodromy computations but we should mention it in the
%     % proof. (WS)}
\item If $\lambda\not=\lambda'$, then there does not exist a dense
  open subset $U$ of $\Aa^1$ such that $\mcR^*_{\lambda,\bfb}|U$ is
  geometrically isomorphic to $\mcR^*_{\lambda',\bfb}|U$ .
\item For all $\lambda_1,\lambda_2, \bfb_1$, $\bfb_2$, the dimensions
  of the stalks of the sheaf $\mcR_{\lambda_i,\bfb_i}$, and the
  dimensions of the cohomology groups
  $H^i_c(\Aa^1_{\bFq}, \mcR_{\lambda_1,\bfb_1})$ and
  $H^i_c (\Aa^1_{\bFq}, \mcR_{\lambda_1,\bfb_1} \otimes
  \mcR_{\lambda_2, \bfb_2} )$ are bounded in terms of $k$ only, in
  particular independently of $q$ for $k$ fixed.
\end{enumerate}
\end{theorem}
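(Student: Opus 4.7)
The overall plan is to establish irreducibility and non-isomorphism generically over the parameter space of $(\lambda,\bfb)\in\Aa^5$ via second-moment averages, then spread these properties to every specialization with $\bfb$ outside a codimension-one bad set. I apply Katz's diophantine criterion for the irreducibility of $\ell$-adic sheaves: by Proposition \ref{corRrbcorrelation}, the sheaf $\mcR^*$ on $\Aa^5$ satisfies
\[
\frac{1}{(q^d)^{5}}\sum_{(r,\bfb)\in\Fqd^5}|t_{\mcR^*}(r,0,\bfb;\Fqd)|^2=q^d+O(q^{d/2}),
\]
and an analogous identity with $\lambda$ in place of a $\bfb$-coordinate follows from Lemma \ref{lm-Rcorrelation}. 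Since $\mcR^*$ is pure of weight $1$, the Grothendieck--Lefschetz trace formula combined with the Riemann Hypothesis (Proposition \ref{pr-recall-rh}) forces $\mcR^*$, viewed as a family over $\Aa^5$, to contain a single geometrically irreducible component with multiplicity one on a dense open, yielding geometric irreducibility of the generic specialization $\mcR^*_{\lambda,\bfb}$. Similarly, the non-correlation \eqref{Rnoncorrelation} supplies generic non-isomorphism for distinct $\lambda$.

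\textbf{Phase 2 (upgrade to pointwise statements via semicontinuity).} This is the heart of the argument. For a fixed $\bfb_0$, generic irreducibility on $\Aa^5$ does not a priori yield irreducibility of $\mcR^*_{\lambda,\bfb_0}$ for \emph{every} $\lambda$: the whole hyperplane $\{\bfb_0\}\times\Aa^1_\lambda$ could be bad. To rule this out, I invoke Deligne's semicontinuity theorem, which gives upper semicontinuity of the total Swan conductor in a family, together with the Euler--Poincar\'e formula controlling Betti numbers via rank, singularities, and Swan conductors. Using the explicit local monodromy of $\KL_k$ at $0$ (tame unipotent, single Jordan block) and at $\infty$ (wild, Swan $1$, made precise by Lemma \ref{lm-kl-infty}), I would compute the generic ramification of $\mcR^*_{\lambda,\bfb}$ by stationary-phase analysis of the $s$-integration defining $\mcR$ from $\mcK$. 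I then define $\mcV^{bad}$ as the union of $\mcV^{\Delta}$, the diagonal strata $\{b_i=b_j\}$, the image of these under the involution $(b_1,b_2,b_3,b_4)\mapsto(b_3,b_4,b_1,b_2)$ (ensuring the required symmetry), and the locus where the computed ramification invariants jump; each condition is cut out by polynomials of degree $O_k(1)$, bounding $\deg\mcV^{bad}$ independently of $q$. Vanishing-cycle analysis then precludes the appearance of new irreducible components by specialization outside $\mcV^{bad}$, establishing (1).

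\textbf{Phase 3 (non-triviality, non-isomorphism, Betti bounds).} Non-triviality in (2) is automatic from purity of weight $1$ combined with Lemma \ref{lm-support}(3), since a geometrically trivial sheaf would have weight $0$ Frobenius eigenvalues. For (3), I interpret $\bfR(r,\lambda,\bfb)$ as the discrete Fourier transform in $s$ of $\bfK(r,s,0,\bfb)$: a geometric isomorphism $\mcR^*_{\lambda,\bfb}\simeq \mcR^*_{\lambda',\bfb}$ at fixed $\bfb$ would contradict the Plancherel-type identity of Lemma \ref{lm-Rcorrelation}, specifically \eqref{Rnoncorrelation} together with the duality of Lemma \ref{lm-dual-sheaf}, after applying Proposition \ref{pr-recall-rh} to the relevant correlation sum. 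Finally, the Betti bounds (4) reduce via Euler--Poincar\'e to controlling the rank, singularities, and Swan conductors of the sheaves built from $\KL_k$, all of which are bounded solely in terms of $k$ because $\cond(\KL_k)=O_k(1)$.

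\textbf{Main obstacle.} The principal difficulty lies in Phase 2. While the overall shape (generic irreducibility $+$ semicontinuity $+$ vanishing cycles) is standard, the delicate points are to pin down $\mcV^{bad}$ \emph{explicitly} as a closed subset of $\Aa^4_{\bfb}$ alone of bounded degree, to exclude that specialization induces an unexpected global symmetry causing the fibre to split, and to handle the wild ramification at $\infty$ of $\mcR^*_{\lambda,\bfb}$ inherited from $\KL_k$ via the fibre-integration $\pi^{(2)}_!$. These all hinge on the precise local input on Kloosterman sheaves recorded in Section \ref{sec-kl-sheaves}.
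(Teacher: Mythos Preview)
Your outline has the right shape, but Phase~2 misses the two specific mechanisms that carry the argument. The paper splits sharply into the cases $\lambda=0$ and $\lambda\neq 0$, handled by quite different arguments, and this split is absent from your proposal. For $\lambda=0$, the decisive input (Lemma~\ref{lm-r-integrality}) is that $\mcR_{\lambda=0}$ descends to a sheaf over $\Zz[1/\ell]$, by a homogeneity argument (the trace function is independent of the choice of $\psi$). This integrality is what allows $\mcV^{bad}$ itself to be constructed over $\Zz[1/\ell]$ with degree bounded independently of $q$ (Proposition~\ref{pr-degree-bound}) --- your assertion that the jump locus is cut out by polynomials of degree $O_k(1)$ is precisely the point requiring proof --- and, via Abhyankar, it kills wild ramification along $\Aa^4$ for $q$ large. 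Combined with the check that the singular divisors of $\mcR_{\lambda=0}$ on $\Aa^5$ are not pullbacks from $\Aa^4$ (Lemma~\ref{lm-mcr-lisse}), this forces the action of the Galois group of $\bFq(\bfb)$ on the irreducible constituents of the generic fibre to be trivial (Proposition~\ref{pr-new-irreducibility}).

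For $\lambda\neq 0$, the paper applies the criterion of Lemma~\ref{lemcriterion}(b), whose heart is condition~(2): a multiplicity-one irreducible summand in the local monodromy at $r=\infty$ with Galois-stable isomorphism class. This is established by a nearby-cycle comparison (Lemmas~\ref{lm-kloosterman-invariance} and~\ref{lm-nearby}) identifying the wild part of the local monodromy of $\mcR_{\lambda,\bfb}$ at $\infty$ with that of $\mcR_{\lambda,\bfO}$, the latter computed explicitly via Laumon's local Fourier transform (Lemma~\ref{lm-local-fourier}) as a sum of multiplicative translates of a hypergeometric sheaf $\mcH_{k-1}$ indexed by the multiset $S_k$ of nonzero values $(1+\zeta_2-\zeta_3-\zeta_4)^k$. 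The combinatorics of $S_k$ (Lemma~\ref{lm-sk-geometry}) supplies both the required multiplicity-one component and, for part~(3), the non-isomorphism $\mcR^*_{\lambda,\bfb}\not\simeq\mcR^*_{\lambda',\bfb}$ when $\lambda\neq\lambda'$; your Phase~3 approach via~\eqref{Rnoncorrelation} only treats the residual case $\lambda'=-\lambda$ for $k$ odd (which is indeed how the paper disposes of it). Finally, your non-triviality argument fails: a geometrically trivial sheaf can be pure of any weight.
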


After some preliminaries, the proof splits into two cases: the case
$\lambda=0$ in Section~\ref{sec-zero} and the case $\lambda\not=0$ in
Section~\ref{sec-not-zero}.

First, let us recall how this theorem implies our desired
Theorems~\ref{thm1Klk} and~\ref{thm2Kl2}.

\begin{theorem}\label{thmRbounds}  Let $k\geq 2$ be an integer. 
  % Let
  % $q$ be a prime sufficiently large with respect to $k$.
  Let $\bfR(r,\lambda,\bfb)$ be the function on $\Aa^6(\Fq)$ defined
  in \eqref{Rdef}. For any $\bfb\in\Ff_q^4-\mcV^{bad}(\Fq)$ and any
  $\lambda,\lambda'\in\Fq$, we have
\begin{align*}
  \sum_{r\in\Fq}\bfR(r,\lambda,\bfb)
  &\ll q,\\
  \sum_{r\in\Fq}\bfR(r,\lambda,\bfb)
  \ov{\bfR(r,\lambda',\bfb)}
  &=
    \delta(\lambda,\lambda')q^2+O(q^{3/2}),
\end{align*}
where the implied constant depends only on $k$.
\end{theorem}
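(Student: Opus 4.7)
The plan is to deduce both bounds directly from Theorem~\ref{thmirreducibility}, using the Grothendieck--Lefschetz trace formula together with Deligne's Riemann Hypothesis. First I identify $\bfR(r,\lambda,\bfb)$ as essentially a trace function on $\Aa^6$. Proper base change and the trace formula applied to the projection $\pi^{(2)}\colon\Aa^7\to\Aa^6$ give
$$
\bfR(r,\lambda,\bfb) = \sum_{i=0}^{2}(-1)^i \tr\bigl(\Frob_{(r,\lambda,\bfb)}\,\big|\, H^i_c(\Aa^1_{\bFq},\mcK_{r,\lambda,\bfb})\bigr) + O(1),
$$
with the $O(1)$ accounting for the finitely many points in the fibre where $\bfK$ differs from $t_\mcK$. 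By Lemma~\ref{lm-support}, both $H^0_c$ and $H^2_c$ vanish once $\bfb\notin\mcV^\Delta\subset\mcV^{bad}$, so this reduces to $\bfR(r,\lambda,\bfb) = -t_\mcR(r,\lambda,\bfb) + O(1)$. Using Definition~\ref{def-sp} and the fact that the kernel of $\mcR\to\mcR^*$ is mixed of weight $\leq 0$ with bounded rank (cf.\ the proof of Proposition~\ref{corRrbcorrelation}), I have $t_\mcR = t_{\mcR^*} + O(1)$ uniformly, and punctual purity of weight~$1$ yields $|t_{\mcR^*}(r,\lambda,\bfb)|\ll q^{1/2}$.

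For the first bound I apply the trace formula a second time, to $\mcR^*_{\lambda,\bfb}$ on $\Aa^1$. By Theorem~\ref{thmirreducibility}(1) and~(2), $\mcR^*_{\lambda,\bfb}$ is lisse and geometrically irreducible on a dense open subset and admits no dense open subset on which it becomes geometrically trivial. Combined with semisimplicity coming from pointwise purity, this forces the geometric coinvariants to vanish, so $H^2_c(\Aa^1_{\bFq},\mcR^*_{\lambda,\bfb}) = 0$. The remaining group $H^1_c$ is mixed of weight $\leq 2$ with dimension bounded in terms of $k$ only by Theorem~\ref{thmirreducibility}(4), giving $\sum_r t_{\mcR^*_{\lambda,\bfb}}(r) \ll q$. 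Summing the $O(1)$ correction over $r\in\Fq$ contributes another $O(q)$, yielding $\sum_r \bfR(r,\lambda,\bfb) \ll q$.

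For the second bound I expand
$$
\sum_r \bfR(r,\lambda,\bfb)\,\overline{\bfR(r,\lambda',\bfb)} = \sum_r t_{\mcR^*_{\lambda,\bfb}}(r)\,\overline{t_{\mcR^*_{\lambda',\bfb}}(r)} + O(q^{3/2}),
$$
where the error arises from the $O(1)$ discrepancy between $\bfR$ and $-t_{\mcR^*}$ combined with $|t_{\mcR^*}|\ll q^{1/2}$ over $q$ values of $r$. Since $\mcR^*$ is pure of weight~$1$ on its lisse locus, $\overline{t_{\mcR^*_{\lambda',\bfb}}(r)} = q\cdot t_{\mcR^{*\vee}_{\lambda',\bfb}}(r)$, so the inner sum equals $q$ times the trace-function sum of the weight-$0$ sheaf $\mcR^*_{\lambda,\bfb}\otimes\mcR^{*\vee}_{\lambda',\bfb}$. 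The Grothendieck--Lefschetz formula together with the coinvariant formula for $H^2_c$, Schur's lemma for geometrically irreducible representations, and parts~(1) and~(3) of Theorem~\ref{thmirreducibility} (distinct values of $\lambda$ give non-isomorphic specializations on any dense open) then show that this sum contributes $\delta(\lambda,\lambda')\cdot q + O(q^{1/2})$, and multiplying by $q$ gives the main term $\delta(\lambda,\lambda')q^2 + O(q^{3/2})$.

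The main obstacle is not this packaging but the deep geometric input of Theorem~\ref{thmirreducibility} itself, namely the pointwise geometric irreducibility of the specialized sheaves $\mcR^*_{\lambda,\bfb}$ for $\bfb$ outside a codimension-one exceptional set, and the absence of geometric isomorphisms between specializations at distinct values of $\lambda$. Establishing these is precisely the task of Sections~\ref{sec-zero} and~\ref{sec-not-zero}, and relies on Katz's local monodromy computations for Kloosterman sheaves recalled in Proposition~\ref{pr-kl} and Lemma~\ref{lm-kl-infty}.
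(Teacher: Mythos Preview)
Your argument is correct and follows the same overall strategy as the paper: identify $\bfR$ with $-t_{\mcR}$ via proper base change and Lemma~\ref{lm-support}, then apply the Grothendieck--Lefschetz trace formula, Deligne's weight bounds, the coinvariant formula for $H^2_c$, and the irreducibility and non-isomorphism statements of Theorem~\ref{thmirreducibility}. The one organizational difference is in how the complex conjugate is handled. You pass to $\mcR^*$ immediately and use punctual purity of weight~$1$ to write $\overline{t_{\mcR^*_{\lambda',\bfb}}}=q\,t_{\mcR^{*\vee}_{\lambda',\bfb}}$ on the lisse locus. The paper instead uses the elementary identity $\overline{\bfR(r,\lambda',\bfb)}=\bfR(r,-\lambda',\tilde{\bfb})$ with $\tilde{\bfb}=(b_3,b_4,b_1,b_2)$, works with the tensor product $\mcR_{\lambda,\bfb}\otimes\mcR_{-\lambda',\tilde{\bfb}}$ so that the Betti bounds of Theorem~\ref{thmirreducibility}(4) (which are stated for $\mcR$, not $\mcR^*$) apply verbatim, extracts the weight-$4$ piece of $H^2_c$ only at the end, and then invokes Lemma~\ref{lm-dual-sheaf} to identify $\mcR^*_{-\lambda',\tilde{\bfb}}\simeq\mcR^{*\vee}_{\lambda',\bfb}(-1)$. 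The paper's route also makes explicit that Frobenius acts by exactly $q$ on the one-dimensional coinvariant space when $\lambda=\lambda'$, giving the precise main term $q^2$; in your version this is implicit in the Schur-lemma step. Both bookkeepings work; the paper's has the advantage that the required uniformity in $q$ of the cohomology dimensions is literally what Proposition~\ref{pr-betti-bound} provides, whereas your route needs the (easy) remark that the corresponding bounds for $\mcR^*$ follow from those for $\mcR$ via the short exact sequence on the lisse locus.
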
 %\marginpar{ rating 8/10}

\begin{proof} It is sufficient to prove the theorem when $q$ is
  sufficiently large with respect to $k$, since we can handle any
  finite set of primes by replacing the implied constant by a larger
  one using trivial bounds for the sums.
\par
First of all, note that by the proper base change theorem and the
Grothendieck-Lefschetz trace formula, we have
\begin{equation}\label{eq-trace-1}
t_{\mcR}(r,\lambda,\bfb)=-\sum_{s\in\Fq} t_{\mcK}(r,s,\lambda,\bfb)=
-\bfR(r,\lambda,\bfb)
\end{equation}
for $\bfb\notin \mcV^{\Delta}(\Fq)$, where the implied constant depends only on
$k$. Since $\mcR$ is mixed of weights $\leq 1$ and of rank bounded in
terms of $k$ only, we have
$$
t_{\mcR}(r,\lambda,\bfb)\ll q^{1/2}
$$
for $\bfb\notin \mcV^{\Delta}(\Fq)$.
\par
We begin the proof of the second bound. Thus let
$\bfb\in\Ff_q^4-\mcV^{bad}(\Fq)$ (in particular
$\bfb\not\in\mcV^\Delta(\Fq)$) and $\lambda$, $\lambda'\in\Fq$ be
given.
First, %%because $K ( (-1)^{k} s (r+ b_i)) = \ov{ K(s(r+b_i))}$,
we have
$\overline{\bfR(r, \lambda,\bfb)}= \bfR(r,-\lambda,\tilde{\bfb})$,
where $\bfb=(b_3,b_4,b_1,b_2)\in\Ff_q^4-\mcV^{bad}(\Fq)$.
%%\footnote{Must be  added to the main theorem (EK).}  
Thus the
relation~(\ref{eq-trace-1}) and the Grothendieck-Lefschetz trace
formula imply that
$$
\sum_{r\in\Fq}\bfR(r,\lambda,\bfb)\ov{\bfR(r,\lambda',\bfb)} =
\sum_{i=0}^{2} (-1)^i \Tr\left(\Frob_q\,\mid\, H^i_c (\Aa^1_{\bFq},
  \mcR_{\lambda, \bfb} \otimes \mcR_{-\lambda',\tilde{\bfb}}
  )\right).
$$
Let $\mcF= \mcR_{\lambda, \bfb} \otimes \mcR_{-\lambda',\tilde{\bfb}}$
and
$\mcF^*=\mcR^*_{\lambda, \bfb} \otimes
\mcR^*_{-\lambda',\tilde{\bfb}}$.  Since $\mcR$ is mixed of weight
$\leq 1$, the tensor product sheaf $\mcF$ is mixed of weight $\leq 2$,
so the $i$-th compactly supported cohomology group with coefficient in
$\mcF$ is mixed of weight $\leq i + 2$ by Deligne's
Theorem~\cite{WeilII}.

The dimension of these cohomology groups are bounded in terms of $k$
only by Theorem \ref{thmirreducibility} (4). Thus we have
$$
\sum_{r\in\Fq}\bfR(r,\lambda,\bfb)\ov{\bfR(r,\lambda',\bfb)}=
\Tr(\Frob_q\,\mid\, W_{\lambda,\lambda'})+O(q^{3/2})
$$
where $W_{\lambda,\lambda'}$ is the subspace of weight $4$ in
$H^2_c(\Aa^1_{\bFq},\mcF)=H^2_c(U_{\bFq},\mcF)$, and the implied constant
depends only on $k$ (here $U$ is any dense open set where $\mcF$ is
lisse).
% So it is sufficient to show that the weight $4$ part of $H^2_c (\Aa^1/\Fq, \bfR_{\lambda, \bfb} \otimes \bfR_{(-1)^{k+1}\lambda',\bfb} )$ has dimension $\delta_{\lambda,\lambda'}$ and that $\Frob_q$ acts on it by multiplication by $q^2$ when $\lambda= \lambda'$.
\par
We have by definition a short exact sequence
$$
0\lra \sheaf{G}\lra \sheaf{F}\lra \sheaf{F}^*\lra 0
$$
of lisse sheaves on $U$ where $\sheaf{G}$ is mixed of weights
$<2$. Taking the long cohomology exact sequence and applying again
Deligne's Theorem, we see that
$W_{\lambda,\lambda'}\simeq W^*_{\lambda,\lambda'}$, where
$W^*_{\lambda,\lambda'}$ is the subspace of weight $4$ in
$H^2_c(U_{\bFq},\mcF^*)$.
\par
By the coinvariant formula, we have
$$
H^2_c (U_{\bFq}, \mcF^*)=(\mcF^*_{\bar{\eta}})_{\pi_1(U_{\bFq})}(-1),
$$ 
so it is sufficient to prove that the weight $2$ part of the
$\pi_1(U_{\bFq})$-coinvariants of $\mcF^*$ has dimension
$\delta(\lambda, \lambda')$, and that the action of $\Frob_q$ is
multiplication by $q$ when $\lambda=\lambda'$.
\par
The sheaves $\mcR^*_{\lambda,\bfb}$ and $\mcR^*_{\lambda',\bfb}$ are
geometrically irreducible by Theorem~\ref{thmirreducibility} (1), in
particular they are arithmetically semisimple. By
Lemma~\ref{lm-dual-sheaf}, we have arithmetic isomorphisms
% Since $\mcR^*_{\lambda',\bfb}$ is pure of weight $1$ on $U$, we have
% $$
% t_{\mcR^{*\vee}_{\lambda',\bfb}}(x;\Fqd)=\frac{1}{q^d}\overline{
%   t_{\mcR^*_{\lambda',\bfb}}(x;\Fqd)}=
% \frac{1}{q^d}t_{\mcR^*_{(-1)^{k+1}\lambda',\bfb}}(x;\Fqd)+O(1)
% $$
% for any $d\geq 1$ and any $x\in U(\Fqd)$. Since the sheaves involved
% are geometrically irreducible (by Theorem~\ref{thmirreducibility}
% (1)), the Chebotarev Density Theorem implies that there is an
% isomorphism
% % Because every eigenvalue of $\Frob_q$ on a stalk of
% % $\mcR^*_{\lambda', \bfb} $ is a complex number of absolute value
% % $\sqrt{q}$, and because the trace function of the dual sheaf of
% % $\mcR^*_{\lambda', \bfb} $ is equal to the sum of the inverses of
% % those eigenvalues, the trace function of the dual is equal to the
% % complex conjugate of the trace function of
% % $\mcR^*_{(-1)^{k+1}\lambda', \bfb}$ divided by $q$.  As the trace
% % function of $\mcR_{(-1)^k \lambda',\bfb}$ is equal to the complex
% % conjugate of the trace fucnction of $\mcR_{\lambda,\bfb}$, the trace
% % function of $\mcR^*_{(-1)^{k} \lambda',\bfb}$ is equal to the complex
% % conjugate of the trace function of $\mcR^*_{\lambda',\bfb}$ to within
% % $O(1)$, so we obtain an isomorphism
$$
\mcR^*_{-\lambda', \tilde{\bfb}} \simeq \mcR^{*\vee}_{\lambda',
  \bfb}(-1),\ \mcF^*\simeq \mcR^{*}_{\lambda, \bfb}\otimes
\mcR^{*\vee}_{\lambda', \bfb}(-1)
$$
on $U$. %This shows that it is sufficient to prove that the
%$\pi_1(U_{\bFq})$-coinvariants of the sheaf
%$\mcR^*_{\lambda, \bfb} \otimes \mcR^{*\vee}_{\lambda',\tilde{\bfb}}$
%have dimension $\delta(\lambda,\lambda')$ and that $\Frob_q$ is the
%identity on this space when $\lambda=\lambda'$.
Again by geometric irreducibility of $\mcR^*_{\lambda,\bfb}$ and
$\mcR^*_{\lambda',\bfb}$, the monodromy coinvariants of that tensor
product is one-dimensional if $\mcR^*_{\lambda, \bfb}$ and
$\mcR^{*}_{\lambda',\bfb}$ are geometrically isomorphic and is zero
otherwise. By Theorem~\ref{thmirreducibility} (3), the sheaves are
geometrically isomorphic if and only if $\lambda=\lambda'$. In that
later case the space of (geometric) coinvariants of
$\mcR^{*}_{\lambda, \bfb}\otimes \mcR^{*\vee}_{\lambda', \bfb}$ is
one-dimensional, generated by the trace, and $\Frob_q$ acts trivially
on it; therefore $\Frob_q$ acts by multiplication by $q$ on $\mcF^*$.

The argument for the first bound is similar but simpler. We work with
the cohomology groups $H^i_c (\Aa^1_{\bFq}, \mcR_{\lambda, \bfb})$,
which are mixed of weights $\leq i+1$. It is sufficient to show that
the weight $3$ part of $H^2_c (\Aa^1_{\bFq}, \mcR_{\lambda, \bfb})$
vanishes, and thus sufficient to show that the weight $1$ part of the
monodromy coinvariants of $\mcR_{\lambda,\bfb}$ vanishes. Because
$\mcR^*$ is the weight $1$ part of $\mcR$, this is the same as showing
that the monodromy coinvariants of $\mcR^*_{\lambda,\bfb}$
vanishes. But $\mcR^*_{\lambda,\bfb}$ is irreducible and nontrivial as
a monodromy representation, by Theorem~\ref{thmirreducibility} (2)
(3), so it has no coinvariants.
\end{proof}

% \begin{remark}
% Note that in our analytic applications,
% \end{remark}

\subsection{Preliminaries}\label{sec-prelim-geo}

We collect in this section a number of results and definitions that we
will use in the proof of our results. In a first reading, it might be
easier to only survey the statements before going to the next section.

We will derive the irreducibility statement of
Theorem~\ref{thmirreducibility} for $\lambda\not=0$ from the second of
the following criteria.

\begin{lemma}\label{lemcriterion}%  \marginpar{ rating 9/10}
  Let $X_0$ and $Y_0$ be normal varieties
  over % the algebraic closure $\bFq$ of
  $\Ff_q$. Let $f\,:\,Y_0 \lra X_0$ be a smooth proper morphism whose
  fibers are curves and whose geometric fibers are
  % generic fiber is
  geometrically connected.  Let $D_0 \subset Y_0$ be a divisor. Write
  $X$, $Y$ and $D$ for the corresponding varieties over $\bFq$.
\par
For a lisse $\bQl$-sheaf $\mcF$ on $Y_0-D_0$, consider the three
following conditions:
\begin{enumerate}
\item The sheaf $\mcF$ is geometrically irreducible and pure of some
  weight;
% added purity condition - we are going to be applying this to sheaves 
%  that are pure by definition so no big deal.
\item For the generic point $\eta$ of $X$, there exists a point
  $z$ of $D_\eta$ defined over the function field $\kappa(\eta)$
  of $X$ such that there exists an irreducible component of
  multiplicity one of the restriction of the monodromy representation
  of $\mcF_{\eta}$ to the inertia group at $z$ whose isomorphism class
  is preserved by the action of the Galois group of $\kappa(\eta)$ by
  conjugation on representations of the inertia group;
  % monodromy representations of $\mcF|(Y_{\eta}-D_{\eta})$ at a point
  % of $D_{\eta}$ has an indecomposable component, defined over the base
  % field, of multiplicity $1$;
\item The divisor $D$ is finite and flat over $X$, and
 % on each
 %  irreducible component of $D$, the Swan conductor function
the function
$$
x\mapsto \sum_{y\in Y_x-D_x} (\swan_y(\mcF\otimes
\mcF^\vee) +\rank(\mcF\otimes\mcF^\vee))
$$ 
is locally constant on $X$.
\end{enumerate}
\par
Then the following statements are true:
\par
\emph{(a)} If \emph{(1)} and \emph{(2)} hold, then for all $x$ in a
dense open subset of $X$, the restriction
$\mcF_x=\mcF|(Y_x-D_x)$ to a fiber $Y_x-D_x$
is geometrically irreducible.
\par
\emph{(b)} If \emph{(1)}, \emph{(2)} and \emph{(3)} hold, then for all
$x$ in $X$, the restriction $\mcF|(Y_x-D_x)$ to a fiber $Y_x-X_x$ is
geometrically irreducible.
\end{lemma}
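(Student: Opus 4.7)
The plan is to use Clifford theory applied to the homotopy exact sequence of the fibration for part (a), and then Deligne--Laumon semicontinuity combined with Poincar\'e duality for part (b).

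For part (a), I would begin with the homotopy exact sequence associated to the smooth proper morphism $f \colon Y - D \to X$ with geometrically connected fibers,
$$
1 \lra \pi_1((Y-D)_{\bar\eta}) \lra \pi_1(Y-D) \lra \pi_1(X) \lra 1,
$$
where $\bar\eta$ is a geometric generic point. Condition~(1) lets me view $\mcF$ as an irreducible representation $V$ of $\pi_1(Y-D)$, and applying Clifford's theorem to the normal subgroup $\pi_1((Y-D)_{\bar\eta})$ gives a decomposition $V|_{\mathrm{geom}} = \bigoplus_{i=1}^n W_i^{\oplus m}$ with the distinct irreducible constituents $W_i$ permuted transitively by $\pi_1(X)$ and a common multiplicity $m$. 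The goal is to force $n=1$ and $m=1$.

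Here condition~(2) is crucial: the inertia representation $V|_{I_z^{\mathrm{geom}}}$ at the $\kappa(\eta)$-rational point $z$ contains an irreducible component $\tau$ of multiplicity one whose isomorphism class is stable under the Galois action by conjugation. If $n\geq 2$, then $\tau$ would appear in exactly one of the restrictions $W_i|_{I_z^{\mathrm{geom}}}$ (say $W_1$); but any Galois element of $\pi_1(X)$ carrying $W_1$ to a distinct $W_j$ carries $\tau$ to a Galois-conjugate of $\tau$, which by Galois-stability is isomorphic to $\tau$, so $\tau$ would appear in $W_j$ too, contradicting multiplicity one. Thus $n=1$ and the same multiplicity-one argument forces $m=1$. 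Hence $\mcF$ restricted to $(Y-D)_{\bar\eta}$ is geometrically irreducible. The property then extends from the generic point to a dense open subset of $X$ by the usual constructibility of $x\mapsto \dim H^0(U_x,\End(\mcF_x))$ combined with upper semicontinuity, proving (a).

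For part (b), given the generic irreducibility from (a), I would invoke the Grothendieck--Ogg--Shafarevich formula together with Deligne's (Laumon's) theorem on the semicontinuity of Swan conductors: finiteness and flatness of $D\to X$ plus the local constancy in condition~(3) imply that the higher direct images $R^i f_!(\mcF \otimes \mcF^\vee)$ are lisse on all of $X$, so that the dimension $\dim H^i_c(Y_x-D_x,\mcF_x\otimes \mcF_x^\vee)$ is constant in $x$. By Poincar\'e duality on the smooth curve $Y_x-D_x$, the dimension of $H^2_c(Y_x-D_x,\mcF_x\otimes\mcF_x^\vee)$ equals the dimension of $\mathrm{End}_{\pi_1}(\mcF_x)$, which in turn equals the number of irreducible constituents of the semisimple (by purity from (1)) representation $\mcF_x$. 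By (a) this dimension equals one at the generic point, hence equals one at every $x\in X$, so $\mcF_x$ is geometrically irreducible for all $x$.

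The main obstacle I anticipate is the delicate bookkeeping in step~(a): one has to track simultaneously the Clifford action of $\pi_1(X)$ on $\{W_i\}$ and the Galois action on isomorphism classes of $I_z^{\mathrm{geom}}$-representations, and verify that the hypothesis in (2) really does force the stabilizer of a $W_i$ to be all of $\pi_1(X)$. The transition from (a) to (b) is then comparatively routine once condition~(3) is translated into the lisseness of $R^\bullet f_!(\mcF\otimes\mcF^\vee)$ via the Euler--Poincar\'e formula.
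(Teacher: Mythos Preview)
Your approach is essentially the same as the paper's: Clifford theory for the normal ``geometric fiber'' subgroup, using the multiplicity-one Galois-stable inertia component to force the restriction to be irreducible, and then Deligne--Laumon semicontinuity plus the coinvariant formula for $H^2_c$ in part~(b). Two points where the paper's execution differs and is a bit cleaner:

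First, the paper does not use the sequence $1\to\pi_1((Y-D)_{\bar\eta})\to\pi_1(Y-D)\to\pi_1(X)\to 1$. Since $Y-D\to X$ is \emph{not} proper, this sequence need not be short exact on the left, and more importantly condition~(2) is phrased in terms of $\Gal(\kappa(\eta))$, not $\pi_1(X)$. Instead the paper first observes that $\pi_1((Y-D)_\eta)\to\pi_1(Y-D)$ is surjective (via \cite[V, Prop.~8.2]{sga1}), so $\mcF_\eta$ is already irreducible as a $\pi_1((Y-D)_\eta)$-representation; it then runs Clifford for the genuinely exact arithmetic/geometric sequence $1\to\pi_1((Y-D)_{\bar\eta})\to\pi_1((Y-D)_\eta)\to\Gal(\kappa(\eta))\to 1$. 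This makes the compatibility between the Clifford action and the Galois action on $I_z$-representations immediate, precisely the bookkeeping you flag as delicate. Second, to pass from the geometric generic fiber to a dense open set in~(a), the paper invokes Pink's Specialization Theorem \cite[Th.~8.18.2]{ESDE}; your ``upper semicontinuity of $\dim H^0$'' points the wrong way (it would give $\dim\geq 1$ at special points, not $\leq 1$), so you should either switch to constructibility of $R^2 f_!(\mcF\otimes\mcF^\vee)$ (lisse on a dense open, rank one there) or quote Pink directly. Part~(b) is exactly as in the paper.
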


\begin{proof} 
  We assume that conditions (1) and (2) hold.
  
  Let $\eta'$ be the generic point of $Y$. By~\cite[V, Proposition
  8.2]{sga1}, the natural homomorphism $\pi_1(\eta')\lra \pi_1(Y-D)$
  is surjective. Since it factors through the natural homomorphism
 $$
\pi_1(Y_\eta-D_{\eta}) \to \pi_1(Y-D),
$$
it follows that the latter is also surjective. In particular,
condition (1) shows that the restriction of $\mcF$ to
$Y_{\eta}-D_{\eta}$ corresponds to an irreducible representation of
$\pi_1(Y_\eta-D_{\eta})$. Thus $\mcF_{\eta}=\mcF|(Y_{\eta}-D_{\eta})$
is an irreducible lisse sheaf on $Y_\eta-D_\eta$.

  Consider now a geometric point $\bar{\eta}$ over $\eta$, the
  geometric fibers $Y_{\bar{\eta}}$ and $D_{\bar{\eta}}$ and the
  pullback $\mcF_{\bar{\eta}}$ of $\mcF_{\eta}$ to
  $(Y-D)_{\bar{\eta}}$. We will show that condition (2) implies that
  $\mcF_{\bar{\eta}}$ is irreducible.

  Indeed, the representation of $\pi_1(Y_{\bar{\eta}}-D_{\bar{\eta}})$ corresponding
  to $\mcF_{\bar{\eta}}$ is semisimple, as the restriction to a normal
  subgroup of an irreducible, hence semisimple, representation.
%%Serre, "Semisimplicit\'e, Section 3.4, Lemme 5
Let
$$
\mcF_{\bar{\eta}}=\bigoplus_{i\in I} n_i V_i
$$
be a decomposition of this representation of $\pi_1(Y_{\bar{\eta}}- D_{\bar{\eta}})$
into irreducible subrepresentations, where $n_i\geq 1$ and the $V_i$
are pairwise non-isomorphic. The quotient
$$
G=\pi_1(Y_{\eta}-D_{\eta})/\pi_1(Y_{\bar{\eta}}-D_{\bar{\eta}}),
$$
is isomorphic to the Galois group of the function field $\kappa(\eta)$
of $X$ since $f$ has geometrically connected generic fiber. It acts on
the set $\{V_i\}$ of irreducible subrepresentations of
$\mcF_{\bar{\eta}}$. Since $\mcF_{\eta}$ is an irreducible
representation of $\pi_1(Y_{\eta}-D_{\eta})$, this action is
transitive. Hence, for any point $y$ of $D_{\eta}$ defined over
$\kappa(\eta)$ , the restriction of $\mcF_{\eta}$ to the inertia group
at $y$ has the property that it is a direct sum of $n=\sum n_i$
subrepresentations which are $G$-conjugates (but not necessarily
irreducible or even indecomposable). In particular, any irreducible
subrepresentation of the inertia group whose isomorphism class is
fixed by $G$ appears with multiplicity divisible by $n$. By condition
(2), this means that $n=1$, so that $\mcF_{\bar{\eta}}$ is
irreducible.

By Pink's Specialization Theorem (see~\cite[Th. 8.18.2]{ESDE}), it
follows that $\mcF_{x}$ is geometrically irreducible for all $x$ in
some dense open subset, which gives (a).

  % We will now show that $\mcF_{\eta}$ is also geometrically
  % irreducible. Suppose not, then it splits into a sum of $n$
  % geometrically irreducible components which are sent to each other by
  % the arithmetic monodromy. Hence all the irreducible components must
  % be isomorphic to, or Galois conjugates of, each other. Then by
  % restriction, the local monodromy representations of
  % $\mathcal F_\eta$ must be a sum of $n$ components, no longer
  % necessarily irreducible, but still all isomorphic or Galois
  % conjugates. So each irreducible or indecomposable component of these
  % must appear with multiplicity a multiple of $n$, counting Galois
  % conjugates. If condition 2 holds, then the greatest common divisor
  % of the multiplicities is $1$, we must have $n=1$, so
  % $\mathcal F_\eta$ is geometrically irreducible.

Now assume further that condition (3) holds. For a closed point
$x \in X$, the fiber $\mcF_x$ is geometrically irreducible if and only
if the cohomology group
$H^2_c ( (Y - D)_{x,\bFq}, \mcF_x\otimes\mcF_x^\vee)$ is
one-dimensional, by the coinvariant formula for the second cohomology
group on a curve (see, e.g.,~\cite[2.0.4]{GKM}) and the fact that
$\mcF_x$, being pure by condition (1), is geometrically semisimple
(see~\cite[Th. 3.4.1 (iii)]{WeilII}). Equivalently, by the proper base
change theorem, the specialized sheaf $\mcF_x$ is geometrically
irreducible if and only if the stalk of
$R^2 f_!  ( \mcF\otimes\mcF^{\vee})$ at $x$ is one-dimensional. % We
% have seen that the stalk is one-dimensional for $x=\eta$; since the
% sheaf $R^2 f_!  ( \mcF\otimes\mcF^{\vee})$ is constructible, it is
% lisse on a dense open subset, and therefore the stalks are
% one-dimensional for all $x$ in a dense open subset, which gives (a).
Condition (3) and Deligne's semicontinuity theorem~\cite[Corollary
2.1.2]{LaumonSMF} imply that the sheaf
$R^2 f_!  ( \mcF \otimes \mcF^{\vee})$ is lisse on $X$. Since it has
rank $1$ at all closed points in an open set, by what we proved
before, it has rank $1$ on all of $X$, which means that $\mcF_x$ is
geometrically irreducible for all closed points $x$ in $X$. By Pink's
Specialization Theorem (see~\cite[Th. 8.18.2]{ESDE}), $\mcF_x$ is
geometrically irreducible for all points in $X$.
\end{proof}

\begin{remark} 
  Our proof of condition (1) below generalizes to quite general
  (bountiful) sheaves, but the proofs of conditions (2) and (3)
  involve careful calculations that depend on specific properties of
  the Kloosterman sheaves. This means that our results do not easily
  generalize to other sheaves.
\par
However, condition (2) is a ``generic'' condition that should hold for
a ``random'' sheaf. Thus it should be possible to prove it in a number
of different concrete cases. The last condition (3) is more subtle;
although is always true on a dense open subset (hence is generic in
that sense), the closed complement where it fails will usually have
codimension $1$. However, it should often be possible to compute
explicitly that subset, and to use this information for further study
(cf. Remark~\ref{rmkimprove} for instance).
\end{remark}

In this paper, we will only use the second criterion of
Lemma~\ref{lemcriterion} in the proof of
Theorem~\ref{thmirreducibility}, to show that for all $\bfb$ outside
of a proper subvariety, the specialized sheaves
$\mcR_{\lambda,\bfb}^*$ are geometrically irreducible for \emph{every}
non-zero $\lambda$. However, the first criterion might be useful in
other applications (in the first draft of this paper, we used it to
deal with sum-product sheaves where $\lambda=0$, but we later found a
simpler argument to deal with this case).
\par
To verify the first condition of the lemma, we will use Katz's
diophantine criterion for geometric irreducibility
(compare~\cite[Lemma 7.0.3]{rigid}).

\begin{lemma}[Diophantine criterion for
  irreducibility]\label{diophantine} %\marginpar{ rating 9/10}
  Let $Y$ be a normal variety over $\Fq$, $U\subset Y$ a dense open subset
  and $\mcF$ a sheaf on $Y$ that is lisse on $U$. Assume moreover that
  $\mcF|U$ is pure of some weight $w$, and that $\mcF$ is mixed
  of weights $\leq w$ on $Y$. Then $\mcF|U$ is geometrically
  irreducible if
$$
\frac{1}{q^{d\dim Y}}\sum_{y\in Y(\Fqd)}|t_\mcF(y)|^2 =q^{dw}(1+o(1))
$$
as $d$ tends to infinity.
\end{lemma}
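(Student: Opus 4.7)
The plan is to relate the asymptotics of $\sum_{y\in Y(\Fqd)}|t_\mcF(y)|^2$ to the top compactly-supported cohomology of $\mcF\otimes\mcF^{\vee}$ on $U$, and then to identify the dimension of this cohomology with $\sum_i m_i^2$, where $\mcF|U=\bigoplus_i V_i^{m_i}$ denotes the isotypic decomposition into pairwise non-isomorphic geometric irreducibles. This decomposition is available because pure lisse sheaves are geometrically semisimple (Weil~II, Th.~3.4.1); the aim is then to show $\sum_i m_i^2=1$.

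First I isolate the main sum. Because $\mcF$ is mixed of weights $\leq w$ on $Y$ and $\dim(Y\setminus U)\leq\dim Y-1$, the contribution of $(Y\setminus U)(\Fqd)$ is $O(q^{d(w+\dim Y-1)})$, absorbed in the error $o(q^{d(w+\dim Y)})$. On $U(\Fqd)$ the sheaf is pointwise pure of weight $w$, so $|t_\mcF(y)|^2=q^{dw}\,t_{\mcF\otimes\mcF^{\vee}}(y)$, and the Grothendieck--Lefschetz trace formula yields
$$\sum_{y\in U(\Fqd)}t_{\mcF\otimes\mcF^{\vee}}(y)=\sum_{i=0}^{2\dim Y}(-1)^i\Tr(\Frob_q^d\mid H^i_c(U_{\bFq},\mcF\otimes\mcF^{\vee})).$$
Since $\mcF\otimes\mcF^{\vee}$ is pure of weight $0$ on $U$, Deligne's theorem bounds the Frobenius eigenvalues on $H^i_c$ by $q^{di/2}$, so all $i<2\dim Y$ together contribute $O(q^{d(\dim Y-1/2)})$ uniformly in $d$.

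Next, after passing if necessary to the smooth locus of $U$ (whose complement in $U$ has codimension $\geq 2$ by the normality of $Y$, so that the geometric fundamental group and the top cohomology are unchanged), the coinvariant formula identifies
$$H^{2\dim Y}_c(U_{\bFq},\mcF\otimes\mcF^{\vee})\simeq(\mcF\otimes\mcF^{\vee})_{\pi_1^{\mathrm{geom}}(U)}(-\dim Y),$$
which is pure of weight $2\dim Y$ and of dimension $\dim\End_{\pi_1^{\mathrm{geom}}(U)}(\mcF|U)=\sum_i m_i^2$ by Schur and semisimplicity. Writing the eigenvalues of $\Frob_q$ on this space as $q^{\dim Y}\gamma_j$ with $|\gamma_j|=1$, $j=1,\dots,N$ and $N=\sum_i m_i^2$, combining all the above with the hypothesis and dividing by $q^{d(w+\dim Y)}$ yields $\sum_{j=1}^{N}\gamma_j^d=1+o(1)$ as $d\to\infty$.

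The final step is a Cesàro averaging argument: for any $\gamma\in\Cc$ with $|\gamma|=1$, $\frac{1}{D}\sum_{d=1}^D\gamma^d$ tends to $1$ if $\gamma=1$ and to $0$ otherwise. Applying this to $\sum_j\gamma_j^d$ forces $|\{j:\gamma_j=1\}|=1$; then applying Cesàro to $|\sum_j\gamma_j^d-1|^2\to 0$ forces $|\{(j,j'):\gamma_j=\gamma_{j'}\neq 1\}|=0$, hence $N=1$. Thus $\sum_i m_i^2=1$ and $\mcF|U$ is geometrically irreducible. The only subtle point to watch is the identification of the top cohomology with coinvariants on an open subset of a merely normal (not necessarily smooth) variety, which is handled via the codimension~$\geq 2$ argument above; no algebraicity or rationality of the $\gamma_j$ beyond $|\gamma_j|=1$ is needed for the averaging.
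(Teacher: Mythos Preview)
Your proof is correct and follows essentially the same route as the paper: reduce to $U$, apply Grothendieck--Lefschetz together with Deligne's weight bounds to isolate the top cohomology, identify $H^{2\dim Y}_c(U_{\bFq},\mcF\otimes\mcF^{\vee})$ with the geometric coinvariants of $\mcF\otimes\mcF^{\vee}$, and use semisimplicity plus Schur. Your write-up is in fact more careful than the paper's on two points: you make explicit the Ces\`aro averaging that extracts $N=1$ from $\sum_j\gamma_j^d\to 1$ (the paper simply asserts the conclusion), and you address the passage to the smooth locus of $U$ needed for a clean coinvariant formula, which the paper leaves implicit.
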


\begin{proof}
  Using a Tate twist, we may assume that $w=0$.  Let $n$ be the
  dimension of $Y$ and $D=Y-U$.  We have
  $$
  \frac{1}{q^{nd}}\sum_{y\in
    Y(\Fqd)}|t_\mcF(y)|^2=\frac{1}{q^{nd}}\sum_{y\in
    U(\Fqd)}|t_\mcF(y)|^2+ \frac{1}{q^{nd}}\sum_{y\in
    D(\Fqd)}|t_\mcF(y)|^2.
$$
The second sum is bounded by $O(q^{-d})=o(1)$ using our assumption on
the weights of $\mcF$ on $Y$ (and the reduction to $w=0$), and hence
the assumption implies that
$$
\frac{1}{q^{nd}}\sum_{y\in U(\Fqd)}|t_\mcF(y)|^2 \ra 1
$$
as $d\ra +\infty$. On the other hand, the Grothendieck--Lefschetz
Trace Formula and the Riemann Hypothesis imply that
$$
\sum_{y\in U(\Fqd)}|t_\mcF(y)|^2 =\Tr(\frob_{\Fqd}\mid
H^{2n}_c(Y_{\bFq},\mcF\otimes\mcF^{\vee})) +O(q^{d(n-1/2)}),
$$ 
and therefore
$$
\frac{1}{q^{nd}}\sum_{y\in U(\Fqd)}|t_\mcF(y)|^2 =
\Tr(\frob_{\Fqd}\mid H^{2n}_c(Y_{\bFq},\mcF\otimes\mcF^{\vee})(n))+o(1).
$$
By the semisimplicity of $\mcF$ (see~\cite[Th. 3.4.1 (iii)]{WeilII})
and the coinvariant formula
$$
H^{2n}_c(Y_{\bFq},\mcF\otimes\mcF^{\vee})
\simeq (\mcF\otimes\mcF^{\vee})_{\pi(U_{\bFq})}(-n),
$$
we deduce by combining these formulas that the geometric invariant
subspace of $\mcF\otimes\mcF^{\vee}$ is one-dimensional, which by
Schur's Lemma means that $\mcF$ is geometrically irreducible.
% By the Lefschets Trace formula and the Riemann hypothesis, this
% implies that the trace of $\Frob_{q^d}$ on
% $H^{2d}(X / \Fqd, \mcF \otimes \mcF^\vee)$ is
% $q^{dw + d \dim Y} (1+o(1))$ as $d$ tends to $\infty$. By elementary
% linear algebra, this implies that
% $H^{2d}(X / \Fqd, \mcF \otimes \mcF^{\vee})$ is a one-dimensional
% vector space on which $\Frob_q$ acts by multiplication by
% $q^{q+ \dim Y}$. By the interpretation of
% $H_c^2(Y\times\bFq,\mcF\otimes\mcF^\vee)$ as the space of geometric
% co-invariants of the corresponding representation (see,
% e.g.,~\cite[2.0.4]{GKM}, we see that the endomorphism group of $\mcF$
% is on-dimensional, hence by the geometric semisimplicity of $\mcF$
% (see~\cite[Th. 3.4.1 (iii)]{WeilII}), $\mcF$ is irreducible.
\end{proof}

We will use the following lemma from elementary representation theory
to describe the local monodromy of tensor products of Kloosterman
sheaves.

\begin{lemma}\label{lm-inductionformula} Let $G$ be a group and $E$
  an arbitrary field. Let $H$ be a normal subgroup of $G$ of finite index. Consider
  the usual action $\sigma\cdot V=\sigma(V)$ of $G/H$ on
  $E$-representations of $H$, where $x\in H$ acts on $\sigma(V)$ by
  the action of $\sigma^{-1}x\sigma$ on $V$.
\par
For any finite-dimensional $E$-representations $V_1, \dots, V_n$ of
$H$, we have a canonical isomorphism
$$
 \bigotimes_{i=1}^n \Ind_H^G V_i \simeq \bigoplus_{ (\sigma_2,
  \dots, \sigma_n) \in (G/H)^{n-1} } \Ind_{H}^G \Bigl( V_1 \otimes
  \bigotimes_{i=2}^n \sigma_i(V_i ) \Bigr).
$$
\end{lemma}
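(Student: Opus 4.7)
The plan is to argue by induction on $n$, with the induction step reduced to the standard tensor-product identity for a single pair of induced representations of a normal subgroup.

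For $n=1$ the statement is tautological. For the inductive step, I would rely on two classical facts. First, the projection formula: for any representation $W$ of $H$ and any representation $U$ of $G$, there is a canonical isomorphism
\[
(\Ind_H^G W)\otimes U \;\simeq\; \Ind_H^G\bigl(W\otimes \Res_H^G U\bigr),
\]
obtained by sending $(g\otimes w)\otimes u$ to $g\otimes(w\otimes g^{-1}u)$ and checking that this descends to tensor products over $E[H]$. Second, since $H$ is normal of finite index in $G$, the restriction to $H$ of an induced representation decomposes as
\[
\Res_H^G \Ind_H^G V \;\simeq\; \bigoplus_{\sigma\in G/H} \sigma(V),
\]
which follows from Mackey's decomposition, or directly by picking coset representatives $\{g_\sigma\}$ and letting the $\sigma$-summand correspond to $g_\sigma\otimes V$. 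Combining these two identities yields, for any representations $W$ of $H$ and $V$ of $H$,
\[
(\Ind_H^G W)\otimes (\Ind_H^G V) \;\simeq\; \bigoplus_{\sigma\in G/H}\Ind_H^G\bigl(W\otimes \sigma(V)\bigr).
\]

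With this building block in hand, I would proceed with induction on $n$. Assuming the claim for $n-1$, namely
\[
\bigotimes_{i=1}^{n-1}\Ind_H^G V_i \;\simeq\; \bigoplus_{(\sigma_2,\dots,\sigma_{n-1})\in (G/H)^{n-2}} \Ind_H^G\Bigl(V_1\otimes\bigotimes_{i=2}^{n-1}\sigma_i(V_i)\Bigr),
\]
I tensor both sides with $\Ind_H^G V_n$. Distributing the tensor product over the direct sum and applying the two-factor identity above to each summand with $W = V_1\otimes\bigotimes_{i=2}^{n-1}\sigma_i(V_i)$ and $V = V_n$, produces an additional direct sum over $\sigma_n\in G/H$ and yields precisely the desired decomposition with index set $(G/H)^{n-1}$.

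The only step requiring mild care is checking that the constructed isomorphism is canonical (i.e.\ does not depend on choices of coset representatives beyond the indexing by $G/H$ itself): the projection formula is manifestly natural, and the decomposition of $\Res_H^G\Ind_H^G V$ is canonical once the summands are labelled by cosets $\sigma\in G/H$, since the action of $\sigma^{-1}x\sigma$ on $V$ is independent of the choice of representative of $\sigma$ up to the natural $H$-action. I do not anticipate a genuine obstacle here; the argument is a routine Mackey-type computation, and the main point is to record the combinatorics so that the indexing set $(G/H)^{n-1}$ emerges with the asserted structure, with $V_1$ playing the distinguished role as the representation that is not conjugated.
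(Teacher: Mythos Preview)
Your proposal is correct and follows essentially the same approach as the paper: induction on $n$, with the $n=2$ case obtained by combining the projection formula with the Mackey-type decomposition $\Res_H^G \Ind_H^G V \simeq \bigoplus_{\sigma\in G/H}\sigma(V)$ for normal $H$. The paper's argument is identical in structure, and your added remark on canonicity is a reasonable elaboration not spelled out there.
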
 %\marginpar{ rating 10/10}

\begin{proof} We proceed by induction on $n$. The case $n=1$ is a
  tautology. For $n=2$, we need to prove that
  \[ \Ind_H^G V_1 \otimes \Ind_H^G V_2 \simeq \bigoplus_{\sigma \in
    G/H} \Ind_H^G \left( V_1 \otimes \sigma(V_2)\right) \]
\par
To see this, first apply the projection formula
\[ \Ind_H^G ( V_1 \otimes \Res_G^H \Ind_H^G V_2) = \Ind_H^G V_1
\otimes \Ind_H^G V_2 \]
and then the fact that
\[ \Res_G^H \Ind_H^G V_2 = \bigoplus_{\sigma \in G/H} \sigma(V_2), \]
which follows from the definition of induction (see,
e.g.,~\cite[Prop. 2.3.15, Prop. 2.3.18]{k-repr} for these standard
facts).
\par
We easily complete the proof for $n\geq 3$ by induction using the case
$n=2$.
\end{proof}

As a corollary, we now obtain the local monodromy at infinity for the
sheaves $\mcK_{r,\lambda,\bfb}$. To state the result, we recall from
the introduction the notation $\mcL_{\psi}(c s^{1/k})$, for a variety
$X/\Fq$, an integer $k\geq 1$ and a function $c$ on $X$: this is the
sheaf on $X\times \Aa^1$ (with coordinates $(x,s)$) given by
$$
\mcL_{\psi}(c s^{1/k})=\alpha_* \mcL_{\psi (c(x)t)}
$$ 
where $\alpha$ is the map
$$
\begin{cases}
X\times\Aa^1\rightarrow X\times\Aa^1\\
(x,t)\mapsto (x,t^k).
\end{cases}
$$
% the covering map $(x,s,t)\mapsto (x,s)$ on the
    %     $k$-fold cover $\{(x,s,t)\in X\times\Aa^1\times\Aa^1\,\mid\,
    %     t^k=s\}$.

\begin{lemma}\label{lm-local-product}  %\marginpar{ rating 9/10}
  Assume $q>k\geq 2$ and denote by $\tilde{\psi}$ the character
  $x\mapsto \psi(kx)$. Fix $r,\bfb,\lambda$ such that $r+b_i\not=0$
  for all $i$.  Let $(r+b_i)^{1/k}$ be a fixed $k$-th root of $r+b_i$
  in $\bFq$.
\par
Then the local monodromy at $s=\infty$ of $ \mcK_{r,\lambda,\bfb}$ is
isomorphic to the local monodromy at $s=\infty$ of the sheaf
\begin{equation}\label{eq-local-goal}
\mcL_{\psi(\lambda s)} \otimes \bigoplus_{\zeta_2,\zeta_3,\zeta_4 \in
  \mmu_k} \mcL_{\tilde{\psi}} \left( \left((r+b_1)^{1/k}+ \zeta_2
    (r+b_2)^{1/k} - \zeta_3 (r+b_3)^{1/k} - \zeta_4
    (r+b_4)^{1/k}\right) s^{1/k} \right)
\end{equation}
where $\mmu_k$ is the group of $k$-th roots of unity in $\bFq$.
\par
More generally, for fixed $\lambda$ and $\bfb$, for any algebraic
variety $U_{\Fq}$, let $f\,:\, U\lra \Aa^1-\{-b_1,\ldots, -b_4\}$ be a
morphism, and assume there are morphisms $r_i\,:\, U\lra \Aa^1$ such
that $[x\mapsto x^k]\circ r_i=[x+b_i]\circ f$. Assume that $k$ is odd
or that there exist a constant $c$ and a function $g$ on $U$ such that
$r_1r_2r_3r_4=cg^2$.
% is a perfect square times a constant on $U$.
Then the local monodromy of the sheaf
$(f\times\mathrm{Id})^*\mcK_{\lambda,\bfb}$ on $U\times\Aa^1$ along
the divisor $U\times\{\infty\}$ is isomorphic to the local monodromy
of the sheaf
\[ 
\mcL_{\psi(\lambda s)} \otimes \bigoplus_{\zeta_2,\zeta_3,\zeta_4 \in
  \mmu_k} \mcL_{\tilde{\psi}} \left( \left(r_1+ \zeta_2r_2 - \zeta_3
    r_3 - \zeta_4 r_4\right) s^{1/k} \right)
\]
along $U\times\{\infty\}$
%  In addition, the
% same formula works if we let $r$ vary in an \'{e}tale open set where
% $r \neq -b_i$ everywhere and some $k$th roots $(r+b_i)^{1/k}$ are
% defined.
\end{lemma}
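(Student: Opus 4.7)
The plan is to combine the explicit description of $\KL_k|_{I(\infty)}$ as an induced representation from a $k$-fold cover (Lemma~\ref{lm-kl-infty}) with the tensor-induction formula (Lemma~\ref{lm-inductionformula}), after using proper base change to handle the scalings $[\times(r+b_i)]$.

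Unwinding the definition of $\mcK$, the restriction $\mcK_{r,\lambda,\bfb}$ is the tensor product of $\mcL_{\psi(\lambda s)}$ and four scaled (dual) Kloosterman sheaves $[s\mapsto s(r+b_i)]^{*}\KL_k^{\varepsilon_i}$, with $\varepsilon_i=+1$ for $i\in\{1,2\}$ and $\varepsilon_i=-1$ for $i\in\{3,4\}$. Each scaled factor, by Lemma~\ref{lm-kl-infty} combined with the Cartesian identity $[\times c]^{*}[x\mapsto x^k]_{*}\simeq [x\mapsto x^k]_{*}[\times c^{1/k}]^{*}$ (valid on $\Gm$, hence near $\infty$) and the geometric self-duality $\KL_k^{\vee}\simeq[\times(-1)^k]^{*}\KL_k$ from Proposition~\ref{pr-kl}(6), rewrites as $[x\mapsto x^k]_{*}\bigl(\mcL_{\chi_2^{k+1}}\otimes \mcL_{\tilde\psi(\varepsilon_i(r+b_i)^{1/k} t)}\bigr)$, where for $k$ even the $\pm 1$ signs can be absorbed into the $\mmu_k$-indexing below since $-1\in\mmu_k$. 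Applying Lemma~\ref{lm-inductionformula} with $G=I(\infty)$ and $H$ the index-$k$ subgroup corresponding to the cover $[x\mapsto x^k]$ (so $G/H\simeq\mmu_k$) decomposes the four-fold tensor product as a direct sum over $(\zeta_2,\zeta_3,\zeta_4)\in\mmu_k^3$ of inductions. In each summand, the Kummer factors multiply to $\mcL_{\chi_2^{4(k+1)}}=\bQl$ (trivial, since $\chi_2^2$ is trivial), while the Artin--Schreier factors combine to $\mcL_{\tilde\psi(((r+b_1)^{1/k}+\zeta_2(r+b_2)^{1/k}-\zeta_3(r+b_3)^{1/k}-\zeta_4(r+b_4)^{1/k})t)}$. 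Tensoring back with $\mcL_{\psi(\lambda s)}$ produces the first stated formula.

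The parametrized version follows the same skeleton with the sections $r_i$ replacing $(r+b_i)^{1/k}$: the hypothesis $r_i^k=f+b_i$ is precisely what allows relative base change to substitute $r_i(u)$ for $c^{1/k}$. The new phenomenon is that the ``Kummer constants'' $\chi_2^{k+1}(c^{1/k})$ appearing under base change, which are geometrically trivial in the pointwise case, become genuine Kummer sheaves $\mcL_{\chi_2^{k+1}(r_i)}$ on $U$; their four-fold tensor product is $\mcL_{\chi_2^{k+1}(r_1 r_2 r_3 r_4)}$ (using $\chi_2^{-1}=\chi_2$). This sheaf is geometrically trivial exactly when $\chi_2^{k+1}$ is trivial (i.e.\ $k$ is odd) or when $r_1 r_2 r_3 r_4$ is a constant times a square (for $k$ even), which is precisely the hypothesis $r_1 r_2 r_3 r_4=cg^2$.

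The main obstacle I expect is precisely this bookkeeping of the Kummer twists: in the pointwise case they cancel trivially because $\chi_2^2$ is trivial, but over $U$ they become honest rank-one sheaves, and one must verify that the only obstruction to their geometric cancellation is the stated hypothesis on $r_1 r_2 r_3 r_4$. A secondary technical point is justifying the Cartesian base change formula for the purpose of local monodromy along a divisor, which reduces to the fact that $[x\mapsto x^k]$ and its relevant pullbacks are finite \'etale over the completed local ring at $\infty$.
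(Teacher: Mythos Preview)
Your proposal is correct and follows essentially the same route as the paper: the Cartesian base-change identity together with Lemma~\ref{lm-kl-infty} expresses each scaled (dual) Kloosterman factor as an induced representation from the $k$-fold cover, then Lemma~\ref{lm-inductionformula} decomposes the four-fold tensor product over $\mmu_k^3$, with the Kummer pieces combining to $\mcL_{\chi_2^{k+1}}(r_1r_2r_3r_4)$ (times the trivial $t^4$-part), which is geometrically trivial exactly under the stated hypothesis on $r_1r_2r_3r_4$. The only cosmetic differences are that the paper dualizes the local monodromy directly (sending $\mcL_{\tilde\psi}$ to $\mcL_{\overline{\tilde\psi}}$) rather than invoking Proposition~\ref{pr-kl}(6), and keeps $t^4$ bundled with $r_1r_2r_3r_4$ in the Kummer sheaf rather than splitting them as you do.
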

%%WS: The etale assumption wasn't doing anything here so I just deleted it.

\begin{proof} 
We have
$$
\mcK_{r,\lambda,\bfb}=\mcL_{\psi(\lambda s)} \otimes
\bigotimes_{i=1}^2 [\times (r+b_i)]^*\HYPK_k \otimes [\times
(r+b_{i+2})]^*\HYPK_k^{\vee},
$$
so that it is enough to treat the case $\lambda=0$. Furthermore, the
first statement is the special case of the second where $U$ is a
single point (the second assumption holds with $c=r_1r_2r_3r_4$,
$g=1$), so it is enough to handle the second case. By definition, we
have
$$
(f\times\mathrm{Id})^*\mcK_{\lambda,\bfb} = (f\times\mathrm{Id})^*
\bigotimes_{i=1}^{2} (f_i^* \HYPK_k \otimes f_{i+2}^* \HYPK_k^\vee ) =
\bigotimes_{i=1}^{2}\left( (f\times\mathrm{Id})^* f_i^* \HYPK_k
  \otimes (f\times\mathrm{Id})^* f_{i+1}^* \HYPK_k^\vee\right)
$$
where $f_i$ is the map $(r,s)\mapsto s(r+b_i)$.
 
Let $\alpha: \Aa^1 \to \Aa^1$ be the morphism $t \mapsto t^k$. By
Lemma \ref{lm-kl-infty}, the local monodromy of $\HYPK_k$ at $\infty$
is $\alpha_* (\mcL_{\chi_2^{k+1}}\otimes\mcL_{\tilde{\psi}})$.

Let $V=\Aa^1-\{-b_1,\ldots,-b_4\}$. For each $i$, we have the
Cartesian diagram
$$
\begin{tikzcd}
  U \times  \Aa^1 \arrow{d}{\mathrm{Id}_U \times \alpha} \arrow{rrr}{(u,t)\mapsto r_i(u)t}&  & & \Aa^1 \arrow{d}{\alpha}\\
  U \times \Aa^1 \arrow{rr}{f \times \mathrm{Id}_{\Aa^1}} & &V\times
  \Aa^1 \arrow{r}{f_i} & \Aa^1
  \end{tikzcd}
$$
By proper base change, this implies that the local monodromy at
$\infty$ of $(f\times\mathrm{Id})^* f_i^* \HYPK_k$ is the same as the
local monodromy at $\infty$ of
$(\mathrm{Id}_U\times \alpha)_* \left( \mcL_{\chi_2^{k+1}}( r_i t)
  \otimes\mcL_{\tilde{\psi}}(r_it)\right)$.
%Let
%$r_i=(r+b_i)^{1/k}$.
%\par
%Lemma~\ref{lm-kl-infty} shows that the local monodromy at $\infty$ of
%$\HYPK_k$ is given by
%$$
%[s\mapsto s^k]_*(\mcL_{\chi}\otimes\mcL_{\tilde{\psi}})
%$$
%for some multiplicative character $\chi$. As a consequence, for any
%$i$, the local monodromy at $\infty$ of $[\times (r+b_i)]^*\HYPK_k$ is
%$\alpha_{i,*} (\mcL_{\chi}\otimes \mcL_{\tilde{\psi}})$, where
%$\alpha_i(s,t)=s$ on the \'etale Galois covering
%$$
%\{(s,t)\,\mid\, t^k=(r+b_i)s\}
%$$
%of $\Gg_m$. This covering is isomorphic, via the map
%$(s,t)\mapsto (s, t/r_i)$, to the covering $\{(s,t)\,\mid\, t^k=s\}$,
%and under this isomorphism, the local monodromy becomes isomorphic to
%$\alpha_* (\mcL_{\chi(r_i t)}\otimes \mcL_{\tilde{\psi}(r_i t)})$
%where $\alpha(s,t)=s$.  Similarly, the local monodromy for
%$[\times (r+b_i)^*]\HYPK_k^{\vee}$ is
%$\alpha_*(\mcL_{\bar{\chi}(r_it)}\otimes \mcL_{\tilde{\psi}(-r_it)})$.
%\par
In terms of representation theory, this means that the local monodromy
representation at $\infty$ is induced from the normal subgroup $H$ of
$G=\pi_1((U\times\Gg_m)_{\bFq})$ corresponding to the covering
$\mathrm{Id}_U\times \alpha$ (which we will simply denote $\alpha$ by
slight abuse of notation).
\par
The quotient group $G/H$ is naturally isomorphic to the Galois group
of the covering, which is isomorphic to $\mmu_k$ by the homomorphism
sending a root of unity $\zeta\in \mmu_k$ to the maps
$(s,t)\mapsto (s,\zeta t)$. One checks easily that the action of
$\zeta$ on representations of $H$ is given by
$$
\zeta\cdot \mcL_{\chi_2^{k+1}}=\mcL_{\chi_2^{k+1}},\quad\quad
\zeta\cdot \mcL_{\tilde{\psi}}=[\times \zeta]^*\mcL_{\tilde{\psi}}.
$$
\par
Hence by Lemma~\ref{lm-inductionformula}, the local monodromy at
$\infty$ of $\mcK_{r,0,\bfb}$ is  isomorphic to that of
\begin{multline*}
  \bigoplus_{\zeta_2,\zeta_3,\zeta_4\in \mmu_k} \alpha_*\Bigl(
  \mcL_{\chi_2^{k+1}(r_1t)}\otimes\mcL_{\tilde{\psi}(r_1t)} 
  \otimes
  \mcL_{\chi_2^{k+1}(r_2t)}\otimes\mcL_{\tilde{\psi}(\zeta_2r_2t)}
  \otimes
  \\
  \mcL_{\chi_2^{k+1}(r_3t)}\otimes\mcL_{\tilde{\psi}(-\zeta_3r_3t)}
  \otimes
  \mcL_{\chi_2^{k+1}(r_4t)}\otimes\mcL_{\tilde{\psi}(-\zeta_4r_4t)}
  \Bigr)
  \\
  \simeq \bigoplus_{\zeta_2,\zeta_3,\zeta_4\in \mmu_k}
  \alpha_*\Bigl(\mcL_{\chi_2^{k+1}}(r_1r_2r_3r_4 t^4)
  \mcL_{\tilde{\psi}}(r_1t+\zeta_2r_2t-\zeta_3r_3t-\zeta_4r_4t)\Bigr),
\end{multline*}

If $k$ is odd, then $\chi_2^{k+1}$ is then trivial. Otherwise
$\chi_2^{k+1}=\chi_2$. Since $r_1,\ldots,r_4$ are nonvanishing on $U$,
the sheaf $\mcL_{\chi_2}(r_1r_2r_3r_4 t^4)$ is lisse on
$U \times \Gg_m \subseteq U \times \Aa^1$. By assumption, we have
$r_1r_2r_3r_4=cg^2$, so $r_1r_2r_3r_4 t^4=c(gt^2)^2$, and thus
$\mcL_{\chi_2}(r_1r_2r_3r_4 t^4)$ is geometrically trivial on
$U \times \Gg_m$.  Therefore we may ignore that term, and we
obtain~(\ref{eq-local-goal}).
\end{proof}

The following lemma about Kloosterman sheaves will prove useful to
compute the monodromy at $r=\infty$ of sum-product sheaves.

\begin{lemma}\label{lm-kloosterman-invariance} %\marginpar{rating 9/10}
  Let $R$ be a strictly Henselian regular local ring of characteristic
  $q>2$ with fraction field $K$ and maximal ideal $\mathfrak{m}$. Assume
  that $q\nmid k$.
\par
\emph{(1)} If $a\in R-\{0\}$ and $b\in\mathfrak{m}$, then we have
$$
a^*\HYPK_k\simeq (a+ab)^*\HYPK_k,
$$
where we view $a$ and $a+ab$ as maps $\Spec(R)\lra \Aa^1_{\bFq}$. 
\par
\emph{(2)} If $a\in K^{\times}$ is such that $a^{-1}\in\mathfrak{m}$,
and $b\in R$, then we have
$$
a^*\HYPK_k\simeq (a+b)^*\HYPK_k
$$ 
where we view $a$ and $a+b$ as maps
$\Spec(R) \lra \Pp^1_{\bFq}$.
\end{lemma}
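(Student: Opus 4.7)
Plan. For Part~(1), I distinguish two cases according to whether $a$ is a unit or lies in $\mathfrak{m}$. If $a\in R^\times$, then $1+b\in R^\times$ (as $b\in\mathfrak{m}$), so both $a$ and $a+ab=a(1+b)$ are units and the two morphisms $\Spec R\to\Aa^1_{\bFq}$ factor through $\Gm_{\bFq}$, where $\HYPK_k$ is lisse. Since $\Spec R$ is strictly Henselian, its étale fundamental group is trivial, so both pullbacks are trivial lisse sheaves of rank $k$ and hence isomorphic. If $a\in\mathfrak{m}$, set $c=1+b\in 1+\mathfrak{m}$, so that $a+ab=ac$; both pullbacks are extensions by zero from $U=\Spec R\setminus V(a)=\Spec R\setminus V(ac)$ of lisse sheaves, and on $U$ they correspond to representations of $\pi_1(U)$ obtained by composing the morphisms $a|_U,\,ac|_U\colon U\to\Gm_{\bFq}$ with the monodromy of $\HYPK_k|_{\Gm_{\bFq}}$. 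Because $\HYPK_k$ is tame at $0$ with a principal unipotent local monodromy (Proposition~\ref{pr-kl}(4)), these pullbacks are tame along $V(a)$ and their monodromies depend only on the order of vanishing of $a$ (resp.\ $ac$) along each irreducible component of $V(a)$; these orders agree because $c$ is a unit. The residual ``twist by $c$'' corresponds to pulling back Kummer characters from $\Gm_{\bFq}$ along the section $c\in R^\times=\Gm(\Spec R)$, which is trivial because $\pi_1(\Spec R)=1$. Combining these observations gives the desired isomorphism.

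For Part~(2), I reduce to the analogue of Part~(1) case (B) near $\infty$. Since $a^{-1}\in\mathfrak{m}$ and $b\in R$, we have $ba^{-1}\in\mathfrak{m}$ and $a+b=ac$ with $c=1+ba^{-1}\in 1+\mathfrak{m}$. Both $a$ and $a+b$ send the closed point of $\Spec R$ to $\infty\in\Pp^1$, and in the coordinate $y=1/x$ near $\infty$ they correspond to $y_1=a^{-1}$ and $y_2=y_1c^{-1}$, both in $\mathfrak{m}$. Because $\HYPK_k$ is wildly ramified at $\infty$, the tame argument of Part~(1)(B) does not apply directly; instead, I invoke Lemma~\ref{lm-kl-infty}, which identifies the inertia representation of $\HYPK_k$ at $\infty$ with $[x\mapsto x^k]_*(\mcL_{\chi_2^{k+1}}\otimes\mcL_{\tilde\psi})$, where $\tilde\psi(x)=\psi(kx)$. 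Using $q\nmid k$ and the strict Henselianness of $R$, Hensel's lemma produces a $k$-th root $u\in 1+\mathfrak{m}$ of $c^{-1}$, and pulling back the induced representation via $y_2=y_1c^{-1}$ instead of $y_1$ amounts to multiplying the integration variable by $u$ on the $k$-fold cover. The Kummer factor $\mcL_{\chi_2^{k+1}}$ is unaffected (Kummer pullbacks along $u\in R^\times$ are trivial), whereas the Artin--Schreier factor $\mcL_{\tilde\psi(t)}$ becomes $\mcL_{\tilde\psi(ut)}\simeq \mcL_{\tilde\psi(t)}\otimes\mcL_{\tilde\psi((u-1)t)}$. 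Since $u-1\in\mathfrak{m}$ and $t$ takes values in $\mathfrak{m}$ at the relevant closed point, Hensel's lemma applied to the equation $y^q-y=(u-1)t$ (whose derivative in $y$ is $-1$, a unit) exhibits a section, so the Artin--Schreier sheaf $\mcL_{\tilde\psi((u-1)t)}$ is trivial on the relevant strictly Henselian base, yielding the isomorphism.

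The main obstacle will be the careful handling of wild ramification in Part~(2): tracking how the change of coordinate $y_1\mapsto y_1c^{-1}$ interacts with the induced/push\-forward description of the local monodromy, and verifying that the resulting perturbations of the Kummer and Artin--Schreier factors are trivial over $\Spec R$. Both types of triviality (Kummer and Artin--Schreier) ultimately rest on strict Henselianness together with Hensel's lemma; the key technical input is the precise form of $\HYPK_k$'s local monodromy at $\infty$ from Lemma~\ref{lm-kl-infty}, which is what makes the coordinate change amenable to an explicit analysis. The tame case (Part~(1)(B)) is comparatively routine once one accepts that the unipotent monodromy of $\HYPK_k$ at $0$ is detected entirely by the order of vanishing, and that any ``unit twist'' contributes only Kummer data that is killed by the triviality of $\pi_1(\Spec R)$.
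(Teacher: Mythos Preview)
Your outline for Part~(1) is essentially the same as the paper's proof: the unit case is dispatched via $\pi_1(\Spec R)=1$, and the case $a\in\mathfrak{m}$ is reduced to the triviality of Kummer twists by $c=1+b\in R^\times$ over a strictly Henselian base, which amounts exactly to the paper's observation that the pullbacks of the Kummer covers along $a$ and $ac$ are isomorphic because $(1+b)^{1/n}\in R$.

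There is, however, a genuine gap in your Part~(2). After invoking Lemma~\ref{lm-kl-infty}, the Artin--Schreier factor on the $k$-fold cover is $\mcL_{\tilde\psi(t)}$ with $t$ pulling back to $a^{1/k}$. Your claim that ``$t$ takes values in $\mathfrak{m}$ at the relevant closed point'' is false: on $\Spec R[a^{-1/k}]$ the element $a^{-1/k}$ lies in the maximal ideal, so $t=a^{1/k}=(a^{-1/k})^{-1}$ is not even in the ring, let alone in its maximal ideal. Consequently the product $(u-1)t$ is a priori only an element of the fraction field, and Hensel's lemma for $y^q-y=(u-1)t$ does not apply just from $u-1\in\mathfrak{m}$. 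Knowing $u-1\in\mathfrak{m}$ alone is not enough: a generic element of $\mathfrak{m}$ multiplied by $a^{1/k}$ need not be regular.

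What is missing is the sharper statement that $u-1$ lies in the ideal $(a^{-1})$, not merely in $\mathfrak{m}$. This follows from the binomial expansion of the $k$-th root of $c^{-1}=(1+ba^{-1})^{-1}$ (valid since $q\nmid k$), which gives $u-1=-k^{-1}ba^{-1}+O((a^{-1})^2)$. Then $(u-1)\,a^{1/k}\in (a^{-1}\cdot a^{1/k})=(a^{-(k-1)/k})=((a^{-1/k})^{k-1})$, which is an honest element of $R[a^{-1/k}]$ lying in its maximal ideal. Now the Artin--Schreier equation $y^q-y=(u-1)t$ is solvable over this strictly Henselian base, and the argument goes through. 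This is exactly the step the paper carries out; once you insert it, your proof of Part~(2) is correct and matches the paper's.
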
 

\begin{proof} 
(1) There are two cases: either $a\in \mathfrak{m}$ or $a\in R^{\times}$.

If $a\in\mathfrak{m}$, we first observe that as $1+b \in R^\times$,
the ideals $(a)$ and $(a+ab)$ are the same, and hence
$$
Z=a^{-1}(\{0\})=(a+ab)^{-1}(\{0\})\subset \Spec(R).
$$
Let $U$ be the open complement of $Z$ in $\Spec(R)$. Let $j$ be the
open immersion $U \to \Spec R$. As $\HYPK_k$ is zero at $0$ according
to our definition, both $a^* \HYPK_k$ and $(a+ab)^* \HYPK_k$ are zero
on $Z$. Thus $a^* \HYPK_k$ is the extension by zero of
$j^* a^* \HYPK_k$, and $(a+ab)^* \HYPK_k$ is the extension by zero of
$j^* (a+ab)^* \HYPK_k$. So it is sufficient to check that
$j^* a^* \HYPK_k$ is isomorphic to $j^* (a+ab)^* \HYPK_k$ on $U$, and
then applying $j_!$ gives the isomorphism on $\Spec R$.

As $\HYPK_k$ is lisse on $\Gg_m$, the sheaves $j^*a^* \HYPK_k$ and
$j^* (a+ab)^* \HYPK_k$ are both lisse on $U$.  We next check that
these two sheaves are isomorphic as lisse sheaves on $U$, or
equivalently that they are isomorphic as representations of
$\pi_1(U)$.
\par
First, $a$ and $a+ab$, viewed as maps from $\Spec(R)$ to
$\Aa^1_{\bFq}$, both factor through the \'etale local ring at $0$. So
on the complement $U$ of the inverse image of zero, both maps factor
through the generic point.
%  A sheaf on a scheme with a single closed
% point is always lisse, and is given by a representation of the
% fundamental group of that scheme, which is the Galois group of the
% residue field (Milne's lecture notes p.45).
\par
By Proposition~\ref{pr-kl}(4), the local monodromy representation
associated to $\HYPK_k$ at $0$ is tame, hence it factors through the tame
fundamental group
$$
\pi_1^t\simeq \lim_{\substack{\leftarrow\\(n,q)=1}}\mmu_n(\bFq),
$$
(see, e.g.,~\cite[Examples I.5.2(c)]{milne}) corresponding to
coverings obtained by adjoining $n$-th roots of the coordinate with
$(n,q)=1$.  %Further, because $\HYPK_k$ is unipotent at $0$, the
%representation factors through the quotient of the tame fundamental
%group that classifies the covers obtained by adjoining $\ell$-power
%roots of the coordinate (see...~\footnote{Find reference or explain
%  based on Abyankhar's lemma?}).
%  that acts
% nontrivially on the covers defined by adjoining $\ell$-power roots of
% the coordinate (say $x$) \cite[Expos\'{e} XIII, Proposition
% 5.2]{sga1},\footnote{Abyankhar's Lemma basically explains why this is
%   true but is stated in a somewhat obtuse way for this particular
%   problem} 
To show that $a^*\HYPK_k$ and $(a+ab)^*\HYPK_k$ are isomorphic on $U$,
it is therefore enough by the Galois correspondence to prove that, for
any $n$ with $(n,q)=1$, the pullbacks under $a$ and $a+ab$ of the
covers obtained by $n$-th roots of the coordinate are isomorphic. But
$1+b$ is a unit and $R$ is a strict Henselian local ring, so that $R$
contains an $n$-th root of $1+b$, and the equation
$$ 
(a+ab)^{n} = a^{1/n} (1+b)^{1/n}
$$
gives such an isomorphism.
\par
On the other hand, if $a\in R^{\times}$, then $a+ab\in
R^{\times}$.  Hence both $a$ and $a+ab$, as maps from
$\Spec(R)$ to $\Aa^1_{\bFq}$, send the special point to a point $y\in
\Gg_m$.  Therefore the pullbacks $a^*\HYPK_k$ and
$(a+ab)^*\HYPK_k$ are both locally constant on $\Spec
(R)$, hence correspond to representations of $\pi_1 (\Spec
(R))$. These are all trivial since $\pi_1(\Spec
(R))$ is trivial for
$R$ strictly Henselian (see, e.g.,~\cite[Ex. I.5.2(b)]{milne}), and
since $a^*\HYPK_k$ and
$(a+ab)^*\HYPK_k$ have the same rank, they are isomorphic.
\par
(2) Assume now that $a^{-1}\in\mathfrak{m}$. Then
$$
u=\frac{a+b}{a} = 1 + \frac{b}{a}\in R^{\times},
$$
and hence $(a+b)^{-1}=u^{-1}a^{-1}\in \mathfrak{m}$. So both
$a$ and $a+b$ (now viewed as maps $\Spec(R)\lra
\Pp^1_{\bFq}$) send the special point of
$\Spec(R)$ to
$\infty\in\Pp^1$.  Furthermore the inverse image $Z\subset
\Spec(R)$ of $\infty \in
\Pp^1_{\bFq}$ is the same under both maps, since multiplying by a unit
does not change whether a function is infinite at a point. Because the
sheaves $a^*\HYPK_k$ and $(a+b)^*\HYPK_k$ are $0$ on
$Z$ and lisse on the complement
$U=\Spec(R)-Z$, they are both the extensions by zero of their
restrictions to
$U$, so it is enough to check that they are isomorphic on
$U$ as lisse sheaves, or as representations of the fundamental group
$\pi_1(U)$.
\par
As representations of the fundamental group, both sheaves are
pullbacks of the local monodromy representation of $\HYPK_k$.  By
Lemma~\ref{lm-kl-infty}, the local monodromy of $\HYPK_k$ at $\infty$
is isomorphic to that of the sheaf
$$
[x\mapsto x^k]_*(\mcL_{\chi_2^{k+1}}\otimes\mcL_{\tilde{\psi}}),
$$
where
$\tilde{\psi}(x)=\psi(kx)$. It is therefore enough to show that the
pullbacks of this sheaf along $a$ and $a+b$ are isomorphic.
\par
Let $C_a=\Spec(R[a^{-1/k}])$ and $C_{a+b}=\Spec(R[(a+b)^{-1/k}])$,
viewed as \'etale covers of $U$. Then, because
$u=(a+b)/a\in R^{\times}$ is a unit congruent to $1$ modulo
$\mathfrak{m}$ (and $k$ is coprime to $q$), there exists a $k$-th root
(say $v$) of $u$ in $R^{\times}$ which is congruent to $1$ modulo
$\mathfrak{m}$. The two covers are isomorphic via the map
$$
C_{a+b}\lra C_{a}
$$
induced by $y\mapsto vy$. Let $f\,:\, C_a\lra U$ be the
covering map. We have then
\begin{align*}
  a^*([x\mapsto x^k]_*(\mcL_{\chi_2^{k+1}}\otimes\mcL_{\tilde{\psi}}))
  &\simeq
    f_*\Bigl((a^{1/k})^*\sheaf{L}_{\chi_2^{k+1}}
    \otimes(a^{1/k})^*\mcL_{\tilde{\psi}}\Bigr),\\
  (a+b)^*([x\mapsto x^k]_*(\mcL_{\chi_2^{k+1}}\otimes\mcL_{\tilde{\psi}}))
  &\simeq
    f_*\Bigl((va^{1/k})^*\sheaf{L}_{\chi_2^{k+1}}\otimes
    (va^{1/k})^*\mcL_{\tilde{\psi}}\Bigr).
\end{align*}
It is therefore sufficient to prove that
$$
(a^{1/k})^*\sheaf{L}_{\chi_2^{k+1}}\otimes(a^{1/k})^*\mcL_{\tilde{\psi}}
\simeq (va^{1/k})^*\sheaf{L}_{\chi_2^{k+1}}\otimes
(va^{1/k})^*\mcL_{\tilde{\psi}}.
$$
Indeed, since $q\neq 2$ and $v$ is a unit, we have first
$$
(a^{1/k})^*\sheaf{L}_{\chi_2^{k+1}}\simeq
(va^{1/k})^*\sheaf{L}_{\chi_2^{k+1}}
$$
since $v$ is a unit. Furthermore, since
$v-1=w$ belongs to $\mathfrak{m}$, we get
$$
(va^{1/k})^*\mcL_{\tilde{\psi}} \simeq (a^{1/k})^*\mcL_{\tilde{\psi}}
\otimes (wa^{1/k})^*\mcL_{\tilde{\psi}}.
$$
Now we claim that the second factor is trivial on $R[a^{-1/k}]$, which
concludes the proof. Indeed, $w$ is in the ideal generated by $a^{-1}$
(by the power series $v=1+b k^{-1} a^{-1} + \cdots$), so $wa^{1/k}$ is
in the ideal generated by $a^{-(k-1)/k}$ and thus in the maximal ideal
of $R[a^{-1/k}]$. Hence it sends $\Spec R[a^{-1/k}]$ to a neighborhood
of $0$ in $\Aa^1_{\bFq}$, where $\mcL_{\tilde{\psi}}$ is lisse and
hence locally trivial, so the pullback
$(wa^{1/k})^*\mcL_{\tilde{\psi}}$ is trivial.
\end{proof}

We will need some simple facts about hypergeometric sheaves in the
sense of Katz~\cite{ESDE}, more precisely about a particular
hypergeometric sheaf.

%% Make explicit Fourier variable \xi to avoid confusions later.
\begin{definition}\label{def-hypergeo}
  For $k\geq 2$ an integer, we denote by $\mcH_{k-1}$ the middle-extension to
  $\Aa^1$ with coordinate $\xi$ of the $\ell$-adic sheaf on ${\Gm}$
  given by
$$
\mcH_{k-1}=[\xi\mapsto \xi^{-1}]^*
j^*\ft_{\psi}(\mcL_{\tilde{\psi}}(x^{1/k})),
$$ 
where $j\,:\, \Gg_m\lra \Aa^1$ is the open immersion and we recall
that $\tilde\psi(x)=\psi(kx)$.
\end{definition}

It is important for later purpose to note the following lemma:

\begin{lemma}\label{lm-hypergeo} 
  The sheaf $\mcH_{k-1}$ is a multiplicative translate of a
  hypergeometric sheaf of type $(k-1,0)$ in the sense of Katz. More
  precisely, it is geometrically isomorphic to
$$
\mathrm{Hyp}_{(-1)^k}(!,\bar{\psi};\{\chi|\chi^k=1,\
\chi\not=1\};\emptyset),
$$
with the notation of~\cite[8.2.2, 8.2.13]{ESDE}. The inertia
representation of $\mcH_{k-1}$ at infinity is absolutely irreducible.
% , the subscript $a$ in
% $\mathrm{Hyp}_{a}$ denote the pull-back by the map $x\mapsto a x$.
\end{lemma}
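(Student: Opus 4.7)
The plan is to unwind the definition of $\mcH_{k-1}$ and match it with Katz's construction of hypergeometric sheaves recalled in \cite[Chapter 8]{ESDE}, which is precisely the reference given in the statement.

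First, using the notation recalled in the introduction, I would rewrite $\mcL_{\tilde\psi}(x^{1/k}) = [t \mapsto t^k]_* \mcL_{\tilde\psi}$, a rank-$k$ constructible sheaf on $\Aa^1$. Its Fourier transform $\ft_\psi$ is then a constructible sheaf on the dual $\Aa^1$ (with coordinate $\xi$) whose trace function is, up to the standard Fourier normalization,
\[
\xi \mapsto \frac{-1}{\sqrt{q}}\sum_{t \in \Fq} \psi(kt + t^k \xi).
\]
After restricting to $\xi \neq 0$, pulling back by $\xi \mapsto \xi^{-1}$, and substituting $t \mapsto -t$ (which introduces the factor $(-1)^k$ in $\xi$ and converts $\psi$ into $\bar\psi$ in the linear part), one recognizes the resulting expression as the trace function of a hypergeometric sum in the sense of Katz, with upper parameters equal to the non-trivial characters $\chi$ satisfying $\chi^k=1$, with no lower parameters, and with multiplicative translation parameter $(-1)^k$.

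Second, to promote this trace-function comparison to an isomorphism of sheaves, I would appeal to the Fourier-transform description of hypergeometric sheaves in \cite[8.2.13]{ESDE}, which expresses $\mathrm{Hyp}_a(!,\bar\psi;\chi_1,\ldots,\chi_n;\emptyset)$ as the middle extension to $\Aa^1$ of an explicit Fourier transform of a suitable $!$-pushforward along a Kummer-type cover. Both $\mcH_{k-1}$ and the candidate hypergeometric sheaf are by construction middle extensions of sheaves lisse on $\Gg_m$, so the identification of their restrictions to $\Gg_m$ suffices, and this reduces (via Chebotarev and semisimplicity on $\Gg_m$) to the trace-function matching performed above. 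This establishes the first assertion of the lemma.

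For the absolute irreducibility of the inertia representation at infinity, I would invoke Katz's systematic analysis of local monodromy of hypergeometric sheaves at $\infty$ (\cite[\S 8.4]{ESDE}): for a hypergeometric sheaf of type $(n,0)$ with $n = k-1 \geq 1$, the inertia representation at $\infty$ is totally wild with a single break $1/n$ and Swan conductor $1$. Such a representation is induced from a character of the unique cyclic cover of order $n$, and any such induced representation is absolutely irreducible (this is the exact analogue of the statement for Kloosterman sheaves encoded in Proposition~\ref{pr-kl}(5) and Lemma~\ref{lm-kl-infty}). The principal obstacle is purely bookkeeping: carefully tracking the sign $(-1)^k$ that arises from the substitution $t\mapsto -t$, the appearance of $\bar\psi$ rather than $\psi$ dictated by the Fourier-inversion convention, and the multiplicative translate absorbed into the subscript of $\mathrm{Hyp}_{(-1)^k}$, so as to match our construction exactly with the statement in Katz's normalization.
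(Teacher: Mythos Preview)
Your approach is correct and arrives at the same conclusion, but the route differs from the paper's. The paper works entirely at the sheaf level: it identifies $\mcL_{\tilde\psi}(x^{1/k})$ with $j_*\mathrm{Hyp}_1(!,\psi;\{\chi\mid\chi^k=1\};\emptyset)$ via~\cite[5.6.2]{GKM}, then applies the known behavior of Fourier transform on hypergeometric sheaves (again~\cite[5.6.2]{GKM}) and the effect of $\xi\mapsto\xi^{-1}$ to obtain the stated hypergeometric sheaf directly, with no trace-function computation and no appeal to Chebotarev or semisimplicity. Your trace-function matching plus Chebotarev is a legitimate alternative, and you correctly note that purity forces semisimplicity so that trace equality over all extensions yields a genuine geometric isomorphism; the cost is the bookkeeping you flag (the sign $(-1)^k$, the switch $\psi\to\bar\psi$, the translate), whereas the paper's sheaf-level identities absorb all of this into citations. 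For the irreducibility at~$\infty$, both arguments are the same in substance: the paper cites~\cite[Th.~8.4.2(6)]{ESDE} for the unique break $1/(k-1)$ with Swan conductor~$1$ and then~\cite[Prop.~1.14]{GKM} for irreducibility of such a representation, which is exactly the mechanism you describe.
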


We thank the referee for giving a proof that is simpler than our
original. 

\begin{proof} 
  Since both $\mcH_{k-1}$ and hypergeometric sheaves are
  middle-extension sheaves (recall that $k\geq 2$), it is enough to
  prove the isomorphism after restriction to $\Gg_m$.  We compute
% Using the
%   notation in~\cite[8.2.2(2)]{ESDE}, we compute
%   % have, using \cite{GKM}[5.6.2]
%   and \cite{ESDE}[9.3.2], the following geometric isomorphisms
  \begin{align*}
    j^*\mcH_{k-1}
    &= [\xi\mapsto \xi^{-1}]^*j^*\mathrm{FT}_\psi(\mcL_{\tilde\psi}(x^{1/k}))\\ 	
    &\simeq  [\xi\mapsto
      \xi^{-1}]^*j^*\mathrm{FT}_\psi(j_*\mathrm{Hyp}_1(!,\psi;\{\chi|\chi^k=1\};\emptyset))
      \quad\quad \text{\cite[5.6.2]{GKM}}\\
    &\simeq  [\xi\mapsto
      \xi^{-1}]^*\mathrm{Hyp}_{(-1)^k}(!,\psi;\emptyset;\{\chi\not=1,\
      \chi^k=1\})) 
      \quad\quad \text{\cite[5.6.2]{GKM}}\\
    &\simeq  \mathrm{Hyp}_{(-1)^k}(!,\ov\psi;\{\chi\not=1,\
      \chi^k=1\};\emptyset)
% \\
%     &\simeq  \mathrm{Hyp}_{-1}(!,\psi;\{\chi\not=1,\
%       \chi^k=1\};\emptyset).
  \end{align*}
where $\simeq$ always denotes geometric isomorphisms.
\par
The last assertion now follows from~\cite[Th. 8.4.2 (6)]{ESDE}, which
shows that the inertia representation at $\infty$ is of dimension
$k-1$ will unique break $1/(k-1)$ and~\cite[Prop. 1.14]{GKM}, which
shows that such a representation of the inertia group at $\infty$ is
absolutely irreducible.
\end{proof}

We will need some properties of the local monodromy at $\infty$ of
$\mcH_{k-1}$. To state them, we need the following definition.

\begin{definition}\label{def-reparameterization}
  Let $K$ be a local field and let $\sigma$ be an automorphism of
  $K$. Let $n\geq 1$ be an integer and let $\pi$ be a uniformizer of
  $K$. We say that $\sigma$ is \emph{a reparameterization of order
    $n$} if $\sigma(\pi)$ is a uniformizer of $K$ such that
$$
\sigma(\pi)\equiv \pi\mods{\pi^n}.
$$
\end{definition}

Note that since an order $n$ reparameterization acts on $K$, it also
defines an outer automorphism of the Galois group of $K$: each
extension $\bar{\sigma}$ of $\sigma$ to a separable closure $\bar{K}$
of $K$ gives an automorphism of $\Gal(\bar{K}/K)$, and the ambiguity
in the possible choices of this extension amounts to conjugating
$\bar{\sigma}$ with an element of $\Gal(\bar{K}/K)$, so that the
corresponding outer automorphism of the Galois group is well
defined. This outer automorphism defines an action of $\sigma$ on the
set of isomorphism classes of representations of the Galois
group. More abstractly, $\sigma$ defines an automorphism of the
category of finite \'{e}tale covers of $\Spec(K)$ by pullback, and
hence acts on the category of \'{e}tale sheaves on $\Spec(K)$, which
is equivalent to the category of Galois representations.

\begin{lemma}\label{lm-hypergeo-2}
  Assume that $q>k\geq 2$. 
\par
\emph{(1)} The local monodromy representation at infinity of
$\mcH_{k-1}$ is invariant under reparameterizations of order $2$.
\par
\emph{(2)} The local monodromy representation at infinity of
$\mcH_{k-1}$, restricted to the wild inertia group, is a direct sum of
pairwise non-isomorphic characters with multiplicity $1$. The tame
inertia group acts transitively on these characters.
\par
\emph{(3)} Let $\alpha_1$, $\alpha_2$ be elements of an algebraically
closed extension of $\Fq$ such that the wild local monodromy
representation at infinity of $[\times \alpha_1]^*\mcH_{k-1}$ and
$[\times \alpha_2]^*\mcH_{k-1}$ have a common irreducible
component. Then $\alpha_1=\alpha_2$.
\end{lemma}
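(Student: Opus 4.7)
I will derive all three statements from the explicit description of the local monodromy of $\mcH_{k-1}$ at $\infty$ already afforded by Lemma~\ref{lm-hypergeo}: the inertia representation has rank $k-1$, is totally wild with unique break $1/(k-1)$, and is absolutely irreducible.

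For (1), the key numerical observation is that $1/(k-1)\leq 1$ for all $k\geq 2$. By standard results on the upper numbering ramification filtration (see, e.g., \cite[\S 1]{GKM}), a local representation with all breaks $\leq s$ factors through a quotient of $\Gal(\bar K/K)$ that depends only on the uniformizer modulo $\pi^{\lceil s\rceil+1}$, hence is invariant under reparameterizations of order $\lceil s\rceil+1$. With $s\leq 1$, order $2$ reparameterizations act trivially, giving (1) immediately.

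For (2) and (3), the idea is to pull the inertia representation at $\infty$ back along the Kummer cover $[y\mapsto y^{k-1}]$. Because the break is $1/(k-1)$, the pullback has all breaks equal to $1$, and is therefore a direct sum of rank $1$ sheaves of Artin--Schreier type $\mcL_{\bar\psi}(a\cdot y)$ (up to a factor lisse at the special point), the coefficients $a$ ranging over an explicit set. Using the identification in Lemma~\ref{lm-hypergeo} of $\mcH_{k-1}$ with a hypergeometric sheaf of type $(k-1,0)$ and the stationary phase description of the local structure of a Fourier transform at infinity (see \cite[Ch. 7--8]{ESDE} or the computation of $\ft_\psi(\mcL_{\tilde\psi}(x^{1/k}))$ via Lemma~\ref{lm-kl-infty}), the coefficients $a$ form the set $\{\zeta c_0 : \zeta\in\mmu_{k-1}\}$ for a single explicit nonzero constant $c_0$ (depending on $\psi$ and $k$ but not on further choices). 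The original wild inertia representation of $\mcH_{k-1}$ is thus the induction of any one such character from the index $k-1$ subgroup corresponding to the Kummer cover, and Mackey's formula shows that its restriction to wild inertia is exactly the direct sum of these $k-1$ characters. Their pairwise non-isomorphism is equivalent to the absolute irreducibility of the induced representation, which is Lemma~\ref{lm-hypergeo}; transitivity of the tame inertia action is immediate because the tame quotient surjects onto the Galois group $\mmu_{k-1}$ of the Kummer cover and acts on the characters by Galois conjugation. This proves (2).

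For (3), pulling back by $[\times\alpha]$ rescales the uniformizer at $\infty$, so the analogous computation gives that the wild inertia characters of $[\times\alpha]^*\mcH_{k-1}$ are exactly $\{\mcL_{\bar\psi}(\zeta\alpha^{1/(k-1)}c_0\cdot y)\}_{\zeta\in\mmu_{k-1}}$ for any fixed choice of $(k-1)$-th root of $\alpha$. A shared wild component for $\alpha_1$ and $\alpha_2$ then forces $\zeta_1\alpha_1^{1/(k-1)}=\zeta_2\alpha_2^{1/(k-1)}$ for some $\zeta_1,\zeta_2\in\mmu_{k-1}$, and raising to the $(k-1)$-th power yields $\alpha_1=\alpha_2$. \emph{The main obstacle} is the explicit computation of the constant $c_0$ and, more generally, the precise Kummer pullback of the hypergeometric sheaf at $\infty$: we need it with enough precision that the $\alpha$-dependence is visible (for (3)), while still abstract enough that (1) and (2) follow without knowing $c_0$ itself. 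This is the point where the specific structure of $\mcH_{k-1}$ -- namely that its ``upstairs'' parameters are exactly the nontrivial $k$-th roots of the trivial character and its ``downstairs'' parameter set is empty -- is essential, and is the only step where we need more than the qualitative information (rank, break, irreducibility) supplied by Lemma~\ref{lm-hypergeo}.
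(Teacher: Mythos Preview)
Your approaches to (2) and (3) are essentially the paper's: pull back along the Kummer cover $t\mapsto t^{k-1}$ to decompose the wild inertia representation as a direct sum of Artin--Schreier characters, then read off both the multiplicity-one statement (via the irreducibility from Lemma~\ref{lm-hypergeo}) and the rigidity under multiplicative shifts. The paper obtains the explicit formula from Fu's computation of local Fourier transforms \cite[Th.~0.1(iii)]{Fu}, which gives directly
\[
\mcH_{k-1}|_{I(\infty)}\;\simeq\;[t\mapsto -t^{k-1}]_*\bigl(\mcL_{\psi((k-1)t)}\otimes\mcL_{\chi_2}\bigr),
\]
so that your $c_0$ is $k-1$ and the wild characters after Kummer pullback are $\mcL_{\psi((k-1)\beta t)}$ with $\beta^{k-1}=\alpha$. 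Your gesture toward ``stationary phase'' points at the same computation but does not carry it out; this is indeed the step you flag as the main obstacle, and it is where the actual work lies.

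For (1), however, your route diverges from the paper's and has a real gap. The principle you invoke --- that a representation with all breaks $\leq s$ ``depends only on the uniformizer modulo $\pi^{\lceil s\rceil+1}$'' and is therefore invariant under reparameterizations of order $\lceil s\rceil+1$ --- is not in \cite[\S 1]{GKM} and is not a standard black box. What \cite[\S 1]{GKM} provides is that such a representation factors through $I/I^{(s+)}$; it does \emph{not} tell you that an order-$2$ reparameterization of $K$ induces an inner (or even trivial) automorphism of that quotient, which is what you need. Proving this requires precisely the hands-on argument the paper gives: using the induced form $g_*W$ with $g(t)=t^{k-1}$, one checks that an order-$2$ reparameterization $\sigma$ downstairs satisfies $\sigma^{-1}\circ g=g\circ\sigma_1$ for some order-$k$ reparameterization $\sigma_1$ upstairs, and then verifies directly (via the Artin--Schreier and Kummer coverings) that the rank-one sheaves $\mcL_\eta$ and $\mcL_\chi$ are invariant under any reparameterization of order $\geq 2$. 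Your shortcut, if made rigorous, would unwind to this same computation; as stated it asserts the conclusion without supplying a proof.
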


\begin{proof} 
  The integer $q$ is coprime with $2(k-1)$ since $q>k\geq 2$. By
  \cite[Theorem 0.1, (iii)]{Fu} (which is more precise) we derive
  isomorphisms of $I(\infty)$-representations
  \begin{align}
    \mcH_{k-1|I(\infty)}
    &\simeq
      \ft_\psi(\mcL_{\tilde\psi}(x^{1/k}))_{|I(0)}
      \notag  \\
    &\simeq
      \ft_{\psi}\text{loc}(\infty, 0)
      ([t\mapsto t^k]_*\mcL_{\tilde\psi}) \simeq
      [t\mapsto -t^{k-1}]_*(\mcL_{\psi((k-1)t)}\otimes\mcL_{\chi_2})
\label{eq-local-hk}
\end{align}
where $\ft_{\psi}\text{loc}(\cdot,\cdot)$ denotes Laumon's local
Fourier transform functors (see, e.g,~\cite[7.4]{ESDE}).
%  and
%  % $\mcL_{\chi_2}$ is the Kummer sheaf associated to the quadratic
%  % character.
\par
To prove (1), it is therefore enough to prove that for any additive
character $\eta$ and any multiplicative character $\chi$, the local
monodromy representation at $\infty$ of
$[t\mapsto t^{k-1}]_*(\sheaf{L}_{\eta}\otimes\sheaf{L}_{\chi})$ is
invariant under reparameterizations of order $2$.
\par
Let $V$ denote this representation. Let $R$ be the strict
henselization at $\infty$, let $K$ be its field of fractions and let
$\pi$ be a uniformizer of $R$.  Let $g\,:\, \Spec(K)\lra \Spec(K)$ be
the map corresponding to $t\mapsto t^{k-1}$. A representation obtained
from $V$ by applying a reparameterization of order $2$ is of the form
$\sigma^*V=(\sigma^{-1})_*V$, where $\sigma$ is an automorphism $K$
such that $\sigma(\pi)\equiv \pi\mods{\pi^2}$. We view $\sigma$ and
$\sigma^{-1}$ as automorphisms $\Spec(K)\lra \Spec(K)$.
\par
Let $W=\sheaf{L}_{\eta}\otimes\sheaf{L}_{\chi}$; we have
$V\simeq g_*W$ and hence $(\sigma^{-1})_*V=\tau_* W$ where
$\tau=\sigma^{-1}\circ g$.  There exists an automorphism $\sigma_1$
such that $\tau=g\circ \sigma_1$, and $\sigma_1$ is a
reparameterization of order $k$. We can see this in coordinates by
solving the equation
$$\sigma_1(t)^{k-1} = \sigma^{-1} ( t^{k-1}) = t^{k-1} + a_1
t^{2(k-1)} + \dots $$ with
$$\sigma_1(t) = t + \frac{a_1}{k-1} t^k + \dots $$ Thus
$\sigma^*V\simeq g_*(\sigma_{1,*}W)$, and in particular, we obtain
$\sigma^*V\simeq V$, provided $W$ is invariant under
reparameterizations of order $k$.  In fact, we will show that both
$\sheaf{L}_{\eta}$ and $\sheaf{L}_{\chi}$ are invariant under any
reparameterization $\sigma_1$ of order $k\geq 2$, which will be
enough.
\par
For multiplicative characters, this amounts to saying that for $d$
coprime to $q$, the covering $\Spec(K(\pi^{-1/d}))\lra \Spec(K)$ is
invariant under $\sigma_1$, which is clear because if we write
$\sigma_1(\pi)=\pi+b\pi^2$ for some $b\in R$, we get
$$
\sigma_1(\pi)^{-1/d}=\pi^{-1/d}(1+b\pi)^{-1/d},
$$
and $(1+b\pi)^{-1/d}\in K$. For additive characters, this amounts to
proving that the Artin-Schreier covering with equation
$y^q-y=\pi^{-1}$ is invariant, and this follows because the equation
$$
z^q-z=\frac{1}{\sigma_1(\pi)}-\frac{1}{\pi}=-\frac{b}{1+b\pi}
$$
is solvable in $K$.
\par
(2) By~(\ref{eq-local-hk}), the local wild monodromy representation of
$\mcH_{k-1}$ at $\infty$ is isomorphic to
$$
[t\mapsto -t^{k-1}]_*(\mcL_{\psi((k-1)t)}).
$$
It is equivalent to study this after pulling back by any tame
cover. In particular, after pulling back along the map
$t \mapsto t^{k-1}$, we have to deal with
\begin{equation}\label{eq-hk}
\bigoplus_{\xi^{k-1}=1} \mcL_{\psi ( \xi (k-1) t)},
\end{equation}
which is indeed a sum of one-dimensional characters.  They are
pairwise non-isomorphic (if we have, say, an isomorphism
$\mcL_{\psi (\xi_1 (k-1)t )} \simeq \mcL_{\psi (\xi_2 (k-1)t )}$ as
representations of the wild inertia group, then
$\mcL_{\psi ((\xi_1-\xi_2) (k-1)t )}$ is tamely ramified, which means
that $\xi_1=\xi_2$ since otherwise
$\mcL_{\psi ((\xi_1-\xi_2) (k-1)t )}$ is a non-trivial additive
character sheaf with Swan conductor $1$).

Since $\mathcal H_{k-1}$ is an irreducible representation of the full
inertia group at infinity (Lemma~\ref{lm-hypergeo}), the tame inertia
group acts transitively by conjugation on the set of characters
in~(\ref{eq-hk}) (the direct sum of any subset of the characters that
is stable under the tame inertia group would define an
inertia-invariant subspace).
\par
(3) Let $L/\Fq$ be an algebraically closed extension. We use the same
notation $\mathcal{H}_{k-1}$ and $\mcL_{\psi}$ for the sheaves
base-changed to $L$, so that for instance $[\times\alpha]^*\mcH_{k-1}$
and $\mcL_{\psi(\beta t)}$ are well-defined for $\alpha$ and
$\beta\in L^{\times}$.
\par
Adding a multiplicative shift to the computation of (2), the pullback
along $[t\mapsto -t^{k-1}]$ of the local wild monodromy representation
of $[\times\alpha]^*\mcH_{k-1}$ at $\infty$ is isomorphic to
$$
\bigoplus_{\beta^{k-1}=\alpha}\mcL_{\psi((k-1)\beta t)}.
$$
If the local wild monodromy representations of
$[\times\alpha_1]^*\mcH_{k-1}$ and $[\times\alpha_2]^*\mcH_{k-1}$ at
$\infty$ have a common irreducible component, then one of the additive
characters appearing in one of the two sums must also appear in the
other, so there exists $\beta$ such that
$\alpha_1=\beta^{k-1}=\alpha_2$.
\end{proof}

The following lemma is quite standard but we include a proof for lack
of a suitable reference.

\begin{lemma}\label{lm-weight-computation} 
  \emph{(1)} Let $U_{\Fq}$ be a dense open subset of a smooth projective
  curve $C_{\Fq}$ and let $\sheaf{F}$ be an $\ell$-adic sheaf on
  $C$. Assume that $\sheaf{F}$ is lisse and pure of weight $w$ on $U$,
  that it has no punctual sections, and that, viewed as a
  representation of the geometric fundamental group of $U$, it has no
  trivial subrepresentation.

  Then the subspace of weight $<w+1$ of $H^1(C_{\bFq}, \sheaf{F})$ is
  equal to
$$
\bigoplus_{x\in C-U}(\sheaf{F}_{\ov\eta}^{I_x}/\sheaf{F}_{\ov x}),
$$
where $I_x$ is the inertia group at $x$ and $\sheaf{F}_{\ov\eta}$ is
the stalk at the geometric generic point of $\sheaf{F}$.
% the sum over the points of $C-U$ of the local monodromy invariants
% of $\sheaf{F}_\eta$ at that point modulo the stalk of $\sheaf{F}$ at
% that point.\footnote{There might be a reference for this in Weil II
% or something by Katz... (WS)}
\par
\emph{(2)} Let $\pi: C \to X$ be a smooth projective morphism of
relative dimension $1$ over $\Fq$, and let $\sheaf{F}$ be an
$\ell$-adic sheaf on $C$. Assume that $\sheaf{F}$ is lisse and pure of
weight $w$ on a dense open subset $U\subset C$. Assume that for all
$x\in X$ in some dense open subset, $\sheaf{F}|C_x$ has no punctual
sections and that, when $\sheaf{F}|C_x$ is viewed as a representation
of the geometric fundamental group of $U_x$, it has no trivial
subrepresentation.

On the dense open set where $R^1 \pi_* \sheaf{F}$ is lisse, let
$(R^1 \pi_* \sheaf{F})^{<w+1}$ be the maximal lisse subsheaf of
$R^1\pi_*\sheaf{F}$ of weight $<w+1$. Then for any point $x$ in the
dense open subset where $R^1 \pi_* \sheaf{F}$ is lisse, we have an
isomorphism
$$
(R^1 \pi_* \sheaf{F})^{<w+1}_x= \bigoplus_{y\in
  C_x-U_x}((\sheaf{F}|C_x)_{\ov\eta}^{I_y}/(\sheaf{F}|C_x)_{\ov y})
$$
where $(\sheaf{F}|C_x)_{\ov\eta}$ is the stalk at the geometric
generic point of the restriction of $\sheaf{F}$ to $C_x$.
\end{lemma}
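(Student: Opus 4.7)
My plan for part (1) is to exploit the natural adjunction inclusion $\sheaf{F} \hookrightarrow j_*\sheaf{F}_U$, where $j\colon U \hookrightarrow C$ denotes the open immersion and $\sheaf{F}_U = j^*\sheaf{F}$. The hypothesis that $\sheaf{F}$ has no punctual sections guarantees injectivity, while the stalks of $j_*\sheaf{F}_U$ at $x \in C - U$ are given by $(j_*\sheaf{F}_U)_{\ov x} = \sheaf{F}_{\ov\eta}^{I_x}$; the cokernel is therefore the skyscraper sheaf $\bigoplus_{x \in C - U} \sheaf{F}_{\ov\eta}^{I_x}/\sheaf{F}_{\ov x}$. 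Taking the long exact sequence in \'etale cohomology on $C_{\bFq}$, observing that $H^0(C_{\bFq}, j_*\sheaf{F}_U) = \sheaf{F}_{\ov\eta}^{\pi_1(U_{\bFq})} = 0$ by the no-trivial-subrepresentation hypothesis and that $H^1$ of a skyscraper sheaf vanishes, will yield the short exact sequence
\[
0 \to \bigoplus_{x \in C - U} \sheaf{F}_{\ov\eta}^{I_x}/\sheaf{F}_{\ov x} \to H^1(C_{\bFq}, \sheaf{F}) \to H^1(C_{\bFq}, j_*\sheaf{F}_U) \to 0.
\]

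Next I will invoke Deligne's purity theorem: on the smooth curve $C$, the sheaf $j_*\sheaf{F}_U$ coincides with the middle extension of a pure-of-weight-$w$ lisse sheaf on an open subset, so $H^1(C_{\bFq}, j_*\sheaf{F}_U)$ is pure of weight $w+1$. On the other hand, the leftmost term above is a subquotient of $\sheaf{F}_{\ov\eta}$, which is pure of weight $w$, hence has weight $\leq w$. Splitting $H^1(C_{\bFq}, \sheaf{F})$ along its weight filtration thus identifies its weight $<w+1$ part with $\bigoplus_{x} \sheaf{F}_{\ov\eta}^{I_x}/\sheaf{F}_{\ov x}$, concluding (1).

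For part (2), I will apply this argument fiberwise and transfer to $R^1\pi_*\sheaf{F}$ via proper base change. For any closed point $x$ lying in the intersection of the dense open where $R^1\pi_*\sheaf{F}$ is lisse with the dense open where the fiberwise hypotheses on $\sheaf{F}|C_x$ hold, proper base change identifies $(R^1\pi_*\sheaf{F})_x$ with $H^1(C_x, \sheaf{F}|C_x)$, and the weight $<w+1$ subsheaf of a lisse sheaf of mixed weights has stalks equal to the weight $<w+1$ parts of the corresponding stalks. Applying part (1) to $\sheaf{F}|C_x$ then gives the claimed identification pointwise, and hence as lisse sheaves on this open. The main obstacle will be setting up Deligne's purity theorem cleanly together with the identification of $j_*\sheaf{F}_U$ with the middle extension on a smooth curve; once these standard inputs are granted, the rest is routine manipulation of short exact sequences, weight filtrations, and proper base change, with the key insight being that all boundary-stalk contributions collect exactly into the lower-weight part of $H^1$.
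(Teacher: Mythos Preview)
Your approach to part~(1) is essentially identical to the paper's: both use the adjunction $\sheaf{F}\hookrightarrow j_*j^*\sheaf{F}$, identify the cokernel as the punctual sheaf $\bigoplus_{x}\sheaf{F}_{\ov\eta}^{I_x}/\sheaf{F}_{\ov x}$, and invoke Deligne's purity for the $H^1$ of the middle extension. One imprecision: your justification that the boundary term has weight $\leq w$ (``subquotient of $\sheaf{F}_{\ov\eta}$, which is pure of weight $w$'') is not quite right, since the Frobenius at $x\in C-U$ does not act on the full generic stalk $\sheaf{F}_{\ov\eta}$ but only on the inertia invariants $\sheaf{F}_{\ov\eta}^{I_x}$, so this is not a ``subquotient inherits purity'' argument. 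The correct input is Deligne's local result \cite[Lemme~1.8.1]{WeilII}, which the paper cites explicitly.

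For part~(2) your strategy is again the paper's, but the argument you sketch is incomplete relative to the stated conclusion. You establish the identity only at closed points $x$ lying in the \emph{intersection} of the lisse locus of $R^1\pi_*\sheaf{F}$ with the dense open where the fiberwise hypotheses (no punctual sections, no trivial subrepresentation) hold; the lemma asserts it for \emph{all} points of the lisse locus, and these two opens need not coincide. The paper closes this gap by a reduction to the generic case: given any $x$ in the lisse locus, replace $X$ by the closure $\overline{\{x\}}$ and rerun the argument, so that $x$ becomes the generic point and automatically lies in every dense open of the new base. The paper also organizes things globally via the cokernel sheaf $\sheaf{G}$ and $R^0\pi_*\sheaf{G}$, using generic base change for $j_*$ (\cite[Th.~Finitude, Th.~1.9(2)]{sga4h}) to identify the fiberwise exact sequences with the fibers of a global one; your direct fiberwise application of~(1) sidesteps this bookkeeping, but then the closure trick is genuinely needed to reach the full statement.
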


\begin{proof}
  (1) Let $j\,:\, U\lra C$ denote the open immersion.  Because
  $\sheaf{F}$ has no punctual sections, the natural adjunction map
  $\sheaf{F} \to j_* j^* \sheaf{F} $ is injective. Let $\sheaf{G}$ be
  its cokernel.  Then we have a long exact sequence
\begin{equation}\label{eq-exact}
\cdots\lra H^i(C_{\bFq}, \sheaf{F}) \to H^i(C_{\bFq}, j_* j^* \sheaf{F})
\to H^i (C_{\bFq}, \sheaf{G})\lra \cdots
\end{equation}
By assumption on $\sheaf{F}$, we have
$$
H^0(C_{\bFq}, j_* j^* \sheaf{F})= H^0 (U_{\bFq}, j^* \sheaf{F})=0.
$$
Since $\sheaf{G}$ is supported on $C - U$, its cohomology vanishes in
degree above $1$, and hence we deduce a short  exact sequence
$$
0 \to H^0 (C_{\bFq}, \sheaf{G}) \to H^1(C_{\bFq}, \sheaf{F}) \to H^1(C,
j_* j^* \sheaf{F}) \to 0.
$$
\par
Because $j_* j^* \sheaf{F}$ is the middle extension of a lisse sheaf
pure of weight $w$, a result of Deligne implies that its cohomology
group $H^1(C_{\bFq}, j_* j^* \sheaf{F})$ is pure of weight
$w+1$ % as $j_* j^* \sheaf{F}$ is the middle extension
% of a lisse sheaf pure of weight $w$, so its cohomology in degree $i$
% is pure of weight $w+i$
(see~\cite[Exemple 6.2.5(c) and Proposition 6.2.6]{WeilII}). Therefore
the weight $<w+1$ part of $H^1(C_{\bFq}, \sheaf{F})$ is the same as the
weight $<w+1$ part of $H^0(C_{\bFq}, \sheaf{G})$. Since the sheaf
$\sheaf{G}$ is punctual, we have
$$
H^0(C_{\bFq},\sheaf{G})=\bigoplus_{x\in C-U}\sheaf{G}_{\ov x}= \bigoplus_{x\in
  C-U}(j_*j^*\sheaf{F})_{\ov x}/\sheaf{F}_{\ov x}
$$ 
(by definition of $\sheaf{G}$). We also have
$$
(j_*j^*\sheaf{F})_{\ov x}=\sheaf{F}_{\ov\eta}^{I_x},
$$
and~\cite[Lemma 1.8.1]{WeilII} shows that this space is of weight
$\leq w$, so that all of $H^0(C_{\bFq},\sheaf{G})$ is the weight $<w+1$
part of $H^1(C_{\bFq},\sheaf{F})$, as claimed.
%% we can drop that part of the formula.
% (see for instance~\cite[Ex. 2, p. 104]{Sommes}) at finitely many
% points, so its global sections are just the sum of its sections at
% those points. The sections of $\sheaf{G}$ at those points are just the
% quotients of the sections of $j_* j^* \sheaf{F}$ by the sections of
% $\sheaf{F}$.  So the weight $<w+1$ part of $H^1(C,\sheaf{F})$ is the
% same as

% $$ \sum_{x \in C- U} (j_* j^* \sheaf{F})_x  / \sheaf{F}_x$$

% Finally, $(j_* j^* \sheaf{F})_x$ is equal to the invariants of $\sheaf{F}_\eta$, under the inertia action at $x$.
% \footnote{REFERENCE - I'm sure a reference for this exists and I'll go look for one (WS).} It is already weight $\leq w$ by
%\cite[Lemma 1.8.1]{WeilII} so we can drop that part of the formula.

(2) Denote again by $j\,:\, U\lra C$ the open embedding. We want to
apply (1) fiber by fiber.  First (since pushforward does not commute
with arbitrary base change), we let $U_1$ denote a dense open subset
of $X$ such that the adjunction map
$$
\sheaf{F}|\pi^{-1}(U_1)\to j_*j^*\sheaf{F}|\pi^{-1}(U_1)
$$
is injective (the existence of such a dense open set follows
from~\cite[Th. Finitude, Th\'eor\`eme 1.9(2)]{sga4h}, applied to the
morphism $j$ over the base $X$).  Let $\sheaf{G}$ be the quotient
sheaf. Then we again take the long exact sequence
$$
\cdots\lra R^{i} \pi_* \sheaf{F} \to R^i \pi_* j_* j^* \sheaf{F} \to
R^i \pi_* \sheaf{G}\lra \cdots
$$
The fiber over any $x\in U_1$ of this exact sequence is the same as
the exact sequence~(\ref{eq-exact}) for the fiber curve $C_x$, again
using~\cite[Th. Finitude, Th\'eor\`eme 1.9(2)]{sga4h}. In particular,
for any point $x' \in U_1$ (closed or not), we have
$$
\bigoplus_{y\in
  C_{x'}-U_{x'}}((\sheaf{F}|C_{x'})_{\ov\eta}^{I_y}/(\sheaf{F}|C_{x'})_{\ov y})
=(R^0 \pi_* \sheaf{G})_{x'}.
$$
Thus (1) shows over any closed point $x' \in U_1$ that the weight
$<w+1$ part of $(R^1\pi_* \sheaf{F})_{x'}$ is the image of
$(R^0 \pi_* \sheaf{G})_{x'}$ in $(R^1\pi_* \sheaf{F})_{x'}$. Over a
possibly smaller dense open set $U_2\subset U_1$ where
$R^0 \pi_* \sheaf{G}$ and $R^1\pi_* \sheaf{F}$ are both lisse, this
implies that the maximal weight $<w+1$ lisse subsheaf of
$R^1\pi_* \sheaf{F}$ is $R^0 \pi_* \sheaf{G}$.  Then for an arbitrary
$x \in U_2$, we have
$$
(R^1 \pi_* \sheaf{F})^{<w+1}_x = (R^0 \pi_* \sheaf{G})_x=
\bigoplus_{y\in
  C_x-U_x}((\sheaf{F}|C_x)_{\ov\eta}^{I_y}/(\sheaf{F}|C_x)_{\ov y}).
$$
\par
If $x$ is the generic point, it is necessarily contained in the dense
open subset $U_2$. If not, we can replace $X$ by the closure of $x$ in
$X$ and apply the same argument, obtaining the same identity (because
the direct sum
$$
\bigoplus_{y\in
  C_x-U_x}((\sheaf{F}|C_x)_{\ov\eta}^{I_y}/(\sheaf{F}|C_x)_{\ov y})
$$
depends only on the fiber over $x$, and the same for
$(R^1 \pi_* \sheaf{F})^{<w+1}_x $, since taking the weight $<{w+1}$
part commutes with restriction to a closed subscheme.)
\end{proof}

The next lemma will be useful to bound in terms of $q$ the degree of
the subvariety $\mcV^{bad}$ for $\lambda=0$, by showing that this
variety is defined over $\Zz[1/\ell]$.

\begin{lemma}\label{lm-surprising-integrality} Let $X$ and $Y$ be
  separated varieties of finite type over $\Zz[1/\ell]$. Let $f: X\to Y$ and
  $g: X\to \Aa^1$ be morphisms.  Let 
  % $t$ be a coordinate on $\Gm$ and
  $p_2 : \Gm \times X \to X$ be the second projection.
\par
There exists an $\ell$-adic complex $K$ on $Y$ such that, for any
prime $q\neq \ell$ and any additive character $\psi$ of $\Fq$, we have
$$ 
R (f \circ p_2)_!  \mcL_\psi ( t g) = K | Y_{\Fq} .
$$
\end{lemma}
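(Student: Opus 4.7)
The strategy is to factor $fp_2 = f\circ p_2$ and prove that the intermediate complex $B:=Rp_{2,!}\mcL_\psi(tg)$ on $X$ is the base change of a fixed $\Zz[1/\ell]$-complex; then $K:=Rf_!(\text{said complex})$ works. A direct Fubini computation shows the trace function of $B$ at $x\in X(\Fq)$ is
$$\sum_{t\in\Fq^\times}\psi(tg(x))\;=\;-1+q\cdot\mathbf 1_{g(x)=0},$$
which is manifestly $\psi$-independent and suggests
$$B\;\simeq\;\bQl_X[-1]\;\oplus\;(\iota_Z)_*\bQl_Z(-1)[-2],$$
where $Z=g^{-1}(0)$ and $\iota_Z:Z\hookrightarrow X$ is the closed immersion. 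Both $Z$ and $\iota_Z$ are defined over $\Zz[1/\ell]$.

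To prove this isomorphism I would stratify $X=U\sqcup Z$ with $U=X\setminus Z$ and compute the restrictions. On $\Gm\times U$, the change of variable $\phi:\Gm\times U\to\Gm\times U$, $(t,u)\mapsto (tg(u),u)$, is an isomorphism (since $g$ is invertible on $U$) which commutes with the projection to $U$ and satisfies $\phi^*p_\Gm^*\mcL_\psi=\mcL_\psi(tg)|_{\Gm\times U}$; by K\"unneth one obtains $B|_U\simeq R\Gamma_c(\Gm,\mcL_\psi)\otimes\bQl_U=\bQl_U[-1]$, canonically and independently of $\psi$. On $Z$, since $g|_Z\equiv 0$, one has $\mcL_\psi(tg)|_{\Gm\times Z}=\bQl$ and $B|_Z\simeq R\Gamma_c(\Gm,\bQl)\otimes\bQl_Z=\bQl_Z[-1]\oplus\bQl_Z(-1)[-2]$, again canonically and $\psi$-independently.

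To assemble these stratum-wise identifications into a single $\psi$-independent, globally defined complex, I would introduce the ``graph-type'' morphism $m:\Gm\times X\to X\times\Aa^1$, $(t,x)\mapsto(x,tg(x))$, defined over $\Zz[1/\ell]$, and set $M:=Rm_!\bQl$, a $\Zz[1/\ell]$-complex on $X\times\Aa^1$. With $q_X,q_{\Aa^1}$ the two projections, the projection formula gives $B=R(q_X)_!(M\otimes q_{\Aa^1}^*\mcL_\psi)$. Decomposing $X\times\Aa^1=(X\times\Gm)\sqcup(X\times\{0\})$ one checks $M|_{X\times\Gm}=(j_{U\times\Gm})_!\bQl$ (from the isomorphism $\phi$ onto its image) and $M|_{X\times\{0\}}=(\iota_Z)_*R\Gamma_c(\Gm,\bQl)$ (from the collapse of $\Gm\times Z$ to $Z$); both pieces are defined over $\Zz[1/\ell]$. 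Applying the open/closed triangle for $M\otimes q_{\Aa^1}^*\mcL_\psi$ on $X\times\Aa^1$ and pushing forward by $q_X$ yields the open/closed triangle for $B$, where the only $\psi$-dependent input is the canonical identification $R\Gamma_c(\Gm,\mcL_\psi)\simeq\bQl[-1]$.

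The main obstacle will be the final descent step: showing rigorously that the connecting morphism (and hence the full complex $B$) is pulled back from a morphism of $\Zz[1/\ell]$-complexes, not merely that its restriction to each stratum is $\psi$-independent. As a sanity check, the $\Gm$-action $(t,x)\mapsto(ct,x)$ commutes with $fp_2$ and satisfies $\alpha_c^*\mcL_\psi(tg)=\mcL_{\psi_c}(tg)$, providing a canonical isomorphism $R(fp_2)_!\mcL_\psi(tg)\simeq R(fp_2)_!\mcL_{\psi'}(tg)$ over each $\Fq$ between any two nontrivial characters, which confirms the expected $\psi$-independence from the start and is the key structural reason the lemma should hold.
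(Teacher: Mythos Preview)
Your initial reduction to $B=Rp_{2,!}\mcL_\psi(tg)$ on $X$ is exactly the paper's first step. After that, however, you try to build $B$ directly by stratifying $X=U\sqcup Z$ and gluing, and you explicitly flag the gluing (the connecting morphism in the open--closed triangle) as unresolved. That is precisely where all the content of the lemma sits: knowing the restrictions to $U$ and to $Z$ are $\psi$-independent, together with the trace-function check and the $\Gm$-equivariance observation, does not determine the extension class over $\Zz[1/\ell]$, so the argument is genuinely incomplete as it stands. Your graph-map construction $m$ does not help either: unwinding the projection formula gives back $B=R(q_X)_!(Rm_!\bQl\otimes q_{\Aa^1}^*\mcL_\psi)=Rp_{2,!}\mcL_{\psi(tg)}$, a tautology that still carries $\mcL_\psi$.

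The paper avoids the gluing problem entirely by one further reduction you are missing. Applying proper base change to the Cartesian square
\[
\begin{tikzcd}
\Gm\times X \arrow{r}{\mathrm{id}\times g}\arrow{d}{p_2} & \Gm\times\Aa^1 \arrow{d}{p'_2}\\
X \arrow{r}{g} & \Aa^1
\end{tikzcd}
\]
gives $B=g^*\bigl(Rp'_{2,!}\mcL_{\psi(tx)}\bigr)$, so it suffices to produce a single complex $K^*$ on $\Aa^1_{\Zz[1/\ell]}$ specializing to $Rp'_{2,!}\mcL_{\psi(tx)}$ for every $(q,\psi)$, and then take $K=Rf_!g^*K^*$. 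But $Rp'_{2,!}\mcL_{\psi(tx)}$ is the Fourier transform of $j_!\bQl$ for $j\colon\Gm\hookrightarrow\Aa^1$, and the existence of such a $K^*$ is a special case of Laumon's homogeneous Fourier transform; in fact Fu shows one can take $K^*=j_*\bQl$. This reduction to the universal situation on $\Aa^1$ is the key idea, and it replaces your unresolved gluing step with a citation to a known result.
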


\begin{proof}
  We denote by $t$ a coordinate on $\Gg_m$.  As
  $R (f \circ p_2)_! = Rf_! R p_{2,!}$, it is sufficient to prove that
  there exists a complex $K'$ on $X$ with
$$
R p_{2,!} \mcL_{\psi(t g)} = K' | X_{\Fq},
$$ 
for all $q\not=\ell$ and all $\psi$, as we can then take $K=Rf_!K'$.
\par
Let $p_2'$ denote the second projection
$\Gg_m \times \Aa^1 \to \Aa^1$.  By the proper base change
theorem, we have
$$
R p_{2,!} \mcL_{\psi(t g)} = g^* R p'_{2,!} \mcL_{\psi ( tx)},
$$
for any $q\not=\ell$ and $\psi$, so it is sufficient to find a complex
$K^*$ on $\Aa^1_{\Zz[1/\ell]}$ with
$$
R p'_{2!} \mcL_{\psi (tx)} = K^* | \Aa^1_{\Fq}
$$
for all $q\not=\ell$ and all $\psi$, and to define $K'=g^*K^*$.
\par
By the above reduction we may assume that $X=\Aa^1_{\Zz[1/\ell]}$ and
write $p_2$ for $p'_2$. Let $j: \Gm \to \Aa^1$ be the open immersion
and $i: \{0\} \to \Aa^1$ the complementary closed immersion. Then
$R p_{2,!} \mcL_{\psi (tx)} $ is the Fourier transform of $j_! \bQl$
(as extension by zero commutes with pullback and tensor product). The
existence of an $\ell$-adic complex on $\Aa^1_{\Zz[1/\ell]}$ that
specializes to $\mathrm{FT}_\psi j_! \bQl=Rp_{2,!}\mcL_{\psi(tx)}$ in
each positive characteristic $q\not=\ell$ is a special case of
Laumon's homogeneous Fourier transform
(see~\cite[Th. 2.2]{laumonhomog}). In this special case, L. Fu
(see~\cite[Lemma 3.2]{Fu2}) showed that we can take the complex to be
$j_* \bQl$.
\end{proof}

Finally, we can already prove the last part of
Theorem~\ref{thmirreducibility}.

\begin{proposition}\label{pr-betti-bound}
  For all
  $\lambda_1,\lambda_2\in\Fq, \bfb_1,\bfb_2\not\in\mcV^\Delta(\Fq)$,
  the dimensions of the stalks of the sheaf $\mcR$ and the dimensions
  of the cohomology groups
$$
H^i_c(\Aa_{\bFq}, \mcR_{\lambda_1,\bfb_1}),\quad\quad H^i_c (\Aa_{\bFq},
\mcR_{\lambda_1,\bfb_1} \otimes \mcR_{\lambda_2,\bfb_2})
$$
are all bounded in terms of $k$ only.
\end{proposition}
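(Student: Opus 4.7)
The plan is to reduce every assertion to an application of the Euler--Poincar\'e / Grothendieck--Ogg--Shafarevich formula on curves together with standard uniform bounds on sums of Betti numbers of constructible sheaves on affine varieties of bounded dimension (as used systematically in \cite{FKM1}, in the spirit of Katz). The whole point is that all sheaves appearing in the statement are built from $\KL_k$ by a bounded number of operations (tensor products, pullbacks along polynomial maps of bounded degree, and proper pushforwards along affine curve fibrations), so their rank, singular locus, and Swan conductors can all be bounded in terms of $k$ alone, uniformly in $q$.

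First I would bound the stalks of $\mcR$. By proper base change, the stalk of $\mcR=R^1\pi^{(2)}_!\mcK$ at a point $(r,\lambda,\bfb)$ is $H^1_c(\Aa^1_{\bFq},\mcK_{r,\lambda,\bfb})$. The sheaf $\mcK_{r,\lambda,\bfb}$ on $\Aa^1_s$ has rank $k^4$, at most $O(1)$ singular points in $\Aa^1$ (namely those $s$ where some $s(r+b_i)$ hits the ramification locus $\{0\}$ of $\KL_k$), and its Swan conductor is bounded in terms of $k$ at every singularity: at finite points because $\mcK_{r,\lambda,\bfb}$ is a tensor product of pullbacks of $\KL_k$ along linear maps, whose Swan conductors are controlled; at $\infty$ because Lemma~\ref{lm-local-product} writes the local monodromy explicitly as a direct sum of $k^3$ Artin--Schreier characters of Swan conductor $\leq 1$, twisted by $\mcL_{\psi(\lambda s)}$. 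The Euler--Poincar\'e formula on $\Aa^1$ yields $\dim H^i_c(\Aa^1_{\bFq},\mcK_{r,\lambda,\bfb}) = O_k(1)$ for all $i$, and the $R^0$ and $R^2$ parts are handled the same way (and are anyway supported on $\mcV^\Delta$ by Lemma~\ref{lm-support}).

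Second, to bound $\dim H^i_c(\Aa^1_{\bFq},\mcR_{\lambda,\bfb})$, I would apply the Leray spectral sequence for the projection $p\colon \Aa^2_{(r,s)}\to \Aa^1_r$ with coefficients in the restriction $\mcK|_{\lambda,\bfb}$ of $\mcK$ to $(\lambda,\bfb)$:
\[
E_2^{p,q}=H^p_c(\Aa^1_{\bFq},R^qp_!\,\mcK|_{\lambda,\bfb})\ \Longrightarrow\ H^{p+q}_c(\Aa^2_{\bFq},\mcK|_{\lambda,\bfb}).
\]
By base change, $R^1p_!\,\mcK|_{\lambda,\bfb}=\mcR_{\lambda,\bfb}$, and the $q=0,2$ terms are punctually supported with stalks bounded in terms of $k$ by the first step. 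It therefore suffices to bound $\sum_i\dim H^i_c(\Aa^2_{\bFq},\mcK|_{\lambda,\bfb})$; but $\mcK|_{\lambda,\bfb}$ is a tensor product of four pullbacks of $\KL_k$ along the linear forms $(r,s)\mapsto s(r+b_i)$ with an Artin--Schreier sheaf $\mcL_{\psi(\lambda s)}$, so it has complexity bounded in terms of $k$ alone, and standard uniform Betti number bounds on $\Aa^2$ finish the job.

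Third, for $H^i_c(\Aa^1_{\bFq},\mcR_{\lambda_1,\bfb_1}\otimes\mcR_{\lambda_2,\bfb_2})$, the same strategy works verbatim after introducing a second auxiliary variable $s_2$: view the tensor product as $R^2\tilde{p}_!$ applied to an $8$-fold tensor product of $\KL_k$-pullbacks times two Artin--Schreier sheaves on $\Aa^3_{(r,s_1,s_2)}$, and reduce via Leray to uniform Betti bounds on a constructible sheaf of bounded complexity on $\Aa^3$. The main (minor) obstacle in all three steps is uniform control of Swan conductors, which is handled explicitly at $\infty$ by Lemma~\ref{lm-local-product} and, whenever more refined semicontinuity is needed for a pushforward, by Deligne--Laumon semicontinuity (\cite[Corollary~2.1.2]{LaumonSMF}).
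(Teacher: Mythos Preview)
Your approach is correct and essentially the same as the paper's. The paper makes your ``standard uniform Betti bounds'' step explicit by unwinding the Kloosterman sheaves completely---writing $\mcR_{\lambda_1,\bfb_1}\otimes\mcR_{\lambda_2,\bfb_2}$ on $\Aa^1_{\Fq}$ as $Rf_!\,g^*\mcL_\psi[2]$ for an affine variety $V$ and morphisms $f\colon V\to\Aa^1$, $g\colon V\to\Aa^1$ all defined over $\Zz$, then applying Leray and Katz's explicit Betti estimates \cite[Theorem~12]{KatzBetti}---which is exactly what your appeal to bounded complexity unpacks to once it is spelled out.
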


\begin{proof}
  We deal with the second of these cohomology groups.  Fix
  $\lambda_1,\lambda_2$ in $\Fq$,
  $\bfb_1,\bfb_2\not\in\mcV^\Delta(\Fq)$.
% of $l^*\mathcal{E}$. More precisely, we apply the lemma to the complex
% $l^*M$ where $M=R\pi_! \sheaf{F}$; indeed the singular locus of $l^*M$
% includes that of $l^*\sheaf{E}$.
% \par
% The inverse image under $\pi$ of the line $l$ in $\Aa^4$ is a plane
% isomorphic to $\Aa^2$. Hence we have a commutative square
% $$
% \begin{matrix}
%   \Aa^5 &\fleche{\pi}& \Aa^4\\
% \varpi\  \uparrow && \uparrow\ l \\
%   \Aa^2&\fleche{\tilde{\pi}} &\Aa^1.
% \end{matrix}
% $$
% \par
% We have
% $$
% \sum_{i} \dim \mathcal{H}^i(l^*M)_{\eta}= \sum_{i} \rk
% R^i\tilde{\pi}_!\varpi^*\sheaf{F},
% $$
% and this is equal to
% $$
% \sum_{i} \dim H^i_c(\tilde{\pi}^{-1}(x),\varpi^*\sheaf{F}),
% $$
% for some $x\in\Aa^1$. We also have
% $$
% \sum_{i} \dim H^i_c(\Aa^1_{\bFq},l^*M)= \sum_{i} \dim
% H^i_c(\Aa^1_{\bFq},R\tilde{\pi}_!\varpi^*\sheaf{F})= \sum_{i} \dim
% H^i_c(\Aa^2_{\bFq}, \varpi^*\sheaf{F}).
% $$
% \par
% These two formulas show that we need only find a uniform upper-bound
% (for all primes) for the sum of the Betti numbers of the restriction
% of the sheaf $ \sheaf{F}=\mcR_{\lambda=0}
% \otimes\ov{\mcR}_{\lambda=0}$ either to a line in $\Aa^5$ or to
% a plane in $\Aa^5$.
% \par
By construction of $\mcR$ and by interpreting
sheaf-theoretically the definition of the hyper-Kloosterman sums,
there exists an affine variety $V_{\Zz}$ and maps
$f\,:\,V\to \Aa^1_{\Zz}$ and $g\,:\, V\to \Aa^1_{\Zz}$ such that, for any
prime $q$, we have
$$
(\mcR_{\lambda_1,\bfb_1}\otimes\mcR_{\lambda_2,\bfb_2})|\Aa^1_{\Fq}=
Rf_!g^*\sheaf{L}_{\psi}[2]
$$
(see also Lemma~\ref{lm-r-integrality} below for this construction).
\par
By the Leray spectral sequence, it is enough to bound the sum of Betti
numbers
$$
\sum_{i} \dim H^i_c(\tilde{V}_{\bFq},\sheaf{L}_{\psi(g)})
$$
where $\tilde{V}$ is the inverse image in $V$ of either a line or a
plane.  Since $(V,f,g)$ are defined over $\Zz$, a suitable bound is
given by the estimates of Bombieri and Katz for sums of Betti numbers
(see the version of Katz in~\cite[Theorem 12]{KatzBetti}).
\par
A similar argument applies to $H^i_c(\Aa^1_{\bFq},\mcR_{\lambda,\bfb})$.
\end{proof}

Finally, we need a lemma on inertia groups that is probably well-known
but for which we do not know a convenient reference.

\begin{lemma}\label{lm-surj-inertia}
  Let $\pi\colon \Aa^5_{\bFq}\to \Aa^4_{\bFq}$ be the projection on
  the first four coordinates. Let
  $\bar{\pi}\colon \Pp^4\times\Pp^1\to \Pp^4$ be the analogue
  projection. For any divisor $D$ in $\Pp^4$, the induced homomorphism
  from the inertia group at the generic point of $\bar{\pi}^{-1}(D)$
  to the inertia group at the generic point of $D$ is surjective.
\end{lemma}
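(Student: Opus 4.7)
The plan is to work directly with the local rings at the two generic points involved and reduce surjectivity to a concrete identity of subfields of an algebraic closure. Let $\eta$ be the generic point of $D$ and $\eta'$ the generic point of $\bar\pi^{-1}(D)=D\times\Pp^1$. Since $D$ is cut out near $\eta$ by a single equation $\pi\in\mcO_{\Pp^4,\eta}$, and the same equation cuts out $D\times\Pp^1$ near $\eta'$, the local ring $R=\mcO_{\Pp^4,\eta}$ is a DVR with uniformizer $\pi$ and fraction field $K=K(\Pp^4)$, while $R'=\mcO_{\Pp^4\times\Pp^1,\eta'}$ is simply $R[t]_{(\pi)}$, also a DVR with uniformizer $\pi$, fraction field $K(t)$, and residue field $\kappa(D)(t)$. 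After fixing compatible embeddings of strict henselizations and algebraic closures $K^{sh}\hookrightarrow K(t)^{sh}\hookrightarrow \overline{K(t)}$ and $\overline{K}\hookrightarrow\overline{K(t)}$, the inertia homomorphism of the statement becomes the restriction map $\Gal(\overline{K(t)}/K(t)^{sh})\to \Gal(\overline{K}/K^{sh})$.

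For surjectivity, I would show that every finite Galois extension $L/K^{sh}$ extends to a Galois extension $L\cdot K(t)^{sh}/K(t)^{sh}$ of the same degree, since this standard cofinality argument (running over all finite Galois $L$) directly implies surjectivity of the inverse limit of restriction maps. Equality of degrees is equivalent to the disjointness $L\cap K(t)^{sh}=K^{sh}$ inside $\overline{K(t)}$, and since $L\subseteq\overline{K}$ this reduces in turn to the identity $\overline{K}\cap K(t)^{sh}=K^{sh}$.

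The main obstacle, and the heart of the argument, is therefore the identity
$$
\overline{K}\cap K(t)^{sh}=K^{sh}.
$$
The inclusion $\supseteq$ is clear. For $\subseteq$, I would take $\alpha$ in the intersection, enclose it in a finite Galois extension $M/K$, and use the decomposition $K\subset M^{un}\subset M$, where $M^{un}$ is the maximal subextension of $M$ unramified at the valuation on $M$ extending the one defined by $D$ (compatible with the fixed embedding). The key point is that extending valuations from $K$ to $K(t)$ by the Gauss construction preserves ramification indices and residue degrees, so that $M^{un}(t)/K(t)$ remains unramified and $M(t)/M^{un}(t)$ remains totally ramified. Since $K(t)^{sh}$ is the maximal unramified extension of $K(t)$ at the chosen valuation, this forces $M(t)\cap K(t)^{sh}\subseteq M^{un}(t)$; intersecting with $M$ and using that $M^{un}(t)/M^{un}$ is purely transcendental (hence linearly disjoint from any algebraic extension of $M^{un}$) yields $\alpha\in M^{un}\subseteq K^{sh}$, by the characterization of $K^{sh}$ as the maximal unramified extension of $K$ at the given valuation.
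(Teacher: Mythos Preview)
Your argument is correct and takes a genuinely different route from the paper's. The paper works with \'etale local rings and argues by contradiction: if the inertia map were not surjective, there would be a finite \'etale cover $E\to\Spec(K)$ (here $K$ is the fraction field of the strict henselization at $\eta_D$) with no section whose pullback to $K'$ acquires one. They then use that $R'$ is the \'etale local ring at the generic point of $\Aa^1_R$ to realize the section over an \'etale neighborhood $X\to\Aa^1_R$, and specialize along a section of $\Aa^1_R\to\Spec(R)$ lying in the image of $X$ to produce a section of $E$ over $K$ itself. This is a geometric ``spreading out and specializing'' argument.

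Your approach is valuation-theoretic: you identify $R'=R[t]_{(\pi)}$ as the Gauss-valuation DVR and reduce everything to the identity $\overline{K}\cap K(t)^{sh}=K^{sh}$, which you prove via the preservation of $e$ and $f$ under Gauss extension. This is more elementary in that it avoids \'etale neighborhoods entirely, and it makes transparent that the statement is really the classical fact that inertia is unchanged under a purely transcendental extension of the residue field. One small comment: the ``totally ramified'' claim for $M(t)/M^{un}(t)$ is a bit delicate in the non-henselian setting and is in any case not needed. The step $M(t)\cap K(t)^{sh}\subseteq M^{un}(t)$ follows directly once you observe (via the Galois correspondence for $M(t)/K(t)$ and your Gauss-extension fact) that $M^{un}(t)$ is exactly the maximal unramified subextension of $M(t)/K(t)$; any intermediate field unramified over $K(t)$ is of the form $L(t)$ with $L/K$ unramified, hence $L\subseteq M^{un}$. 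The paper's argument, by contrast, never singles out a valuation-theoretic invariant and would adapt more readily to situations where the fiber is not $\Pp^1$.
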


\begin{proof}
  Let $R$ (resp. $R'$) be the étale local ring of $D$ (resp. of
  $\bar{\pi}^{-1}(D)$) at its generic point, $K$ (resp. $K'$) its
  field of fractions. Then the inertia group $I_D$ of $D$ is the
  Galois group of $K$ and the inertia group $I_{\bar{\pi}^{-1}(D)}$ of
  $\bar{\pi}^{-1}(D)$ is the Galois group of $K'$.  If the
  homomorphism $I_{\bar{\pi}^{-1}(D)}\to I_D$ of profinite groups is
  not surjective, then its image is contained in some proper open
  subgroup of $I_D$. By the Galois correspondence, this means that
  there exists some finite \'{e}tale covering $E \to K$ without a
  section whose pullback to $K'$ admits a section.
  
  We will show that every finite \'{e}tale covering $E\to \Spec(K)$
  whose pullback to $K'$ admits a section already has a section over
  $K$, implying by contradiction that the homomorphism is surjective,
  as claimed.
  
  Let $E\to \Spec(K)$ be such a covering, and $s'$ a section of the
  pullback to $K'$.  The section $s'$ is defined over
  $K'= R'[t^{-1}]$, where $t$ is a uniformizer of $R$ (and hence also
  a uniformizer of $R'$). Because $R'$ is the \'{e}tale local ring of
  the generic point of $\bar{\pi}^{-1}(D)$, it is the \'{e}tale local
  ring of the generic point $\eta$ of the special fiber $\Aa^1_R$.
  Because the section $s'$ is necessarily defined over some finitely
  generated subring of $R'[t^{-1}]$, and $R'$ is the limit of the
  rings of functions on all \'{e}tale neighborhoods of $\eta$, the
  section $s'$ is defined over the ring of functions on some \'{e}tale
  neighborhood $X \to \Aa^1_R$ of $\eta$, after inverting $t$.  The
  equations for $s'$ over this ring describe a map
  $s\colon X -\{t =0 \} \to E$ over $\Spec(R)$.
  
  The image of $X$ in $\Aa^1_R$ contains a Zariski neighborhood of
  $\eta$, which contains all but finitely many closed points of the
  special fiber. Hence it contains the image of some section of the
  projection $\pi: \Aa^1_R \to \Spec(R)$. Let $Y$ be the pullback of
  $X$ along that section. Then there is a morphism $Y-\{t=0\} \to E$,
  and $Y$ is an \'{e}tale cover of $\Spec(R)$, so it contains a copy
  of $\Spec(R)$, hence there is a map $\Spec(R) - \{t=0\} \to E$,
  which means that $E$ admits a section over
  $\Spec(K)= \Spec(R) -\{t=0\}$.

\end{proof}

\subsection{Irreducibility of sum-product sheaves for $\lambda=0$}
\label{sec-zero}

We now begin the study of sum-product sheaves in the case
$\lambda=0$. \emph{We always assume that $q>k$.}
\par
We denote by $\mcR_{\lambda=0}$ (resp. $\mcR^*_{\lambda=0}$) the
pullback $i^*\mcR$ (resp. $i^*\mcR^*$) for the inclusion $i$ of
$\Aa^5$ in $\Aa^6$ such that $i(r,\bfb)=(r,0,\bfb)$, and similarly we
define $\mcK_{\lambda=0}$.

The main result of this Section establishes that for $q$ large enough,
and for generic values of $\bfb$ (ie. outside some proper subvariety
$\mcV^{bad}\subset\Aa^4_\Fq$), the sheaf $\mcR^*_{\lambda=0,\bfb}$ is
geometrically irreducible.
\par
The strategy is as follows:
\begin{enumerate}
\item\label{Zkey} A key observation (Lemma \ref{lm-r-integrality}) is
  that, by homogeneity, $\mcR^*_{\lambda=0}$ is defined over
  $\Zz[1/\ell]$. In particular this implies that for $q$ large enough,
  the sheaf $\mcR^*_{\lambda=0}$ is not wildly ramified.
\item In Proposition~\ref{pr-new-irreducibility}, we use this fact
  together with the diophantine criterion of irreducibility
  (Lemma~\ref{diophantine}) and the the mean square average asymptotic
  formula of Proposition~\ref{corRrbcorrelation} to prove that
  $\mcR^*_{\lambda=0,\bfb}$ is generically geometrically irreducible.
\item In Proposition \ref{pr-degree-bound}, we conclude and show,
  using \eqref{Zkey}, that $\mcV^{bad}$ is in fact defined over
  $\Zz[1/\ell]$.
\end{enumerate}

\begin{lemma}\label{lm-mcr-lisse} %\marginpar{rating 9/10} 
Let $Z$ be the subvariety of $\Aa^5\times\Aa^4$ defined by 
\begin{equation}\label{eq-var-z}
\{(r,\bfb,x_1,x_2,x_3,x_4)\in\Aa^5\times\Aa^4\,\mid\, x_i^k=(r+b_i),\
i=1,\cdots,4\}
\end{equation}
and let $\tilde{Z}\subset\Aa^5$ be the image of the subvariety of $Z$
defined by the equation $x_1+x_2-x_3-x_4=0$ under the projection
$$
(r,\bfb,x_1,x_2,x_3,x_4)\in Z\mapsto (r,\bfb)\in\Aa^5.
$$
\par
\emph{(1)} The image $\tilde{Z}$ is closed and irreducible.
\par
\emph{(2)} Let $U$ be the dense open complement of the union of
$\tilde{Z}$ and of the divisors given by the equations $r=-b_i$ in
$\Aa^5$.  The sheaf $\mcR_{\lambda=0}$ is lisse on
$U$.
%% $\mcR^*_{\lambda=0}$ are lisse on $U$.
\par
\emph{(3)} On any dense open subset $V\subset U$ where
$\mcR^*_{\lambda=0}$ is lisse, the monodromy representation of
$\mcR^*_{\lambda=0}$ factors through $\pi_1(U)$.
\end{lemma}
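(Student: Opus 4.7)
The plan is to handle the three claims in order, with the Swan conductor analysis in part (2) as the main technical input.

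For (1), I would note that the map $\Aa^5\to Z$ defined by $(r,x_1,x_2,x_3,x_4)\mapsto (r,x_1^k-r,x_2^k-r,x_3^k-r,x_4^k-r,x_1,x_2,x_3,x_4)$ is an isomorphism (inverse: forget $\bfb$), so $Z\cong \Aa^5$ is irreducible, and in these coordinates the subvariety $\{x_1+x_2-x_3-x_4=0\}\subset Z$ is a hyperplane (the graph of $x_4=x_1+x_2-x_3$), hence irreducible. The projection $Z\to \Aa^5$ has fibres of cardinality at most $k^4$, so it is finite and thus proper, and $\tilde{Z}$ is the image of a closed irreducible set under this proper map, hence itself closed and irreducible.

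For (2), I would invoke Deligne's semicontinuity theorem for Swan conductors (in the form of Laumon~\cite[Corollary 2.1.2]{LaumonSMF}) applied to the relative affine line $\pi\colon \Aa^1_s\times \Aa^5\to \Aa^5$ and the sheaf $\mcK_{\lambda=0}$. Over $U$, the fibre $\mcK_{r,0,\bfb}$ is the extension by zero to $\Aa^1_s$ of a lisse sheaf on $\Gm$ of rank $k^4$, tame at $s=0$, whose singular set $\{0,\infty\}$ is finite and flat over $U$. Lemma~\ref{lm-local-product} (specialised to $\lambda=0$) expresses its inertia representation at $s=\infty$ as the direct sum over $(\zeta_2,\zeta_3,\zeta_4)\in\mmu_k^3$ of rank-$k$ summands with Swan conductor $1$ or $0$, according as the coefficient
\[
c(\zeta)=(r+b_1)^{1/k}+\zeta_2(r+b_2)^{1/k}-\zeta_3(r+b_3)^{1/k}-\zeta_4(r+b_4)^{1/k}
\]
is nonzero or zero. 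Absorbing a fixed $(r+b_1)^{1/k}$ into the remaining roots of unity, the existence of a vanishing $c(\zeta)$ is exactly the condition $(r,\bfb)\in \tilde{Z}$, so on $U$ the total Swan conductor at infinity is the constant $k^3$. Combined with the constant rank and finite-flat singular locus, semicontinuity yields the lissness of $\mcR_{\lambda=0}=R^1\pi_!\mcK_{\lambda=0}$ on $U$.

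For (3), the restriction $\mcR_{\lambda=0}|U$ is lisse by (2) and is mixed of weights $\leq 1$ (since $\mcR$ is, by Lemma~\ref{lm-support}~(1)). By~\cite[Th\'eor\`eme 3.4.1]{WeilII} it therefore admits a maximal lisse quotient $\mcQ$ on $U$ that is pure of weight $1$. On any dense open $V\subset U$ where $\mcR^*_{\lambda=0}$ is lisse, the stalk-by-stalk definition of $\mcR^*$ on the strata of $\Aa^6$, combined with compatibility of the weight-$1$ quotient with pullback along $\{\lambda=0\}\hookrightarrow \Aa^6$ (which applies because both sheaves are lisse on the relevant opens), forces $\mcR^*_{\lambda=0}|V\simeq \mcQ|V$, both being the weight-$1$ quotient of the common lisse sheaf $\mcR_{\lambda=0}|V$. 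Since $\mcQ$ is lisse on all of $U$, the monodromy of $\mcR^*_{\lambda=0}|V$ factors through the surjection $\pi_1(V)\twoheadrightarrow \pi_1(U)$.

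The hard step is (2), and more precisely the identification of the Swan-jumping locus with the image $\tilde{Z}$; parts (1) and (3) are essentially formal once this geometric content is in hand.
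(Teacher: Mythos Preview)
Your proposal is correct and follows essentially the same approach as the paper. For (1) you and the paper both identify $Z$ with $\Aa^5$ via $(r,x_1,\dots,x_4)$ and use finiteness of the projection; for (2) you both apply Deligne's semicontinuity theorem with the local monodromy description from Lemma~\ref{lm-local-product} to see that the Swan conductor at $s=\infty$ is the constant $k^3$ on $U$; for (3) the paper argues even more directly than you do---it simply observes that $\mcR^*_{\lambda=0}|V$ is a quotient of $\mcR_{\lambda=0}|V$, and since the latter is lisse on all of $U$, its monodromy (and hence that of any quotient) factors through $\pi_1(U)$---whereas you take the extra step of identifying this quotient with the weight-$1$ quotient $\mcQ$ constructed on $U$, which is not needed for the conclusion.
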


\begin{proof} 
  (1) The projection $Z\to \Aa^5$ is finite because $Z$ is defined by
  adjoining the coordinates $x_1,x_2,x_3,x_4$ to $\Aa^5$, and each
  satisfies a monic polynomial equation. Thus the closed subvariety of
  $Z$ defined by the equation $x_1+x_2-x_3-x_4=0$ is also finite over
  $\Aa^5$ and hence its image $\tilde{Z}$ is closed.  Moreover,
  $\tilde{Z}$ is the projection of the subvariety of $\Aa^9$ with
  equations
$$
\begin{cases}
  x_i^k = r+b_i,\quad 1\leq i\leq 4\\
  x_1+x_2-x_3-x_4=0,
\end{cases}
$$
and hence this subvariety is isomorphic to the divisor in $\Aa^5$ with
coordinates $(x_1,x_2,x_3,x_4,r)$ given by the equation
$x_1+x_2-x_3-x_4=0$. In particular, it is irreducible, and therefore
its image $\tilde{Z}$ is also irreducible.

(2) This will use Deligne's semicontinuity theorem~\cite{LaumonSMF}.
Precisely, as we already observed, the sheaf $\mcK_{\lambda=0}$ is
lisse on the complement of the divisors given by the equations
$r=-b_i$ and $s=0$ in $\Aa^6$.  We compactify the $s$-coordinate by
$\Pp^1$ and work on
 $$
 X=(\Aa^1\times\Pp^1\times \Aa^4)\cap \{(r,s,\bfb)\,\mid\, (r,\bfb)\in
 U\}.
$$
\par
By extending by $0$, we view $\mcK_{\lambda=0}$ as a sheaf on $X$
which is lisse on the complement in $X$ of the divisors $s=0$ and
$s=\infty$ (because $U$ is contained in the complement of the divisors
$r=-b_i$ and thus $X$ is as well). Let
$$
\pi^{(2)} \,:\, X\lra U
$$
denote the projection $(r,s,\bfb)\mapsto (r,\bfb)$. Then $\pi^{(2)}$
is proper and smooth of relative dimension $1$ and
$\mcR_{\lambda=0}|U=R^1\pi^{(2)}_*\mcK$.
\par
Since the restrictions of $\mcK_{\lambda=0}$ to the divisors
$s=\infty$ and $s=0$ are zero, this sheaf is the extension by zero
from the complement of those divisors to the whole space of a lisse
sheaf. Deligne's semicontinuity theorem \cite[Corollary
2.1.2]{LaumonSMF}
%is zero, and since its restriction to the divisor $s=0$ is lisse (by
%the definition~(\ref{eq-sheaf-k})), Lemma \ref{semicont} 
implies that the sheaf $\mcR_{\lambda=0}$ is lisse on $U$ if the Swan
conductor is constant on each of these two divisors.

On $s=0$, the Kloosterman sheaf has tame ramification, hence any
tensor product of Kloosterman sheaves has tame ramification. Thus
$\mcK_{\lambda=0}$ has tame ramification at $s=0$, and in particular
the Swan conductor is zero, which is constant.
\par
On the other hand, Lemma \ref{lm-local-product} gives a formula for
the local monodromy representation at $s=\infty$ as a sum of
pushforward representations from the tame covering $x\mapsto x^k$.
Since the Swan conductor is additive and since the Swan conductor is
invariant under pushforward by a tame covering (see,
e.g.,~\cite[1.13.2]{GKM}), it follows that
\begin{multline*}
  \swan_{\infty}(\mcK_{r,\lambda=0,\bfb})=\\
  \sum_{\zeta_2,\zeta_3,\zeta_4\in\mmu_k}
  \swan_{\infty}\Bigl(\mcL_\psi \left( \left((r+b_1)^{1/k}+ \zeta_2
      (r+b_2)^{1/k} - \zeta_3 (r+b_3)^{1/k} - \zeta_4
      (r+b_4)^{1/k}\right) t \right) \Bigr)= k^3,
\end{multline*}
by definition of $U$, since the Swan conductor of $\mcL_{\psi(a t)}$
is $1$ for $a\not=0$. This is constant and therefore we deduce that
$\mcR_{\lambda=0}$ is lisse on $U$.
\par
(3) The restriction $\mcR^*_{\lambda=0}|V$ is a quotient of $\mcR|V$,
and both sheaves are lisse on $V$; since the monodromy representation
of $\mcR|V$ factors through $\pi_1(U)$, the same holds for $\mcR^*|V$.
% To extend this to $\mcR^*_{\lambda=0}$, we use the stratification
% $(X_i)$ in Definition~\ref{def-sp}. We view $\Aa^5$ as a closed
% subscheme defined by $\lambda=0$ in $\Aa^6$. By
% Lemma~\ref{lm-lisseness}, we know that $\{\lambda=0\}\cap X_1$ is
% empty, so $U$ is contained in the complement of $X_1$. Since $\mcR$ is
% lisse on $U$, this means by definition that $U\subset X_2$, hence
% $\mcR^*$ is lisse on $U$.
\end{proof}

We can now deduce the main result of this section. We first show that
$\mcR_{\lambda=0}$ is defined over $\Zz$.  Intuitively, this is
because its trace function is independent of the choice of additive
character $\psi$ used in the definition of the Kloosterman
sheaf. Indeed, let $\psi'(x)=\psi(\lambda x)$, for some
$\lambda\in\Fqt$, be any non-trivial additive character of $\Fq$ and
let
$$
\Kl_{k,\psi'}(x)=q^{-\frac{k-1}2}({\psi'}\star \cdots\star
{\psi'})(x)=\Kl_{k}(\lambda^k x)
$$ 
be the Kloosterman sums defined using $\psi'$ instead of $\psi$. We
have then, with obvious notation, the equality
\begin{align*}
  \bfR_{\psi'}(r,0,\bfb)
  &=\sum_{s}\prod_{i=1}^2 \Kl_{k,\psi'}( s
    (r+b_i )) \overline{\Kl_{k,\psi'}(s (r+b_{i+2}))}\\
  &=\sum_{s}\prod_{i=1}^2 \Kl_{k}( \lambda^ks
    (r+b_i )) \overline{\Kl_{k}(\lambda^k s (r+b_{i+2}))}=\bfR(r,0,\bfb)	
\end{align*}
by making the change of variable $s\mapsto \lambda^ks$, so that
$(r,\bfb)\mapsto \bfR(r,0,\bfb)$ does not depend on the choice of
$\psi$.
\par
The following lemma is a geometric incarnation of this simple
computation:

\begin{lemma}\label{lm-r-integrality} 
  For any prime $\ell$, there exists an $\ell$-adic sheaf
  $\mcR^{univ}$ on $\Aa^5_{\Zz[1/\ell]}$ such that, for any prime
  $q\not=\ell$, % and for any additive character $\psi$ of $\Fq$,
  we have
$$
 \mcR^{univ} | \Aa^5_{\Fq} = \mcR_{\lambda=0},
$$
where $\mcR_{\lambda=0}$ is defined using the Kloosterman sheaf
$\HYPK_{\psi,k}$ for \emph{any} non-trivial additive character $\psi$
of $\Fq$.
\end{lemma}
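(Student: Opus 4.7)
The plan is to exhibit $\mcR_{\lambda=0}$, up to Tate twist and cohomological shift, as a direct image of an Artin--Schreier sheaf along a morphism that factors as a $\Gg_m$-bundle projection followed by a morphism to $\Aa^5$, with the Artin--Schreier character applied to a function linear in the bundle coordinate. This is precisely the situation addressed by Lemma~\ref{lm-surprising-integrality}, which will provide the universal $\ell$-adic complex.

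First, I unfold the Kloosterman sheaves in $\mcK_{\lambda=0}$. Using
$$
\HYPK_k \simeq R^{k-1}\mu_{k,!}\mcL_\psi(\sigma)\bigl(\tfrac{k-1}{2}\bigr)
$$
on $\Gg_m$, where $\mu_k\colon \Gg_m^k\to \Gg_m$ is the multiplication map and $\sigma=x_1+\cdots+x_k$, together with the arithmetic isomorphism $\HYPK_k^\vee\simeq[x\mapsto (-1)^k x]^*\HYPK_k$ of Proposition~\ref{pr-kl}(6), I introduce the subvariety $W\subset \Aa^5\times\Gg_m\times(\Gg_m)^{4k}$ over $\Spec\Zz$ with coordinates $(r,\bfb,s,(x_i^{(j)}))$ defined by
$$
x_1^{(j)}\cdots x_k^{(j)}=c_j s(r+b_j),\quad j=1,\dots,4,
$$
with $c_1=c_2=1$, $c_3=c_4=(-1)^k$. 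Let $L=\sum_{i,j} x_i^{(j)}$, and let $\tilde f\colon W\to \Aa^5$ denote the projection on $(r,\bfb)$. Iterated proper base change and K\"unneth, combined with the vanishing $R^i\mu_{k,!}\mcL_\psi(\sigma)=0$ for $i\neq k-1$, give an identification
$$
\mcK_{\lambda=0}\simeq \bigl(R^{4(k-1)}\pi_{W,!}\mcL_\psi(L)\bigr)(2(k-1))
$$
on all of $\Aa^6_{\Fq}$, where $\pi_W\colon W\to\Aa^6$ drops the $(x_i^{(j)})$-coordinates. Applying Leray to the factorization $\tilde f=\pi^{(2)}_{\lambda=0}\circ\pi_W$, where $\pi^{(2)}_{\lambda=0}\colon \Aa^6\to\Aa^5$ projects out $s$, and using that $R\pi_{W,!}\mcL_\psi(L)$ is concentrated in a single cohomological degree, I obtain
$$
\mcR_{\lambda=0}=R^1(\pi^{(2)}_{\lambda=0})_!\mcK_{\lambda=0}=\bigl(R^{4k-3}\tilde f_!\mcL_\psi(L)\bigr)(2(k-1))
$$
everywhere on $\Aa^5_{\Fq}$, with signs checked via Grothendieck--Lefschetz and matching the formula $t_{\mcR_{\lambda=0}}(r,\bfb)=-\bfR(r,0,\bfb)$.

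Next, I introduce the $\Gg_m$-action on $W$ over $\Aa^5$ defined by
$$
t\cdot(r,\bfb,s,(x_i^{(j)}))=(r,\bfb,t^k s,(t x_i^{(j)})),
$$
which preserves the defining constraints of $W$ and scales $L$ linearly. Since the constraints force $x_1^{(1)}\in\Gg_m$ on $W$, the slice $W'=\{x_1^{(1)}=1\}\subset W$ yields an isomorphism $W'\times\Gg_m\xrightarrow{\sim}W$ via $(w',t)\mapsto t\cdot w'$, under which $\tilde f$ factors as $W'\times\Gg_m\xrightarrow{p_1} W'\xrightarrow{\tilde f'}\Aa^5$, and $L$ pulls back to $tL'$ with $L'=L|_{W'}$.

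Finally, I apply Lemma~\ref{lm-surprising-integrality} with $X=W'_{\Zz[1/\ell]}$, $Y=\Aa^5_{\Zz[1/\ell]}$, $f=\tilde f'$, and $g=L'$ to obtain an $\ell$-adic complex $K$ on $\Aa^5_{\Zz[1/\ell]}$ such that, for every prime $q\neq\ell$ and every nontrivial additive character $\psi$ of $\Fq$,
$$
K|\Aa^5_{\Fq}=R(\tilde f'\circ p_1)_!\mcL_\psi(t L')=R\tilde f_!\mcL_\psi(L).
$$
Since $\Aa^5_{\Zz[1/\ell]}\to \Spec\Zz[1/\ell]$ is flat, cohomology sheaves of $K$ commute with base change along fibers, so
$$
\mcR^{univ}:=\mcH^{4k-3}(K)(2(k-1))
$$
is a well-defined $\ell$-adic sheaf on $\Aa^5_{\Zz[1/\ell]}$ whose restriction to each $\Aa^5_{\Fq}$ is $\mcR_{\lambda=0}$, independently of the choice of $\psi$. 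The main obstacle is the first paragraph: carefully tracking cohomological degrees, Tate twists, and Grothendieck--Lefschetz signs through the iterated direct image, and in particular verifying that all the intermediate $R\mu_!$-functors are concentrated in the advertised single degree so that the Leray spectral sequence degenerates in the required way.
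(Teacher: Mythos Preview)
Your proof is correct and follows essentially the same route as the paper: unfold the Kloosterman sheaves to express $\mcR_{\lambda=0}$ as a higher direct image of an Artin--Schreier sheaf, use the homogeneity (your $\Gg_m$-action and slice $W'=\{x_1^{(1)}=1\}$; the paper's explicit change of variables $\alpha_{i,j}=x_{i,j}/x_{1,1}$, $\beta=s/x_{1,1}^k$) to factor off a $\Gg_m$ with the phase linear in its coordinate, and then invoke Lemma~\ref{lm-surprising-integrality}. The only cosmetic differences are that you place the sign $(-1)^k$ for the dual in the defining constraints rather than in the phase, and you track the Tate twist explicitly; one small remark is that the commutation of $\mcH^{4k-3}$ with restriction to $\Aa^5_{\Fq}$ follows simply from exactness of \'etale pullback, not from flatness of $\Aa^5_{\Zz[1/\ell]}\to\Spec\Zz[1/\ell]$.
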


\begin{proof} 
  Let $X_1\subset \Gg_m^{k+1}$ be the subvariety over $\Zz$ with
  equation
$$
x_1\cdots x_k=t
$$
and
$$
f_1\,:\, X_1\lra \Aa^1
$$ 
the projection $(x_1,\ldots, x_k,t)\mapsto t$. For any prime
$q\not=\ell$ and any $\psi$, we then have an isomorphism
$$
\HYPK_{k,\psi}\simeq Rf^{k-1}_{1,!}\sheaf{L}_{\psi}(x_1+\cdots +x_k)
$$
of sheaves on $\Aa^1_{\Fq}$ (up to a Tate twist). Let $X_2$ be the
variety in $\Gg_m^{4k}\times\Aa^6$ (over $\Zz[1/\ell]$) defined by the
equations
$$
\prod_{j=1}^kx_{i,j} = s(r+b_i),\quad\quad 
1\leq i\leq 4,
$$
and $f_2\,:\, X_2\lra \Aa^5$ the projection 
$$
f_2(x_{1,1},\ldots, x_{4,k},r,s,\bfb)= (r,\bfb).
$$
By definition, it follows that for all $q\not=\ell$, we have
$$
\mcR_{\lambda=0}=Rf^{4k-3}_{2,!}\sheaf{L}_{\psi}
\Bigl(\sum_{j=1}^k(x_{1,j}+x_{2,j}-x_{3,j}-x_{4,j})\Bigr).  
$$
 % $\prod_{i=1}^k \alpha_i$ of $\mcL_\psi ( \sum_{i=1}^k
 %  \alpha_i)$. Thus $\mcK_{\lambda=0}$ is the compactly supported
 %  pushforward from a variety with coordinates $\alpha_{1,1} \dots$
 %  We can define $\HYPK_k$ as a Tate twist of the compactly supported
 %  pushforward from $\mathbb G_m^k$ to $\Aa^1$ along the map
 %  $\prod_{i=1}^k \alpha_i$ of $\mcL_\psi ( \sum_{i=1}^k
 %  \alpha_i)$. Thus $\mcK_{\lambda=0}$ is the compactly supported
 %  pushforward from a variety with coordinates $\alpha_{1,1} \dots
 %  \alpha_{4,k}, r, s, b_1,b_2,b_3,b_4,$ and equations $$\prod_{j=1}^k
 %  \alpha_{i,j} = s(r+b_i)$$ of the sheaf $\mcL_{\psi} ( \sum_{j=1}^k
 %  (\alpha_{1,j} +\alpha_{2,j}- \alpha_{3,j}- \alpha_{4,j} ))$ to
 %  $\Aa^6$ with coordinates $r,s,b_1,b_2,b_3,b_4$.  Furthermore
 %  $\mcR_{\lambda=0}$ is the same thing but without the $s$.
\par
Let $X\subset \Gg_m^{4k-1}\times\Aa^6$ be the variety over
$\Zz[1/\ell]$ with equations
\begin{align*}
  \alpha_{1,2}\cdots \alpha_{1,k}&=\beta (r+b_1)\\
  \alpha_{2,1}\cdots \alpha_{2,k}&=\beta (r+b_2)\\
  \alpha_{3,1}\cdots \alpha_{3,k}&=\beta (r+b_3)\\
  \alpha_{4,1}\cdots \alpha_{4,k}&=\beta (r+b_4).
\end{align*}
The morphism $X_2\lra \Gg_m\times X $ given by
\begin{gather*}
(x_{1,1},\ldots,x_{4,k},r,s,\bfb)\mapsto
\Bigl(x_{1,1}, \Bigl(\frac{x_{1,2}}{x_{1,1}},\ldots,
\frac{x_{4,k}}{x_{1,1}},r,\frac{s}{x_{1,1}^k},\bfb\Bigr)\Bigr)
\end{gather*}
is an isomorphism over $\Zz[1/\ell]$. In coordinates $(x_{1,1},x)$ of
$\Gg_m\times X$, we have
$$
\sum_{j=1}^k(x_{1,j}+x_{2,j}-x_{3,j}-x_{4,j}) =x_{1,1}g(x)
$$
where $g\,:\, X\lra \Aa^1$ is the morphism
$$
(\alpha_{1,2},\ldots,\alpha_{4,k},r,s,\bfb)\mapsto 1+\sum_{j=2}^k
\alpha_{1,j}+ \sum_{j=1}^k (\alpha_{2,j}-\alpha_{3,j}-\alpha_{4,j}).
$$
Similarly, the projection $f_2$ is $f\circ p_2$ in the coordinates of
$\Gg_m\times X$ where $f:X\lra\Aa^5$ is the projection $(\alpha_{1,2},\cdots,\alpha_{4,k},r,s,\bfb)\lra (r,\bfb)$ and $p_2$ is the second
projection $\Gg_m\times X\lra X$. Thus we get
$$
\mcR_{\lambda=0}\simeq R^{4k-3}(f\circ p_2)_!\mcL_{\psi}(tg(x))
$$
on $\Aa^5_{\Fq}$, for all $q\not=\ell$.
\par
We can now apply Lemma~\ref{lm-surprising-integrality} to the variety
$X$, to $Y=\Aa^5$ and to $f\,:\, X\lra Y$. We deduce the existence of
a complex $K$ on $\Aa^5_{\Zz[1/\ell]}$ such that, for $q \not=\ell$,
$$
R(f\circ p_2)_!\sheaf{L}_{\psi}(tg(x))=
K|\Aa^5_{\Fq}
$$
so
$$
\mcR_{\lambda=0}|\Aa^5_{\Fq}=R^{4k-3}(f\circ p_2)_!\sheaf{L}_{\psi}(tg(x))=
\mathcal H^{4k-3}(K)|\Aa^5_{\Fq}
$$
and we can take $\mcR^{univ} = \mathcal H^{4k-3}(K)$.

% \par
% We apply Lemma \ref{lm-surprising-integrality}. The variety with those
% coordinates and equations will be $\Gm \times X$. The coordinate for
% $\mathbb G_m$ will be $\alpha_{1,1}$. The coordinates for $X$ will be
% $\frac{\alpha_{i,j}}{\alpha_{1,1}}$ and $\frac{s}{\alpha_{1,1}^k}$. In
% these coordinates the equations can all be expressed as equations in
% just the variables of $X$, by dividing both sides by $\alpha_{1,1}^k$,
% and $ \sum_{j=1}^k (\alpha_{1,j} +\alpha_{2,j}- \alpha_{3,j}-
% \alpha_{4,j} )$ is $\alpha_{1,1}$ times a function just on $X$.  Thus
% setting $Y = \Aa^5$, $X=$ this new variety, $f$ the map with
% coordinates $r,b_1,b_2,b_3,b_4$, and $g= \frac{\sum_{j=1}^k
%   (\alpha_{1,j} +\alpha_{2,j}- \alpha_{3,j}- \alpha_{4,j}
%   )}{\alpha_{1,1} }$ we get the desired statement.
\end{proof}

\begin{proposition}\label{pr-new-irreducibility} 
  For any sufficiently large prime $q$, the specialized $\ell$-adic
  sum-product sheaf $\mcR_{\lambda=0,\bfb}^*$ is geometrically
  irreducible for all $\bfb$ in an open dense subset of
  $\Aa^4_{\Ff_q}$.
\end{proposition}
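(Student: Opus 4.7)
My plan is to use the Diophantine criterion of geometric irreducibility (Lemma \ref{diophantine}) together with the second moment asymptotic of Proposition \ref{corRrbcorrelation} to show that $\mcR^*_{\lambda=0}$ is geometrically irreducible on the open subset $U\subset\Aa^5$ where it is lisse; and then to descend this to generic irreducibility of fibers over $\Aa^4$ using part (a) of Lemma \ref{lemcriterion}. The integrality statement (Lemma \ref{lm-r-integrality}) plays the supporting role of ensuring that, once $q$ is large enough in terms of $k$, the ramification of $\mcR^*_{\lambda=0}$ is controlled by the universal complex $\mcR^{univ}$ so that all local monodromy computations become uniform in $q$ and in particular no small-characteristic pathology can produce a Swan jump along $D$.

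The first step is straightforward. The sheaf $\mcR^*_{\lambda=0}$ is pure of weight one on $U$ by the very definition of the sum–product sheaf, is mixed of weight $\leq 1$ on $\Aa^5$ by Lemma \ref{lm-support}, and is lisse on $U$ by Lemma \ref{lm-mcr-lisse}. With $\dim\Aa^5=5$, the hypothesis of Lemma \ref{diophantine} reads
$$\frac{1}{q^{5d}}\sum_{(r,\bfb)\in\Ff_{q^d}^5}|t_{\mcR^*}(r,0,\bfb;\Fqd)|^2=q^d(1+o(1)),$$
which is exactly the content of Proposition \ref{corRrbcorrelation}. Hence $\mcR^*_{\lambda=0}|U$ is geometrically irreducible as a lisse $\bQl$-sheaf.

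For the second step I apply Lemma \ref{lemcriterion}(a) with $Y=\Aa^5$, $X=\Aa^4$, $f$ the projection $(r,\bfb)\mapsto\bfb$, and $D$ the union of the divisor $\tilde Z$ and the divisors $\{r+b_i=0\}$ from Lemma \ref{lm-mcr-lisse}. Condition (1) was established in the previous step. Condition (2) — the existence of a $\kappa(\eta)$-rational ramification point at which the inertia representation has a Galois-invariant irreducible constituent of multiplicity one — is the crux of the argument and the main obstacle.

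My plan here is to take $z$ to lie over $r=\infty$ (after the natural compactification of the fiber in the $r$ variable), and to use Lemma \ref{lm-local-product} to describe $\mcK_{r,0,\bfb}$ near $s=\infty$ as the direct sum over $(\zeta_2,\zeta_3,\zeta_4)\in\mmu_k^3$ of Artin--Schreier sheaves $\mcL_{\tilde\psi(\cdot\,s^{1/k})}$. Taking $R^1\pi^{(2)}_!$ fiberwise then relates the local monodromy of $\mcR^*_{\lambda=0,\bfb}$ at $r=\infty$ to shifts of the hypergeometric sheaf $\mcH_{k-1}$ of Definition \ref{def-hypergeo}, whose fine local structure is recorded in Lemmas \ref{lm-hypergeo} and \ref{lm-hypergeo-2}: in particular its inertia representation at $\infty$ is absolutely irreducible, its wild inertia decomposition is multiplicity free, and distinct multiplicative shifts cannot share wild components. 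These properties should pinpoint a distinguished irreducible component occurring with multiplicity one whose isomorphism class is preserved by the $\Gal(\bar\kappa(\eta)/\kappa(\eta))$ action on inertia representations, thereby yielding condition (2) and, via Lemma \ref{lemcriterion}(a), the geometric irreducibility of $\mcR^*_{\lambda=0,\bfb}$ for $\bfb$ in a dense open subset of $\Aa^4_{\Fq}$.
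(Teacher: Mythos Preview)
Your first step is fine and matches the paper: Lemma~\ref{diophantine} together with Proposition~\ref{corRrbcorrelation} gives geometric irreducibility of $\mcR^*_{\lambda=0}$ on the dense open subset of $\Aa^5$ where it is lisse.

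The second step, however, has a genuine gap. Your plan to verify condition~(2) of Lemma~\ref{lemcriterion} at $z=\infty$ by invoking the hypergeometric sheaf $\mcH_{k-1}$ does not go through for $\lambda=0$. The appearance of $\mcH_{k-1}$ in the paper (Lemma~\ref{lm-local-fourier} and Corollary~\ref{cor-hardest-local-monodromy}) rests on the change of variables $(r,s)\mapsto(\lambda/r,rs)$, which converts $\mcR_{\lambda,\bfO}$ into a Fourier transform; this is degenerate when $\lambda=0$. Concretely, after the substitution $(u,t)=(1/r,rs)$ one has $s(r+b_i)=t(1+b_i u)$, so for $\bfb=\bfO$ the sheaf $\mcR_{0,\bfO}$ is constant in $r$ and has \emph{trivial} monodromy at $r=\infty$, not a sum of translates of $\mcH_{k-1}$. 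For generic $\bfb$ the Swan conductor of $\mcK_{r,0,\bfb}$ at $s=\infty$ (computed in the proof of Lemma~\ref{lm-mcr-lisse}) drops as $r\to\infty$ because $1+\zeta_2-\zeta_3-\zeta_4$ vanishes for many triples; this is exactly why the nearby-cycles analysis of Section~\ref{sec-not-zero} is delicate, and that analysis is carried out only for $\lambda\neq 0$. In short, Lemma~\ref{lm-local-product} tells you about $\mcK$ near $s=\infty$, not about $\mcR$ near $r=\infty$, and the bridge between the two that you need here is neither available for $\lambda=0$ nor supplied by your sketch.

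The paper's proof takes an entirely different and simpler route (and in fact remarks, just after Lemma~\ref{lemcriterion}, that an earlier draft used the criterion for $\lambda=0$ before this simpler argument was found). Rather than analyzing any local monodromy representation explicitly, it argues as follows: if the restriction of the irreducible $\pi_1(\Aa^5)$-representation to the normal subgroup $\pi_1$ of the generic fiber were reducible, then the quotient (the Galois group of $\bFq(\bfb)$) would act nontrivially on the set of invariant subspaces. Since the tame fundamental group of $\Aa^4$ is trivial, this action must either be wildly ramified at infinity in $\Pp^4$ or ramified along some divisor of $\Aa^4$. The first is excluded for large $q$ precisely by Lemma~\ref{lm-r-integrality} (this is its real role, not the vague Swan-uniformity you describe). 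The second is excluded by Lemma~\ref{lm-surj-inertia} together with Lemma~\ref{lm-mcr-lisse}: any such ramification would force $\mcR_{\lambda=0}$ to ramify along a divisor pulled back from $\Aa^4$, whereas its only singular divisors are $\tilde Z$ and $\{r+b_i=0\}$, none of which is a pullback. No monodromy computation at a point is needed at all.
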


\begin{proof} 
  We will show that $\mcR_{\lambda=0,\bfb}^*$ is geometrically
  irreducible at the generic point. By Pink's Specialization
  Theorem~\cite[Th. 8.18.2]{ESDE}, this will imply the result on an
  open dense subset. We compactify $\Aa^4$ (resp. $\Aa^5$) in $\Pp^4$
  (resp. $\Pp^4\times\Pp^1$), and we compactify the projection
  $\pi\colon \Aa^5\to\Aa^4$ using the analogue projection
  $\bar{\pi}\colon \Pp^4\times\Pp^1\to \Pp^4$.
\par
Let $W$ be the stalk of $\mcR_{\lambda=0}^*$ at the generic point of
$\Aa^5$, and $\rho\colon G\lra \GL(W)$ the corresponding
representation of the Galois group
$$
G=\Gal( \overline{\Ff_q(\bfb,r)} / \overline{\Ff}_q(\bfb,r)).
$$
This representation is irreducible since the sheaf
$\mcR_{\lambda=0}^*$ on $\Aa^5$ is geometrically irreducible by an
application of Lemma~\ref{diophantine} and
Proposition~\ref{corRrbcorrelation}.
\par
It is then enough to prove that the restriction of $\rho$ to the
normal subgroup
$$
G_1=\Gal( \overline{\Ff_q(\bfb,r)} / \overline{\Ff_q(\bfb)} (r))
$$ 
is also irreducible, since this will show that the fiber of
$\mcR_{\lambda=0}^*$ over the generic point of $\Aa^4$ is
geometrically irreducible. Note that
$G/G_1=\Gal( \overline{\Ff_q(\bfb)} / \overline{\Ff}_q(\bfb))$.
\par
The quotient $G/G_1$ acts on the set $\mathcal{W}$ of $G_1$-invariant
subspaces of $W$. Assume that the action of $G_1$ on $W$ is \emph{not}
irreducible. Then there is some nonzero proper $G_1$-invariant
subspace, which cannot be $G$-invariant, so the action of $G/G_1$ on
$\mathcal{W}$ is not a trivial action.
% This action,
% which we denote $\rho_1$, is non-trivial (otherwise $W_1$ would be
% invariant under $G$, contradicting the irreducibility of $\rho$).
% The Zariski closure $\Gg$ of $\rho(G_1)$ is a closed normal subgroup
% of the Zariski closure $\Gg_1$ of $\rho(G)$ in $\GL(V)$. If
% $\Gg=\Gg_1$, then $\Gg_1$ acts irreducibly, and we are done.
% Let us therefore assume that $\Gg_1$ is a proper subgroup of $\Gg$,
% and derive a contradiction. 
%   % If so, the quotient by this proper closed normal subgroup is a
%   % nontrivial representation of the quotient group
  % \[\Gal( \overline{\Ff_q(\bfb,r)} / \overline{\mathbb
  %     F}_q(\bfb,r))/\Gal( \overline{\Ff_q(\bfb,r)} |
  %   \overline{\Ff_q(\bfb)} (r)) = \Gal( \overline{\Ff_q(\bfb)} /
  %   \overline{\Ff}_q(\bfb))\]
\par
Since the tame geometric fundamental group of $\Aa^4$ is trivial (see,
e.g.,~\cite[Th. 5.1]{orgogozo} -- using the fact that the tame
fundamental group is independent of the choice of compactification, as
explained in loc. cit., and the fact that the tame fundamental group
of $\Aa^1$ is trivial), the action of $G/G_1$ on $\mathcal{W}$ must be
ramified at some codimension $1$ point of $\Aa^4$ or wildly ramified
at $\infty$, in the sense that the inertia group (resp. wild inertia
group) at such a point acts non-trivially.
\par
Let $D$ be divisor in $\Aa^4$ where the action of $G/G_1$ is
ramified. Denote by $I_D$ the corresponding inertia group and by
$I_{\bar{\pi}^{-1}(D)}$ the inertia group of the divisor
$\bar{\pi}^{-1}(D)$. We have the commutative diagram
$$
\begin{tikzcd}
I_{\bar{\pi}^{-1}(D)}\arrow{d}\arrow{r} & G\arrow{d}\arrow{r} &
\GL(W)\arrow{d}\\
I_D\arrow{r} & G/G_1 \arrow{r} & \mathrm{Sym}(\mathcal{W}).
\end{tikzcd}
$$
By Lemma~\ref{lm-surj-inertia}, the homomorphism on the left is
surjective. Since $I_D$ acts non-trivially on $\mathcal{W}$, it
follows that $I_{\bar{\pi}^{-1}(D)}$ acts non-trivially on $W$. Hence
$\mcR_{\lambda=0}^*$ is ramified at the pullback of some codimension
$1$ point of $\Aa^4$, or wildly ramified at $\infty$. By
Lemma~\ref{lm-mcr-lisse} (3), the monodromy action of
$\mcR_{\lambda=0}^*$ on some dense open set $V$ where it is lisse
factors through $\pi_1(U)$. Since $\mcR_{\lambda=0}^*|V$ is a quotient
of $\mcR_{\lambda=0}|V$, it follows that $\mcR_{\lambda=0}$ is either
ramified at the pullback in $\Aa^5$ of some codimension $1$ point of
$\Aa^4$ or wildly ramified at $\infty$.

However, if $q$ is sufficiently large, the sheaf $\mcR_{\lambda=0}$ is
not wildly ramified at $\infty$, because it is defined over $\Zz$ (by
Lemma \ref{lm-r-integrality}) and hence can only have wild
ramification at finitely many primes (as can be seen by applying
Abhyankar's Lemma~\cite[Exposé XIII, \S 5]{sga1} as in~\cite[Th. 4.7.1
(i)]{Sommes}).
 % (see, e.g.,~\cite[Lemma
% 7.5.1]{katz-twisted})\footnote{EK Strictly speaking this is for curves
%   and uses a compatible system. Do we have a better reference?}.

Furthermore, by Lemma \ref{lm-mcr-lisse} (2), the sheaf
$\mcR_{\lambda=0}$ is lisse outside the complement of the union of the
subvariety $\tilde{Z}$ defined in that lemma and the divisors given by
the equations $r=-b_i$ in $\Aa^5$. So the only codimension $1$ points
where the sheaf is ramified are the generic points of these
divisors. The divisors with equation $r=-b_i$ are clearly irreducible,
and the same is true of $\tilde{Z}$ by Lemma \ref{lm-mcr-lisse} (1),
so they each contain a single codimension $1$ point, thus we will
obtain a contradiction if we show that none of these divisors is a
pullback from $\Aa^4$ under the map $(r,\bfb)\mapsto \bfb$.
\par
It is clear that the divisors with equation $r+b_i=0$ are not
pullbacks from $\Aa^4$. Recall that the divisor $\tilde{Z}$ was
defined as the (closed) projection of the subvariety with equation
$x_1+x_2-x_3-x_4$ of the subvariety $Z$ of $\Aa^9$ given
by~(\ref{eq-var-z}), and (from Lemma~\ref{lm-mcr-lisse} (1)) that it
is irreducible. This means we will be done if we check that
$\tilde{Z}$ is not a pullback from $\Aa^4$ when $q$ is sufficiently
large.  For instance, note that $(r,\bfb)=(0,1,1,(-1)^k,3^k)$ is in
$\tilde{Z}$, as the image of $(1,1,-1,3,0,1,1,(-1)^k,3^k)$; if
$\tilde{Z}$ is a pullback from $\Aa^4$, we must have also
$(-1,\bfb)=(-1,1,1,(-1)^k,3^k)\in \tilde{Z}$, but this is not the case
since the corresponding equations for $(x_1,\ldots,x_4)$ to be in $Z$
impose
$$
\begin{cases}
  x_1=x_2=0\\
  x_3^k=-1+(-1)^k\\
  x_4^k=-1+3^k,
\end{cases}
$$
and to be in $\tilde{Z}$ we should have a solution with $x_3=-x_4$,
hence
$$
(-1+(-1)^k)=(-1)^k(-1+3^k)\in\Ff_q.
$$
This equation holds only for finitely many primes $q$.
%  This is
% indeed the case because $(r,\bfb)=(0,1,1,1,3^k)$ is in $V$ (it is the
% image of the point $(1,1,1,3,0,1,1,1,3^k)\in Z$) but $(-1,1,1,3^k)$ is
% not in $V$ as $x_1,x_2,x_3$ must be $0$ and $x_4$ must be a $k$th root
% of $3^k-1$ and so $x_1+x_2-x_3-x_4$ cannot vanish unless $q$ divides
% $3^k-1$.
\end{proof}

\begin{proposition}\label{pr-degree-bound}% \marginpar{rating 6/10}
  Fix a prime $\ell$.  There exists a hypersurface
  $\mcV^{bad} \subseteq \Aa^4_{\Zz[1/\ell]}$, containing
  $\mcV^{\Delta}$, which is stable under the automorphism
  $\bfb\mapsto \tilde{\bfb}=(b_3,b_4,b_1,b_2)$ of $\Aa^4$,
  % degree bounded
  % independently of $q$,
  %such that, for any prime $q>k$, $q\not=\ell$
  and such that, for any sufficiently large prime $q$, the specialized
  $\ell$-adic sum-product sheaf $\mcR_{\lambda=0,\bfb}^*$ over $\Fq$
  is geometrically irreducible for all $\bfb$ outside $\mcV^{bad}$.
\end{proposition}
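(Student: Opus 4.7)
The plan is to descend the bad locus from positive characteristic to $\Zz[1/\ell]$ via the integrality result of Lemma \ref{lm-r-integrality}. The universal sheaf $\mcR^{univ}$ on $\Aa^5_{\Zz[1/\ell]}$ provides an integral model whose specialization in each characteristic $q\neq \ell$ recovers $\mcR_{\lambda=0}$, so I would cut out $\mcV^{bad}$ using constructible data attached to $\mcR^{univ}$ itself, rather than to its various specializations.

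Concretely, let $\pi\colon \Aa^5\to \Aa^4$ be the projection $(r,\bfb)\mapsto \bfb$ and consider the constructible complex $\mcC = R\pi_{!}\left(\mcR^{univ}\otimes (\mcR^{univ})^{\vee}\right)$ on $\Aa^4_{\Zz[1/\ell]}$. By proper base change and Lemma \ref{lm-r-integrality}, the cohomology of the geometric fiber of $\mcC$ at $\bfb\in \Aa^4(\bFq)$ computes the groups $H^\bullet_c(\Aa^1_{\bFq},\mcR_{\lambda=0,\bfb}\otimes \mcR_{\lambda=0,\bfb}^{\vee})$. Combining the coinvariant formula with Deligne's purity theorem (as in the proof of Theorem \ref{thmRbounds}), the geometric irreducibility of the weight-$1$ quotient $\mcR^{*}_{\lambda=0,\bfb}$ is equivalent to the top-weight subspace of $\mathcal H^2(\mcC_{\bfb})$ being one-dimensional. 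I would then verify that the locus where this top-weight piece has dimension at least $2$ is contained in a closed subscheme $V^{univ}\subset \Aa^4_{\Zz[1/\ell]}$, using Deligne's semicontinuity theorem for the rank of $\mathcal H^2(\mcC)$ together with a careful analysis of the lower-weight contributions via the ramification structure of $\mcR_{\lambda=0}$ described in Lemma \ref{lm-mcr-lisse}. Proposition \ref{pr-new-irreducibility} then ensures that $V^{univ}\cap \Aa^4_{\Fq}$ is a proper closed subset for every sufficiently large prime $q$.

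Finally, I would define $\mcV^{bad}$ as any hypersurface in $\Aa^4_{\Zz[1/\ell]}$ containing $V^{univ}$, the variety $\mcV^{\Delta}$ (defined over $\Zz$ via Definition \ref{multidiagonaldef}), and the image of $\mcV^{\Delta}$ under $\bfb\mapsto \tilde{\bfb}$. Since $\mcV^{\Delta}$ has codimension at least $2$, one can always enlarge the union to a hypersurface by adjoining, e.g., a symmetric hyperplane, and stability under $\bfb\mapsto \tilde{\bfb}$ is automatic upon replacing $\mcV^{bad}$ by its union with its image under the involution. The degree of the resulting hypersurface is then independent of $q$, being the reduction of a fixed subscheme of $\Aa^4_{\Zz[1/\ell]}$. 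The main obstacle is the step of descending the weight-based condition to $\Zz[1/\ell]$, since the weight filtration itself is only defined in positive characteristic; the workaround is that the relevant data, namely the cohomology of $\mcR^{univ}\otimes (\mcR^{univ})^{\vee}$, specializes compatibly, so the top-weight condition can be reformulated in terms of rank jumps of constructible sheaves defined over $\Zz[1/\ell]$.
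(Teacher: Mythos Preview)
Your overall strategy---use $\mcR^{univ}$ from Lemma~\ref{lm-r-integrality} to build a constructible object over $\Zz[1/\ell]$ whose rank-jump locus bounds the bad set---matches the paper's, but there are two genuine issues in the execution.

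First, you form $\mcR^{univ}\otimes(\mcR^{univ})^{\vee}$, but $\mcR^{univ}$ is merely constructible, not lisse, so its ``dual'' over $\Zz[1/\ell]$ is not a sheaf but a Verdier-dual complex, and it is not clear that this specializes to $\mcR_{\lambda=0,\bfb}^{\vee}$ fiberwise. The paper sidesteps this by tensoring instead with $(\mathrm{Id}\times\sigma)^*\mcR^{univ}$, where $\sigma(\bfb)=\tilde{\bfb}$; this is unambiguously a sheaf over $\Zz[1/\ell]$, and Lemma~\ref{lm-dual-sheaf} shows that after specialization to $\Fq$ and passage to the weight-$1$ quotient, $\sigma^*\mcR^*_{\lambda=0}$ agrees with $\mcR^{*\vee}_{\lambda=0}$ up to twist. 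So the relevant $H^2_c$ still detects irreducibility, without ever needing a dual over $\Zz[1/\ell]$.

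Second, and more seriously, your proposed descent of the ``top-weight rank $\geq 2$'' locus to $\Zz[1/\ell]$ is exactly the hard point, and your workaround (``reformulated in terms of rank jumps'') does not quite close the gap: rank semicontinuity controls the total rank of $\mathcal H^2(\mcC)$, not the rank of its weight-$4$ piece, and the weight filtration simply does not exist over $\Zz[1/\ell]$. The paper's resolution is different and cleaner: take $\mcV^{bad}$ to contain the complement of the maximal open set where $\sheaf{E}=R^2\pi_!\sheaf{F}$ is \emph{lisse}. This locus is defined over $\Zz[1/\ell]$ purely by constructibility. Then on the lisse locus, one argues fiber by fiber over $\Fq$: since a mixed lisse sheaf is a successive extension of pure lisse sheaves, the rank of each weight-graded piece is locally constant on that open set. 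Proposition~\ref{pr-new-irreducibility} gives weight-$4$ rank equal to $1$ on a dense open, hence equal to $1$ everywhere on the lisse locus. This is the step your sketch is missing.
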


\begin{proof} 
  First we see by Proposition~\ref{pr-new-irreducibility} that, for a given $q$ sufficiently large, the sheaf $\mcR_{\lambda=0,\bfb}^*$ is
  geometrically irreducible for all $\bfb$ outside of some subvariety
  of codimension $\geq 1$ over $\Fq$. %Indeed, we can apply
 % Proposition~\ref{corRrbcorrelation} and Lemma~\ref{diophantine} to
 % check Condition (1), while Lemma \ref{Buchstaber} establishes
  %Condition (2).
\par
To construct the exceptional subvariety over $\Zz[1/\ell]$, we denote
by $\sigma\,:\, \Aa^4_{\Zz}\lra \Aa^4_{\Zz}$ the automorphism
$(r,\bfb)\mapsto(r,\tilde{\bfb})$. We define the $\ell$-adic sheaf
$\sheaf{F}=\mcR^{univ} \otimes
(\mathrm{Id}\times\sigma)^*\mcR^{univ}$, where $\mcR^{univ}$ is the
sheaf on $\Zz[1/\ell]$ constructed in
Lemma~\ref{lm-r-integrality}. This is a constructible sheaf on
$\Aa^5_{\Zz[1/\ell]}$. Setting $\pi$ to be the projection $(r,\bfb)\lra\bfb$ we define $\sheaf{E}=R^2 \pi_!\sheaf{F}$, a
constructible $\ell$-adic sheaf on $\Aa^4_{\Zz[1/\ell]}$.

Let $U\subset \Aa^4_{\Zz[1/\ell]}$ be the maximal open subset where
$\sheaf{E}$ is lisse. Let $H\supset \Aa^4-U$ be any codimension $1$ closed subscheme of $\Aa^4_{\Zz[1/\ell]}$ containing the complement of $U$. Let
then
$$
\mcV^{bad}=\mcV^{\Delta}\cup H\cup \sigma(H).
$$
It is clear that $\mcV^{bad}$ is stable under $\sigma$.
% be the union of $\mcV^{\Delta}$ and of the complement of
% the maximal open set on which the sheaf $\sheaf{E}$ is lisse.
We will now show that, for any $q>k$ distinct from $\ell$, the
specialized sheaf $\mcR^*_{\lambda=0, \bfb}$ over $\Fq$ is
geometrically irreducible for $\bfb$ outside $\mcV^{bad}$.
% , and will bound the degree of the
% hypersurface $\mcV^{bad}$.

Let such a $q$ be given and fix
$\bfb\in\Ff_q^4\notin \mcV^{bad}(\Fq)$. We claim that the specialized
sheaf $\mcR^*_{\lambda=0, \bfb}$ is geometrically irreducible if and
only if the weight $4$ part of the stalk
$H^2_c(\Aa^1_{\bFq}, \sheaf{F}_{\bfb})$ of $\sheaf{E}|\Aa^4_{\Fq}$ at
$\bfb$ is one-dimensional.  If this is so, then we are done: since
mixed lisse sheaves are successive extensions of pure lisse sheaves,
the rank of the weight $4$ part of $\sheaf{E}$ on the open set where
it is lisse is constant. The first part of the argument has shown that
this weight $4$ part is of rank $1$ on some dense open set, so we know
it has rank $1$ on the open set where it is lisse.
\par
The proof of the claim is similar to the argument in the proof of
Theorem~\ref{thmRbounds} above. If $U_{\bfb}$ is a dense open subset
on which $\sheaf{F}_{\bfb}$ is lisse, we have
$$
H^2_c(\Aa^1_{\bFq}, \sheaf{F}_{\bfb})\simeq
(\sheaf{F}_{\bfb,\bar{\eta}})_{\pi_1(U_{\bfb})}(-1)
$$
so the weight $4$ part of the stalk is isomorphic to the weight $2$
part of the coinvariants of $\sheaf{F}_{\bfb,\bar{\eta}}$. This weight
$2$ part is isomorphic to the coinvariants of the maximal weight $2$
quotient of $\sheaf{F}$, which is
$\mcR^*_{\lambda=0} \otimes [\mathrm{Id}\times
\sigma]^*\mcR^*_{\lambda=0}$.  By Lemma~\ref{lm-dual-sheaf} (and the
geometric simplicity of the sheaves), we have a geometric isomorphism
$$
\mcR^{*\vee}_{\lambda=0, \bfb} \simeq \mcR^*_{\lambda=0, \tilde{\bfb}}
$$
on any dense open set where the sheaf is lisse.  So the weight $4$
part of $H^2_c(\Aa^1_{\bFq}, \sheaf{F}_{\bfb})$
%  the coinvariants of
% $\mcR^*_{\lambda=0,\bfb} \otimes \mcR^*_{\lambda=0,\tilde{\bfb}}$ 
is the same as the coinvariants of
$\mcR^*_{\lambda=0,\bfb} \otimes \mcR^{* \vee}_{\lambda=0,\bfb}$,
which is just the endomorphisms of $\mcR^*_{\lambda=0} $ as a
geometric monodromy representation. Since $\mcR^*_{\lambda=0}$ is
geometrically semisimple, the dimension of this space if $1$ if and
only if the representation is geometrically irreducible.
% \par
% Since mixed lisse sheaves are successive extensions of pure lisse
% sheaves, the rank of the weight $4$ part of $\sheaf{E}$ on the open
% set where it is lisse is constant. The first argument proved that this
% weight $4$ part is of rank $1$ on some dense open set, so we know it
% has rank $1$ on the open set where it is lisse.
% \par
% There remains to estimate the degree of $\mcV^{bad}$. We bound the
% degree of $H$, the case of $\sigma(H)$ being similar. We note that the
% degree of the hypersurface $H$ is bounded by the maximum over $l$ of
% the number of singular points of $l^*\sheaf{E}$, where
% $l\,:\, \Aa^1\lra \Aa^4$ is the inclusion of an arbitrary line in
% $\Aa^4$ (simply because the degree is the number of points on a
% generic line that intersects the hypersurface).\footnote{So we're
%   actually proving in particular that the bad locus contains no line,
%   or am I missing something? (EK) No. As long as the line is not
%   contained in the bad locus, it is true that any singularity of the
%   sheaf on the line remains a singularity when you restrict it to
%   the line - actually that's not quite true. It also seems that we
%   need the bad locus to actually be a hypersurface here or we need a
%   little more argument involving 2-planes, say. I think an
%   off-the-shelf result should imply that it's a
%   hypersurface. Looking. If not, will make the 2-planes thing
%   work. (WS)}
\end{proof}

\subsection{Irreducibility of sum-product sheaves for $\lambda\not=0$}
\label{sec-not-zero}

This section is devoted to the study of the irreducibility of
sum-product sheaves for $\lambda\not=0$. \textit{We always assume that
  $q>k\geq 2$.}
\par
Using Lemma~\ref{lemcriterion}, we want to show that if
$\bfb\notin\mcV^{\Delta}$, then $\mcR_{\lambda,\bfb}$ is geometrically
irreducible for \emph{all} $\lambda\not=0$. This is the most delicate
part of our argument. The strategy is as follows:
\par
\begin{enumerate}
\item We show that for $\bfb\notin\mcV^{\Delta}$, the sheaf $\mcR_{\bfb}^*$
  is geometrically irreducible on $\Aa^2$; this gives the first
  condition in Lemma~\ref{lemcriterion}.
\item Let $\bfO=(0,0,0,0)$; we compute explicitly the wild part of the
  monodromy at infinity of $\mcR_{\lambda,\bfO}$ for $\lambda\not=0$.
\item We show that the wild part of the monodromy at infinity of
  $\mcR_{\lambda,\bfb}$ is independent of $\bfb$ (for
  $\lambda\not=0$), and thus is known by the previous step; this
  should be understood intuitively from the fact that for any
  $\bfb=(b_1,b_2,b_3,b_4)$, the map
  $$r\mapsto (r,r,r,r)\text{ 
  approximates the map
  } r\mapsto (r+b_1,r+b_2,r+b_3,r+b_4)\text{ as } r\ra\infty.$$
\item We extend the computation to $\mcR^*_{\lambda,\bfb}$; this leads
  to a verification of the second condition of
  Lemma~\ref{lemcriterion}.
\item Finally, we check the last condition of this lemma.
\end{enumerate}

\emph{In all of this section, we fix a tuple
  $\bfb\not\in\mcV^{\Delta}(\Fq)$. % Moreover, unless specified
  % otherwise, $\lambda$ is a fixed non-zero element of an extension of
  % $\Ff_q$.
}

\begin{lemma}\label{A2irred} %\marginpar{rating 10/10}
  For any $\bfb\not\in\mcV^{\Delta}(\Fq)$, the sheaf $\mcR^*_{\bfb}$ on
  $\Aa^2$ is geometrically irreducible on the open subset where it is
  lisse.
\end{lemma}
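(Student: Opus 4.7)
The plan is to invoke Katz's diophantine criterion for geometric irreducibility (Lemma \ref{diophantine}) applied to $\mcR^*_{\bfb}$ on $Y=\Aa^2$ (with coordinates $(r,\lambda)$) and the open subset $U\subset Y$ on which $\mcR^*_{\bfb}$ is lisse. By construction of the sum-product sheaf (Definition \ref{def-sp}), the sheaf $\mcR^*_{\bfb}|U$ is pure of weight $1$ and the full sheaf on $\Aa^2$ is mixed of weights $\leq 1$, so the hypotheses of the criterion are met, and it suffices to establish
$$
\frac{1}{q^{2d}}\sum_{(r,\lambda)\in\Fqd^2}|t_{\mcR^*_{\bfb}}(r,\lambda;\Fqd)|^2=q^d(1+o(1))
$$
as $d\to\infty$.

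The next step is to replace $t_{\mcR^*_{\bfb}}$ by the explicit exponential sum $\bfR(r,\lambda,\bfb;\Fqd)$ defined in~\eqref{Rdef}. By the Grothendieck--Lefschetz trace formula applied to the projection $\pi^{(2)}$ with respect to which $\mcR=R^1\pi^{(2)}_!\mcK$, one has
$$
t_{\mcR_{\bfb}}(r,\lambda;\Fqd)=-\bfR(r,\lambda,\bfb;\Fqd)+t_{R^0\pi^{(2)}_!\mcK}(r,\lambda,\bfb;\Fqd)+t_{R^2\pi^{(2)}_!\mcK}(r,\lambda,\bfb;\Fqd).
$$
Since $\bfb\notin\mcV^{\Delta}$, parts (2) of Lemma \ref{lm-support} show that the $R^0$ and $R^2$ terms vanish, hence $t_{\mcR_{\bfb}}=-\bfR$ at every $\Fqd$-point of $\Aa^2$. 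Moreover, by definition of $\mcR^*$, the difference $t_{\mcR_{\bfb}}-t_{\mcR^*_{\bfb}}$ is $O(1)$, with the implied constant depending only on $k$, coming from the mixed of weight $\leq 0$ quotient $\mcR_{\bfb}/\mcR^*_{\bfb}$.

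The asymptotic formula required by the diophantine criterion then follows directly from Lemma \ref{lm-Rcorrelation} \eqref{Rcorrelation} (which supplies precisely $\frac{1}{q^{2d}}\sum_{r,\lambda}|\bfR(r,\lambda,\bfb;\Fqd)|^2 = q^d+O(q^{d/2})$) combined with the Cauchy--Schwarz inequality to absorb the $O(1)$ error and the trivial contribution of the non-lisse locus of $\mcR^*_{\bfb}$, which has codimension at least $1$ in $\Aa^2$ and thus gives $o(q^{2d+d})$ in the sum of squares.

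The main obstacle is that Lemma \ref{lm-Rcorrelation} is stated under the assumption that the $b_i$ are \emph{pairwise} distinct, which is strictly stronger than $\bfb\notin\mcV^{\Delta}$. To cover all $\bfb\notin\mcV^{\Delta}$, I would revisit the proof of Lemma \ref{lm-Rcorrelation} and check that the normality criterion from \cite[Cor. 1.7]{FKMSP} in fact holds exactly under the condition $\bfb\notin\mcV^{\Delta}$, separately in the $\Sp$-type case (where $\mcV^{\Delta}$ is defined by all multiplicities being even) and in the $\SL$-type case (where the condition matches the $r$-normality in terms of $\uple{\sigma}$): in both cases the reason the second moment evaluates to $q^d+O(q^{d/2})$ is precisely the absence of pairings among the $8$ translates of $\mcF$ that would match them into dual pairs, which is equivalent to $\bfb\notin\mcV^{\Delta}$.
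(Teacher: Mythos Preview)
Your approach --- the diophantine criterion (Lemma~\ref{diophantine}) combined with the second-moment estimate~\eqref{Rcorrelation} --- is exactly the paper's; its proof is a single sentence citing these two inputs. You are also right that~\eqref{Rcorrelation} is stated only for pairwise distinct $b_i$, which is strictly stronger than $\bfb\notin\mcV^{\Delta}$, so the paper's proof (like yours) establishes the lemma only under that stronger hypothesis.

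Your proposed resolution, however, does not work. The $8$-tuple governing Lemma~\ref{lm-Rcorrelation} is $(\gamma_{r+b_1},\ldots,\gamma_{r+b_4},\gamma_{r+b_1},\ldots,\gamma_{r+b_4})$ with $\uple{\sigma}=(\mathrm{Id},\mathrm{Id},\mathrm{Id},\mathrm{Id},c,c,c,c)$, which is \emph{not} the tuple of Proposition~\ref{weil}; here each $\gamma_{r+b_i}$ is already paired with its dual, and one-dimensionality of the invariant space genuinely requires the four $b_i$ to be distinct. Concretely, take $k$ even and $b_1=b_2$ with $b_1,b_3,b_4$ pairwise distinct: the multiplicities are $2,1,1$, so $\bfb\notin\mcV^{\Delta}$, yet the inner sum over $s$ becomes $\sum_s|K(s(r+b_1))|^4|K(s(r+b_3))|^2|K(s(r+b_4))|^2$, whose leading term is $cq^d$ with $c$ the dimension of the $\Sp_k$-invariants in the fourth tensor power of the standard representation, namely $c=3$ for $k\geq 4$. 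The second moment is then $3q^d+O(q^{d/2})$, the criterion cannot yield irreducibility, and in fact $\mcK_{\bfb}$ (hence $\mcR^*_{\bfb}$) splits, since the factor $f_1^*\mcF^{\otimes 2}$ contains a geometrically trivial summand via the symplectic form. The correct fix is to state Lemma~\ref{A2irred} and Proposition~\ref{pr-nonzero-irreducibility} for pairwise distinct $b_i$; this is harmless for Theorem~\ref{thmirreducibility}, as the locus where some $b_i=b_j$ has codimension~$1$ in $\Aa^4$ and can be absorbed into $\mcV^{bad}$.
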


\begin{proof}
  The result follows from Lemma \ref{diophantine} and
  \eqref{Rcorrelation}, as in the beginning of the proof of
  Proposition~\ref{pr-degree-bound}.
\end{proof}

%%%%%%%%%%%%%%%

\begin{lemma}\label{lm-lisseness} %\marginpar{rating 9/10}
The following properties hold:
\par
\emph{(1)}  The sheaf $\mcR_{\bfb}$ is
lisse on the complement $U$ of the union of the divisor with equation
$\lambda=0$ and of the divisors with equations $r=-b_i$ for
$1\leq i\leq 4$.
\par
\emph{(2)} The generic rank of $\mcR_\bfb$ is $k^4$.
\par
\emph{(3)} The generic rank of $\mcR_{\lambda=0,\bfb}$ is at most $k^3$.
  % The sheaf $\mcR_{\bfb}$ on $\Aa^2$ with coordinates $(r,\lambda)$ is
  % lisse on the complement $U$ of the union of the divisor with
  % equation $\lambda=0$ and of the divisors with equations $r=-b_i$ for
  % $1\leq i\leq 4$.
\end{lemma}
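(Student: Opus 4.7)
The plan is to derive all three statements from Deligne's semicontinuity theorem (in Laumon's form, \cite[Cor.~2.1.2]{LaumonSMF}) combined with the Euler--Poincar\'e formula, starting from the explicit description of the local monodromy of $\mcK_{r,\lambda,\bfb}$ at $s=\infty$ provided by Lemma~\ref{lm-local-product}. To set the stage, I would compactify the $s$-fiber to $\Pp^1$ and view $\mcK_\bfb$ as extended by zero to $U\times\Pp^1$; this extension is lisse off $\{s=0\}\cup\{s=\infty\}$, because for $(r,\lambda,\bfb)\in U$ the loci $\{s(r+b_i)=0\}$ coming from the pullbacks of $\KL_k$ all coincide with $\{s=0\}$. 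Proper base change then identifies $\mcR_\bfb|U$ with $R^1\bar\pi_*(\widetilde{\mcK}_\bfb)|U$, where $\bar\pi\colon U\times\Pp^1\to U$ is the projection.

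For part (1), semicontinuity reduces lisseness on $U$ to the claim that the Swan conductors of $\mcK_{r,\lambda,\bfb}$ at $s=0$ and at $s=\infty$ are locally constant on $U$. Because $\KL_k$ is tamely ramified at $0$ by Proposition~\ref{pr-kl}~(4), each factor $f_i^*\KL_k$ or $f_{i+2}^*\KL_k^\vee$ is tame at $s=0$, hence so is the whole tensor product, and $\swan_0=0$ identically. At $s=\infty$, Lemma~\ref{lm-local-product} yields an $I(\infty)$-isomorphism between $\mcK_{r,\lambda,\bfb}$ and
\[
\bigoplus_{(\zeta_2,\zeta_3,\zeta_4)\in\mmu_k^3}\mcL_{\psi(\lambda s)}\otimes \alpha_*\mcL_{\tilde\psi}(c(\zeta)t),
\]
where $\alpha(t)=t^k$ and $c(\zeta)$ denotes the corresponding linear combination of the $(r+b_i)^{1/k}$. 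The projection formula rewrites each summand as $\alpha_*\mcL_{\psi(\lambda t^k+kc(\zeta)t)}$; since $\lambda\neq 0$, the polynomial $\lambda t^k+kc(\zeta)t$ has degree exactly $k$ in $t$, so its Swan at $t=\infty$ equals $k$, and this Swan is preserved by the tame pushforward $\alpha_*$ by \cite[1.13.2]{GKM}. Summing over the $k^3$ choices of $\zeta$ yields $\swan_\infty=k^4$, independent of $(r,\lambda,\bfb)\in U$, so Laumon's theorem gives the lisseness of $\mcR_\bfb$ on $U$.

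For parts (2) and (3), the generic rank of $\mcR_\bfb$ follows from Euler--Poincar\'e applied on $\Aa^1_s$, together with the vanishing of $R^0\pi^{(2)}_!\mcK$ and $R^2\pi^{(2)}_!\mcK$ off $\Aa^1\times\Aa^1\times\mcV^\Delta$ proved in Lemma~\ref{lm-support}. Since $\mcK_{r,\lambda,\bfb}$ is lisse of rank $k^4$ on $\Gm$, is extended by zero at $s=0$, and $\chi_c(\Gm)=0$, one has
\[
\chi_c(\Aa^1_s,\mcK_{r,\lambda,\bfb})=-\swan_0-\swan_\infty=-\swan_\infty,
\]
so the generic rank of $\mcR_\bfb$ equals $\swan_\infty$. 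For $\lambda\neq 0$ the preceding computation gives $\swan_\infty=k^4$, proving (2); for $\lambda=0$, the summands at $s=\infty$ reduce to $\alpha_*\mcL_{\psi(kc(\zeta)t)}$, each contributing either $1$ (when $c(\zeta)\neq 0$) or $0$ (when $c(\zeta)=0$) to $\swan_\infty$, whence $\swan_\infty\leq k^3$ and the generic rank of $\mcR_{\lambda=0,\bfb}$ is at most $k^3$, proving (3). Everything reduces to routine bookkeeping with Lemma~\ref{lm-local-product}, so I do not expect any serious obstacle; the only point requiring minor care is the identification of the fiber-wise Swan contributions via the projection formula, as performed above.
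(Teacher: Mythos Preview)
Your proposal is correct and follows essentially the same architecture as the paper's proof: Deligne--Laumon semicontinuity for part~(1), and the Euler--Poincar\'e formula together with Lemma~\ref{lm-support} for parts~(2) and~(3). The one substantive difference is in how you compute $\swan_\infty(\mcK_{r,\lambda,\bfb})$: you invoke Lemma~\ref{lm-local-product} to get the explicit decomposition and then use the projection formula to rewrite each summand as $\alpha_*\mcL_{\psi(\lambda t^k+kc(\zeta)t)}$, whereas the paper argues more directly from the break structure---the tensor product of Kloosterman pullbacks has all breaks $\leq 1/k$ at $\infty$, so tensoring with $\mcL_{\psi(\lambda s)}$ (break~$1$ when $\lambda\neq 0$) forces every break to equal $1$, giving $\swan_\infty=\rank=k^4$; and for $\lambda=0$ the same break bound gives $\swan_\infty\leq k^4\cdot(1/k)=k^3$. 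The paper's route is shorter and does not need the full local-monodromy formula, but your computation is perfectly valid and has the minor advantage of making the $\lambda=0$ bound slightly more explicit (identifying which triples $(\zeta_2,\zeta_3,\zeta_4)$ contribute).
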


\begin{proof} 
  First we prove that $\mcR_{\bfb}$ is lisse on $U$.
Let
$$
i\,:\, U\injecte \Aa^1\times \Gg_m
$$
and
$$
j\,:\, \Aa^2\times\Gg_m\injecte \Aa^1\times\Pp^1\times\Gg_m
$$
be the canonical open immersions, and let
$\tilde{\mcK}_{\bfb}=j_!\mcK_{\bfb}$ be the extension by $0$ of
$\mcK_{\bfb}$. Write $\tilde{\pi}$ for the projection
$(r,s,\lambda)\mapsto (r,\lambda)$ on
$\Aa^1\times\Pp^1\times\Gg_m$. Let $\pi=\tilde\pi \circ j$,
$\tilde{W}=\tilde{\pi}^{-1}(U)$ and $W=\pi^{-1}(U)$ so that $W$ is the
preimage of $\tilde{W}$ under $j$. Denote also by $\pi_W$ the
restriction of $\pi$ to $W$.
\par
We note that $\mcK_{\bfb}$ is lisse on the complement of $s=0$ in $W$,
and vanishes on the divisor $s=0$. Similarly, $\tilde{\mcK}_{\bfb}$ is
lisse on the complement of the smooth divisor
$\{s=0\}\cup \{s=\infty\}$ in $\tilde{W}$.  Moreover, we have
$$
\mcR_{\bfb}|U=R^1\pi_{W!}(\mcK_{\bfb})=R^1\overline{\pi}_{W!}(\tilde{\mcK}_{\bfb}|W),
$$
where the point is that we write the restriction of $\mcR_{\bfb}$ to $U$ as a
higher direct image of the restriction of a sheaf lisse outside a
smooth divisor.
\par
We next claim that the Swan conductor of $\tilde{\mcK}_{\bfb}$ is constant
along the two divisors $s=0$ and $s=\infty$. Indeed, recall that
$$
\mcK_{\bfb}=\sheaf{L}_{\psi(\lambda s)} \otimes\bigotimes_{i=1}^2
(f_i^*\HYPK_k\otimes f_{i+2}^*\HYPK_k^{\vee})= \sheaf{L}_{\psi(\lambda
  s)} \otimes\sheaf{G},
$$
say. Along the divisor $s=0$, the pullbacks $f_i^*\HYPK_k$ and
$f_{i+2}^*\HYPK_k^{\vee}$ are tamely ramified since the Kloosterman
sheaf $\HYPK_k$ is tamely ramified at $0$ and $s\mapsto (r+b_i)s$
fixes $0$ and $\infty$. Since $\mcL_{\psi(\lambda s)}$ is unramified
along $s=0$, we see that $\tilde{\mcK}_{\bfb}$ is tamely ramified, and in
particular has constant Swan conductor equal to zero.
\par
On the other hand, the Kloosterman sheaf $\HYPK_k$ is wildly ramified
at $\infty$ with unique break $1/k$, so the tensor product $\sheaf{G}$
above has all breaks at most $1/k$ at $\infty$ (again because $f_i$
fixes $\infty$ as a function of $s$). Since $k\geq 2$ and the single
sheaf $\sheaf{L}_{\psi(\lambda s)}$ has rank $1$ and is wildly
ramified at $\infty$ with break $1$ (recall that $\lambda\not=0$ in
this argument), the sheaf $\tilde{\mcK}_{\bfb}$ has unique break $1$ at
$\infty$. Since the rank of $\tilde{\mcK}_{\bfb}$ is $k^4$, the Swan
conductor at $s=\infty$ of $\tilde{\mcK}_{\bfb}$ is the constant $k^4$. This
establishes our claim.
\par
It follows from the above and Deligne's semicontinuity theorem,
\cite[Corollary 2.1.2]{LaumonSMF} that the sheaf $R^1\pi_{W!}(\mcK_{\bfb})$
is lisse on $U$.  As we observed, this is the same as the restriction
of $\mcR_{\bfb}$ to $U$ and hence $ \mcR_{\bfb}$ is lisse on $U$.
\par
Now we consider the rank estimates. By the propre base change theorem,
the stalk of $\mcR$ over
$x=(r,\lambda,\bfb)\in\Aa^2\times (\Aa^4\setminus \mcV^{\Delta})$ is
$H^1_c(\Aa^1_{\bFq}, \sheaf{F})$
where
$$
\sheaf{F}=\sheaf{L}_{\psi(s\lambda)}\otimes
\bigotimes_{i=1}^{2}[\times (r+b_i)]^*\HYPK_k\otimes [\times
(r+b_{i+2})]^*\HYPK_k^{\vee}))
$$
We recall from Lemma~\ref{lm-support} that the $0$-th and $2$-nd
cohomology groups of $\sheaf{F}$ vanish, so that the rank of the stalk
of $\mcR$ at $x$ is minus the Euler-Poincaré characteristic of the
sheaf whose cohomology we consider.  The Euler-Poincaré formula for a
constructible sheaf on $\Aa^1$ gives
$$
\chi(\Aa^1_{\bFq},\sheaf{F})= \rank(\sheaf{F})-\sum_{x\in
  \Pp^1}\swan_x(\sheaf{F}) -\sum_{x\in\Aa^1}\Drop_x(\sheaf{F}),
$$
where $\Drop_x(\sheaf{F})$ is the generic rank of $\sheaf{F}$ minus
the dimension of the stalk at $x$ (see,
e.g.,~\cite[p. 67]{katz-mellin} and the references there,
or~\cite[Cor. 10.2.7]{fu-book}).

% We denote
% $$
% \sheaf{F}=\sheaf{L}_{\psi(s\lambda)}\otimes
% \bigotimes_{i=1}^{2}[\times (r+b_i)]^*\HYPK_k\otimes [\times
% (r+b_{i+2})]^*\HYPK_k^{\vee}).
% $$
Since we normalized the Kloosterman sheaf $\HYPK_{k}$ to have stalk
$0$ at $0$, so does $\sheaf{F}$, and the above formula becomes
$$
\chi(\Aa^1_{\bFq},\sheaf{F})= -\sum_{x\in \Pp^1}\swan_x(\sheaf{F})
-\sum_{x\in\Gg_m}\Drop_x(\sheaf{F}).
$$
\par
(2) In the generic case $\lambda\not=0$ and $r+b_i\not=0$, the rank is
equal to $k^4$ 
% $$
% -(k^4-k^4-(k^4-1)=k^4-1,
% $$
since the sheaf $\sheaf{F}$ is then lisse on $\Gg_m$, tame at $0$, and
has unique break $1$ with multiplicity $k^4$ at infinity.
\par
(3) If $\lambda=0$ (and $\bfb$ generic), then we get generic rank
$\leq \swan_{\infty}(\sheaf{F})\leq k^3$, since $\sheaf{F}$ (for
$\lambda=0$) has all breaks $\leq 1/k$ at $\infty$ and rank $k^4$.
%\par
% $$
% -(k^3-k^3-(k^3-1))=k^3-1.
% $$
\end{proof}

%%%%%%%%%%%%%%

We now consider the local monodromy of $\mcR_{\bfb}$ in terms of the
$r$ variable for $\lambda\not=0$. First we deal with the singularity
$r=-b_i$.
%  (recall Lemma~\ref{lm-lisseness} that determines the singular
% locus).

\begin{lemma}\label{tameness}  
  On the open set where $\lambda \neq 0$, the sheaf $\mcR_{\bfb}$ has
  tame ramification around the divisors $r=-b_i$ for $1\leq i\leq 4$.
\end{lemma}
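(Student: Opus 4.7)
By the symmetry exchanging the indices, it is enough to handle the divisor $r = -b_1$. I would begin by localizing the problem: work in a Zariski open neighborhood $V$ of the generic point of $\{r = -b_1\}$ inside $\Aa^1 \times \Gg_m$ (coordinates $(r, \lambda)$) on which $\lambda \neq 0$ and $b_j \neq b_1$ for $j \neq 1$. Compactifying the $s$-direction by extending $\mcK_\bfb$ by zero from $V \times \Gg_m$ to $V \times \Pp^1$ produces a constructible sheaf $\bar\mcK_\bfb$ whose stalks vanish along both horizontal divisors $\{s = 0\}$ and $\{s = \infty\}$ (the former because $\HYPK_k$ has stalk zero at $0$ by Proposition~\ref{pr-kl}(8)). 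Letting $\bar\pi\colon V \times \Pp^1 \to V$ be the proper projection, proper base change identifies $\mcR_\bfb|_V$ with $R^1 \bar\pi_* \bar\mcK_\bfb$ (the lower and higher direct images vanishing by Lemma~\ref{lm-support}).

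The next step is to check that $\bar\mcK_\bfb$ is tamely ramified along the vertical divisor $\{r = -b_1\} \times \Pp^1$. On the open locus where $s \neq 0, \infty$, the sheaf is a tensor product of (i) the Artin--Schreier sheaf $\sheaf{L}_{\psi(\lambda s)}$, which is lisse on $V \times \Aa^1$; (ii) the factors $f_j^*\HYPK_k$ and $f_j^*\HYPK_k^\vee$ for $j \neq 1$, which are lisse on $V \times \Gg_m$ since $r + b_j$ is invertible there; and (iii) the factor $f_1^*\HYPK_k$, the pullback of $\HYPK_k$ along $(r, s) \mapsto s(r + b_1)$. Since $\HYPK_k$ is tamely ramified at $0$ (Proposition~\ref{pr-kl}(4)) and the map $f_1$ meets the divisor $\{0\}$ transversally along $\{r + b_1 = 0\} \cap \{s \neq 0\}$ (multiplicity one), Abhyankar's lemma implies that $f_1^*\HYPK_k$ is tamely ramified along the vertical divisor $\{r = -b_1\} \times \Gg_m$. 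As tame ramification is preserved under tensor products with lisse sheaves, $\bar\mcK_\bfb$ is tame along this vertical divisor.

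Finally, I would deduce tameness of $R^1 \bar\pi_* \bar\mcK_\bfb$ at $r = -b_1$ by a nearby cycles analysis. Working in the strict henselization of $V$ at the generic point $\eta$ of $\{r = -b_1\}$, the local monodromy of $R^1\bar\pi_*\bar\mcK_\bfb$ is the action of the inertia group $I_\eta$ on $H^1(\Pp^1_{\bar\kappa(\eta)}, R\Psi\bar\mcK_\bfb)$, and it suffices to show that $I_\eta$ acts tamely on $R\Psi\bar\mcK_\bfb$ stalk by stalk on the special fiber. Since $\bar\mcK_\bfb$ is tame along the vertical divisor (step two) and lisse elsewhere on $V \times \Gg_m$, the only potential source of wild inertia is the horizontal divisor $\{s = \infty\}$; but there the Swan conductor is constant in $r$ (equal to $k^4$, coming from the $\psi(\lambda s)$ factor, as in the proof of Lemma~\ref{lm-lisseness}), and wild ramification along a horizontal divisor with constant Swan conductor does not contribute to the inertia action along the base. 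Concretely, one can make this precise either via Deligne's semicontinuity theorem applied after a further compactification, or by invoking Laumon's general formula for the nearby cycles of a family wildly ramified only in horizontal directions. The main obstacle is precisely this last step: cleanly extracting tameness in the base direction from the mix of tame vertical and wild horizontal ramification in the total space. The conclusion is that $I_\eta$ acts through its tame quotient, so $\mcR_\bfb$ is tame at $r = -b_1$.
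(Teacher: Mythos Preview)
Your strategy and step 2 are fine, but step 3 has a real gap at the corner $(-b_1,\infty)$, and neither of your suggested fixes closes it. The assertion that the Swan conductor at $s=\infty$ is constant in $r$ fails precisely at $r=-b_1$: since $\HYPK_k$ has zero stalk at $0$, the factor $f_1^*\HYPK_k$ vanishes along $\{r=-b_1\}$, so the fiber $\mcK_{-b_1,\lambda,\bfb}$ is the zero sheaf and its Swan conductor at $\infty$ is $0$, not $k^4$. Deligne's semicontinuity therefore says nothing about the pushforward across $r=-b_1$. More to the point, near $(-b_1,\infty)$ the map $f_1(r,s)=s(r+b_1)$ is indeterminate, so $f_1^*\HYPK_k$ does not have a ``tame in $r$, wild in $s$'' product structure there; the wild horizontal ramification and the tame vertical ramification are genuinely mixed, and there is no off-the-shelf principle of the kind you invoke that separates them.

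The paper fills this gap with a replacement trick you are missing. After reducing to $\ell$-torsion, pull back along a tame cover $g(t,s)=(t^n-b_i,s)$ with $n$ chosen so that the unipotent local monodromy of $\HYPK_k/\varpi$ at $0$ becomes trivial. On $\{t\neq 0\}$ the factor $g^*f_i^*\HYPK_k/\varpi$ then extends to a sheaf $\HYPK_k^0$ on all of $\Aa^2$ that is lisse away from $\{s=0\}$ and, along $\{t=0\}$, is the tame unipotent sheaf in $s$. Replacing just this one factor yields a sheaf $\mcK_\bfb^0$ equal to $g^*\mcK_\bfb/\varpi$ for $t\neq 0$, lisse near $t=0$ off $s\in\{0,\infty\}$, and now with \emph{genuinely} constant Swan conductor $k^4$ at $s=\infty$ (the modified factor has breaks $<1$ for every $t$, so $\mcL_{\psi(\lambda s)}$ still dominates). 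Semicontinuity then shows $R^1\pi_!\mcK_\bfb^0$ is lisse at $t=0$; since local monodromy is determined on the punctured neighborhood where the two sheaves agree, $[t\mapsto t^n-b_i]^*\mcR_\bfb/\varpi$ has trivial monodromy at $t=0$, which is exactly tameness of $\mcR_\bfb$ at $r=-b_i$.
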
 %\marginpar{rating 7/10}

\begin{proof} 
  Let $\mathcal{O}$ be the ring of integers in a finite extension of
  $\Qq_{\ell}$ such that the sheaves $\HYPK_k$ and
  $\sheaf{L}_{\psi(\lambda s)}$ have a model over $\mathcal{O}$, in
  the sense of~\cite[Remark 1.10]{GKM}, and let $\varpi$ be a
  uniformizer of $\mcO$. Then $\mcR_{\bfb}$ has a model over
  $\mathcal{O}$ and we have
$$
\swan_{-b_i}(\mcR_{\bfb}) = \swan_{-b_i}(\mcR_{\bfb}/\varpi)
$$
for any $i$ (see, e.g.,~\cite[Remark 1.10]{GKM}). Thus we reduce to
$\ell$-torsion sheaves.
\par
We will show that the torsion sheaf $\mcR_{\bfb}/\varpi$ is trivialized
at $-b_i$ after pullback to a covering defined by adjoining $n$-th roots of $r+b_i$, for some $n$ coprime to
$q$. This implies that $\mcR_{\bfb}/\varpi$ is tame at $-b_i$, and hence
gives our claim.
\par
We fix $\lambda\not=0$ and we now view $f_i$ as a morphism
$\Aa^1\times\Aa^1\lra \Aa^1$ given by $(r,s)\mapsto s(r+b_i)$.  Over
the \'{e}tale local ring at $0$, the sheaf $\HYPK_k$ is isomorphic (by
Proposition~\ref{pr-kl} (4)) to the extension by zero of a lisse sheaf
$\mathcal U$ on $\Gg_m$ corresponding to a principal unipotent rank
$k$ representation of the tame fundamental group
$$
\lim_{\substack{\leftarrow\\(n,q)=1}}\mmu_n(\bFq)
$$
of $\Gg_m$.
% This is just because both
% these sheaves have isomorphic local monodromy at $0$ by
% Proposition~\ref{pr-kl}, (4). 
Hence $f_i^* \HYPK_k$ and $f_i^* \mathcal U$ are isomorphic after
pullback in an \'{e}tale neighborhood of the divisor $D$ with equation
$s(r+b_i)=0$ in $\Aa^2$.
\par
% i.e. after pullback to an \'{e}tale open set with a section of that
% divisor.
%% What does the last part mean? (EK)
The sheaf $\mathcal U/\varpi$ corresponds to a representation of the
monodromy group of an \'etale Kummer covering of $\Gg_m$, defined by
adjoining the $n$-th root of the coordinate for some $n$ coprime to
$q$. Therefore $f_i^* \mathcal U/\varpi$ corresponds to a covering of
$\Aa^2$, ramified over $D$, obtained by adjoining the $n$-th root of
$s(r+b_i)$. It follows that, if we adjoin the $n$-th root of $r+b_i$,
the cover defining $f_i^*\mathcal U/\varpi$ becomes isomorphic to the
cover obtained by adjoining the $n$-th root of $s$ of order coprime to
$q$.  Consider the map
$$
g\,:\, \Aa^2 \lra \Aa^2
$$
with $g(t,s)=(t^n-b_i,s)$.  Then because of this isomorphism of
covers, $g^*f_i^* \mathcal U/\varpi$ is locally isomorphic to
$g^*[(r,s)\mapsto s]^* \mathcal U/\varpi=[(t,s)\mapsto
s]^*\mathcal{U}/\varpi$. From now, on we will write
$s^*\sheaf{U}/\varpi$ for $[(t,s)\mapsto s]^*\mathcal{U}/\varpi$.

The sheaf $g^* f_i^* \HYPK_k$ is lisse on $\Aa^2$ away from the lines
$s=0$ and $t=0$. We claim that $g^* f_i^* \HYPK_k$, restricted to the
open set $t\neq 0$, may be extended to $\Aa^2$ to a sheaf $\HYPK_k'$
in such a way that $\HYPK_k'$ is lisse away from the line $s=0$, and
isomorphic to $s^* \mathcal U/\varpi$ on the line $t=0$. This is an
\'{e}tale-local condition, and may be checked in an \'{e}tale
neighborhood of the line $t=0$. In fact, since it depends only on the
restriction to the open set $t \neq 0$, it may be checked on the
complement of the line $t=0$ in an \'{e}tale neighborhood of
itself. In such a neighborhood, we have the two aforementioned
isomorphisms
$g^* f_i^* \HYPK_k /\varpi \cong g^* f_i^* \mathcal U/\varpi \cong s^*
\mathcal U/\varpi$. The existence of the desired extension is obvious
for $s^* \mathcal U/\varpi$, hence holds for $g^*f_i^*\HYPK_k$. We
next denote by $\HYPK_k^0$ the extension by zero to $\Aa^2$ of the
restriction of $\HYPK_k'$ to the complement of the line $s=0$ in
$\Aa^2$.

We have
$$
g^*\mcK_{\bfb}/\varpi= g^* \sheaf{L}_{\psi(\lambda s)}/\varpi \otimes
g^* f_i^* \HYPK_k/\varpi\otimes \bigotimes_{j\not=i} 
g^* f_j^* \HYPK_k/\varpi.
$$
Let
$$
\mcK_{\bfb}^0 =g^* \sheaf{L}_{\psi(\lambda s)}/\varpi \otimes
\HYPK^0_k\otimes \bigotimes_{j\not=i} g^* f_j^* \HYPK_k/\varpi
$$
be the same tensor product but with the $g^* f_i^* \HYPK_k/\varpi$
term replaced with $\HYPK_k^0$.
% , e.g. if $i=1$ then
% $\mcK_{\bfb}^0 = g^* \sheaf{L}_{\psi(\lambda s)}/\varpi \otimes
% \HYPK_k^0 \otimes g^*f_2^* \HYPK_k/\varpi \otimes g^*f_3^*
% \HYPK_k^\vee/\varpi \otimes g^*f_4^* \HYPK_k^\vee/\varpi$. 
Then $\mcK_{\bfb}^0$ is lisse on $\Aa^2$ away from the line $s=0$ and
the lines $t^n-b_i=-b_j$ for $j \neq i$.

The sheaf $R^1 \pi_! \mcK_{\bfb}^0$ is lisse in an étale neighborhood
of $0$, by a proof similar to the proof in Lemma \ref{lm-lisseness}
that $\mcK$ is lisse.  Indeed, $ \mcK_{\bfb}^0$ is lisse near $t=0$
away from $s=0$ and $s=\infty$, and tamely ramified at $0$, so by
Deligne's semicontinuity theorem \cite[Corollary 2.1.2]{LaumonSMF} it
suffices to check that the Swan conductor of $ \mcK_{\bfb}^0$ at
$\infty$ is constant. The three Kloosterman sheaves all have breaks at
$\infty$ strictly less than $1$, and the same is true of $\mcK^0$
because for $t\neq 0$ it is a Kloosterman sheaf and at $t=0$ it is
unipotent and tame. Thus tensoring with $\sheaf{L}_{\psi(\lambda s)}$,
all the breaks become $1$ and the Swan conductor is constant.

So the local monodromy at $t=0$ of $R^1 \pi_! \mcK_{\bfb}^0$ is
trivial. But, by construction, the sheaf $\mcK_{\bfb}^0$ is isomorphic
to $g^* \mcK_{\bfb}/\varpi$ away from $t=0$, so the local monodromy of
$$
R^1 \pi_! g^* \mcK_{\bfb}/\varpi= [t\mapsto t^n-b_i]^* R^1 \pi_!
\mcK_{\bfb}/\varpi= [t\mapsto t^n-b_i]^* \mcR_{\bfb}/\varpi
$$
around $t=0$ is also trivial.  Thus $\mcR_{\bfb}/\varpi$ has trivial
local monodromy after adjoining the $n$-th roots of the uniformizer,
and is tamely ramified, as desired.
\end{proof}

It remains to compute the local monodromy at $\infty$. For this
purpose, we will use the theory of nearby and vanishing cycles. Since
this theory is likely to be unfamiliar to analytic number theorists,
Appendix~\ref{app-nearby} gives a short introduction, with some
explanation of its relevance for our purposes.

%We will in fact only determine the wildly ramified part of this representation using Proposition~\ref{pr-irreducible-counting}. 

\begin{lemma}\label{lm-nearby} %\marginpar {rating 9/10}
  Let $\lambda\not=0$ be fixed in a field extension (possibly
  transcendental) of $\Ff_q$. Let $X$ be the blowup of
  $\Pp^1 \times \Pp^1$ at the point $(r,s)=(\infty,0)$. Consider the
  projection $X\lra\Pp^1$ given by $(r,s)\mapsto r$. Let $\sheaf{F}$
  be the extension by zero of the sheaf $\mcK_{\lambda,\bfb}$ on
  $\Aa^2$ to $X$, and let $\sheaf{G}$ be the extension by zero of
  $\mcK_{\lambda,\bfO}$ on $\Aa^2$.
\par
\emph{(1)} The nearby cycles sheaves of $\sheaf{F}$ and $\sheaf{G}$
over $r=\infty$ are locally isomorphic at all $s\not=\infty$ in
$\Pp^1$ and at each point of the exceptional divisor of $X$.
\par
\emph{(2)} The nearby cycles sheaves of $\sheaf{F}$ and $\sheaf{G}$
over $r=\infty$ have the property that the stalk of $R \Psi \sheaf{F}$
at $s=\infty$, as a representation of the wild inertia group, can be
split into summands
$$
\rho_1,\ldots,\rho_m,
$$
and the stalk of $R \Psi \sheaf{G}$ at $s=\infty$, as a representation
of the wild inertia group, can be split into summands
$$
\rho'_1,\ldots,\rho'_m,
$$
such that, for all $i$, the representations $\rho'_i$ and $\rho_i$ of
the wild inertia group are isomorphic up to order $2$
reparameterizations, in the sense of
Definition~\ref{def-reparameterization}.
\end{lemma}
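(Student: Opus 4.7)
The plan is to treat the two parts by rather different methods. For part (1), at any $x \in \pi^{-1}(\infty)$ other than $(\infty,\infty)$, the additive factor $\mcL_{\psi}(\lambda s)$ is lisse near $x$, so one can hope for an étale-local isomorphism between $\sheaf{F}$ and $\sheaf{G}$; the statement about nearby cycles is then immediate. For part (2), where the Artin-Schreier factor is itself wildly ramified at $s = \infty$, this direct approach fails, and the plan is to invoke Lemma~\ref{lm-local-product} to decompose the wild local monodromy of both sheaves into explicitly given summands that can then be compared one at a time.

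For part (1), I split into two charts of the blow-up. In the chart $(s,w)$ with $1/r = sw$, a point $(s_0, 0)$ on the strict transform with $s_0 \in \Gg_m$ has both $sr = 1/w$ and $s(r+b_i) = 1/w + s_0 b_i$ sending the special point to $\infty \in \Pp^1$, so Lemma~\ref{lm-kloosterman-invariance}(2) with $a = 1/w$ and $b = s_0 b_i$ gives $(sr)^*\HYPK_k \simeq (s(r+b_i))^*\HYPK_k$ locally. In the chart $(u,t)$ with $s = ut$ covering the exceptional divisor, $s(r+b_i) = t(1+ub_i)$ while $sr = t$; with $u$ in the maximal ideal, Lemma~\ref{lm-kloosterman-invariance}(1) applied with $a = t$ and $b = u b_i$ gives the analogous identification (including at the intersection point $(0,0)$, where $a = t$ still lies in $R - \{0\}$). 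Tensoring these isomorphisms for $i = 1, \ldots, 4$ (with the appropriate duals) together with the common factor $\mcL_{\psi}(\lambda s)$ yields $\sheaf{F} \simeq \sheaf{G}$ on an étale neighborhood of $x$, whence the local identification of $R\Psi\sheaf{F}$ and $R\Psi\sheaf{G}$.

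For part (2), I work in a formal punctured neighborhood of $r = \infty$ pulled back along the $k$-fold Kummer cover $r = r'^k$; there both sheaves acquire the functions $r_i$ with $r_i^k = r + b_i$ via the convergent expansion $r_i = r'(1 + b_i/r'^k)^{1/k}$ for $\sheaf{F}$, and $r_i = r'$ for $\sheaf{G}$. Lemma~\ref{lm-local-product} then decomposes the local monodromy along $\{s=\infty\}$ as a direct sum indexed by $\uple{\zeta} = (\zeta_2,\zeta_3,\zeta_4) \in \mmu_k^3$, with summands $\mcL_{\psi}(\lambda s) \otimes \mcL_{\tilde\psi}(c^{\uple{\zeta}}(r') s^{1/k})$; here $c^{\uple{\zeta}}_{\bfb}(r') = r_1 + \zeta_2 r_2 - \zeta_3 r_3 - \zeta_4 r_4$ and $c^{\uple{\zeta}}_{\bfO}(r') = r' A_{\uple{\zeta}}$ with $A_{\uple{\zeta}} := 1 + \zeta_2 - \zeta_3 - \zeta_4$. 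For each $\uple{\zeta}$ with $A_{\uple{\zeta}} \neq 0$, I would exhibit the explicit reparameterization $\sigma_{\uple{\zeta}} \colon r \mapsto r - B_{\uple{\zeta}}/A_{\uple{\zeta}}$, where $B_{\uple{\zeta}} = b_1 + \zeta_2 b_2 - \zeta_3 b_3 - \zeta_4 b_4$ (the order-$2$ property being immediate since $1/(r-c) \equiv 1/r$ modulo $1/r^2$). A direct Taylor expansion in $1/r$ shows that the leading $r' A_{\uple{\zeta}}$-term is preserved and the next-to-leading correction cancels by the definition of $B_{\uple{\zeta}}$, so that $\sigma_{\uple{\zeta}}^*\rho_{\uple{\zeta}}$ and $\rho'_{\uple{\zeta}}$ differ by a discrepancy of order $O(r'^{1/k - 2k+1})$.

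The principal obstacle is twofold. First, one must treat the degenerate indices with $A_{\uple{\zeta}} = 0$: there $c^{\uple{\zeta}}_{\bfO}$ is identically zero, while $c^{\uple{\zeta}}_{\bfb}$ is generically a nonzero function of lower order in $r'$. I expect these degenerate summands to contribute only to the tame part of both stalks, so that the matching at the level of \emph{wild} inertia is automatic, but the verification requires careful bookkeeping of how the direct sum descends through the Kummer cover and of what ``isomorphic up to order $2$ reparameterization'' means when one of the two representations is tame. Second, turning the formal asymptotic matching in the non-degenerate cases into a genuine isomorphism of wild inertia representations — rather than merely agreement of leading coefficients — will require tracking the wild filtration of both sides and arguing that the lower-order remainders act trivially on the relevant wild inertia subquotients. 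This last step, which lies at the heart of the statement, is expected to be where the invariance of the wild local monodromy of hypergeometric-type sheaves under order-$2$ reparameterizations (in the spirit of Lemma~\ref{lm-hypergeo-2}(1)) plays the decisive role.
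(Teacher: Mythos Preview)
Your part~(1) is essentially the paper's argument: reduce to showing $[s(r+b_i)]^*\HYPK_k\simeq [sr]^*\HYPK_k$ étale-locally and invoke Lemma~\ref{lm-kloosterman-invariance}. One omission: your second chart $(u,t)$ (with $u=1/r$, $t=rs$) only covers the exceptional divisor where $t$ is finite; the point $t=\infty$ of the exceptional divisor (the intersection with the strict transform) is not the point you call ``$(0,0)$'', and neither chart as you describe them covers it. There $1/(rs)$ lies in the maximal ideal, so Lemma~\ref{lm-kloosterman-invariance}(2) with $a=rs$, $b=b_is$ applies again; this is exactly how the paper handles it.

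For part~(2) your framework (pass to the $k$-th root cover, apply Lemma~\ref{lm-local-product}, compare summands indexed by $\uple{\zeta}$) is correct, but the execution has a real gap in the non-degenerate case. Your additive shift $r\mapsto r-B_{\uple{\zeta}}/A_{\uple{\zeta}}$ only cancels the first subleading term and leaves you needing to control an infinite tail; the appeal to Lemma~\ref{lm-hypergeo-2}(1) is a red herring, since that concerns $\mcH_{k-1}$ (a specialization of $\mcR$), not the summands of $\mcK$. The paper's key observation is that one should choose the reparameterization \emph{multiplicatively} so as to make the match \emph{exact}: write
\[
y_1+\zeta_2y_2-\zeta_3y_3-\zeta_4y_4=A_{\uple{\zeta}}\,d,\qquad d\equiv 1\ (\mathrm{mod}\ \mathfrak{m}_1),
\]
set $\mu=\rho d$ and $u=\mu^k=rd^k$. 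Then $r\mapsto u$ is an order~$2$ reparameterization (since $d\equiv 1$ forces $1/u\equiv 1/r\bmod(1/r)^2$), and in the variable $u$ the summand becomes literally $\mcL_{\psi(\lambda s)}\otimes\mcL_{\tilde\psi}(A_{\uple{\zeta}}\mu\, s^{1/k})$, i.e.\ the $\bfO$ summand. No asymptotic matching is needed.

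For the degenerate case $A_{\uple{\zeta}}=0$, your intuition is right (these summands contribute nothing), but it requires proof. The point is that when $A_{\uple{\zeta}}=0$ the quantity $y_1+\zeta_2y_2-\zeta_3y_3-\zeta_4y_4$ lies in $\mathfrak{m}_1$, so $\rho(y_1+\zeta_2y_2-\zeta_3y_3-\zeta_4y_4)$ extends to a regular function in $R_0$. Hence the summand extends to a sheaf lisse in an étale neighborhood of $(\infty,\infty)$ away from $s=\infty$, and its Swan conductor along $s=\infty$ is constant (all breaks are forced to $1$ by the $\mcL_{\psi(\lambda s)}$ factor). Deligne's semicontinuity theorem then gives vanishing of $R\Phi$ at $(\infty,\infty)$ for every~$\bfb$, so these summands drop out of the comparison.
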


\begin{remark}
  We use the blowup $X$ instead of $\Pp^1\times\Pp^1$ because the
  argument below would not apply to $\Pp^1\times\Pp^1$: for
  $(r,s)=(\infty,0)$, the function $1/(rs)$ does not belong to the
  maximal ideal. See, e.g.,~\cite[p. 28--29]{hartshorne} for a quick
  description of blowups.
\end{remark}

\begin{proof} 
  (1) Since
\begin{gather*}
  \mcK_{\lambda,\bfb}=\mcL_{\psi(\lambda
    s)}\otimes\bigotimes_{i=1}^{2} \Bigl([s\mapsto (r+b_i)s]^*\HYPK_k
  \Bigr) \otimes \Bigl([s\mapsto (r+b_{i+2})s]^*\HYPK_k^\vee
  \Bigr),\\
  \mcK_{\lambda,\bfO}=\mcL_{\psi(\lambda
    s)}\otimes\bigotimes_{i=1}^{2} \Bigl([s\mapsto rs]^*\HYPK_k\Bigr)
  \otimes\Bigl( [s\mapsto rs]^*\HYPK_k^\vee \Bigr),
\end{gather*}
on $\Aa^2$, the \'etale-local nature of nearby cycles shows that it is
enough to prove that, for $1\leq i\leq 4$, the sheaf
$[s\mapsto (r+b_i)s]^* \HYPK_k$ is locally isomorphic to
$[s\mapsto rs]^* \HYPK_k$ on $\Aa^1-\{0\}\subset \Pp^1$ (with
coordinate $s$) and on the exceptional divisor $D$ of the blowup.
% In fact, we will show this for all $(r,s)\not=(\infty,\infty)$.
\par
For points not on the exceptional divisor, we apply
Lemma~\ref{lm-kloosterman-invariance} (2) to the strict henselization
$R$ of the local ring at $(\infty,s)\in X$, with $a=rs$ and $b= b_is$,
where $r$ and $s$ are now viewed as elements of the field of fractions
of $R$.  Note that $r^{-1}$ belongs then to the maximal ideal
$\mathfrak{m}$ of $R$ (since we are considering the situation at
$r=\infty$) and $s$ is a unit (since we are outside the exceptional
divisor), hence $a^{-1}$ also belongs to $\mathfrak{m}$. Moreover $b\in R$,
and therefore we obtain
$$
(a+b)^*\HYPK_k\simeq a^*\HYPK_k,
$$
which is the desired conclusion.
\par
The exceptional divisor $D$ is isomorphic to $\Pp^1$ by the map
$(r,s)\mapsto s/r^{-1}=rs$. %  and we may take the
% isomorphism to be given by $sr=s/r^{-1}$, the ratio of two generators
% of the maximal ideal of the blown-up point.
Hence, for all points $x$ on $D$ except the point mapping to $\infty$
under this isomorphism, the function $rs$ is a function in the local
ring at $x$, and we may apply Lemma~\ref{lm-kloosterman-invariance}
(1) to the strict henselization $R$ of the local ring at that point,
with $a=rs$ and $b=b_i/r$. The function $1/(rs)$ vanishes at the point
mapping to $\infty$, thus is in the maximal ideal, so we may again use
Lemma~\ref{lm-kloosterman-invariance} (2) with $a=rs$ and $b=b_is$.
\par
(2) We denote again by $R$ the strict henselization of the local ring
at $(\infty,\infty)\in\Pp^1\times\Pp^1$ and by $\mathfrak{m}$ its
maximal ideal.  We also denote by $R_1$ the strict henselization of
the local ring at $\infty$ of $\Pp^1$ (with coordinate $r$), and by
$\mathfrak{m}_1$ its maximal ideal. Then $1/r$ is a uniformizer or
$R_1$. Let $R_0$ be the extension of $R_1$ generated by a $k$-th root
$1/\rho$ of $1/r$. Let $U = \Spec R_0[\rho]$. Since
$1+b_i/r\equiv 1\mods{\mathfrak{m}}$, there exists
$y_i\in R_1\subset R$ with $y_i^k=1+b_i/r$ and
$y_i\equiv 1\mods{\mathfrak{m}_1}$.  We can apply
Lemma~\ref{lm-local-product} to $U$, where $f$ is the projection to
$\Spec R_1-\{\infty\}$ composed with the inclusion
$\Spec R_1-\{\infty\} \to \Aa^1-\{-b_1,\dots,b_4\}$ and
$r_i = \rho y_i$. We observe that $r_1r_2r_3r_4=\rho^4 y_1y_2y_3y_4$
is a perfect square, as $y_1,y_2,y_3,y_4$ are all units in $R_1$,
hence squares in $R_1$ and thus squares in $R_0$.
% this cover is isomorphic to that obtained by taking
% \par
% Let $\pi$ be the covering
% $$
% \pi\colon \Spec \Fq [r,s,t]/(t^k=rs)\to \Aa^2.
% $$
% Note that since $(r+b_i)/(r+b_j)$ is a unit for any $i$ and $j$, so is
% $r_i/r_j$ a unit, hence $r_1r_2r_3r_4$ is a perfect square times a
% constant.  
Hence, by Lemma~\ref{lm-local-product}, we have an
isomorphism of local monodromy representations
\begin{multline*}
[f\times\mathrm{Id}]^*\Bigl(\mcL_{\psi(\lambda
  s)}\otimes\bigotimes_{i=1}^{2} f_i^* \HYPK_k \otimes f_{i+2}^*
\HYPK_k^\vee\Bigr)\\
 \simeq \bigoplus_{ \zeta_2,\zeta_3, \zeta_4 \in
  \mmu_k}\mcL_{\psi(\lambda s)} \otimes \mcL_{\tilde{\psi}} \Bigl(
s^{1/k} \rho \Bigl( y_1+\zeta_2 y_2-\zeta_3
y_3-\zeta_4y_4\Bigr)\Bigr),
\end{multline*}
where $\tilde{\psi}(x)=\psi(kx)$ as before.
  
The nearby cycles is preserved by this pullback to a $k$-th power
covering, as is the action of the wild inertia subgroup (because the
action of the full inertia group is restricted to the inertia group of
the covering, which contains the wild inertia group).

Since the nearby cycle functor is additive, we have a local
isomorphism
$$
[f\times\mathrm{Id}]^*R\Psi \mcK_{\lambda,\bfb} \simeq \bigoplus_{
  \zeta_2,\zeta_3, \zeta_4 \in \mmu_k} R\Psi\Bigl( \mcL_{\psi(\lambda
  s)} \otimes \mcL_{\tilde{\psi}} \Bigl( s^{1/k} \rho \Bigl(
y_1+\zeta_2 y_2-\zeta_3 y_3-\zeta_4y_4\Bigr)\Bigr) \Bigr)
$$
and we handle each term in the sum separately.  We will show that, for
each $(\zeta_2, \zeta_3,\zeta_4)$, either the corresponding component
has no nearby cycles for \emph{any} $\bfb\in\Aa^4$ (not only for
$\bfb\not\in\mcV^{\Delta}$), or that its nearby cycles, with the
action of the wild inertia group, are independent of $\bfb\in\Aa^4$,
up to reparameterizations of order $2$. We consider two cases.
\par
\textbf{Case 1.} Assume that $1+\zeta_2=\zeta_3 + \zeta_4$.
\par
In that case, the element
$$ 
y_1 + \zeta_2 y_2 - \zeta_3 y_3 -\zeta_4 y_4
$$
of $R$ belongs to the maximal ideal. 
% We have
% to deal with
% $$
% \mcL_{\psi(\lambda s)}\otimes  \mcL_{\tilde{\psi}} \left( s^{1/k}
%   r^{1/k} \left( y_1 + \zeta_2 y_2 - \zeta_3 y_3 - \zeta_4 y_4
%   \right)\right).
% $$
% % where the covering $\pi$ defined by
% adjoining a $k$-th root of $rs$
% has been replaced with the, now isomorphic, covering defined by
% adjoining a $k$-th root $s^{1/k}$ of $s$. 
Since $\rho^{-1}$ is a uniformizer of $R_0$, the element
$$
\rho (y_1 + \zeta_2 y_2 - \zeta_3 y_3 - \zeta_4 y_4 )
$$
belongs to $R_0$. Thus the sheaves
$$
\mcL_{\psi(\lambda s)},\quad\quad
\mcL_{\tilde{\psi}} \left( s^{1/k} \rho \left(
    y_1 + \zeta_2 y_2 - \zeta_3 y_3 - \zeta_4 y_4 \right)\right)
$$
both extend to lisse sheaves in an \'etale neighborhood of
$(\infty,\infty)$ away from the line $s=\infty$. 
\par
To check that their tensor product has no vanishing cycles, it
suffices (by Deligne's semicontinuity theorem once more \cite[Théorème
5.1.1]{LaumonSMF}) to check that the Swan conductor is constant. But
the breaks at infinity (in terms of $s$) of
$$
\mcL_{\tilde{\psi}} \left( s^{1/k} \rho \left( y_1 + \zeta_2 y_2 -
    \zeta_3 y_3 - \zeta_4 y_4 \right)\right)
$$
are all $\leq 1/k$, while $\mcL_{\psi( \lambda s)}$ has break $1$, so
the tensor product has all breaks equal to $1$, and we are done.
\par
\medskip
\par
\par
\textbf{Case 2.} Assume that $1 + \zeta_2 \not= \zeta_3 + \zeta_4$.
Then we have
$$
y_1+y_2\zeta_2-y_3\zeta_3-y_4\zeta_4= (1+\zeta_2-\zeta_3-\zeta_4)d
$$
where $d\in R_1$ satisfies $d\equiv 1\mods{\mathfrak{m}_1}$.
Let $\mu=\rho d$ and $u = rd^k = \mu^k$. 
Then we have
$$
\rho (y_1+y_2\zeta_2-y_3\zeta_3-y_4\zeta_4)=
\mu (1+\zeta_2-\zeta_3-\zeta_4)
$$
So, after pulling back to $U$ (which is also the cover defined by
adjoining $\mu$), we are dealing with the sheaf
$$
\mcL_{\psi(\lambda s)}\otimes
\mcL_{\tilde{\psi}}\Bigl(s^{1/k} \mu (1+\zeta_2-\zeta_3-\zeta_4)\Bigr).
$$
\par
The wild inertia action on the nearby cycles of this sheaf, in terms
of the variable $u$, can be computed on the pullback to the cover
defined by $\mu$ with $\mu^k=u$, and thus is independent of
$\bfb\in\Aa^4$, because this formula for the pullback is independent
of $\bfb$ and the cover is also independent of $\bfb$.

Since $1/r$ and $1/u$ are uniformizers of $R_1$, there is a unique
automorphism $\sigma$ of $R_1$ sending $r$ to $u$. Since
$d\equiv 1\mods{\mathfrak{m}_1}$, it follows that
$$
\frac{1}{u}\equiv \frac{1}{r}\mods{(1/r)^2},
$$
and hence $\sigma$ is a reparameterization of order $2$ (see
Definition~\ref{def-reparameterization}). This is the desired result.
\end{proof}

We will describe the wild part of the local monodromy at $r=\infty$ of
$\mcR_{\lambda,\bfb}$ using the following data.

\begin{definition} 
\label{def-sk}
Let $k\geq 2$ and let $q$ be a prime with $q\nmid k$.  We denote by
$S_k$ the multiset of \emph{non-zero} elements of $\bFq$ of the form
$$
( 1 + \zeta_2 -\zeta_3 - \zeta_4)^k
$$
where $\zeta_2$, $\zeta_3$ and $\zeta_4$ range over $\mmu_k(\bFq)$.
\end{definition}

We first use this definition to treat the local monodromy for
$\mcR_{\lambda,\bfO}$. 

\begin{lemma}\label{lm-local-fourier}%  \marginpar{rating 9/10}
  Let $\lambda\not=0$ be fixed in a field extension (possibly
  transcendental) of $\Ff_q$. The local monodromy representation
  of $\mcR_{\lambda,\bfO}$ at $r=\infty$ is isomorphic to that of the
  sheaf
$$
\bigoplus_{\alpha \in S_k} [\times \alpha\lambda^{-1}]^* \mcH_{k-1},
$$
where $\mcH_{k-1}$ is the sheaf defined in
Definition~\ref{def-hypergeo}, plus a tamely ramified representation.
\end{lemma}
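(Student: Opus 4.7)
The strategy is to identify $\mcR_{\lambda,\bfO}$ with a pullback of a global Fourier transform, then apply Laumon's theory of local Fourier transforms. Since $\bfb = \bfO$ forces $f_i(r,s) = rs$ for every $i$, we have $\mcK_{\lambda,\bfO} = \mcL_{\psi(\lambda s)} \otimes T^*\mcG$ where $T\colon (r,s) \mapsto rs$ and $\mcG = \HYPK_k^{\otimes 2} \otimes (\HYPK_k^\vee)^{\otimes 2}$. On the open set $r \neq 0$, the change of coordinates $u = rs$ turns this sheaf into $\mcL_{\psi(\lambda u/r)} \otimes \mcG$ (viewed in the $u$-variable), and taking $R^1\pi_!$ along the $u$-direction identifies $\mcR_{\lambda,\bfO}|_{\Gg_m}$, up to a Tate twist, with $[r \mapsto \lambda/r]^* \mathrm{FT}_\psi \mcG$.

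Since the involution $r \mapsto \lambda/r$ exchanges neighborhoods of $r = \infty$ and $\mu = 0$, the local monodromy at $r = \infty$ of $\mcR_{\lambda,\bfO}$ is determined by the local monodromy of $\mathrm{FT}_\psi \mcG$ at $\mu = 0$. By Laumon's theory, the wild part of the latter is $\mathrm{FT}_\psi^{(\infty,0)}(\mcG|_{s=\infty})$, the contribution from the tame singularity of $\mcG$ at $s=0$ being itself tame. Lemma~\ref{lm-local-product} (applied with $r$ a scalar, $\lambda = 0$, $\bfb = \bfO$) yields
$$
\mcG|_{s=\infty} \simeq \bigoplus_{\zeta_2,\zeta_3,\zeta_4 \in \mmu_k} \mcL_{\tilde\psi}\bigl( (1+\zeta_2-\zeta_3-\zeta_4)\, s^{1/k} \bigr).
$$
Summands with $1+\zeta_2-\zeta_3-\zeta_4 = 0$ are tame; for the others, writing $c = 1+\zeta_2-\zeta_3-\zeta_4$, the identity $\mcL_{\tilde\psi}(cs^{1/k}) = [\times c^k]^* \mcL_{\tilde\psi}(s^{1/k})$ together with the scaling relation $\mathrm{FT}_\psi^{(\infty,0)} \circ [\times a]^* = [\times a^{-1}]^* \circ \mathrm{FT}_\psi^{(\infty,0)}$, combined with Definition~\ref{def-sk} of $S_k$, give
$$
\mathrm{FT}_\psi^{(\infty,0)}(\mcG|_{s=\infty}) \simeq \bigoplus_{\alpha \in S_k} [\times \alpha^{-1}]^* \mathrm{FT}_\psi^{(\infty,0)}\bigl( \mcL_{\tilde\psi}(s^{1/k}) \bigr)
$$
modulo tame terms. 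By Definition~\ref{def-hypergeo} and the substitution $\xi = \mu^{-1}$ built into $\mcH_{k-1}$, the factor $\mathrm{FT}_\psi^{(\infty,0)}( \mcL_{\tilde\psi}(s^{1/k}))$ at $\mu = 0$ is precisely the local monodromy of $\mcH_{k-1}$ at $\xi = \infty$.

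Tracking the pullback by $r \mapsto \mu = \lambda/r$ together with the inversion $\xi = \mu^{-1}$, each summand $[\times \alpha^{-1}]^*$ at $\mu = 0$ becomes $[\times \alpha\lambda^{-1}]^* \mcH_{k-1}$ near $r = \infty$, giving the claimed formula. The main obstacle is careful bookkeeping: keeping track of the Tate twists produced by $R^1\pi_!$ and Fourier transform, isolating the tame corrections at each step, and verifying that the scaling-intertwining identity produces exactly the multiset $S_k$ with the correct parameterization $\alpha \mapsto [\times \alpha\lambda^{-1}]^*$. Once these verifications are in place, the lemma follows by stringing together standard properties of Laumon's local Fourier transforms.
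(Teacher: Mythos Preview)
Your proposal is correct and follows essentially the same route as the paper: the change of variable $(r,s)\mapsto (\lambda/r,\,rs)$ identifies $\mcR_{\lambda,\bfO}$ (on $\Gg_m$) with the pullback by $r\mapsto \lambda/r$ of the Fourier transform of $\mcG=\HYPK_k^{\otimes 2}\otimes(\HYPK_k^{\vee})^{\otimes 2}$, after which Laumon's local Fourier transform $\ft_\psi\mathrm{loc}(\infty,0)$ together with the decomposition of $\mcG|_{s=\infty}$ from Lemma~\ref{lm-local-product} and the scaling relation give the claimed direct sum of multiplicative translates of $\mcH_{k-1}$ (the paper makes the quotient by the stalk at $0$ explicit rather than speaking of ``tame corrections,'' but this is the same point). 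Your bookkeeping concern about Tate twists is harmless here, since Tate twists act trivially on the inertia group and do not affect the isomorphism class of the local monodromy representation.
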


The meaning of the direct sum over the multiset $S_k$ is 
$$
\bigoplus_{\substack{\zeta_2,\zeta_3,\zeta_4\in
    \mmu_k\\1+\zeta_2-\zeta_3-\zeta_4\not=0}} [\times
((1+\zeta_2-\zeta_3-\zeta_4)^k\lambda)^{-1}]^* \mcH_{k-1},
$$
and similarly below.

\begin{proof} 
  Note that every representation of the inertia group is a sum of a
  wildly ramified representation and a tamely ramified representation,
  as the wild part is a $q$-group, so has semisimple $\ell$-adic
  representation theory, hence every representation of the wild inertia
  group splits canonically into trivial and nontrivial parts. Thus,
  because $\mcH_{k-1}$ is totally wild at $\infty$, we concern
  ourselves only with the wild summand.

The change of variable
$$
(r,s)\mapsto (\lambda/r,rs)
$$
is an isomorphism $\Gg_m\times \Aa^1\lra \Gg_m\times\Aa^1$ (with
inverse $(\xi,x)\mapsto (\lambda/\xi,x\xi/\lambda)$). In terms of the
variables $(\xi,x)$, the sheaf $\mcR_{\lambda,\bfO}$ becomes the
Fourier transform with respect to $\psi$ of the sheaf
$$
\sheaf{F}=\bigotimes_{i=1}^2 \HYPK_k \otimes \overline{\HYPK_k},
$$
on $\Aa^1$ with coordinate $x$, reflecting the trace function identity
$$
\sum_{s \in \Fq} \psi(\lambda s)\prod_{i=1}^2 \Kl_k( rs)
\overline{\Kl_k(rs)} = \sum_{ x \in \Fq} \psi ( x \xi) \prod_{i=1}^2
\Kl_k(x) \overline{\Kl_k(x)}.
$$
\par
We now need to compute the local monodromy at $\xi=0$ of this Fourier
transform, which we can do using Laumon's local Fourier transform
functors. Laumon's results (see, e.g.,~\cite[Th. 7.4.3,
Cor. 7.4.3.1]{ESDE}) give an isomorphism
$$
\mcR_{\lambda,\bfO}/ (\mcR_{\lambda,\bfO})_0\simeq
\ft_{\psi}\text{loc}(\infty,0)
%\mcF_{\psi}^{(\infty,0)}
(\sheaf{F}(\infty))
$$
of representations of the inertia group at $0$, where
$(\mcR_{\lambda,\bfO})_0$ is the stalk at $0$ and $\sheaf{F}(\infty)$
is the local monodromy representation of $\sheaf{F}$ at
$\infty$. Since the stalk at $0$ is a trivial representation of the
inertia group, this implies that the wild summand of the local monodromy
is the same as that of
$\ft_{\psi}\text{loc}(\infty,0)(\sheaf{F}(\infty))$.
\par
Using Lemma~\ref{lm-kl-infty} as in Lemma \ref{lm-local-product}, the
local monodromy at $\infty$ of $\sheaf{F}$ is isomorphic to that of
$$
\bigoplus_{\zeta_2,\zeta_3,\zeta_4 \in \mmu_k} \mcL_{\tilde{\psi}}
\left( x^{1/k}(1+ \zeta_2 - \zeta_3 - \zeta_4) \right)=
\bigoplus_{\zeta_2,\zeta_3,\zeta_4 \in \mmu_k} \mcL_{\tilde{\psi}}
\Bigl(\Bigl( (1+ \zeta_2 - \zeta_3 - \zeta_4)^k x\Bigr)^{1/k} \Bigr)
$$
where $\tilde{\psi}(x)=\psi(kx)$.  All triples
$(\zeta_2,\zeta_3,\zeta_4)$ with $1+ \zeta_2 - \zeta_3 - \zeta_4=0$
give tamely ramified local monodromy, whose local Fourier transform at
$0$ is also tamely ramified (see, e.g.,~\cite[Th. 7.4.4 (3)]{ESDE}),
so do not contribute to the wild part of the local monodromy.
\par
Otherwise, if $\alpha=(1+ \zeta_2 - \zeta_3 - \zeta_4)^k\not=0$ is an
element of $S_k$, then we have the following isomorphisms of local
monodromy representations at $0$ in $\Aa^1$ with (Fourier) coordinate
$\xi$, using the definition of the sheaf $\mcH_{k-1}$:
\begin{align*}
  \ft_{\psi}\text{loc}(\infty,0)
  (\mcL_{\tilde{\psi}}
  ( (\alpha x)^{1/k} ))
  &=
    [\times \alpha^{-1}]^*
    R\Phi_{\eta_0}\ft_\psi
    (\mcL_{\tilde{\psi}}(x^{1/k}))\\
  &\simeq [\times \alpha^{-1}]^*
    R\Phi_{\eta_0}([\xi\mapsto \xi^{-1}]^*\mcH_{k-1})\\
  &\simeq [\times \alpha^{-1}]^*
    [\xi\mapsto \xi^{-1}]^*\mcH_{k-1,\eta_\infty}\\
  &\simeq [\xi\mapsto (\alpha/\xi)]^*\mcH_{k-1,\eta_\infty}.
\end{align*}
(It is important to note that when composing pullbacks, one applies
the leftmost functions first, since this is the opposite order from
the usual composition of functions, where the rightmost is applied
first.  So $\xi$ is sent to $\alpha^{-1} \xi$ which is sent to
$(\alpha^{-1}\xi)^{-1}= \alpha/\xi$.) Since $\xi=\lambda/r$, this
concludes the proof.
\end{proof}

We can finally conclude:

\begin{cor}\label{cor-hardest-local-monodromy}% \marginpar{rating 8/10}
  Let $\lambda\not=0$ be fixed in a field extension (possibly
  transcendental) of $\Ff_q$. The wild inertia representation of
  $\mcR_{\lambda,\bfb}$ at $r=\infty$ is the same as that of the sheaf
  $$
  \bigoplus_{\alpha \in S_k} [\times \alpha\lambda^{-1}]^* \mcH_{k-1}.
$$
plus a trivial representation.
\end{cor}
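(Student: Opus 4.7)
The strategy is to bootstrap Lemma \ref{lm-local-fourier}, which handles the case $\bfb=\bfO$, to arbitrary $\bfb$ via the comparison of nearby cycles provided by Lemma \ref{lm-nearby}.

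First, I would recast the local monodromy of $\mcR_{\lambda,\bfb}$ at $r=\infty$ in terms of nearby cycles on a proper model. Letting $p\colon X\to\Pp^1$ be the projection $(r,s)\mapsto r$ on the blowup $X$ of Lemma \ref{lm-nearby} (which is proper since $X$ is proper), and letting $\sheaf{F}$, $\sheaf{G}$ be the extensions by zero of $\mcK_{\lambda,\bfb}$, $\mcK_{\lambda,\bfO}$ to $X$ as in that lemma, proper base change for nearby cycles identifies the local monodromy of $R^1p_!\sheaf{F}=\mcR_{\lambda,\bfb}$ at $r=\infty$ with $H^1(X_\infty,R\Psi\sheaf{F})$, where $X_\infty$ is the fiber of $p$ at $r=\infty$, and similarly for $\mcR_{\lambda,\bfO}$ and $\sheaf{G}$.

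Next, by Lemma \ref{lm-nearby} (1), the sheaves $R\Psi\sheaf{F}$ and $R\Psi\sheaf{G}$ are locally isomorphic on $U:=X_\infty-\{s=\infty\}$. Applying the open-closed long exact sequence associated to the decomposition $X_\infty=U\cup\{s=\infty\}$, the contributions of $U$ to the two cohomology groups match identically, so the wild inertia representations of $\mcR_{\lambda,\bfb}$ and $\mcR_{\lambda,\bfO}$ at $r=\infty$ can differ only through the stalks of the nearby cycles at $s=\infty$. By Lemma \ref{lm-nearby} (2), these stalks restricted to the wild inertia decompose into paired summands $\rho_i$, $\rho'_i$ that are related by reparameterizations of order $2$ in the variable $r$ at $\infty$.

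The final step is to invoke Lemma \ref{lm-local-fourier}, which identifies the wild part of the local monodromy of $\mcR_{\lambda,\bfO}$ at $r=\infty$ as $\bigoplus_{\alpha\in S_k}[\times\alpha\lambda^{-1}]^*\mcH_{k-1}$, together with Lemma \ref{lm-hypergeo-2} (1), which says that the local monodromy of $\mcH_{k-1}$ at $\infty$ is invariant under order-$2$ reparameterizations. A short check verifies that an order-$2$ reparameterization of $r$ at $\infty$ induces, through the linear change $r\mapsto\alpha\lambda^{-1}r$, an order-$2$ reparameterization at $\infty$ of the coordinate of $\mcH_{k-1}$; hence each summand $[\times\alpha\lambda^{-1}]^*\mcH_{k-1}$ inherits the same invariance. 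This absorbs the order-$2$ ambiguity from the previous step and yields that the wild inertia representation of $\mcR_{\lambda,\bfb}$ at $r=\infty$ coincides with that of $\bigoplus_{\alpha\in S_k}[\times\alpha\lambda^{-1}]^*\mcH_{k-1}$, plus a trivial wild inertia representation coming from the tame part of the full local monodromy (accounting for the rank difference, since $\mcR_{\lambda,\bfb}$ has generic rank $k^4$).

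The main obstacle is executing cleanly the ``excision'' step: one must verify that, at the level of wild inertia representations, replacing the stalk of $R\Psi\sheaf{F}$ at $s=\infty$ by an order-$2$ reparameterization variant leaves the resulting wild inertia representation on $H^1(X_\infty,R\Psi\sheaf{F})$ unchanged. Lemma \ref{lm-hypergeo-2} (1) is precisely the tool that removes this ambiguity, since the pieces $\mcH_{k-1}$ at $\infty$ were designed to be self-invariant under such reparameterizations.
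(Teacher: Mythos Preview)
Your overall architecture is right, but there is a genuine gap in the excision step. You write that because $R\Psi\sheaf{F}$ and $R\Psi\sheaf{G}$ are locally isomorphic on $U=X_\infty-\{s=\infty\}$, ``the contributions of $U$ to the two cohomology groups match identically.'' This does not follow. Lemma~\ref{lm-nearby}(1) only gives you isomorphisms of stalks (as inertia representations) at each point of $U$; it does not give a global isomorphism of the complexes $R\Psi\sheaf{F}|_U$ and $R\Psi\sheaf{G}|_U$ as sheaves with inertia action. Two sheaves that are étale-locally isomorphic at every point can have non-isomorphic cohomology, so you cannot conclude that $H^*_c(U,R\Psi\sheaf{F})$ and $H^*_c(U,R\Psi\sheaf{G})$ agree, even as wild-inertia representations.

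The paper closes this gap with an additional sub-lemma: it shows, by a scaling-symmetry argument specific to $\bfb=\bfO$ (using the automorphisms $s_\alpha\colon(r,s)\mapsto(\alpha r,\alpha^{-1}s)$ under which $\mcK_{\lambda,\bfO}$ is invariant up to the $\mcL_{\psi(\lambda s)}$ factor), that the wild inertia group acts \emph{trivially} on $R\Psi\sheaf{G}$ away from $(\infty,\infty)$. Triviality of the wild inertia action on stalks is a pointwise condition, so it transfers to $R\Psi\sheaf{F}$ via the local isomorphisms of Lemma~\ref{lm-nearby}(1). Then, using semisimplicity of wild-inertia representations (pro-$q$ group acting on $\ell$-adic spaces), the distinguished triangle shows that the nontrivial wild part of $H^*(X_\infty,R\Psi\sheaf{F})$ comes entirely from the stalk at $(\infty,\infty)$, and likewise for $\sheaf{G}$. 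Only then do Lemma~\ref{lm-nearby}(2), Lemma~\ref{lm-local-fourier}, and Lemma~\ref{lm-hypergeo-2}(1) combine to finish as you describe. Your proposal is missing this sub-lemma, and without it the comparison of the $U$-contributions is unjustified.
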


%%\begin{proof}
For the proof, we use the same notation as in
Lemma~\ref{lm-nearby}. Thus, $X$ denotes the blowup of
$\Pp^1\times\Pp^1$ at $(\infty,0)$ and $\sheaf{F}$ and $\sheaf{G}$ on
$X$ are the extensions by $0$ from $\Aa^1 \times \Aa^1$ to $X$ of
$\mcK_{\lambda,\bfb}$ and $\mcK_{\lambda,\bfO}$ respectively. Let
$\pi$ be the proper map
$$
X\lra \Pp^1\times\Pp^1\lra \Pp^1
$$
where the second map is the proper projection $(r,s)\mapsto r$.

We need to compute the wild inertia representations at $\infty$ of
$R\pi_* \sheaf{F}$ and $R\pi_* \sheaf{G}$.  To do that, we use the
nearby cycles $R \Psi \sheaf{F}$ and $R \Psi \sheaf{G}$ relative to
$\pi$. These are complexes of sheaves with an inertia group action on
the fiber over $\infty$ over $X$. We know by Lemma~\ref{lm-nearby}
that $R\Psi \sheaf{F}$ and $R\Psi \sheaf{G}$ are locally isomorphic
away from the point $(\infty,\infty)$.

The key step is the following sub-lemma:

\begin{lemma}
  Away from $(\infty,\infty)$, the wild inertia group acts trivially
  on $R \Psi \sheaf{G}$.
\end{lemma}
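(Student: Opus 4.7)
My plan is to verify the claim pointwise on the fiber $X_\infty = \pi^{-1}(\infty)$, which has two irreducible components: the exceptional divisor $E$ of the blowup and the strict transform $S$ of $\{r = \infty\}$ (parametrized by $s \in \Pp^1$), meeting at a single point. The excluded point $(\infty,\infty)$ is the endpoint $s = \infty$ of $S$. I will argue separately at points of $E$ and at points of $S\setminus\{(\infty,\infty)\}$.

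At any point of $E$, I work in chart~1 with coordinates $(u,t)$, where $u = 1/r$ and $s = ut$, so that $E = \{u = 0\}$ is parametrized by $t$. In this chart, on $\{u \neq 0\}$ the sheaf $\mcK_{\lambda,\bfO}$ equals $\mcL_{\psi(\lambda u t)} \otimes \sheaf{H}(t)$ with $\sheaf{H} = (\HYPK_k \otimes \HYPK_k^\vee)^{\otimes 2}$. The factor $\mcL_{\psi(\lambda ut)}$ is lisse on all of chart~1, while $\sheaf{H}(t)$ is tamely ramified at $t = 0$ by Proposition~\ref{pr-kl}(4) and lisse for $t \neq 0$. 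Hence $\sheaf{G}$ is at most tamely ramified along the divisor $\{u = 0\}$ in a neighborhood of any point of $E\cap\text{chart 1}$. A standard argument (nearby cycles of an at-most-tamely-ramified sheaf along a smooth map $\pi = u$ are tame) then yields trivial wild inertia action on $R\Psi\sheaf{G}$ at such points.

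At points of $S$ away from $(\infty,\infty)$ (in chart~2, points $(v,s) = (0,s_0)$ with $s_0 \neq 0,\infty$; the case $s_0 = 0$ is covered above), the local analysis is more delicate because $\sheaf{H}(1/v)$ carries wild ramification along $\{v=0\}$. My plan is to exploit the morphism $\Pi\colon X \to \Pp^1_y \times \Pp^1_x$, $(r,s) \mapsto (y,x) = (\lambda s, rs)$, which by the calculation in the proof of Lemma~\ref{lm-nearby} is everywhere defined on $X$. A direct Jacobian computation shows that $\Pi$ is a local isomorphism near $(0,s_0)$, mapping onto a neighborhood of $(\lambda s_0, \infty)$. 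Writing $x' = 1/x$ near $x = \infty$, the sheaf corresponds under $\Pi$ to the box product $\mcL_\psi(y) \otimes \sheaf{H}(1/x')$, and $\pi$ becomes $f(y,x') = yx'/\lambda$, which is smooth at $(\lambda s_0, 0)$ with nonvanishing $\partial_{x'}f = y/\lambda = s_0/\lambda$ and vanishing $\partial_y f = x' = 0$. A K\"unneth-style analysis of the nearby cycles of this box product with respect to $f$, combined with the lisseness of $\mcL_\psi(y)$ across $\{x'=0\}$ and the transversality of the fiber direction to the wild divisor $\{x'=0\}$, should give the triviality.

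The main obstacle is the last step: rigorously ruling out a contribution of the wild character of $\sheaf{H}$ at $\infty$ in $R\Psi_f$ at $(\lambda s_0, 0)$. I would try two approaches in parallel. The first is to invoke Laumon's local Fourier transform and the computation of Lemma~\ref{lm-local-fourier}, interpreting the nearby cycles of $\mcL_\psi \boxtimes \sheaf{H}$ with respect to $f = yx'/\lambda$ as (a piece of) the Fourier transform already handled in the proof of that lemma; the wild summands should concentrate at the fiber locus of $f = 0$ where $y = 0$, not at $(\lambda s_0, 0)$ where $y \neq 0$. The second is a direct application of Deligne's semicontinuity theorem after a suitable blowup: since the Swan conductor of the box product along the divisor $\{x' = 0\}$ is constant in $y$ (as $\mcL_\psi(y)$ is lisse along $\{x'=0\}$), the resulting vanishing cycles at $(\lambda s_0,0)$ should vanish except where $y = 0$, exactly locating any surviving wild inertia at $(\infty,\infty)$.
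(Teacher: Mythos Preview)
Your decomposition of the special fiber and your treatment on the exceptional divisor (case~1) are reasonable: in chart~1 the sheaf is a lisse twist of $\text{pr}_t^*\sheaf{H}$, which is constant in the $u$-direction, so $R\Psi_u$ carries a trivial inertia action. (You should note that the node $E\cap S$, at $(v,s)=(0,0)$ in chart~2, is covered by neither your case~1 nor your case~2, but this is a minor point.)

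The real gap is in case~2, as you yourself flag. Neither of your two sketched approaches actually closes it. For approach~(b), Deligne--Laumon semicontinuity controls the lisseness of a \emph{pushforward} $R^i f_!\sheaf{F}$; it does not directly imply that $R\Phi_f\sheaf{F}=0$ at a given point of the special fiber, which is what you would need. In fact, ``$R\Phi=0$ at $(y_0,0)$'' is strictly stronger than the statement being proved (trivial \emph{wild} inertia on $R\Psi$), and there is no reason to expect it here: the sheaf $\sheaf{H}(1/x')$ is genuinely wildly ramified along the entire divisor $\{x'=0\}$, and your map $f=yx'/\lambda$ vanishes along that same divisor, so the naive local picture does not rule out a wild contribution. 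For approach~(a), the computation in Lemma~\ref{lm-local-fourier} concerns the local monodromy of the \emph{global} pushforward $\mcR_{\lambda,\bfO}=R^1\pi_!\mcK_{\lambda,\bfO}$, obtained after summing over the whole fiber; it does not isolate the stalk of $R\Psi$ at a single point $(y_0,0)$, and the link you assert (``wild summands should concentrate at $y=0$'') is exactly the hard statement to be proved.

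The paper's argument is entirely different and avoids any pointwise local computation. It exploits the $\Gg_m$-symmetry $s_\alpha:(r,s)\mapsto(\alpha r,\alpha^{-1}s)$: since $\sheaf{K}=f_1^*(\HYPK_k^{\otimes 2}\otimes(\HYPK_k^\vee)^{\otimes 2})$ with $f_1(r,s)=rs$ is $s_\alpha$-invariant while $s_\alpha$ scales the base coordinate $r$, the induced isomorphism $s_\alpha^*R\Psi\sheaf{K}\simeq R\Psi\sheaf{K}$ intertwines the inertia action with its multiplicative translate by $\alpha$. Constructibility of $R\Psi\sheaf{K}$ then bounds the number of inertia irreducibles appearing in all stalks, and since any wildly ramified irreducible has infinitely many non-isomorphic multiplicative translates (Katz~\cite[4.1.6]{GKM}), no wild component can occur. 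This global argument handles the exceptional divisor, the strict transform, and the node uniformly, without the delicate local analysis you attempt in case~2.
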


\begin{proof}
  Let $f_1(r,s)=rs$. By definition, we have
$$
\mcK_{\lambda,\bfO}= \mcL_{\psi(\lambda s)} \otimes
f_1^*\HYPK_k^{\otimes 2} \otimes
(f_1^*\overline{\HYPK_k}^{\vee})^{\otimes 2}.
$$ 
Because we are verifying a local condition away from the line
$s=\infty$, we may ignore the factor $\mcL_{\psi(\lambda s) }$ and
consider only the nearby cycles of
$$
\sheaf{K}=f_1^*\HYPK_k^{\otimes 2} \otimes
(f_1^*\overline{\HYPK_k}^{\vee})^{\otimes 2}.
$$
For any $\alpha\in\Gg_m$, let $s_{\alpha}$ be the map
$(r,s)\mapsto (\alpha r,\alpha^{-1} s)$. We have
$f_1\circ s_{\alpha}=f_1$, hence
$s_{\alpha}^*\sheaf{K}\simeq\sheaf{K}$. The action of $s_\alpha$
extends to the blow-up $X$ and to the fiber of $X$ over $\infty$, so
it extends by functoriality to the nearby cycles complex
$R \Psi\sheaf{K}$. Since $s_\alpha$ acts by scaling on the coordinate
$r$ of the base local ring, the induced isomorphism
$s_{\alpha}^* R \Psi \sheaf{K} \simeq R \Psi \sheaf{K}$ sends the
Galois action on the nearby cycles complex to its multiplicative
translate by $\alpha$. Since the nearby cycles sheaf is constructible
\cite[Th. Finitude, Theorem 3.2]{sga4h}, only finitely many different
irreducible representations of the inertia group can appear in the
stalks of $R \Psi \sheaf{K}$ as Jordan-Hölder factors anywhere on the
fiber over $\infty$ (on each open set where $R \Psi \sheaf{K}$ is
lisse, there is a single representation with finitely many
Jordan-Hölder factors, and at each other point there is another
representation, again with finitely many Jordan-Hölder factors). By
symmetry, if any irreducible inertia representation appears in the
stalks, its multiplicative translates by $\alpha$ must also
appear. But by \cite[4.1.6]{GKM}, any non-trivial wildly ramified
representation has infinitely many non-isomorphic multiplicative
translates as $\alpha$ varies, so the wild inertia group must act
trivially on the stalks.

Let $I_1$ be the wild inertia group. There is an $I_1$-invariants
functor from $\ell$-adic sheaves with an action of $I_1$ to
$\ell$-adic sheaves, and an adjoint functor that views $\ell$-adic
sheaves as $\ell$-adic sheaves with a trivial action of $I_1$, giving
a natural adjunction map
$(R \Psi \sheaf{G})^{I_1} \to R \Psi \sheaf{G}$.  Because $I_1$ is a
pro-$q$ group, the $I_1$-invariants functor on $\ell$-adic sheaves has
no higher cohomology. Because the stalks are $I_1$-invariant, this map
is an isomorphism on stalks away from $(\infty,\infty)$, hence an
isomorphism away from $(\infty,\infty)$, so the wild inertia group
acts trivially on $R \Psi \sheaf{G}$ away from $(\infty,\infty)$.
\end{proof}

\begin{proof}[Proof of Corollary~\ref{cor-hardest-local-monodromy}]
  It follows from the last lemma and from Lemma \ref{lm-nearby}(1)
  that the wild inertia group acts trivially on $R \Psi \sheaf{F}$
  away from $(\infty,\infty)$. 

  Let $Z=\{(\infty,\infty)\}$ and $U$ the open complement. Let $i$ be
  the closed immersion of $Z$ and $j$ the open immersion of $U$.  We
  have distinguished triangles
$$
R\pi_*j_!R\Psi\sheaf{F}|U\to R\pi_*R\Psi\sheaf{F}\to
R\pi_*i_*R\Psi\sheaf{F}|Z\to,
$$
and
$$
R\pi_*j_!R\Psi\sheaf{G}|U\to R\pi_*R\Psi\sheaf{G}\to
R\pi_*i_*R\Psi\sheaf{G}|Z\to
$$

The middle terms are the local monodromy representations of
$R\pi_* \sheaf{F}$ and $R \pi_* \sheaf{G}$ at $\infty$, which we want
to compute. The third terms are the stalks of $R \Psi \sheaf{F}$ and
$R \Psi \sheaf{G}$ at $(\infty,\infty)$. The left-hand terms, by the
above, have trivial wild inertia action at $\infty$.

% Hence the cohomologies of
%   $R \Psi \sheaf{F}$ and $R \Psi \sheaf{G}$, which are equal to the
%   local monodromy representations of $R\pi_* \sheaf{F}$ and
%   $R \pi_* \sheaf{G}$, can each be written by excision as part of a
%   long exact sequence involving the cohomology away from
%   $(\infty,\infty)$, which has a trivial wild inertia action, and the
%   stalk at $(\infty,\infty)$.

Since the representations of the wild inertia group are semisimple (as
it is a pro-$q$-group acting on an $\ell$-adic vector space) this
implies that the nontrivial part of the wild inertia representation on
the local monodromy of $R\pi_*\sheaf{F}$ and of $R\pi_*\sheaf{G}$ are
each equal to the nontrivial parts of the wild inertia representation
on the stalks of $R \Psi \sheaf{F}$ and $R \Psi \sheaf{G}$ at
$(\infty,\infty)$. By Lemma \ref{lm-nearby}(2), the stalks of
$R \Psi \sheaf{F}$ and $R \Psi \sheaf{G}$ at $(\infty,\infty)$ can be
split into summands which are isomorphic as representations of the
wild inertia group up to order $2$ reparameterizations, so the
nontrivial parts of the wild inertia representations on
$R\pi_* \sheaf{F}$ and $R \pi_* \sheaf{G}$ can be split into summands
that are equal up to order $2$ reparameterizations.

Finally, Lemma~\ref{lm-local-fourier} shows that the wild inertia
representation at $\infty$ of $R\pi_*\sheaf{G}$ is exactly as claimed
in the statement. Since, by Lemma~\ref{lm-hypergeo-2}, any
summand of the local monodromy at $\infty$ of $R\pi_*\sheaf{G}$
(i.e., of $\mcR_{\lambda,\bfO}$) is preserved by reparameterizations
of order $2$, we obtain in fact the same decomposition for
$\mcR_{\lambda,\bfb}$ also.
\end{proof}

% , which will then give
% the corresponding statement of $\mcR_{\lambda,\bfb}$.
% \par
% To check the claim, we use the fact proved in Lemma~\ref{lm-hypergeo}
% that $\mcH_{k-1}$, and hence its nonzero multiplicative
% translates, is a hypergeometric sheaf of type $(k-1,0)$. The local
% monodromy at infinity of such a sheaf is isomorphic to that of
% $$
% [x \mapsto x^{k-1}]_*(\sheaf{L}_{\chi}\otimes\sheaf{L}_{\eta})
% $$
% for some additive (resp. multiplicative) character $\eta$
% (resp. $\chi$), by Lemma~\ref{lm-hypergeo-2}.
% Taking a re
% % IMPROVE EXPOSITION
% \end{proof}

\begin{cor}\label{cor-adjusted-local-monodromy}% \marginpar {rating 8/10}
  Let $\lambda\not=0$ be fixed in a field extension (possibly
  transcendental) of $\Ff_q$. The wild inertia representation of
  $\mcR_{\lambda,\bfb}^*$ at $r=\infty$ is isomorphic to that of
$$
\bigoplus_{\alpha \in S_k}  [\times \alpha/\lambda]^* \mcH_{k-1}.
$$ 
plus a trivial representation.
\end{cor}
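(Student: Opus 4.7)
The plan is to deduce the corollary from Corollary~\ref{cor-hardest-local-monodromy} by analyzing how the wild inertia representation at $r=\infty$ interacts with the weight filtration used to define $\mcR^*$.

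First I would recall that, by Definition~\ref{def-sp}, on any dense open subset where $\mcR_{\lambda,\bfb}$ is lisse, the specialized sheaf $\mcR^*_{\lambda,\bfb}$ is the maximal pure-of-weight-$1$ quotient of $\mcR_{\lambda,\bfb}$; the kernel $\mcN$ of the quotient map is mixed of weights $\leq 0$. Since the wild inertia group $I_1\subset I(\infty)$ at $r=\infty$ is a pro-$q$ group acting on a $\bQl$-vector space, its action on the local monodromy representation at $\infty$ of each of $\mcR_{\lambda,\bfb}$, $\mcR^*_{\lambda,\bfb}$ and $\mcN$ is semisimple, and the weight filtration is $I_1$-stable. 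Consequently, the non-trivial wild summand of $\mcR_{\lambda,\bfb}|_{I(\infty)}$ splits canonically as a direct sum over weights, and the weight-$1$ piece coincides with the non-trivial wild summand of $\mcR^*_{\lambda,\bfb}|_{I(\infty)}$.

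Next I would verify that the wild part $\bigoplus_{\alpha\in S_k}[\times\alpha/\lambda]^*\mcH_{k-1}$ provided by Corollary~\ref{cor-hardest-local-monodromy} is pure of weight $1$. The building block $\mcH_{k-1}$ is (geometrically) the middle extension of the Fourier transform of the pure weight-$0$ sheaf $\mcL_{\tilde{\psi}}(x^{1/k})=\alpha_*\mcL_{\tilde{\psi}}$ on $\Aa^1$, and Deligne's theorem~\cite{WeilII} together with the standard weight behavior of the (unnormalized) $\ell$-adic Fourier transform shows that $\mcH_{k-1}$ is pure of weight $1$ on any dense open set where it is lisse. Multiplicative translates $[\times\alpha/\lambda]^*$ preserve purity and weight, so the whole wild summand in Corollary~\ref{cor-hardest-local-monodromy} is pure of weight $1$ as a representation of $I(\infty)$, and hence of $I_1$.

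Combining these two steps: by Corollary~\ref{cor-hardest-local-monodromy}, as a representation of $I_1$, the local monodromy of $\mcR_{\lambda,\bfb}$ at $r=\infty$ decomposes as
\[
 \Bigl(\bigoplus_{\alpha\in S_k}[\times\alpha/\lambda]^*\mcH_{k-1}\Bigr)\big|_{I_1}\ \oplus\ T
\]
where $T$ is a trivial $I_1$-representation; the first summand is pure of weight $1$ by the previous paragraph, and since the kernel $\mcN$ is of weight $\leq 0$, the quotient map $\mcR_{\lambda,\bfb}\to \mcR^*_{\lambda,\bfb}$ is an isomorphism on this first summand. The image of $T$ in $\mcR^*_{\lambda,\bfb}$ is again a trivial $I_1$-representation (possibly of smaller dimension than $T$, since some weight-$\leq 0$ trivial summands of $T$ may be killed by passing to $\mcR^*$). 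This gives exactly the statement of the corollary.

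The main (and essentially only) point to be careful about is the compatibility of the wild/tame splitting and the weight filtration, which as noted above is immediate once one observes that wild inertia acts semisimply. No further computation is needed beyond invoking purity of $\mcH_{k-1}$.
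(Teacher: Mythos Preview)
There is a genuine gap in your argument. The isomorphism in Corollary~\ref{cor-hardest-local-monodromy} is only an isomorphism of \emph{wild inertia representations}, i.e.\ of abstract $I_1$-modules. It carries no Frobenius and hence no weight information. So when you write that the wild summand ``is pure of weight $1$ as a representation of $I(\infty)$, and hence of $I_1$'', this statement does not make sense: weights are determined by the global Frobenius structure of $\mcR_{\lambda,\bfb}$, not by the abstract isomorphism type of a subspace as an $I_1$-module. Knowing that the sheaf $\mcH_{k-1}$ is pure of weight~$1$ on its own affine line tells you nothing about where an $I_1$-isomorphic copy of $\mcH_{k-1}|_{I_1}$ sits inside the weight filtration of the entirely different sheaf $\mcR_{\lambda,\bfb}$. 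Your argument is therefore circular at the point ``since the kernel $\mcN$ is of weight $\leq 0$, the quotient map is an isomorphism on this first summand'': this is equivalent to the statement you are trying to prove, namely that $\mcN$ has trivial wild part at $\infty$.

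The paper proceeds differently and proves exactly this missing statement directly. Using Lemma~\ref{lm-weight-computation}\,(2) applied to the projection $(r,s,\lambda)\mapsto (r,\lambda)$, it identifies the weight $<1$ part of $\mcR_{\lambda,\bfb}$ at a generic $r$ with the inertia invariants $\mcK_{x,\bfb}^{I(0)}$ of the fibre sheaf at $s=0$ (the contribution from $s=\infty$ vanishes because $\mcK$ is totally wild there). Since the local monodromy of $\HYPK_k$ at $0$ is unipotent, the decomposition group at a point with $s=0$ acts unipotently on $\mcK_{x,\bfb}$, hence unipotently on its $I(0)$-invariants; in particular the inertia group at $r=\infty$ acts through a pro-$\ell$ quotient, and the weight $<1$ part is tame at $r=\infty$. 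This is the substantive input your proposal is missing; once it is in place, the passage from $\mcR_{\lambda,\bfb}$ to $\mcR^*_{\lambda,\bfb}$ is exactly as you describe.
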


\begin{proof} 
  In view of Corollary~\ref{cor-hardest-local-monodromy} and of the
  definition of $\mcR^*$, it suffices to prove that the weight $<1$
  part of $\mcR_{\bfb}$ is tamely ramified at $r=\infty$. To do this
  we will study the action of the decomposition group at $\infty$ on
  the stalk of the weight $<1$ part of $\mcR_{\lambda \bfb}$ at a
  generic point of the $r$-line. 
  % Because $\mcR_{\bfb}$ is lisse away from $\lambda=0$ and $r=-b_i$,
  % its
  % weight $<1$ sheaf is lisse as well, and so it suffices to check
  % this
  % at a generic point.
\par
We apply Lemma~\ref{lm-weight-computation} (2) to
$C=\Aa^1\times \Pp^1 \times \Gg_m$, with coordinates
$(r, s, \lambda)$, with its dense open subset
$U=\Aa^1\times\Aa^1\times \Gg_m$ (with open embedding $j$), to the
morphism $\pi\,:\, C\lra \Aa^1 \times \Gg_m$ given by
$\pi(r,s, \lambda)=(r,\lambda)$ and to the sheaf
$\sheaf{F}=j_!\mcK_{\bfb}$ on $C$.  The assumptions of
Lemma~\ref{lm-weight-computation} are easily verified using
Lemma~\ref{lm-support} (2) and (3).
\par
Taking $x=(r,\lambda)$ for a generic value of $r$, the lemma implies
that the part of weight $<1$ of
$$
(R^1\pi_*\sheaf{F})_{x}=H^1(\pi^{-1}(\bar{x}), \sheaf{F})=(\mcR_{\bfb})_{x}
$$
is isomorphic to
$$
\mcK_{x,\bfb}^{I(0)}/(\mcK_{x,\bfb})_0\oplus
\mcK_{x,\bfb}^{I(\infty)}/(\mcK_{x,\bfb})_{\infty}
=\mcK_{x,\bfb}^{I(0)},
$$
since $\mcK_{x,\bfb}$ is totally wildly ramified at $s=\infty$
and has stalk $0$ at $s=0$.
\par
Recall that the local monodromy representation of $\HYPK_k$ at $0$ is
unipotent. Let $K$ be an algebraically closed field extension of
$\Ff_q$ containing $\lambda$, so that over $K$ the decomposition group
representation of $\HYPK_k$ at $0$ is unipotent.  Hence the
decomposition group representation of
$$
[(r,s)\mapsto s(r+b_i)]^*\HYPK_k
$$ 
at a point where $s=0$ is unipotent (still over $K$). The
decomposition group representation of $\mcL_{\psi(\lambda s)}$ is
trivial at a point where $s=0$. Hence we conclude that the
decomposition group over $K$ also acts unipotently on the tensor
product $\mcK_{x,\bfb}$. Hence the inertia invariants
$\mcK_{x,\bfb}^{I(0)}$ is a unipotent representation of Galois
group of the residue field of the generic point $x$. In particular,
the inertia group at $r=\infty$ acts unipotently. Because it is
unipotent, it must factor through a pro-$\ell$ group and hence be
tame.

\end{proof}

We need some last elementary geometric considerations to isolate
features of the local monodromy at $\infty$ that will allow us to
deduce the irreducibility and disjointness of the sheaves
$\mcR_{\lambda,\bfb}$.

\begin{lemma}\label{lm-sk-geometry} %\marginpar {rating 10/10}
Let $k\geq 2$ be given.
\par
\emph{(1)} If $q$ is sufficiently large, then the multiset $S_k$
contains an element with multiplicity $1$.
\par
\emph{(2)} If $q$ is sufficiently large, then the group of
$\mu\in\bFq^{\times}$ such that $\mu S_k=S_k$ is trivial if $k$ is
even, and is reduced to $\{\pm 1\}$ if $k$ is odd.
\end{lemma}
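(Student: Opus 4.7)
}

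My approach has four steps. The first is to reduce the problem to characteristic zero. For $q$ sufficiently large (depending on $k$), fix a prime $\ideal{p}$ of $\bar{\mathbb{Z}}$ above $q$; reduction modulo $\ideal{p}$ gives a bijection of the multiset $S_k^{(0)}\subset \bar{\mathbb{Q}}$, defined analogously using complex $k$-th roots of unity, with $S_k\subset \bFq$, preserving multiplicities (distinct elements of $S_k^{(0)}$ reduce to distinct elements for $q$ outside a finite set of primes depending on $k$). Moreover, for $q$ large, any $\mu\in \bFq^\times$ stabilising $S_k$ arises from the stabiliser of $S_k^{(0)}$: the induced permutation $\sigma$ of the support of $S_k$ lifts to a permutation of the support of $S_k^{(0)}$, the ratio $\tilde{\mu}=\sigma(\alpha)/\alpha$ is independent of $\alpha$ in $\bar{\mathbb{Q}}^\times$ (again for $q$ avoiding a finite bad set), and reduces to $\mu$ modulo $\ideal{p}$. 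So it suffices to compute the stabiliser of $S_k^{(0)}$ in $\bar{\mathbb{Q}}^\times$.

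For part (1), I will use an extremal absolute value argument in $\mathbb{C}$. The maximum of $|1+\zeta_2-\zeta_3-\zeta_4|$ over $\zeta_i\in\mu_k(\mathbb{C})$ is $4$ if $k$ is even, attained uniquely by $(1,-1,-1)$ (equality in the triangle inequality forces alignment of the four unit-modulus summands, which is impossible unless $-1\in\mu_k$); and it is $4\cos(\pi/(2k))$ if $k$ is odd, attained by the two triples $(1,\zeta,\zeta)$ with $\zeta=-e^{\pm i\pi/k}\in\mu_k$ (a case analysis on the direction of alignment, using that the closest element of $-\mu_k$ to any $\eta\in\mu_k$ is at angular distance $\pi/k$). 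Any other triple maps to a $\beta$ of strictly smaller modulus, hence cannot lie in the $\mu_k$-orbit of an extremal $\beta$. Therefore the extremal $\beta^k$ has multiplicity $1$ in $S_k^{(0)}$, giving part (1).

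For part (2), upper bound: any $\mu$ in the stabiliser permutes the multiplicity-$1$ elements identified above. For $k$ even, the unique extremal multiplicity-$1$ element $4^k\in\mathbb{R}$ must be fixed, forcing $\mu=1$. For $k$ odd, the two extremal multiplicity-$1$ elements are $\pm 4^k\cos^k(\pi/(2k))\cdot i$ (purely imaginary and opposite), so $\mu$ must permute this two-element set, giving $\mu\in\{\pm 1\}$.

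The final and hardest step is to show that when $k$ is odd the element $-1$ actually lies in the stabiliser, i.e.\ $-S_k^{(0)}=S_k^{(0)}$ as multisets; equivalently, since $(-\beta)^k=-\beta^k$ for $k$ odd, the $\mu_k$-orbits of the auxiliary multiset $B^{(0)}=\{1+\zeta_2-\zeta_3-\zeta_4:\zeta_i\in\mu_k\}$ come in negation-pairs $(O,-O)$ with matching total multiplicities. The plan is to pair orbits explicitly and to construct, for each such pair, a bijection between the preimage triples that produces matching total counts. The verification can be carried out orbit-by-orbit by exploiting the symmetries of the defining equation $1+\zeta_2-\zeta_3-\zeta_4=\beta$ under the Galois action $\zeta\mapsto\zeta^{-1}$ (which provides $\bar{O}=O_{\text{conj}}$) combined with the geometric identification $-O=\bar{O}$ valid at the level of orbits for $k$ odd (because $-1$ can be rewritten as $e^{i\pi}$ times a suitable root of unity within each orbit). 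Constructing and checking this bijection is the main obstacle of the proof, since the natural substitution $\zeta\mapsto -\zeta$ does not preserve $\mu_k$ when $k$ is odd; the bijection is therefore combinatorial rather than purely algebraic, and is the sole reason $-1$ appears in the stabiliser (a sample verification at $k=3$, where the orbits pair as $(8,8),(2,2),(1,1)$ in multiplicity, illustrates the structure).
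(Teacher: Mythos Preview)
Your reduction to characteristic $0$, your argument for (1), and your upper bound in (2) are all correct and proceed along somewhat different lines from the paper. The paper passes to the four-variable multiset $T_k=\{\zeta_1+\zeta_2-\zeta_3-\zeta_4:\zeta_i\in\mmu_k\}$ and records a multiplicity-preserving bijection between representations of a nonzero $\alpha\in T_k$ and representations of $\alpha^k\in\tilde S_k$; it then handles (1) via a generic $\Rr$-linear functional with a unique extremum on $\mmu_k$, and the upper bound in (2) via the convex hull of $T_k$, which is a regular $k$-gon (resp.\ $2k$-gon) for $k$ even (resp.\ odd). Your explicit identification of the elements of maximal modulus is a perfectly good alternative: it yields (1) directly, and it gives the upper bound in (2) by forcing the stabiliser to permute the one or two extremal elements.

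The genuine gap is the lower bound in (2), which you correctly flag as the main obstacle but do not actually prove. Your suggested route via complex conjugation and an identification ``$-O=\bar O$'' at the level of $\mmu_k$-orbits does not work: for a general orbit $O=\mmu_k\cdot\beta$ one has $-O=\bar O$ only when $-\beta/\bar\beta\in\mmu_k$, a condition on $\arg\beta$, and concretely for $k=3$ the element $27\in S_3^{(0)}$ (from $\beta=2-\omega-\omega^2=3$) is real, so conjugation says nothing linking $27$ to $-27$. The fix is elementary once one homogenises to four variables, which is the paper's key observation: the swap $(\zeta_1,\zeta_2)\leftrightarrow(\zeta_3,\zeta_4)$ gives $-T_k=T_k$ trivially, so $-1\in\mathrm{Stab}(T_k)$ for every $k$, and passage to $S_k$ via $k$-th powers yields $-1\in\mathrm{Stab}(S_k^{(0)})$ precisely when $k$ is odd. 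In your three-variable language, the same idea reads as the explicit involution
\[
(\zeta_2,\zeta_3,\zeta_4)\ \longmapsto\ (\zeta_4\zeta_3^{-1},\ \zeta_3^{-1},\ \zeta_2\zeta_3^{-1}),
\]
which sends $\beta=1+\zeta_2-\zeta_3-\zeta_4$ to $\zeta_3^{-1}(-\beta)$ and hence $\beta^k$ to $-\beta^k$. This is the missing bijection; no orbit-by-orbit casework is needed.
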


\begin{proof} 
  We denote by $\tilde{S}_k\subset \Cc$ the analogue of $S_k$ defined
  using $\mmu_k(\bar{\Qq})$.  We observe that the set of non-zero
  numbers $\zeta_1 +\zeta_2 - \zeta_3 -\zeta_4$, where $\zeta_i$ runs
  over $\mmu_k(\bFq)$, is the set of $k$-th roots of the elements of
  $S_k$, and similarly for $\tilde{S}_k$ and
  $\zeta_i\in \mmu_k(\bar{\Qq})$.  Moreover, non-zero element of this
  form has the same multiplicity as its $k$-th power as an element of
  $\tilde{S}_k$. Indeed, there is a bijection from the set of
  representations
$$
 \alpha = \zeta_1 + \zeta_2 - \zeta_3 -\zeta_4
$$
to those of
$$
\alpha^k=(1+\zeta'_2-\zeta'_3-\zeta'_4)^k,
$$
given by
$(\zeta_1,\ldots,\zeta_4)\mapsto (\zeta_2/\zeta_1,\zeta_3/\zeta_1,
\zeta_4/\zeta_1)$
with inverse
$(\zeta'_2,\zeta'_3,\zeta'_4)\mapsto
(\zeta,\zeta\zeta'_2,\zeta\zeta'_3,\zeta\zeta'_4)$,
where $\zeta$ is such that
$\alpha=\zeta(1+\zeta'_2-\zeta'_3-\zeta'_4)$.
\par
(1) Since any two distinct elements of $\tilde{S}_k$ are equal modulo
$q$ for finitely many primes $q$, it is enough to check that the set
$\tilde{S}_k$ contains an element of multiplicity one in $\Cc$.  To
find an element of $\tilde{S}_k$ with multiplicity one, it is
sufficient to find an $\Rr$-linear map $\Cc\lra \Rr$ with a unique
maximum and minimum on $\mmu_k$.  Clearly a generic linear function
has this property (e.g., if $k$ is even, we may take the real part).
\par
(2) We first show the corresponding property for $\tilde{S}_k$. Let
$T_k$ be the multiset of numbers $\zeta_1+\zeta_2-\zeta_3-\zeta_4$. By
the description above, it is enough to show that the group of complex
numbers $\mu$ such that $\mu T_k=T_k$ is equal to $\mmu_k$ if $k$ is
even and to $\mmu_{2k}$ if $k$ is odd.
\par
Consider the convex hull of $T_k$. It is the difference of two copies
of twice the convex hull of the $k$-th roots of unity. Since the
convex hull of $\mmu_k$ in $\Cc$ is a $k$-sided regular polygon, the
convex hull of $T_k$ is a $k$-sided regular polygon if $k$ is even,
and a $2k$-sided regular polygon if $k$ is odd. The result is then
clear.

To reduce the case of $S_k$ to the complex case, we note that an
arbitrary non-empty finite set $S\subset \Cc$ or $S\subset \bFq$ may
only be equal to its multiplicative translate by $\mu$ if $\mu$ is a
root of unity. Moreover, $\mu S=S$, where $\mu$ is a primitive $n$-th
roots of unity, if and only if the coefficients of a monic polynomial
whose roots are $S$ vanish in degrees coprime to $n$. When reducing a
polynomial with algebraic coefficients modulo a prime $q$ large
enough, the set of degrees which are zero modulo $q$ is the same as
the same which are zero in $\Cc$. Hence, for $q$ large enough, the
same roots of unity stabilize $S_k$ as $\tilde{S}_k$.
\end{proof}

Finally we can conclude the basic irreducibility statement for
sum-product sheaves when $\lambda$ is non-zero:

\begin{proposition}\label{pr-nonzero-irreducibility}
% \marginpar{rating 9/10}
  For $q$ large enough in terms of $k$, the sheaf
  $\mcR_{\lambda,\bfb}^*$ is geometrically irreducible whenever
  $\lambda\not=0$ is fixed in a field extension (possibly
  transcendental) of $\Ff_q$ and $\bfb\not\in \mcV^{\Delta}$.
\end{proposition}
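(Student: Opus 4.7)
The plan is to apply Lemma~\ref{lemcriterion}(b) with $X_0 = \Gg_m$ (parametrising the allowed values of $\lambda$), $Y_0 = \Pp^1 \times \Gg_m$, projection $\pi(r,\lambda) = \lambda$, and $D_0 = \{\infty,-b_1,\ldots,-b_4\} \times \Gg_m$. By Lemmas~\ref{lm-lisseness} and~\ref{tameness}, $\mcF := \mcR^*_{\bfb}|(Y-D)$ is lisse on $Y-D$; by construction it is pure of weight $1$, and by Lemma~\ref{A2irred} it is geometrically irreducible, so condition (1) of the lemma is satisfied.

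For condition (2), I would take $z = \infty \in D_\eta$, which is a $\kappa(\eta)$-rational point with $\kappa(\eta) = \bFq(\lambda)$. Corollary~\ref{cor-adjusted-local-monodromy} identifies the wild part of the inertia representation of $\mcF_\eta$ at $z$ with that of $\bigoplus_{\alpha\in S_k}[\times \alpha/\lambda]^*\mcH_{k-1}$. Each summand $[\times \alpha/\lambda]^*\mcH_{k-1}$ is absolutely irreducible as a representation of the inertia group (Lemma~\ref{lm-hypergeo}), and by Lemma~\ref{lm-hypergeo-2}(3) the summands attached to distinct $\alpha$ have disjoint wild parts, hence are pairwise non-isomorphic. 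By Lemma~\ref{lm-sk-geometry}(1), for $q$ sufficiently large, some element $\alpha_0\in S_k$ occurs with multiplicity one; the corresponding summand $[\times \alpha_0/\lambda]^*\mcH_{k-1}$ is therefore an irreducible constituent of the full inertia representation of multiplicity one. Its isomorphism class depends only on the scalar $\alpha_0/\lambda\in\kappa(\eta)$ and on the sheaf $\mcH_{k-1}$, itself defined over $\bFq\subseteq\kappa(\eta)$, so it is fixed by the Galois action of $\kappa(\eta)$ on inertia representations.

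For condition (3), the divisor $D$ is the disjoint union of five horizontal sections of $\pi$, hence finite flat over $X$; the generic rank of $\mcF$ is constant in $\lambda$ by Lemma~\ref{lm-lisseness}. At each point $r = -b_i$, Lemma~\ref{tameness} gives tame ramification, so the Swan conductor vanishes identically. At $r=\infty$, Corollary~\ref{cor-adjusted-local-monodromy} shows that the wild local monodromy of $\mcF$ depends on $\lambda$ only through the multiplicative translation by $\lambda^{-1}$, which preserves all breaks; consequently the Swan conductors of $\mcF$ and of $\mcF\otimes\mcF^\vee$ at $\infty$ are locally constant in $\lambda$.

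Once (1), (2), (3) are verified, Lemma~\ref{lemcriterion}(b) yields geometric irreducibility of $\mcF_x = \mcR^*_{\lambda,\bfb}$ at every point $x\in X$, whether closed, non-closed, or generic; in particular it holds for any $\lambda\neq 0$ in any field extension of $\bFq$. The principal obstacle is condition (2): everything there depends on the precise identification of the wild inertia representation at $r=\infty$ carried out through Lemma~\ref{lm-nearby}, Lemma~\ref{lm-local-fourier} and Corollary~\ref{cor-adjusted-local-monodromy} (using nearby cycles, Laumon's local Fourier transform, and the invariance under reparameterisations of order two), together with the combinatorial multiplicity-one statement of Lemma~\ref{lm-sk-geometry}(1). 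The Galois-invariance of the distinguished component is then essentially automatic, since this component is manifestly the pullback of the fixed sheaf $\mcH_{k-1}$ by a morphism defined over $\kappa(\eta)$.
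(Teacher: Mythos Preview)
Your proposal follows the paper's approach: apply Lemma~\ref{lemcriterion}(b) with $X=\Gg_m$, $Y=\Pp^1\times\Gg_m$, and $D=\{\infty,-b_1,\ldots,-b_4\}\times\Gg_m$. Conditions~(1) and~(3) are handled essentially as in the paper.

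There is, however, a gap in your verification of condition~(2). Corollary~\ref{cor-adjusted-local-monodromy} identifies only the \emph{wild} inertia restriction of $\mcR^*_{\lambda,\bfb}$ at $r=\infty$, not the full inertia representation. You cannot conclude directly that $[\times\alpha_0/\lambda]^*\mcH_{k-1}$ is an irreducible constituent of the inertia representation of $\mcR^*_{\lambda,\bfb}$: two inertia representations with the same wild restriction may differ by a tame twist, so the actual constituent containing those wild characters need not be isomorphic to $[\times\alpha_0/\lambda]^*\mcH_{k-1}$. Consequently your Galois-invariance argument, which is stated for the isomorphism class of $[\times\alpha_0/\lambda]^*\mcH_{k-1}$, does not immediately transfer to the genuine constituent.

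The paper repairs this as follows. Let $V$ be the subspace of the local monodromy of $\mcR^*_{\lambda,\bfb}$ at $\infty$ spanned by the isotypic components, under the wild inertia, of the characters appearing in $[\times\alpha_0/\lambda]^*\mcH_{k-1}$. Since this set of wild characters is stable under the full decomposition group (because $[\times\alpha_0/\lambda]^*\mcH_{k-1}$ extends to a decomposition-group representation, $\alpha_0$ lying in $\bFq$), the subspace $V$ itself is decomposition-group stable; this yields the required Galois invariance of its isomorphism class. For the irreducibility of $V$ under the inertia group one invokes Lemma~\ref{lm-hypergeo-2}(2): the conjugation action of tame inertia on wild characters is intrinsic, it is transitive on the characters occurring in $[\times\alpha_0/\lambda]^*\mcH_{k-1}$, and each such character has multiplicity one in $V$ by Lemma~\ref{lm-hypergeo-2}(3) together with the multiplicity-one choice of $\alpha_0$; hence any inertia-stable subspace of $V$ is $0$ or $V$. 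Your citation of Lemma~\ref{lm-hypergeo} does give the transitivity, but the argument must be carried out for the subspace $V$ inside the actual local monodromy of $\mcR^*_{\lambda,\bfb}$, not for the auxiliary sheaf $[\times\alpha_0/\lambda]^*\mcH_{k-1}$.
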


\begin{proof} 
  We apply Lemma~\ref{lemcriterion} (b) to $Y=\Gg_m \times \Pp^1$
  where the coordinate of $\Gg_m$ is $\lambda$ and the coordinate of
  $\Pp^1$ is $r$ and to the first projection $f\,:\, Y\lra X=\Gg_m$.
  We consider the sheaf on $Y$ which is the extension by zero of the
  sheaf $\mcR^*_{\bfb}$ on $\Gg_m\times\Aa^1$. The divisor $D$ is the
  union of the divisors $\{r=-b_i\}$ and $\{r=\infty\}$. If the three
  conditions of Lemma~\ref{lemcriterion} hold, then we obtain our
  desired conclusion.
\par
By Lemma \ref{diophantine} and \eqref{Rcorrelation} the sheaf
$\mcR_\bfb^*$ is geometrically irreducible on $Y-D$, so that the first
condition holds. It is also pure on $Y-D$ by definition.
\par
Next, we will show that the second condition holds by showing that
there exists an irreducible component of multiplicity one in the local
monodromy at $\infty$ of the restriction of $\mcR^*_{\bfb}$ to the
fiber of $f$ over a geometric generic point of $\Gg_m$ whose
isomorphism class is Galois-invariant.

By Corollary~\ref{cor-adjusted-local-monodromy}, the wild inertia
representation of $\mcR_{\lambda,\bfb}^*$ at $r=\infty$ is isomorphic
to that of
$\bigoplus_{\alpha \in S_k} [\times \alpha/\lambda]^* \mcH_{k-1}$ plus
a trivial representation. By Lemma \ref{lm-sk-geometry}(1), assuming
$q$ is large enough, some $\alpha$ appears with multiplicity $1$ in
$S_k$. Take such an $\alpha$. Let $V$ be the subspace of that local
monodromy representation that is sent to
$[\times \alpha/\lambda]^* \mcH_{k-1}$ under this isomorphism.
  
By Lemma~\ref{lm-hypergeo-2} (3), the irreducible components of the
summands $[\times \alpha/\lambda]^* \mcH_{k-1}$ as representations of
the wild inertia group are disjoint.  So we may characterize $V$ as
the subspace generated by all representations of the wild inertia
group that are isomorphic to wild inertia representations that appear
in $[\times \alpha/\lambda]^* \mcH_{k-1}$. Because
$[\times \alpha/\lambda]^* \mcH_{k-1}$ is a representation of the full
decomposition group, that set of isomorphism classes is stable under
the action of the decomposition group, so $V$ is a subrepresentation
of the local monodromy representation as a representation of the full
decomposition group. (Here we work over a large enough finite field so
that all of $S_k$, including $\alpha$, is contained in the base
field.)

We will show that $V$, restricted to the inertia group, is
irreducible. Restricted to the wild inertia group, it is isomorphic to
$[\times \alpha/\lambda]^* \mcH_{k-1}$. By Lemma~\ref{lm-hypergeo-2}
(2), the action by conjugation of the tame inertia group on the
irreducible wild inertia subrepresentations of
$[\times \alpha/\lambda]^* \mcH_{k-1}$ is transitive. Thus any
subspace would be a sum of wild inertia characters and would be
invariant under the tame inertia subgroup. So it must contain all the
characters or none, and therefore $V$ is indeed irreducible.

Then the irreducible representation $V$ occurs with multiplicity $1$
because each wild inertia component in it occurs with multiplicity
$1$, and its isomorphism class is invariant under conjugation by the
Galois group because it extends to a representation of the full
decomposition group.
\par
For the third condition of Lemma~\ref{lemcriterion}, it is enough to
show that the functions
$$
\lambda\mapsto \sw_r ( \mcR_{\lambda,\bfb}^* \otimes
\mcR_{\lambda,\bfb}^{*\vee})
$$
are locally constant on the divisors $r=-b_i$ and $r= \infty$.  By
Lemma~\ref{tameness}, this function is constant (equal to $0$) on the
divisors $r=-b_i$ for $1\leq i\leq 4$. The Swan conductor is
determined by the restriction to the wild inertia subgroup.  By
Corollary \ref{cor-hardest-local-monodromy}, the restriction of
$\mcR_{\lambda,\bfb}$ to the wild inertia subgroup is a sum of terms
of the form $[\times \alpha/\lambda]^* \mcH_{k-1}$ plus a trivial
representation. Hence the restriction of
$\mcR_{\lambda,\bfb}^* \otimes \mcR_{\lambda,\bfb}^{*\vee}$ to the
wild inertia subgroup is a sum of representations of the form
$[\times \alpha/\lambda]^*\mcH_{k-1} \otimes [\times
\beta/\lambda]^*\mcH_{k-1}^{\vee}$,  representations of the
forms $[\times \alpha/\lambda]^*\mcH_{k-1}$ and
$[\times \beta/\lambda]^*\mcH_{k-1}^{\vee}$, and a trivial
representation.

Therefore, on the divisor $r=\infty$, it suffices to check that the
Swan conductor of
$$
[\times \alpha/\lambda]^*\mcH_{k-1} \otimes [\times
\beta/\lambda]^*\mcH_{k-1}^{\vee} = [\times \alpha/\lambda]^*
(\mcH_{k-1} \otimes [\times \beta/\alpha]^* \mcH_{k-1}^\vee)
$$
depends only on $(\alpha,\beta)$ but is independent of
$\lambda\in\Gg_m$, and the same property for a single hypergeometric
sheaf $[\times \alpha/\lambda]^* \mcH_{k-1}$. But scalar
multiplication does not affect Swan conductors (since it is just an
automorphism of the local field and hence preserves the wild
ramification filtration) and hence these Swan conductors are equal to
the Swan conductors of
$\mcH_{k-1} \otimes [\times \beta/\alpha]^* \mcH_{k-1}^\vee$ and
$ \mcH_{k-1}$ respectively, and thus are independent of $\lambda$.
\end{proof}

\subsection{Final steps}

In this final section, we compare different specialized sum-product
sheaves $\mcR^*_{\lambda,\bfb}$.

We now show distinctness of specialized sum-product sheaves for
distinct $\lambda$. We recall that the subvariety $\mcV^{bad}$ has
been defined in Proposition~\ref{pr-degree-bound}. It is defined over
$\Zz[1/\ell]$ and stable under
$\bfb\mapsto \tilde{\bfb}=(b_3,b_4,b_1,b_2)$.

\begin{lemma}\label{lm-partial-distinct} %\marginpar{rating 9/10}
  For $\bfb$ not contained in $\mcV^{bad}(\Fq)$, and for
  $\lambda_1\not=\lambda_2$ in $\Fq$, the sheaves
  $\mcR_{\lambda_1,\bfb}^*$ and $\mcR_{\lambda_2,\bfb}^*$ are not
  geometrically isomorphic.
\end{lemma}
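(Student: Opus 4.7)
The plan is to leverage the explicit form of the wild inertia representation of $\mcR^*_{\lambda,\bfb}$ at $r=\infty$ given by Corollary \ref{cor-adjusted-local-monodromy}. Suppose that $\mcR^*_{\lambda_1,\bfb}$ and $\mcR^*_{\lambda_2,\bfb}$ are geometrically isomorphic; then their wild inertia representations at $r=\infty$ agree. By the corollary, each is (up to a trivial summand) isomorphic to $\bigoplus_{\alpha\in S_k}[\times\alpha/\lambda_i]^*\mcH_{k-1}$. Lemma \ref{lm-hypergeo-2}(3) asserts that distinct multiplicative translates $[\times c]^*\mcH_{k-1}$ share no irreducible component as wild inertia representations, and each translate has rank $k-1$. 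Matching the two direct sums thus forces the equality of multisets $S_k/\lambda_1=S_k/\lambda_2$, equivalently, $(\lambda_1/\lambda_2)\cdot S_k=S_k$.

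By Lemma \ref{lm-sk-geometry}(2), for $q$ sufficiently large in terms of $k$, the stabilizer of $S_k$ in $\bFq^\times$ is trivial when $k$ is even and equal to $\{\pm 1\}$ when $k$ is odd. Hence the hypothesis $\lambda_1\neq\lambda_2$ yields an immediate contradiction when $k$ is even. When $k$ is odd, there remains the case $\lambda_2=-\lambda_1$. Here I would exploit the special involution of the Kloosterman sheaf for $k$ odd: the identity $K(-x)=\overline{K(x)}$, combined with the substitution $s\mapsto -s$ in the definition of $\bfR$, yields the trace-function identity $\bfR(r,-\lambda,\bfb)=\bfR(r,\lambda,\tilde{\bfb})$ where $\tilde{\bfb}=(b_3,b_4,b_1,b_2)$. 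Combined with the duality isomorphism $\mcR^{*\vee}_{\lambda,\bfb}\simeq\mcR^*_{-\lambda,\tilde{\bfb}}(1)$ from Lemma \ref{lm-dual-sheaf}, together with the geometric irreducibility and semisimplicity of the sheaves involved (Proposition \ref{pr-nonzero-irreducibility}), this reduces the hypothetical isomorphism $\mcR^*_{\lambda,\bfb}\simeq\mcR^*_{-\lambda,\bfb}$ to a geometric isomorphism $\mcR^*_{\lambda,\bfb}\simeq\mcR^*_{\lambda,\tilde{\bfb}}$. The closed locus of $\bfb$ on which this last relation can hold will be absorbed into $\mcV^{bad}$; it is automatically stable under the swap $\bfb\leftrightarrow\tilde{\bfb}$, consistently with the symmetry property of $\mcV^{bad}$ built into Theorem \ref{thmirreducibility}.

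The main obstacle lies in the final step for $k$ odd, namely verifying that the exceptional locus where $\mcR^*_{\lambda,\bfb}\simeq\mcR^*_{\lambda,\tilde{\bfb}}$ is a proper closed subvariety of $\Aa^4$, with degree bounded independently of $q$. I would address this via a second-moment argument in the spirit of Lemma \ref{lm-Rcorrelation}: an isomorphism between these two sheaves would produce anomalously large correlations $\sum_r\bfR(r,\lambda,\bfb)\,\overline{\bfR(r,\lambda,\tilde{\bfb})}$, whereas the generic expectation (available from the Riemann Hypothesis over finite fields once the requisite geometric irreducibility statements are in place) is square-root cancellation. An alternative route would be a direct local-monodromy analysis at one of the singularities $r=-b_i$, exploiting the fact that the position of $b_i$ in the tuple---and hence whether the contributing Kloosterman factor is $\HYPK_k$ or $\HYPK_k^\vee$---differs between $\bfb$ and $\tilde{\bfb}$.
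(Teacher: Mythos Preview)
Your argument for the case where both $\lambda_1,\lambda_2$ are nonzero and $(\lambda_1/\lambda_2)\cdot S_k\neq S_k$ matches the paper and is fine. There are, however, two genuine gaps.

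\textbf{The case $\lambda_1=0$ (or $\lambda_2=0$) is missing.} Corollary~\ref{cor-adjusted-local-monodromy} is only available for $\lambda\neq 0$, so your wild-inertia comparison says nothing when one of the parameters vanishes. The paper treats this separately by a rank argument: from Lemma~\ref{lm-lisseness} one has $\rank\mcR_{\lambda_2,\bfb}=k^4>k^3\geq\rank\mcR_{0,\bfb}$, and a comparison of the weight~$<1$ parts (via Lemma~\ref{lm-weight-computation}, as in the proof of Corollary~\ref{cor-adjusted-local-monodromy}) shows the same inequality survives to $\mcR^*$, so the two sheaves cannot be isomorphic.

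\textbf{For $k$ odd and $\lambda_2=-\lambda_1$, your plan to enlarge $\mcV^{bad}$ does not work as stated.} Your reduction to $\mcR^*_{\lambda,\bfb}\simeq\mcR^*_{\lambda,\tilde{\bfb}}$ (equivalently $\mcR^*_{\lambda,\bfb}\simeq\mcR^*_{-\lambda,\bfb}$) is correct, but the lemma requires this to fail for \emph{every} $\lambda\neq 0$ once $\bfb$ is outside a codimension-one set. The second-moment input you cite, namely~\eqref{Rnoncorrelation}, is averaged over $\lambda$; it only tells you that $\mcR^*_{\bfb}$ and $g^*\mcR^*_{\bfb}$ are not geometrically isomorphic as sheaves on $\Aa^2$, hence that $\mcR^*_{\lambda,\bfb}\not\simeq\mcR^*_{-\lambda,\bfb}$ for \emph{generic} $\lambda$. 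The bad locus you would have to excise from $\Aa^4$ is the projection of $\{(\lambda,\bfb):\mcR^*_{\lambda,\bfb}\simeq\mcR^*_{-\lambda,\bfb}\}\subset\Aa^5$, and nothing prevents that projection from being all of $\Aa^4$. The paper's key additional idea is a \emph{propagation} step: assuming an isomorphism at a single $\lambda_1$, one forms the sheaf $\sheaf{G}=R^2p_!(\mcR^*_{\bfb}\otimes g^*\mcR^{*\vee}_{\bfb})(1)$ on $\Gg_m$, shows (via Deligne semicontinuity, using the Swan-conductor constancy already established) that $\sheaf{G}$ is lisse of rank~$1$, and then---by specializing $r$ and exploiting the Fourier-transform description of $\mcR_{r,\bfb}$ to pin down the ramification of $\sheaf{G}$ at $0$ and $\infty$---proves that $\sheaf{G}$ is geometrically trivial. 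This upgrades the single-$\lambda_1$ isomorphism to $\mcR^*_{\bfb}\simeq g^*\mcR^*_{\bfb}$ on all of $\Gg_m$, and \emph{then} contradicts~\eqref{Rnoncorrelation}. Without this propagation, neither your route~(a) nor the undeveloped route~(b) closes the argument.
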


\begin{proof} 
  Let us recall first that, by definition, $\mcV^{bad}$ contains
  $\mcV^{\Delta}$, and therefore the sheaves $\mcR^*_{\lambda,\bfb}$ are
  geometrically irreducible for $\bfb\not\in\mcV^{bad}$.

  First assume that $\lambda_1=0$ and $\lambda_2 \neq 0$ (the case
  $\lambda_2=0$ and $\lambda_1\not=0$ is of course similar). We will
  show that the generic ranks of the two sheaves $\mcR^*_{0,\bfb}$ and
  $\mcR^*_{\lambda_2,\bfb}$ are different, which of course implies
  that they are not geometrically isomorphic. By
  Lemma~\ref{lm-lisseness} (2), (3), we have
$$
\rank \mcR_{\lambda_2,\bfb}=k^4>k^3= \rank \mcR_{0,\bfb}.
$$
Applying Lemma~\ref{lm-weight-computation} (2) exactly as in the proof
of Corollary~\ref{cor-adjusted-local-monodromy}, we see that the part
of weight $<1$ of $\mcR_{\lambda_2,\bfb}$ has rank
$$
\dim \mcK_{\lambda=\lambda_2, \eta,\bfb}^{I(0)}
$$
while (by the same argument), the  part of weight $<1$ of
$\mcR_{0,\bfb}$ has rank
$$
\dim
\mcK_{\lambda=0,\eta,\bfb}^{I(0)}+\dim\mcK_{\lambda=0,\eta,\bfb}^{I(\infty)}
\geq \dim \mcK_{\lambda=0,\eta,\bfb}^{I(0)}.
$$
However the local monodromy representation of
$\mcK_{\lambda,\eta,\bfb}$ at $0$ is independent of $\lambda$, because
$\mcK$ is a tensor product of Kloosterman sheaves defined
independently of $\lambda$ with $\sheaf{L}_{\psi(\lambda s)}$, which
is lisse at $0$. So the rank of the inertia invariants is independent
of $\lambda$ also.

Hence there is a larger ``drop'' in the generic rank when passing from
$\mcR_{\lambda,\bfb}$ to $\mcR^*_{\lambda,\bfb}$ when $\lambda$ is
$0$, and we deduce that
$$
\rank \mcR^*_{\lambda_2,\bfb}>\rank \mcR^*_{0,\bfb}.
$$

% By
%   Lemma~\ref{Buchstaber}, there exists $r_0$ with $r_0+b_i\not=0$ for
%   all $i$ such that $\mcR_{0,\bfb}^*$ is not lisse at $r_0$. On the
%   other hand, $\mcR_{\lambda_2,\bfb}^*$ is lisse at $r_0$ by
%   Lemma~\ref{lm-lisseness}. Hence these two sheaves cannot be
%   geometrically isomorphic. 

  Now assume that $\lambda_1$ and $\lambda_2$ are non-zero and
  distinct. % If $k$ is even or if $\lambda_1\not=-\lambda_2$ if $k$ is
  % odd, then 
  Corollary~\ref{cor-hardest-local-monodromy} shows that the wild
  inertia representation of $\mcR_{\lambda_1,\bfb}^*$ at $\infty$ is
  the multiplicative translate by $\lambda_2/\lambda_1$ of the wild
  inertia representation of $\mcR_{\lambda_2,\bfb}^*$, which is itself
  isomorphic to the wild inertia representation of
$$
\bigoplus_{\alpha\in S_k}[\times \alpha\lambda_2^{-1}]^*\mcH_{k-1}.
$$
Since the wild inertia representation of $\mcH_{k-1}$ is not
isomorphic to any non-trivial multiplicative translate of itself by
Lemma \ref{lm-hypergeo-2}, (3), these local monodromy representations
are therefore isomorphic only if $S_k=(\lambda_2/\lambda_1 )S_k$.  By
Lemma~\ref{lm-sk-geometry}, (2), this is only possible if
$\lambda_2=\lambda_1$ or if $\lambda_2=-\lambda_1$, and that second
case occurs only if $k$ is odd.

Thus it only remains to deal with the case when $k$ is odd,
$\lambda_1=-\lambda_2$, and both are non-zero. We assume that we have
a geometric isomorphism
\begin{equation}\label{eq-one-iso}
  \mcR_{\lambda_1,\bfb}^*\simeq \mcR_{-\lambda_1,\bfb}^*
\end{equation}
for some $\lambda_1\not=0$, and proceed to derive a
contradiction. This isomorphism, and the fact that
$\mcR_{\lambda_1,\bfb}^*$ and $\mcR_{-\lambda_1,\bfb}^*$ are
geometrically irreducible, implies that
$H^2_c( \Aa^1_{\bFq} -\{-\bfb\}, \mcR_{\lambda_1, \bfb}^* \otimes
\mcR_{-\lambda_1, \bfb}^{* \vee})$ is one-dimensional, where we use
$\{-\bfb\}$ to denote the closed set $\{-b_1,-b_2,-b_3,-b_4\}$.  This
cohomology group is the stalk at $\lambda_1$ of the constructible
$\ell$-adic sheaf
$$
\sheaf{G}=R^2p_!(\mcR^*_{\bfb}\otimes g^*\mcR_{\bfb}^{*\vee})(1)
$$
where $p\,:\, (\Aa^1-\{-\bfb\}) \times\Gg_m\to \Gg_m$ is the
projection $(r,\lambda)\mapsto \lambda$ and $g$ is the automorphism
$(r,\lambda)\mapsto (r,-\lambda)$ of $(\Aa^1-\{-\bfb\}) \times\Gg_m$.
\par
By Deligne's semicontinuity theorem~\cite{LaumonSMF}, the sheaf
$\sheaf{G}$ is lisse on $\Gg_m$: indeed, the Swan conductors are
constant functions of $\lambda$ on the ramification divisors, by an
argument similar to that at the end of the proof of
Proposition~\ref{pr-nonzero-irreducibility}.  Hence, since the stalk
of $\sheaf{G}$ at $\lambda_1\in\Gg_m$ is one-dimensional, the sheaf
$\sheaf{G}$ is lisse of rank $1$ on $\Gg_m$.
\par
%%%%%%%%%%%%%%%%%
By Verdier duality (see, e.g.,~\cite[\S 1, (1.1.3)]{katz-laumon} and
the references there, and the fact that the dual of a (shifted) lisse
sheaf is the shifted dual lisse sheaf), the dual of the sheaf
$\sheaf{G}$ is isomorphic to
$$
R^0p_*(\mcR^{*,\vee}_{\bfb}\otimes g^*\mcR^*_{\bfb}) \simeq
p_*(\mathcal{H}om(\mcR^{*}_{\bfb},g^*\mcR^*_{\bfb}))
$$
and the latter is therefore lisse on $\Gg_m$. We have a natural
adjunction morphism
$$
p^*p_*\mathcal{H}om(\mcR^{*}_{\bfb},g^*\mcR^*_{\bfb})\to
\mathcal{H}om(\mcR^{*}_{\bfb},g^*\mcR^*_{\bfb}).
$$
We tensor with $\mcR_{\bfb}^*$, and compose with the canonical
morphism $\mcR_{\bfb}^*\otimes
\mathcal{H}om(\mcR^{*}_{\bfb},g^*\mcR^*_{\bfb})\to g^*\mcR^*_{\bfb}$
to deduce a morphism
$$
\phi\colon \mcR^*_{\bfb}\otimes p^*\sheaf{G}^{\vee}\to
g^*\mcR^*_{\bfb}.
$$
The restriction to the geometric generic fiber $p^{-1} (\bar{\eta})$
of $\Aa^1-\{-\bfb\} \times \Gg_m$ of $p^*\sheaf{G}^{\vee}$ is 
$$ 
(p^*p_*\mathcal{H}om(\mcR^{*}_{\bfb},g^*\mcR^*_{\bfb})) |
p^{-1}(\bar{\eta}) 
= p^* ( p_*\mathcal{H}om(\mcR^{*}_{\bfb},g^*\mcR^*_{\bfb}) |
\bar{\eta}),
$$ 
which is
$$ (p_*\mathcal{H}om(\mcR^{*}_{\bfb},g^*\mcR^*_{\bfb})) _{\bar{\eta}}
= \Gamma(p^{-1} (\bar{\eta}), \mathcal{H}om(R_{\bfb}^*, g^*
R_{\bfb}^*)| p^{-1} (\bar{\eta}))
$$ 
viewed as a constant sheaf.

The restriction to the geometric generic fiber of the previously described morphism is a natural homomorphism
$$
\mcR^*_{\bfb}| p^{-1} (\bar{\eta})\otimes \Gamma(p^{-1} (\bar{\eta}),
\mathcal{H}om(R_{\bfb}^*, g^* R_{\bfb}^*)| p^{-1} (\bar{\eta})) \to
g^*\mcR^*_{\bfb} | p^{-1} (\bar{\eta})
$$
Specifically, we can describe this as the map that sends a section
(say $s$) of $\mcR^*_{\bfb}$ over an open subset of
$p^{-1} (\bar{\eta})$ and a global section (say $f$) over
$p^{-1} (\bar{\eta})$ of the sheaf of homomorphisms from
$\mcR^*_{\bfb}$ to $g^* \mcR_{\bfb}^*$ to the image $f(s)$. This is
because the adjunction
$p^*p_*\mathcal{H}om(\mcR^{*}_{\bfb},g^*\mcR^*_{\bfb})\to
\mathcal{H}om(\mcR^{*}_{\bfb},g^*\mcR^*_{\bfb})$ and the natural
morphism
$ \mcR^*_{\bfb,\bar{\eta}}\otimes
\mathcal{H}om(\mcR^*_{\bfb,\bar{\eta}}, g^*\mcR^*_{\bfb,\bar{\eta}})
\to g^*\mcR^*_{\bfb,\bar{\eta}} $ commute with restriction to the
generic point, and at the generic point we can unpack the definition
to see that we get this natural morphism.

This morphism is nontrivial as long as
$\Gamma(p^{-1} (\bar{\eta}), \mathcal{H}om(\mcR_{\bfb}^*, g^*
\mcR_{\bfb}^*)| p^{-1} (\bar{\eta})) $ is nonzero, as any nonzero section
must correspond to a homomorphism that is nontrivial on some open
set. The space of global sections is indeed nontrivial because we saw
it is isomorphic to the stalk of $\sheaf{G}$ at $\bar{\eta}$, which
is one-dimensional.

Hence $\phi_{\bar{\eta}}$ is nonzero on the geometric generic fiber.
Because $ \mcR^*_{\bfb}$ and $g^*\mcR^*_{\bfb}$ are geometrically
irreducible lisse sheaves, and $p^*\sheaf{G}^{\vee}$ is
one-dimensional, this implies that $\phi_{\bar{\eta}}$ is an
isomorphism. Hence $\phi$ is a geometric isomorphism on any open dense
set $U$ on which $g^*\mcR^*_{\bfb}$, $p^* \sheaf{G}$, and
$\mcR^*_{\bfb}$ are lisse.
%%%%%%%%%%%%%%%%%
% \par
% Furthermore, $\sheaf{G}$ has generic rank $1$, since the sheaves
% $\mcR^*_{\lambda,\pm\bfb}$ are geometrically irreducible. By
% construction, a section of $\sheaf{G}$ at the generic point
% $\bar{\eta}$ of $\Gg_m$ gives a monodromy coinvariant of
% $\mcR^*_{\bfb}\otimes g^*\mcR_{\bfb}^{*\vee}$ on the generic fiber of
% $p$.  Because $\mcR^*_{\bfb}\otimes g^*\mcR_{\bfb}^{*\vee}$ is pure,
% hence geometrically semisimple, a section of $\sheaf{G}$ is also a
% monodromy invariant and hence defines a map from the stalk of
% $g^* \mcR_{\bfb}^*$ at the generic point $\bar{\eta}'$ of
% $\Aa^1\times\Gg_m$ to the stalk of $\mcR^*_{\bfb}$ at
% $\bar{\eta}'$. Therefore we get a map
% $$
% \phi_{\bar{\eta}'}\colon (g^*\mcR^*_{\bfb}\otimes
% p^*\sheaf{G})_{\bar{\eta}'} \to \mcR^*_{\bfb,\bar{\eta}'}.
% $$
% This map commutes with the monodromy group, and hence extends to a
% homomorphism
% $$
% \phi\colon (g^*\mcR^*_{\bfb}\otimes p^*\sheaf{G})|U \to
% \mcR^*_{\bfb}|U
% $$
% for any open dense set $U$ on which $g^*\mcR^*_{\bfb}$,
% $p^* \sheaf{G}$, and $\mcR^*_{\bfb}$ are lisse.  
\par
We have seen that $\sheaf{G}$ is lisse on $\Gg_m$, and we know that
$\mcR^*_{\bfb}$ is lisse on the complement of the divisors $\lambda=0$
and $r=-b_i$, and the same holds for $g^* \mcR^*_{\bfb}$. So the
homomorphism $\phi$ is a geometric
isomorphism % extending $\phi_{\bar{\eta}}$ exists
on the complement $U$ of these divisors.

% Because $\phi_{\bar{\eta}'}$ is non-zero and both sheaves
% $g^*\mcR^*_{\bfb}\otimes p^*\sheaf{G}$ and $\mcR^*_{\bfb}$ are
% geometrically irreducible where they are lisse, the map $\phi$ is a
% geometric isomorphism.

% In fct, $\phi_{\bar{\eta}}$ extends to any open set  We
% saw earlier that $\sheaf{G}$ is lisse away from $\lambda=0$. Similarly
% $\mcR^*_{\bfb}$ is lisse away from $\lambda=0$ and $r=-b_i$, and the
% same for $g^* \mcR^*_{\bfb}$. So the isomorphism exists away from
% those divisors.

% \par
% By Lemma \ref{diophantine} and \eqref{Rcorrelation} the sheaf
% $\mcR_\bfb^*$ is geometrically irreducible. Let $g$ be the isomorphism
% $g(r,\lambda)=(r,-\lambda)$. Then $g^*\mcR^*_{\bfb}$ is
% % a pullback along an isomorphism and hence is
% also geometrically irreducible. As $\mcR_\bfb^*$ and
% $g^* \mcR_{\bfb}^*$ are both geometrically irreducible, we deduce that
% there exists a rank $1$ sheaf $\sheaf{G}$ on $\Gg_m$ such that
% $$
% \mcR^*_{\bfb}\simeq g^*\mcR^*_{\bfb}\otimes p^*\sheaf{G}
% $$
% and hence, 

Our next goal is to prove that $\sheaf{G}$ is in fact geometrically
trivial. For this, we now specialize the $r$ variable. For $r$ fixed
but generic, we deduce from the above that $\mcR^*_{r,\bfb}$ is
geometrically isomorphic to $(g^*\mcR^*)_{r,\bfb} \otimes \sheaf{G}$.
However, $\mcR_{r, \bfb}$ is the restriction to $\Gg_m$ of the Fourier
transform with respect to $\psi$ of the sheaf
$$
\sheaf{F}=\bigotimes_{1\leq i\leq 2} [s\mapsto
(r+b_i)s]^*\HYPK_k\otimes [s\mapsto (r+b_{i+2})s]^*\HYPK_k^{\vee}
$$
on $\Aa^1$ with variable $s$. The sheaf $\sheaf{F}$ is lisse on
$\Gg_m$, with unipotent tame local monodromy at $0$, and with all
breaks $\leq 1/k$ at $\infty$. By Fourier transform theory it follows
that $\mcR_{r,\bfb}$ is lisse on $\Gg_m$ (see~\cite[Lemma 7.3.9
(3)]{ESDE}), with unipotent tame local monodromy at $\infty$
(\cite[Th. 7.4.1 (1), Th. 7.4.4 (3)]{ESDE}) and with all breaks
$\leq 1/(k-1)$ at $0$ (see~\cite[Th. 7.5.4 (5)]{ESDE}; note the
integers $c$, $d$ in the assumption of that reference are not
necessarily coprime).
\par
Pulling-back by $g$, we see that the sheaf $g^*\mcR_{r,\bfb}$ has the
same ramification properties, and hence also
$g^*\mcR^*_{r,\bfb}$. From this and the isomorphism
$\mcR^*_{r,\bfb}\simeq (g^*\mcR^*)_{r,\bfb}\otimes \sheaf{G}$, it
follows that $\sheaf{G}$ must also be lisse on $\Gg_m$, tame with
unipotent monodromy at $\infty$, and with (unique) break
$\leq 1/(k-1)$ at $0$. But since a rank $1$ sheaf has an integral
break, this means that $\sheaf{G}$ is also tame at $0$, and since
unipotent monodromy in rank $1$ is trivial, this means that
$\sheaf{G}$ is lisse at $\infty$. However, a sheaf on $\Pp^1$ that is
lisse on $\Pp^1-\{0\}$ and tamely ramified at $0$ is geometrically
trivial, so $\sheaf{G}$ is geometrically trivial.
\par
We have therefore proved that $\mcR^*_{\bfb}$ and $ g^*\mcR^*_{\bfb}$
are geometrically isomorphic. But this is impossible, since this would
imply that
$$
\limsup_{d\ra+\infty}
\Bigl|
\frac{1}{q^d}\frac{1}{q^{2d}} \sumsum_{ \lambda, r\in \Fqd} \bfR ( r,
\lambda,\bfb, \Fqd) \overline{ \bfR ( r, -\lambda,\bfb; \Fqd) }\Bigr|
>0
%% + O(q^{3d/2} )
$$
(since
$\bfR(r,\lambda,\bfb;\Fqd)=t_{\mcR^*}(r,\lambda,\bfb;\Fqd)+O(1)$ where
$\mcR^*_{\bfb}$ is lisse) and this contradicts the
estimate~(\ref{Rnoncorrelation}) for odd-rank Kloosterman sheaves.
\end{proof}

We can now finally recapitulate and prove
Theorem~\ref{thmirreducibility}.

\begin{proof}[Proof of Theorem \ref{thmirreducibility}]
  %\marginpar{rating 10/10} 
  Let $\mcV^{bad}$ be the subvariety in
  Proposition~\ref{pr-degree-bound}. It is defined over
  $\Zz[1/\ell]$ and hence its degree is bounded independently of
  $q$. It is also stable under $\bfb\mapsto\tilde{\bfb}$ by
  construction. For $q$ large enough, let
  $\bfb\not\in\mcV^{bad}(\Fq)$. Then $\mcR^*_{0,\bfb}$ is
  geometrically irreducible (by Proposition~\ref{pr-degree-bound}) and
  $\mcR^*_{\lambda,\bfb}$ is geometrically irreducible for all
  $\lambda\not=0$ if $q$ is large enough by
  Proposition~\ref{pr-nonzero-irreducibility} since $\mcV^{bad}$ is
  defined to contain $\mcV^{\Delta}$.
\par
The second part of Theorem~\ref{thmirreducibility} is given by
Lemma~\ref{lm-partial-distinct}, and the third by
Proposition~\ref{pr-betti-bound}.%% Lemma~\ref{lm-cond}.
\end{proof}

%%%%%%%%%%%%% irred end %%%%%%
%%%%%%%%%%%%%%%%%%%%%%%%%%%%%%

\section{Functions of triple divisor type in arithmetic progressions
  to large moduli}\label{secternary}

In this section, we prove Theorem \ref{th-lfone}. Let $f$ be a
holomorphic primitive cusp form of level $1$ and weight $k$. We denote
by $\lambda_f(n)$ the Hecke eigenvalues, which are normalized so that
we have $|\lf(n)|\leq d_2(n)$.  The method will be very similar to
that used in~\cite{FKM3} and some technical details will be handled
rather quickly as they follow very closely the corresponding steps for
the triple divisor function.
\par
For any prime $q$ and integer $a$ coprime to $q$, we denote
$$
E(\lf\star 1,x;q,a):=\sum_{\stacksum{n\leq x}{n\equiv a\mods{q}}}
(\lf\star 1)(n)-\frac{1}{\varphi(q)} \sum_\stacksum{n\leq
  x}{(n,q)=1}(\lf\star 1)(n).
$$

\subsection{Preliminaries}

We first recall several useful results. We begin with stating the
estimates for linear and bilinear forms involving the
hyper-Kloosterman sums $\hypk_3(a;q)$.

\begin{proposition}\label{CombinedTheorem}
  Let $q$ a prime number,
  $M,N\in [1,q]$, $\mcN$ an interval of length $N$, and
  $(\alpha_m)_m$, $(\beta_n)_n$ two sequences supported respectively
  on $[1,M]$ and $\mcN$. Let $a$ be an integer coprime to $q$.
\par
Let $V$ and $W$ be smooth functions compactly supported in the
interval $[1,2]$ and satisfying
\begin{equation}\label{Wbound2}
  V^{(j)}(x), W^{(j)}(x)\ll_j Q^j
\end{equation}
for some $Q\geq 1$ and for all $j \geq 0$.
\par
 Let $\eps>0$ be given.
\par
\emph{(1)} There exists an absolute constant $C_1\geq 0$ such that we
have
\begin{equation}\label{PV}
  \sumsum_{m,n\geq 1}
  \lf(m)V\Bigl(\frac{m}{M}\Bigr)W\Bigl(\frac{n}{N}\Bigr)  
  \Kl_3(amn;q)\ll q^\eps Q^{C_1}
  MN\Bigl(\frac{1}q+\frac{q^{1/2}}N\Bigr).
\end{equation}
and 
\begin{equation}\label{FKM1}
  \sum_{m} \lf(m)V\Bigl(\frac{m}{M}\Bigr)   \Kl_3(am;q)\ll q^\eps Q^{C_1}
  M\Bigl(\frac1{q^{1/8}}+\frac{q^{3/8}}{M^{1/2}}\Bigr).
\end{equation}
\par
\emph{(3)} We have
\begin{equation}\label{typeIIPV}
  \sumsum_{m\leq M,\,n\in\mcN}{\alpha_m}\beta_n \Kl_3(amn;q)\ll q^{\eps}
  \| \alpha \|_2 \| \beta \|_2(MN)^{1/2}  
  \Bigl(\frac{1}{M^{1/2}}+\frac{q^{1/4}}{N^{1/2}}\Bigr).
\end{equation} 
\par
\emph{(3)} If
$$
1\leq M\leq N^2,\quad N<q,\quad MN\leq q^{3/2},
$$ 
we have
\begin{equation}\label{typIsmooth}
  \sumsum_{m,n\geq 1} \lf(m)V\Bigl(\frac{m}{M}\Bigr)W\Bigl(\frac{n}{N}\Bigr)   
  \Kl_3(amn;q)\ll q^\eps Q^{C_1}
  MN\Bigl(\frac{q^{1/4}}{M^{1/6}N^{5/12}}\Bigr).	
\end{equation}
\par
In all estimates, the implied constant depends only on $\eps$.
\end{proposition}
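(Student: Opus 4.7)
The plan is to derive the four bounds by combining three standard ingredients with the main theorems of this paper: Deligne's pointwise bound $|\Kl_3(a;q)|\leq 3$, Voronoi summation on $\GL_2$ applied to the sums weighted by $\lambda_f$, and the bilinear estimates of Theorems~\ref{thmtypeII} and~\ref{thmtypeI} (together with the smooth-coefficient version of \cite[Th.~1.16]{FKM2}). Partial summation and Mellin inversion will be used freely to exchange smooth weights with indicators of dyadic intervals, at the cost of a factor $q^{\eps}Q^{C_1}$ for an absolute constant $C_1$.

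The bound \eqref{typeIIPV} is the classical P\'olya--Vinogradov estimate for bilinear sums with trace functions. I would complete the sum in $n$ by writing $1_{\mcN}$ through additive characters $e_q(hn)$, so that the inner sum becomes a sum of $\hat{\Kl}_3(\cdot)$ against $\alpha_m \beta_n$ followed by a Cauchy--Schwarz in $m$; the two terms $M^{-1/2}$ and $q^{1/4}N^{-1/2}$ arise from the diagonal and from the Fourier transform of $\Kl_3$ being bounded by $q^{1/2}$ (itself a consequence of Deligne). This is a special case of \eqref{PVII} of Remark~\ref{remnontrivialII}. The bound \eqref{PV}, with two smooth weights and an extra factor $\lambda_f(m)$, follows the same completion procedure on both $m$ and $n$; the smoothness of $V$ and $W$ together with the bound $|\lambda_f(m)|\leq d_2(m)$ turns the dual sums into short sums where Deligne's bound for $\Kl_3$ produces the saving $q^{-1}$ on the diagonal and the saving $q^{1/2}N^{-1}$ off-diagonal.

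The new input \eqref{typIsmooth} is a direct consequence of Theorem~\ref{thmtypeI}. I would set $\alpha_m=\lambda_f(m)V(m/M)$ and replace the smooth weight $W(n/N)$ by indicators of dyadic intervals using partial summation (absorbing the loss in $q^{\eps}Q^{C_1}$). Deligne's bound gives $\|\uple{\alpha}\|_1 \ll_{\eps} q^{\eps} M$ and $\|\uple{\alpha}\|_2\ll_{\eps}q^{\eps}M^{1/2}$, so Theorem~\ref{thmtypeI} yields
\[
B([\times a]^*\Kl_3,\uple{\alpha},1_{\mcN}) \ll q^{\eps} M^{1/2}\cdot M^{1/4}N\cdot (M^2N^5/q^3)^{-1/12} = q^{\eps} MN\cdot q^{1/4} M^{-1/6}N^{-5/12},
\]
matching the claim; the constraints $M\leq N^2$, $N<q$, $MN\leq q^{3/2}$ are precisely those required by Theorem~\ref{thmtypeI}.

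The estimate \eqref{FKM1} is the most delicate: here there is only one variable but with $\lambda_f$-weighting. My approach is to apply the Voronoi summation formula on $\GL_2$ to
$\sum_m \lambda_f(m) V(m/M) \Kl_3(am;q)$, transforming it into a dual sum of the form $M q^{-1}\sum_h \lambda_f(h) \widetilde V(hM/q^2) \mathcal{T}(h,a;q)$ where $\mathcal{T}$ is a $\Kl_3$-type correlation sum, which after a change of variables can be expressed as a smooth bilinear sum with variables of total length $q^2/M$. Applying \cite[Th.~1.16]{FKM2} (the smooth-smooth bilinear bound for trace functions) to this dual sum produces the two terms $q^{-1/8}$ (from the type~II saving) and $q^{3/8}M^{-1/2}$ (from the trivial/P\'olya--Vinogradov range), completing the proof. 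The main obstacle is the careful bookkeeping of the Voronoi formula and ensuring the resulting sum is still controlled by a bountiful trace function of bounded conductor, but this has essentially been carried out in \cite{FKM3} for the ternary divisor case, and our present setting differs only by replacing the constant coefficients $1$ with the Hecke eigenvalues $\lambda_f$, which is handled identically since the Voronoi formula for $\lambda_f$ has the same analytic shape as that for $d_2$.
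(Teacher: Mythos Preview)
Your treatment of \eqref{PV}, \eqref{typeIIPV} and \eqref{typIsmooth} is fine and matches the paper: completion/Poisson in $n$ for \eqref{PV}, the Cauchy--Schwarz plus completion argument behind \cite[Thm.~1.17]{FKM2} for \eqref{typeIIPV}, and Theorem~\ref{thmtypeI} after summation by parts for \eqref{typIsmooth}.

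The gap is in your argument for \eqref{FKM1}. The paper simply quotes \cite[Thm.~1.2]{FKM1}, whose proof does \emph{not} proceed by Voronoi followed by a smooth--smooth bilinear bound. It embeds $f$ in a spectral family via the Petersson/Kuznetsov formula with an amplifier, reducing to correlation sums of the shape $\sum_x \Kl_3(ax;q)\Kl_2(mx;q)$ that are handled by the Riemann Hypothesis over finite fields; the exponent $1/8$ is exactly what the amplifier optimisation produces. Your route has two problems. First, applying Voronoi to $\sum_m \lambda_f(m)V(m/M)\Kl_3(am;q)$ does not yield a bilinear sum in two smooth variables: after expanding $\Kl_3$ into additive characters and dualising, the output is again a \emph{single} sum over $m'\ll q^2/M$ weighted by $\lambda_f(m')$ against a new trace function (essentially the ``Voronoi transform'' $\Kl_2\star\Kl_3$), so \cite[Th.~1.16]{FKM2} is not applicable as stated. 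Second, even if one manufactures bilinear structure, the saving from \cite[Th.~1.16]{FKM2} in the relevant range is not $q^{-1/8}$; that particular exponent is a signature of the amplification method and does not fall out of the smooth bilinear bound. So for \eqref{FKM1} you should either cite \cite[Thm.~1.2]{FKM1} directly, or sketch the actual amplification argument rather than the Voronoi-plus-bilinear route.
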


\begin{proof}
  The bound \eqref{PV} is an instance of the completion method
%% P\'olya-Vinogradov method, 
  and follows from an application of the Poisson summation formula to
  the sum over $n$, using the fact that
$$
\widehat{\Kl}_3(u)\ll 1,\quad \widehat{\Kl}_3(0)=-\frac{1}{p^{3/2}}
$$
(the former because $\Kl_3(\cdot;q)$ is the trace function of a
Fourier sheaf modulo $q$, and the latter by direct computation).
\par
The bounds \eqref{FKM1} and \eqref{typeIIPV} are special cases of
\cite[Thm. 1.2]{FKM1} and \cite[Thm. 1.17]{FKM2}. The bound
\eqref{typIsmooth} is a consequence of Theorem~\ref{thmtypeI} (for
$c=1$) after summation by parts.
\end{proof}

\begin{proposition}\label{fcteqn}
  Let $q$ be a prime number.  Let $V,W$ be two smooth functions
  compactly supported on $]0,+\infty[$, and let $K:\Zz\lra\Cc$ be any
  $q$-periodic arithmetic function.
\par 
We have
\begin{align*}
  \sumsum_{m,n\geq 1}K(mn)\lambda_{f}(m)V(m)W(n)=
  &\frac{\what K(0)}{q^{1/2}}\sumsum_{m,n\geq 1}\lambda_{f}(m)V(m)W(n)\\
  &+\Bigl( K(0)-\frac{\what K(0)}{q^{1/2}}\Bigr)
    \sumsum_{m,n\geq 1}\lambda_{f}(m)V(m)W({qn})\\
  &+\frac{1}{q^{3/2}}\sum_{m,n\geq 1}\tilde K(mn)
    \lambda_{f}(m)\check V\Bigl(\frac{m}{q^2}\Bigr)\hat W\Bigl(
    \frac{n}{q}\Bigr),
\end{align*}
where $\hat W$ denotes the Fourier transform of $W$, $\check{V}$ is
the weight $k$ Bessel transform given by
\begin{equation}\label{Besseldef}
  \check V(x)=2\pi i^k\int_0^\infty V(t)J_{k-1}(4\pi\sqrt{xt})dt,	
\end{equation}
and
% $$\hat K(m)=\frac{1}{q^{1/2}}\sum_{u\mods q}K(u)e_q(mu)$$
%  denote the Fourier transform of $K$ over $\Fq$ and
$$
\widetilde K(m)=\frac{1}{q^{1/2}}\sum_{(u,q)=1}K(u)\Kl_3(mu;q).
$$
\end{proposition}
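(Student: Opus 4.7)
The plan is to proceed by a standard sequence of Fourier-theoretic steps: expand $K$ into additive characters modulo $q$, apply Voronoi summation to the $m$-sum and Poisson summation to the $n$-sum, and finally recognise the resulting Kloosterman combination as the one hidden inside $\tilde K$.

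First I would Fourier-expand $K(r)=q^{-1/2}\sum_{h\bmod q}\hat K(h)e_q(hr)$, which turns the left-hand side into $q^{-1/2}\sum_h \hat K(h)\,T_h$ with
\[
T_h := \sum_{m,n\geq 1}\lf(m)V(m)W(n)\,e_q(hmn).
\]
The $h=0$ contribution is, by inspection, the first main term. For $h\neq 0$ I would split $T_h=T_h^{(q\mid n)}+T_h^{(q\nmid n)}$. When $q\mid n$ the exponential is trivial, so $T_h^{(q\mid n)}$ is independent of $h$; using the orthogonality identity $\sum_h\hat K(h)=q^{1/2}K(0)$, the weighted sum of $T_h^{(q\mid n)}$ for $h\neq 0$ reproduces exactly the second main term $(K(0)-q^{-1/2}\hat K(0))\sum\lf(m)V(m)W(qn)$.

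For the residual piece ($h\neq 0$, $q\nmid n$) I would apply the additive-twist Voronoi formula for the level one weight $k$ holomorphic eigenform $f$: since $\gcd(hn,q)=1$,
\[
\sum_m \lf(m)V(m)e_q((hn)m)=\frac{1}{q}\sum_m \lf(m)\check V(m/q^2)\,e_q(-\overline{hn}\,m),
\]
which introduces the Bessel transform $\check V(m/q^2)$ of Definition~\eqref{Besseldef}. Substituting this and pushing the remaining $n$-sum inside, I would then split $n$ into residue classes modulo $q$ and apply Poisson, $\sum_{n\equiv a\,(q)} W(n)=q^{-1}\sum_{\ell\in\Zz}\hat W(\ell/q)e_q(a\ell)$. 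After the harmless change of variables $a\mapsto \bar h a$, the inner sum over the units of $\Zz/q\Zz$ collapses into a classical Kloosterman sum of shape $S(\bar h\ell,-m;q)$.

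The only delicate step is to recognise the resulting double sum
\[
\sum_{h\neq 0}\hat K(h)\,S(\bar h\ell,-m;q)
\]
as $q$ times $\tilde K(m\ell)$, up to the conventions on signs and inverses. Unpacking $\hat K(h)$ and the Kloosterman sum and carrying out the inner $h$-summation turns the combined phase into the $\Kl_3$-kernel, by virtue of the multiplicative-convolution description $\Kl_3=q^{-1/2}(e_q(\cdot)\star\Kl_2(\cdot;q))$ that underlies the very construction of hyper-Kloosterman sums recalled in Section~\ref{sec-kl-sheaves}. Feeding this identification back into the previous two steps produces exactly the third ``dual'' term in the statement. The main obstacle is precisely this last piece of bookkeeping: tracking inverses modulo $q$, signs of arguments, and reconciling the Poisson range $\ell\in\Zz$ with the $n\geq 1$ convention in the target (the ``missing'' $\ell\leq 0$ values are to be absorbed either by a symmetry of $\hat W$ implicit in the statement or by being reassigned to the Ramanujan-type contribution handled in the $q\mid n$ branch). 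Everything else is a routine application of Voronoi, Poisson, and orthogonality, and the two ``main terms'' in the proposition appear as exactly the degenerate contributions coming from $h=0$ and from $q\mid n$ respectively.
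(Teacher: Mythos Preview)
Your approach is essentially the one in the paper: Fourier-expand $K$, apply Voronoi to the $m$-sum and Poisson to the $n$-sum, then recognise the $\Kl_3$ combination. The only cosmetic difference is the order of the first two moves: the paper splits on $q\mid n$ versus $(n,q)=1$ \emph{before} expanding $K$ in characters, whereas you expand first and split afterwards. The resulting terms match line by line, and both routes lead to the identity $\sum_{(u,q)=1}\hat K(u)\Kl_2(m\bar u;q)=\sum_{(u,q)=1}K(u)\Kl_3(mu;q)$ that packages the dual term.

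One clarification on what you flag as ``the main obstacle'': your worry about the Poisson range $\ell\in\Zz$ versus the stated $n\geq 1$ is not a genuine issue. The paper's own proof concludes with the $n$-sum running over all integers (coprime to $q$); the ``$n\geq 1$'' in the statement is loose notation, not a claim that negative frequencies vanish. There is no symmetry of $\hat W$ to invoke and nothing to reassign to the $q\mid n$ branch; in the application (Section~\ref{secternary}) one simply truncates $\hat W(n/q)$ using its decay, and the sign of $n$ is irrelevant there.
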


In particular, if $a$ is an integer coprime with $q$ and
$K(n)=\delta_{n=a\mods q}$, then we have
$$
K(0)=0,\quad \hat K(0)=\frac{1}{q^{1/2}},\quad \widetilde
K(m)=\frac{1}{q^{1/2}}\Kl_3(am;q)
$$
by direct computations.

\begin{proof}
  We split the sum  into
\begin{equation}\label{eq-split}
  \sum_{q|n}\Bigl(\sum_m
  \cdots\Bigr)+\sum_{(n,q)=1}\Bigl(\sum_m\cdots\Bigr).
\end{equation}
\par
The contribution of those $n$ divisible by $q$ is
$$
K(0)\sum_{m,n\geq 1}\lambda_{f}(m)V(m)W({qn}).
$$
\par
For those $n$ coprime to $q$, we apply the Fourier inversion formula
$$
K(mn)= \frac{\what
  K(0)}{q^{1/2}}+\frac{1}{q^{1/2}}\sum_\stacksum{u\mods
  q}{(u,q)=1}\what K(u)e\Bigl(-\frac{umn}{q}\Bigr).
$$
The contribution of the first term is
$$
\frac{\what K(0)}{q^{1/2}}\sum_\stacksum{m,n\geq
  1}{(n,q)=1}\lambda_{f}(m)V(m)W(n)=\frac{\what
  K(0)}{q^{1/2}}\Bigl(\sum_{m,n\geq 1}\lambda_{f}(m)V(m)W(n)-
\sum_{m,n\geq 1}\lambda_{f}(m)V(m)W(qn)\Bigr).
$$
\par
For the last term, we apply the Voronoi summation formula to the sum
over $m$: we have
$$
\sum_{m\geq 1}\lf(m)V(m) e\Bigl(-\frac{mnu}{q}\Bigr)=
\frac{1}{q}\sum_{m\geq 1}\lambda_f(m)\check V\Bigl(\frac{m}{q^2}\Bigr)
e\Bigl(\frac{\ov{nu}m}{q}\Bigr)
$$
for each $u$ (see, e.g.,~\cite[Lemma 2.2]{FGKM}).  Therefore, the
total contribution of the second term in~(\ref{eq-split}) equals
$$
\frac{1}{q}\sumsum_\stacksum{m,n\geq 1}{(n,q)=1}\lf(m)\check
V\Bigl(\frac{m}{q^2}\Bigr)W(n)\tilde K(m,n)
$$
with
$$
\tilde K(m,n)=\frac{1}{q^{1/2}}\sum_{(u,q)=1}\what
K(u)e\Bigl(\frac{m\ov{nu}}{q}\Bigr).
$$
\par
We finish by applying the Poisson summation formula to the sum over
$n$: we have
$$
\sum_{(n,q)=1}W(n)\tilde K(m,n) = \frac{1}{q} \sum_{n}\hat
W\Bigl(\frac{n}q\Bigr)
\sum_{(v,q)=1}e\Bigl(\frac{m\ov{uv}+nv}{q}\Bigr)=
\frac{1}{q^{1/2}}\sum_{n}\hat W\Bigl(\frac{n}q\Bigr) \Kl_2(mn\ov u;q)
$$
for each $m$, so that the total contribution becomes
$$
\frac{1}{q^{3/2}}\sumsum_\stacksum{m,n}{(n,q)=1}\tilde
K(mn)\lf(m)\check V\Bigl(\frac{m}{q^2}\Bigr)\hat
W\Bigl(\frac{n}q\Bigr)
$$
where
$$
\tilde{K}(m)=\frac{1}{q^{1/2}}\sum_{(u,q)=1}\hat K(u)\Kl_2(m\ov u;q)=
\frac{1}{q^{1/2}}\sum_{(u,q)=1}K(u)\Kl_3(mu;q).
$$
for any $m$. This gives the formula we stated.
\end{proof}

\subsection{Decomposition of $E(\lf\star 1,x;q,a)$}

Given any $A\geq 1$ as in Theorem \ref{th-lfone}, we fix some $B\geq 1$ sufficiently large (to depend on $A$).  Given $x\geq 2$, we
set
$$
\mcL:=\log x,\quad \Delta=1+\mcL^{-B}.
$$
Arguing as in \cite{FKM3}, we perform a partition of unity on the $m$
and $n$ variables and decompose
$E(\lf\star 1,x;q,a)$ into $O(\log^2 x)$ terms of the form
$$
\tilde{E}(V,W;q,a)=\sum_{mn=a\mods
  q}\lambda_{f}(m)V(m)W(n)-\frac{1}q\sum_{(mn,q)=
  1}\lambda_{f}(m)V(m)W(n)
$$
where $V,W$ are smooth functions satisfying 
\begin{gather*}
  \supp V\subset [M,\Delta M],\ \supp W\subset [N,\Delta N]\\
  x^jV^{(j)}(x),\ x^jW^{(j)}(x)\ll_j \mcL^{Bj}
\end{gather*}
and where
$$
x\mcL^{-C}\leq MN\leq x
$$
for some $C\geq 0$ large enough, depending on the value of the
parameter $A$ in Theorem \ref{th-lfone}.
\par
Applying Proposition \ref{fcteqn} to the first term, we obtain 
$$
\tilde{E}(V,W;q,a)= \frac{1}{q^{1/2}}\sum_{m,n\geq 1}\Kl_3(mn;q)
\lambda_{f}(m)\check V\Bigl(\frac{m}{q^2}\Bigr)\hat
W\Bigl(\frac{n}{q}\Bigr),
$$
and hence it only remains to prove that 
\begin{equation}\label{wishedbound}
  \frac{1}{q^{1/2}}\sum_{m,n\geq 1}\Kl_3(mn;q)\lambda_{f}(m)
  \check V\Bigl(\frac{m}{q^2}\Bigr)
  \hat W\Bigl(\frac{n}{q}\Bigr)\ll_{A} \frac {MN}q\mcL^{-A}.	
\end{equation}

The following standard lemma describes the decay of the Fourier and
Bessel transforms of $V$ and $W$.

\begin{lemma} 
  Let $V,W$ be as above and let $\check W$, $\hat W$ be their Bessel
  and Fourier transforms as defined in \eqref{Besseldef}. There exists
  a constant $D\geq 0$ such that for any $x>0$, any $E\geq 0$ and any
  $j\geq 0$, we have
\begin{gather}
\label{Vbound}
x^j\check V^{(j)}(x)\ll_{E,f,j} M\mcL^{Bj}\Bigl(\frac{\mcL^{Dj}}{1+xM}
\Bigr)^{E},\\
\label{Wbound} x^j\hat W^{(j)}(x)\ll_{E,j}
N\mcL^{Bj}\Bigl(\frac{\mcL^{Dj}}{1+xN}\Bigr)^{E}.
\end{gather}
\end{lemma}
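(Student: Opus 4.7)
The plan is to prove (\ref{Vbound}) and (\ref{Wbound}) separately, both by integration by parts against the appropriate kernel. The two arguments are entirely standard, and essentially parallel to those used in \cite{FKM3}; I will only outline the key steps.

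For the Fourier-transform bound (\ref{Wbound}), I would differentiate $j$ times under the integral to write
$$
\hat W^{(j)}(x) = \int (-2\pi it)^j W(t)\,e^{-2\pi ixt}\,dt,
$$
so that $\hat W^{(j)}$ is the Fourier transform of the smooth, compactly supported bump $U(t):=(-2\pi it)^j W(t)$. Integrating by parts $\ell = j+E$ times yields
$$
\hat W^{(j)}(x) = \frac{(-1)^\ell}{(2\pi ix)^\ell}\int U^{(\ell)}(t)\,e^{-2\pi ixt}\,dt.
$$
The Leibniz rule combined with the hypothesis $t^r W^{(r)}(t) \ll_r \mcL^{Br}$ gives the pointwise bound $|U^{(\ell)}(t)| \ll_{j,\ell} \mcL^{B\ell}\,t^{j-\ell}$ on $\supp U \subset [N,\Delta N]$. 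Integrating over a support of length $\ll N$ and multiplying by $x^j$ produces a bound of the shape $N\mcL^{B(j+E)}/(xN)^E$, which gives (\ref{Wbound}) on the range $xN \geq 1$ for any $D$ chosen to be a sufficiently large multiple of $B$; on the range $xN \leq 1$ the trivial estimate $|\hat W^{(j)}(x)| \ll_{j} N\cdot N^j$ already suffices, since then $(1+xN)^{-E}$ is of unit order.

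For the Bessel-transform bound (\ref{Vbound}), I would split according to the size of $xM$. When $xM \leq 1$, the elementary bound $J_{k-1}(y) \ll_k y^{k-1}$ together with $\supp V \subset [M,\Delta M]$ yields $|\check V^{(j)}(x)| \ll_{k,j} M^{1-j}\,(xM)^{(k-1)/2}$ (after differentiating inside the integral), which is readily dominated by the right-hand side. When $xM \geq 1$, I would substitute the classical asymptotic expansion
$$
J_{k-1}(y) = y^{-1/2}\bigl(e^{iy}U_+(y) + e^{-iy}U_-(y)\bigr), \qquad y^r U_\pm^{(r)}(y) \ll_r 1,
$$
reducing $\check V(x)$ to oscillatory integrals of the form $x^{-1/4}\int V(t)\,t^{-1/4}\,U_\pm(4\pi\sqrt{xt})\,e^{\pm 4\pi i\sqrt{xt}}\,dt$. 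Integration by parts against the phase $\phi(t)=\pm 4\pi\sqrt{xt}$, whose derivative satisfies $|\phi'(t)| = 2\pi\sqrt{x/t} \gg \sqrt{x/M}$ on $\supp V$, gains a factor $\sqrt{M/x}$ per step; performing $2E$ such steps produces the desired $(xM)^{-E}$ decay. Differentiation $j$ times in $x$ is handled either inside the integral, via the identity $J'_{k-1}(y) = \tfrac12(J_{k-2}(y)-J_k(y))$ which contributes $j$ additional factors of $\sqrt{t/x}$ together with Bessel-function values of nearby orders, or directly on the oscillatory integrals above; either route produces at worst the $\mcL^{Bj}$ factor in the statement.

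The main technical obstacle is the polylogarithmic bookkeeping in the Bessel case: each integration by parts requires differentiating the full prefactor $V(t)\,t^{1/4}\,U_\pm(4\pi\sqrt{xt})$ by Leibniz, so that derivatives of $V$ each contribute a factor of $\mcL^B$ while derivatives of $U_\pm$ and of the phase only contribute harmless polynomial factors in $\sqrt{M/x}$. A sufficiently large choice of the constant $D$, depending only on $B$, absorbs all such losses uniformly in $j$ and $E$. None of the manipulations are original, and for the explicit combinatorial verification I would appeal to the corresponding lemma in \cite{FKM3}.
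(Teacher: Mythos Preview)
The proposal is correct and takes essentially the same approach as the paper: a direct integration-by-parts argument for $\hat W$, and for $\check V$ a split into the regimes $xM\leq 1$ (elementary Bessel bound) and $xM\geq 1$ (asymptotic expansion of $J_{k-1}$ followed by repeated integration by parts against the oscillatory phase). The paper compresses the second step by invoking \cite[Lem.~6.1]{KMV} rather than writing out the stationary-phase computation, but the underlying mechanism is identical to what you describe.
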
 

\begin{proof} 
  By the change of variable $u=4\pi\sqrt{xt}$, we find that
$$
\check V(x)=\frac{i^k}{8\pi}\int_0^{\infty}
\frac{u^2}{x}V\Bigl(\frac{u^2}{16\pi^2x}\Bigr) J_{k-1}(u)\frac{du}{u}=
\frac{i^k}{8\pi\sqrt{x}}\int_0^{\infty}\Bigl(\frac{u^2}{x}\Bigr)^{1/2}
V\Bigl(\frac{u^2}{16\pi^2 x}\Bigr)J_{k-1}(u)du.
$$
Since $J_{k-1}(u)\ll_f (1+u)^{-1/2}$,
we have
$$
\check V(x)\ll \frac{M}{(1+xM)^{1/2}}.
$$
On the other hand, applying \cite[Lem. 6.1]{KMV} we obtain the bound
$$
\check V(x)\ll_{f,j}\frac{M^{1/2}}{x^{1/2}}\frac{(1+|\log
  xM|)\mcL^{O(j)}}{(xM)^{\frac{j-1}2}}(xM)^{1/4}.
$$
In particular if $xM\geq 1$, then by taking $j$ large enough, we see
that $\check V(x)\ll_{f,E}M\mcL^{O(E)}(xM)^{-E}$, which concludes the
proof of \eqref{Vbound} when $j=0$. The general case is similar, and
the proof of \eqref{Wbound} follows similar lines (using easier
standard properties of the Fourier transform).
\end{proof}

Set
$$
M^*=q^2/M\text{ and }N^*=q/N.
$$
Then this lemma shows that, if $\eta>0$ is arbitrarily small, the
contribution to the sum \eqref{wishedbound} of the $(m,n)$ such that
$$
m\geq x^{\eta/2} M^*\text{ or }n\geq x^{\eta/2} N^*
$$
is negligible. Therefore, by \eqref{Vbound} and \eqref{Wbound}, and a
smooth dyadic partition of unity, we are reduced to estimating sums of
the type
$$
S(M',N')=\sumsum_{m,n\geq 1}\lf(m){\Kl_3(amn;q)}V^*(m)W^*(n)
$$
% the following
% bound for $O(\log^2 x)$ sums
% $$\frac{MN}{q^2}S(M',N')\ll_A \frac{x}q\mcL^{-A}\hbox{ or equivalently }
% S(M',N')\ll_A q\mcL^{-A}$$
% for 
% $$S(M',N'):=\sumsum_{m,n\geq 1}\lf(m){\Kl_3(amn;q)}V^*(m)W^*(n)$$
where
$$
1/2\leq M'\leq M^*x^{\eta/2},\quad 1/2\leq N'\leq N^*x^{\eta/2},
$$
and 
$V^*,W^*$ are smooth compactly supported functions with
\begin{gather*}
\supp (V^*)\subset [M',2 M'],\quad \supp(W^*)\subset [N',2 N']
\\
u^jV^*(j)(u),\quad u^jW^*(j)(u)\ll \mcL^{O(j)}
\end{gather*}
for any $j\geq 0$.
Precisely, it is enough to prove that
$$
S(M',N')\ll_A q\mcL^{-A}.
$$
Since the trivial bound for $S(M',N')$ is
$$
S(M',N')\ll M'N'\mcL,
$$
we may assume that
$$
q\mcL^{-A-1}\leq M'N'\leq q^3x^{-1+\eta}.
$$

Let us write
$$
x=q^{2-\delta},\quad M=q^\mu,\quad N=q^\nu,\quad M'=q^{\mu'},\quad
N'=q^{\nu'}
$$
so that
$$
M^*=q^{\mu^*},\quad N^*=q^{\nu^*}
$$
with
$$
\mu^*=2-\mu,\quad \nu^*=1-\nu,\quad \mu'\leq \mu^*+\eta/2,\quad
\nu'\leq \nu^*+\eta/2
$$
and 
$$
\mu+\nu=2-\delta+o(1).
$$
\par
Let us write
$$
S(M',N')=q^{\sigma(\mu',\nu')}.
$$
Then Proposition~\ref{CombinedTheorem} translates to the estimates
$$
\sigma(\mu',\nu')\leq \tau(\mu',\nu')+o(1)
$$
where
\begin{gather}
\label{eq-PV-lin}
\tau(\mu',\nu')\leq \mu'+\nu'+\max(-1,1/2-\nu')
\quad\text{(by~(\ref{PV}))}\\
\label{eq-FKM1-lin}
\tau(\mu',\nu')\leq \mu'+\nu'+\max(-1/8,3/8-\mu'/2)
\quad\text{(by~(\ref{FKM1}))}\\
\label{eq-typeIIPV1}
\tau(\mu',\nu')\leq \mu'+\nu'+\max(-\mu'/2,1/4-\nu'/2)
\quad\text{(by~(\ref{typeIIPV}))}\\
\label{eq-typeIIPV2}
\tau(\mu',\nu')\leq \mu'+\nu'+\max(-\nu'/2,1/4-\mu'/2)
\quad\text{(by~(\ref{typeIIPV}) with $M$, $N$ interchanged)}\\
\label{eq-typeI}
\tau(\mu',\nu')\leq \mu'+\nu'+1/4-\mu'/6-5\nu'/12
\quad\text{(by~(\ref{typIsmooth}), if $0\leq \mu'\leq 2\nu'$)}
\end{gather}
(indeed, note that the conditions $\nu'\leq 1$ and $\mu'+\nu'\leq 3/2$
also required in~(\ref{typIsmooth}) are always satisfied for $\eta$
small enough, since $\mu'+\nu'\leq 3+(2-\delta)(-1+\eta)<\frac32$ and
$\nu'=1-\nu\leq 1$).

We will prove that if $\delta<\frac{1}{26}$ and $\eta$ is small
enough, then we have $\sigma(\mu',\nu')\leq 1-\kappa$, where
$\kappa>0$ depends only on $\delta$ and $\eta$. This implies the
desired estimate. In the argument, we denote by $o(1)$ quantities
tending to $0$ as $\eta$ tends to $0$ or $q$ tends to infinity.

First, since 
$$
\mu'+\nu'-1\leq \mu'+\nu'-\frac1{8}\leq 1+\delta-\frac18+o(1)<1
$$
we may replace \eqref{eq-PV-lin} and \eqref{eq-FKM1-lin} by
\begin{gather}
\label{eq-PV-linbis}
\tau(\mu',\nu')\leq \mu'+\frac{1}{2}
\\
\label{eq-FKM1-linbis}
\tau(\mu',\nu')\leq \frac{\mu'+\nu'}{2}+\frac{\nu'}{2}+\frac{3}{8}.
\end{gather}

% \begin{equation}\label{PVlin2}
% \mu'+\nu'+\frac{1}2-{\nu'}=\mu'+\frac{1}2,	
% \end{equation}
% \begin{equation}\label{FKMlin2}
% \mu'+\nu'+\frac38-\frac{\mu'}2=\frac{\mu'+\nu'}2+\frac{\nu'}2+\frac38.	
% \end{equation}

We now distinguish various cases:
\begin{itemize}
\item If $\mu'\leq \frac{1}{2}-\kappa$, then we obtain the bound
  by~(\ref{eq-PV-linbis});
\item If 
$$
\nu'>2(\delta+\kappa)\text{ and } \mu'>
\frac{1}{2}+(2\delta+\kappa),
$$
then we obtain the bound by~(\ref{eq-typeIIPV2});
\item If $\nu'\leq 2(\delta+\kappa)$, we obtain a suitable bound,
  provided $\kappa$ is small enough, by~(\ref{eq-FKM1-linbis}) since
  then
$$
\frac{\mu'+\nu'}{2}+\frac{\nu'}{2}+\frac{3}{8} \leq
\frac{1+\delta+o(1)}{2}+\delta+\kappa+\frac{3}{8}\leq
\frac{7}{8}+\frac{3\delta}{2}+\kappa+o(1);
$$
\item Finally, if $\mu'\leq (2\delta+\kappa)+\frac{1}{2}$, then from
  $\mu'+\nu'\geq 1$, we deduce that
$$
2\nu'\geq 1-4\delta -2\kappa\geq \frac{1}{2}+2\delta-4\kappa
\geq \mu'
$$
provided $\kappa$ is small enough, and so~(\ref{eq-typeI}) is
applicable and gives the desired bound since
\begin{align*}
  \mu'+\nu'+\frac{1}{4}-\frac{\mu'}{6}-\frac{5\nu'}{12} 
  &=
    \frac{7}{12}(\mu'+\nu')+\frac14+\frac{\mu'}{4}+o(1) \\
  &\leq
    \frac{7}{12}(1+\delta)+\frac14+
    \frac{1}{4}\Bigl(\frac12+2\delta+\kappa
    \Bigr)+o(1)
    %%=\frac{23}{24}+\frac{13}{12}\delta+\frac{\kappa}{4}
  \\
  &
    =1-\frac{13}{12}(1/26-\delta)+\frac{\kappa}{4}+o(1).
\end{align*}
\end{itemize}

%%%%%%%%%%%%%%%%%%%%%%%%%%%%%%%%

\appendix

\section{Nearby and vanishing cycles}\label{app-nearby}

Let $R$ be a Henselian discrete valuation ring $R$ with fraction field
$K$. Let $S$ be the spectrum of $R$, and denote its generic point by
$\eta$ and its special point by $s$. Let $\bar{\eta}$ be a geometric
point over $\eta$ and $\bar{s}$ a geometric point over $s$. 
\par
For any proper scheme $f\,:\, X\lra S$, and any prime $\ell$
invertible on $S$, the nearby cycles function $R \Psi$ is a functor
from $\ell$-adic sheaves on $X_{\eta}$ to the derived category of
$\ell$-adic sheaves on $X_{\overline{s}}$ equipped with an action of
the absolute Galois group $G$ of $K$. (See, e.g.,~\cite[Exp. XIII]{SGA7} %, or~\cite{}
for the definition and further references.)
\begin{equation}\label{nearbycyclediagram}
\xymatrix{
X_s\ar[r]^-{i}\ar[d]&X\ar[d]_{f}&X_{\ov\eta}\ar[l]_-{j}\ar[d]\\
s\ar[r]^-{i_0}&S&\ov\eta\ar[l]_-{j_0}.
}
\end{equation}
\par
Given $\mcF$ a sheaf on $X$ and  $\mcF_s:=i^*\mcF$ and $\mcF_{\ov\eta}:=j^*\mcF$, the complex $R \Psi\mcF$ is defined as
$$R \Psi\mcF=i^*Rj_*\mcF_{\ov\eta}.$$
The mapping cone of the adjunction map
$i^*\mcF\ra  R \Psi\mcF$
is noted $R\Phi\mcF$ is is called the complex of vanishing cycles; one then has a cohomology exact sequence arising from the corresponding distinguished triangle
\begin{equation}\label{eq-vanishing-exact}\cdots\ra H^i(X_{\ov s},\mcF_s)\ra H^i(X_{\overline{s}}, R \Psi
\mathcal F)\ra H^i(X_{\overline{s}}, R \Phi
\mathcal F)\ra \cdots \end{equation}

The functor $R\Psi$ has several key properties that we use in this paper:
\par
(1) (See~\cite[(1.3.3.1)]{LaumonSMF}, \cite[(2.1.8.3)]{SGA7}) For any
$i\geq 0$, there is a natural isomorphism of $G$-representations
\begin{equation}\label{eq-nearby-local}
H^i (X_{\overline{\eta}}, \mathcal F) = H^i(X_{\overline{s}}, R \Psi
\mathcal F).
\end{equation}
Since the left-hand side of~(\ref{eq-nearby-local}) is, together with
its Galois action, the local monodromy representation of the
higher-direct image sheaf $R^if_*\sheaf{F}$ at $s$, the nearby cycle
complex will enable us to compute the local monodromy representation
at specific points of some global sheaves obtained by push-forward on
curves.
%% Hypercohomology???
%%Another property we will be using is that
\par
(2) (See~\cite[Th 1.3.1.3]{LaumonSMF},~\cite[Th. Finitude,
Prop. 3.7]{sga4h}) The functor $R \Psi$ is defined \'{e}tale-locally:
if two pairs $(X\lra S,\sheaf{F})$ and $(X'\lra S,\sheaf{F}')$ are
given which are isomorphic in an \'{e}tale neighborhood of a point
$x\in X$, i.e., if there exist a scheme $U$ over $S$, a point
$\tilde{x}\in U$ and \'etale morphisms making the diagram
$$
\begin{array}{ccc}
U&\fleche{g'}&X'\\
g\ \downarrow&& \downarrow f'\\
X&\fleche{f} & S
\end{array}
$$
commute with $g(\tilde{x})=x$, $g'(\tilde{x})=x'$ (say), and if
$g^*\sheaf{F}\simeq (g')^*\sheaf{F}'$, then we have
$$
g^*R\Psi\sheaf{F}\simeq (g')^*R\Psi\sheaf{G}
$$
(i.e., the nearby cycles complexes are isomorphic in the same
\'{e}tale neighborhood.)

This will be useful to compare the local monodromy of a given sheaf on
a given curve to possibly simpler ones on other (also possibly
simpler) curves, which are étale-locally isomorphic and take advantage
of some existing computations of nearby cycles : for instance the
local acyclicity of smooth morphisms (which handles the case of a
lisse sheaf on a smooth scheme) and Laumon's local Fourier transform
which describes the nearby cycles that arise when computing the
Fourier transform of a sheaf (aka the stationary phase formula).

%%%%%%%%%%%%%%%%%%%%%%%%%%

\begin{bibdiv}

\begin{biblist}

% \bib{bbd}{article}{
%    author={Bernstein, J.},
%    author={Beilinson, A.},
%    author={Deligne, P.},
%    title={Faisceaux pervers},
%    journal={Ast\'erisque},
%    volume={100},
%    date={1983},
%    pages={5--171},
% }

\bib{Bl}{article}{
   author={Blomer, V.},
   title={Applications of the Kuznetsov formula on $\GL(3)$},
   journal={Invent. math.},
   volume={194},
   date={2013},
   number={3},
   pages={673--729},
   issn={0020-9910},
}

\bib{BlMi}{article}{
   author={Blomer, V.},
   author={Mili{\'c}evi{\'c}, D.},
   title={The second moment of twisted modular $L$-functions},
   journal={Geom. Funct. Anal.},
   volume={25},
   date={2015},
   number={2},
   pages={453--516},
}

\bib{BFKMM}{article}{
  author={Blomer, V.},
  author={Fouvry, {\'E}.},
  author={Kowalski, E.},
  author={Michel, Ph.},
  author={Mili{\'c}evi{\'c}, D.},
  title={On moments of twisted L-functions},
  journal={American J. of Math.},
  note={to appear; \url{arXiv:1411.4467}},
}

\bib{2for6}{article}{
  author={Blomer, V.},
  author={Fouvry, {\'E}.},
  author={Kowalski, E.},
  author={Michel, Ph.},
  author={Mili{\'c}evi{\'c}, D.},
  author={Sawin, W.},
  title={On the non-vanishing of twisted $L$-functions},
  note={in preparation},
}
 
\bib{BFG}{article}{
   author={Bump, D.},
   author={Friedberg, S.},
   author={Goldfeld, D.},
   title={Poincar\'e series and Kloosterman sums for $\SL(3,\Zz)$},
   journal={Acta Arith.},
   volume={50},
   date={1988},
   number={1},
   pages={31--89},
 }

\bib{WeilII}{article}{
  author={Deligne, P.},
  title={La conjecture de Weil, II}, 
  journal={Publ. Math. IH\'ES},
  volume={52},
  date={1980},
  pages={137--252},
}
 
 \bib{DesIw}{article}{
    AUTHOR = {Deshouillers, J.M.},
    author = {Iwaniec, H.},
     TITLE = {Kloosterman sums and {F}ourier coefficients of cusp forms},
   JOURNAL = {Invent. math.},
     VOLUME = {70},
      YEAR = {1982/83},
    NUMBER = {2},
     PAGES = {219--288},
}

\bib{FGKM}{article}{
  author={Fouvry, {\'E}.},
  author={Kowalski, E.},
  author={Ganguly, S.},
  author={Michel, Ph.},
  title={Gaussian distribution for the divisor function and Hecke eigenvalues in arithmetic progressions},
  journal={Comm. Math. Helvetici},
  volume={89},
  pages={979-1014},
  date={2014},
  note={\url{arXiv:1301.0214v1}},
}

 \bib{FKM2}{article}{
   author={Fouvry, {\'E}.},
   author={Kowalski, E.},
   author={Michel, Ph.},
   title={Algebraic trace functions over the primes},
   journal={Duke Math. J.},
   volume={163},
   number={9},
   pages={1683--1736},
   date={2014},
   note={\url{arXiv:1211.6043}},
}

\bib{FKM1}{article}{
   author={Fouvry, {\'E}.},
   author={Kowalski, E.},
   author={Michel, Ph.},
   title={Algebraic twists of modular forms and Hecke orbits},
   journal={Geom. Func. Anal.},
   volume={25},
   note={\url{arXiv:1207.0617}},
   date={2015},
   number={2},
   pages={580-657},
 }

 \bib{FKMSP}{article}{
   author={Fouvry, {\' E}.},
   author={Kowalski, E.},
   author={Michel, Ph.},
   title={A study in sums of products},
   journal={Philos. Trans. Roy. Soc. A},
   volume={373},
   date={2015},
   number={2040},
   pages={20140309, 26pp.},
   %issn={1364-503X},
   %review={\MR{3338119}},
   %doi={10.1098/rsta.2014.0309},
}

 % \bib{FKMSP}{article}{
 %   author={Fouvry, {\'E}.},
 %   author={Kowalski, E.},
 %   author={Michel, Ph.},
 %   title={A study in sums of products},
 %   journal={Philosophical Transactions of the Royal Society A},
 %   note={\url{arXiv:1304.3199}},
 %   date={2015},
 %   volume={373},
 % }

\bib{FKM3}{article}{
   author={Fouvry, {\'E}.},
   author={Kowalski, E.},
   author={Michel, Ph.},
   title={On the exponent of distribution of the ternary divisor function},
   journal={Mathematika},
   note={\url{arXiv:1304.3199}},
   date={2015},
   volume={61},
   number={1},
   pages={121-144},
 }

\bib{FoMi}{article}{
  author={Fouvry, {\'E.}},
  author={Michel, {Ph.}},
  title={Sur certaines sommes d'exponentielles sur les nombres premiers}, 
  journal={Ann. Sci. \' Ecole Norm. Sup. (4)},
  volume={31},
  number={1},
  date={1998},
  pages={93--130},
}
 
 \bib{FKMRRS}{article}{
   author={Fouvry, {\'E}.},
   author={Kowalski, E.},
   author={Michel, Ph.},
   author={Raju, C. S.},
   author={Rivat, J.},
   author={Soundararajan, K.},
   title={On short sums of trace functions},
%%   note={\url{arXiv:1508.00512}},
   journal={Annales de l'Institut Fourier},
   note={to appear},
}

\bib{FrIw}{article}{
  author={Friedlander, J.B.},
  author={Iwaniec, H.},
  title={Incomplete Kloosterman sums and a divisor problem},
  note={(with an appendix by
  B. J. Birch and E. Bombieri)},
  journal={Ann. of Math. (2)},
  volume={121},
  date={1985},
  number={2},
  pages={319--350},
}

\bib{FuWan}{article}{
   author={Fu, Lei},
   author={Wan, Daqing},
   title={$L$-functions for symmetric products of Kloosterman sums},
   journal={J. Reine Angew. Math.},
   volume={589},
   date={2005},
   pages={79--103},
   issn={0075-4102},
%   review={\MR{2194679}},
   doi={10.1515/crll.2005.2005.589.79},
}

\bib{Fu}{article}{
   author={Fu, Lei},
   title={Calculation of $\ell$-adic local Fourier transformations},
   journal={Manuscripta Math.},
   volume={133},
   date={2010},
   number={3-4},
   pages={409--464},
   issn={0025-2611},
%   review={\MR{2729262}},
   doi={10.1007/s00229-010-0377-x},
}

\bib{Fu2}{article}{
   author={Fu, Lei},
   title={$\ell$-adic GKZ hypergeometric sheaf and exponential sums},
   journal={Adv. Math.},
   volume={298},
   date={2016},
  %number={3-4},
   pages={51--88},
   note={\tt arXiv:1208.1373},
   doi={10.1016/j.aim.2016.04.021},
}

\bib{fu-book}{book}{
  author={Fu, Lei},
  title={\'Etale cohomology theory},
  publisher={World Scientific},
  year={2011},
  series={Nankai Tracts in Mathematics},
  volume={13},
}

\bib{GKR}{article}{
   author={Gao, P.},
   author={Khan, R.},
   author={Ricotta, G.},
   title={The second moment of Dirichlet twists of Hecke $L$-functions},
   journal={Acta Arith.},
   volume={140},
   date={2009},
   number={1},
   pages={57--65},
   issn={0065-1036},
  % review={\MR{2557853 (2011a:11096)}},
   doi={10.4064/aa140-1-4},
}

\bib{hartshorne}{book}{
  author={Hartshorne, R.},
  title={Algebraic geometry},
  series={Grad. Texts. Math.},
  volume={52},
  publisher={Springer},
  address={New-York},
  year={1977},
}
  
  \bib{HBfourth}{article}{
   author={Heath-Brown, D. R.},
   title={The fourth power mean of Dirichlet's $L$-functions},
   journal={Analysis},
   volume={1},
   date={1981},
   number={1},
   pages={25--32},
   issn={0174-4747},
 %  review={\MR{623640 (82i:10049)}},
   doi={10.1524/anly.1981.1.1.25},
}
% \bibitem{FrIw2} J.B. Friedlander and H. Iwaniec: \textit{The divisor
%     problem for arithmetic progressions}, Acta Arith. 45 (1985),
%   273--277.

\bib{HB}{article}{
  author={Heath--Brown, D.R.},
  title={The divisor function $d_3(n)$ in arithmetic progressions},
  journal={Acta Arith.},
  date={1986},
  volume={47},
  pages={29--56},
  label={HB86},
}

\bib{HL}{article}{
  author={Hoffstein, J.},
  author={Lee, M.},
  title={Second Moments and simultaneous non-vanishing of $\GL(2)$ automorphic $L$-series},
  note={\url{arXiv:1308.5980}}
}

\bib{IwKo}{book}{
  author={Iwaniec, H.},
  author={Kowalski, E.},
  title={Analytic number theory},
  publisher={American Mathematical Society Colloquium Publications,  American Mathematical Society},
  volume={53},
  address={Providence, RI},
  date={2004},
}

\bib{Sommes}{book}{
   author={Katz, N. M.},
   title={Sommes exponentielles},
   series={Ast\'erisque},
   volume={79},
   publisher={Soci\'et\'e Math\'ematique de France},
   address={Paris},
   date={1980},
   pages={209},
}
  
\bib{GKM}{book}{
   author={Katz, N. M.},
   title={Gauss sums, Kloosterman sums, and monodromy groups},
   series={Annals of Mathematics Studies},
   volume={116},
   publisher={Princeton University Press},
   address={Princeton, NJ},
   date={1988},
}
  
\bib{ESDE}{book}{
   author={Katz, N. M.},
   title={Exponential sums and differential equations},
   series={Annals of Mathematics Studies},
   volume={124},
   publisher={Princeton University Press},
   address={Princeton, NJ},
   date={1990},
}

\bib{rigid}{book}{
   author={Katz, N. M.},
   title={Rigid local systems},
   series={Annals of Mathematics Studies},
   volume={139},
   publisher={Princeton University Press},
   address={Princeton, NJ},
   date={1996},
}

% \bib{katz-twisted}{book}{
%    author={Katz, N. M.},
%    title={Twisted $L$-functions and monodromy},
%    series={Annals of Mathematics Studies},
%    volume={150},
%    publisher={Princeton University Press},
%    address={Princeton, NJ},
%    date={2002},
% }

\bib{katz-mellin}{book}{
   author={Katz, N. M.},
   title={Convolution and equidistribution: Sato-Tate theorems for
     finite-field Mellin transforms},
   series={Annals of Mathematics Studies},
   volume={180},
   publisher={Princeton University Press},
   address={Princeton, NJ},
   date={2012},
}

\bib{KatzBetti}{article}{
   author={Katz, N. M.},
   title={Sums of Betti numbers in arbitrary characteristic},
   journal={Finite Fields and Their Applications},
   volume={7},
   number={1},
   year={2001},
   pages={29-44},
}

\bib{katz-laumon}{article}{
  author={Katz, N. M.},
  author={Laumon, G.},
   title={Transformation de Fourier et majoration de sommes exponentielles},
%%   language={French},
   journal={Publ. Math. IHÉS},
   volume={62},
   date={1985},
   pages={145--202},
%%   issn={0073-8301},
}

\bib{KMV}{article}{
  author={Kowalski, E.},
  author={Michel, Ph.},
  author={VanderKam, J.},
  title={Rankin--Selberg $L$-functions in the level aspect},
  journal={Duke Math. Journal},
  volume={114},
  date={2002},
  pages={123--191},
}

\bib{k-repr}{book}{ 
  author={Kowalski, E.}, 
  title={An introduction to
    the representation theory of groups}, 
  publisher={American Math. Society},
  address={Providence, R.I.},
  series={Grad. Studies in Math.}, 
  volume={155}, 
  date={2014}, 
}

\bib{LaumonSMF}{article}{
   author={Laumon, G.},
   title={Semi-continuit\'e du conducteur de Swan (d'apr\`es P. Deligne)},
   conference={
      title={Caractéristique d'Euler--Poincar\'e},
   },
   book={
      series={Ast\'erisque},
      volume={83},
      publisher={Soc. Math. France},
      address={Paris},
   },
   date={1981},
   pages={173--219},
}

\bib{laumon87}{article}{
   author={Laumon, G.},
   title={Transformation de Fourier, constantes d'\'equations fonctionnelles
   et conjecture de Weil},
%%   language={French},
   journal={Publ. Math. IHÉS},
   volume={65},
   date={1987},
   pages={131--210},
%%   issn={0073-8301},
}

\bib{laumonhomog}{article}{
  author={Laumon, G.},
  title={Transformation de Fourier homogène},
  journal={Bull. Soc. math. France},
  year={2003},
  volume={131},
  pages={527--551},
}

\bib{LRS}{article}{
   author={Luo, W.},
   author={Rudnick, Z.},
   author={Sarnak, P.},
   title={On Selberg's eigenvalue conjecture},
   journal={Geom. Funct. Anal.},
   volume={5},
   date={1995},
   number={2},
   pages={387--401},
}

\bib{milne}{book}{
  author={Milne, J.},
  title={\'Etale cohomology},
  publisher={Princeton University Press},
  series={Princeton Math. Series},
  volume={33},
  address={Princeton, N.J.},
  year={1980},
}

\bib{Mu1}{article}{
   author={Munshi, R.},
   title={Shifted convolution of divisor function $d_3$ and Ramanujan
   $\tau$ function},
   conference={
      title={The legacy of Srinivasa Ramanujan},
   },
   book={
      series={Ramanujan Math. Soc. Lect. Notes Ser.},
      volume={20},
      publisher={Ramanujan Math. Soc.},
      address={Mysore},
   },
   date={2013},
   pages={251--260},
}

\bib{Mu2}{article}{
   author={Munshi, R.},
   title={Shifted convolution sums for $\mathrm{GL}(3)\times\mathrm{GL}(2)$},
   journal={Duke Math. J.},
   volume={162},
   date={2013},
   number={13},
   pages={2345--2362},
}

\bib{Nunes}{article}{
 author={Nunes, R. M.},
 title={ Squarefree integers in large arithmetic progressions},
 note={(preprint, 2016; \url{arXiv:1602.00311})},
}

\bib{orgogozo}{article}{
  author={Orgogozo, F.},
  title={Altérations et groupe fondamental premier à $p$},
  journal={Bull. Soc. math. France},
  volume={131},
  date={2003},
  pages={123--147}, 
}

\bib{Sound}{article}{
   author={Soundararajan, K.},
   title={The fourth moment of Dirichlet $L$-functions},
   conference={
      title={Analytic number theory},
   },
   book={
      series={Clay Math. Proc.},
      volume={7},
      publisher={Amer. Math. Soc., Providence, RI},
   },
   date={2007},
   pages={239--246},
 %  review={\MR{2362204 (2008i:11110)}},
}

\bib{Topa}{article}{
 author={Topacogullari, B.},
 title={ The shifted convolution of divisor functions},
 note={(preprint, 2015; \url{arXiv:1506.02608})},
}

\bib{Young}{article}{
 author={Young, {M.}{P.}},
 title={The fourth moment of Dirichlet $L$-functions},
 journal={Ann. of Math. (2)},
 pages={1--50},
 date={2011},
 volume={173},
 number={1},
}

\bib{sga1}{book}{
   author={Grothendieck, A.},
   author={Raynaud, M.},
   title={Rev\^etements \'etales et groupe fondamental},
   series={Lecture Notes in Mathematics},
   volume={224},
%%   note={S\'eminaire de G\'eom\'etrie Alg\'ebrique du Bois-Marie 1960--1961},
   publisher={Springer-Verlag},
   address={Berlin-New York},
   date={1971},
   label={SGA1},
   pages={xviii+327},
}

\bib{sga4}{book}{
   author={Artin, M.},
   author={Grothendieck, A.},
   author={Verdier, J.-L.},
   title={Th\'eorie des topos et cohomologie \'etale des sch\'emas},
   series={Lecture Notes in Mathematics},
   volume={269,270,305},
%%   note={S\'eminaire de G\'eom\'etrie Alg\'ebrique du Bois-Marie},
   publisher={Springer-Verlag},
   address={Berlin-New York},
   date={1972},
   label={SGA4},
   %%pages={xviii+327},
}

\bib{sga4h}{book}{
   author={Deligne, P.},
   title={Cohomologie \'etale},
   series={Lecture Notes in Mathematics},
   volume={569},
%%   note={S\'eminaire de G\'eom\'etrie Alg\'ebrique du Bois-Marie (SGA 4$\frac{1}{2}$)},
   publisher={Springer-Verlag},
   address={Berlin-New York},
   date={1977},
   pages={iv+312pp},
   label={SGA4$\frac{1}{2}$},
 %  review={\MR{0463174 (57 \#3132)}},
}

\bib{sga5}{book}{
  author={Grothendieck, A.},
  author={Illusie, L.},
  title={Cohomologie $\ell$-adique et fonctions $L$},
%%   language={French},
   series={Lecture Notes in Mathematics},
   volume={589},
   % note={S\'eminaire de G\'eometrie Alg\'ebrique du Bois-Marie 1965--1966
   % (SGA 5)},
   publisher={Springer-Verlag},
   address={Berlin-New York},
   date={1977},
   pages={xii+484},
   label={SGA5},
%%   isbn={3-540-08248-4},
  % review={\MR{0491704 (58 \#10907)}},
}

\bib{SGA7}{book}{
  author={Deligne, P.},
  author={Katz, N.M.},
   title={Groupes de monodromie en g\'eom\'etrie alg\'ebrique, II},
%   language={French},
   series={Lecture Notes in Mathematics},
   volume={340},
   % note={S\'eminaire de G\'eom\'etrie Alg\'ebrique du Bois-Marie 1967--1969
   % (SGA 7 II)},
   publisher={Springer-Verlag},
   address={Berlin-New York},
   date={1973},
   pages={x+438},
   label={SGA7},
  % review={\MR{0354657 (50 \#7135)}},
}

\end{biblist}

\end{bibdiv}

\end{document}